\newcommand{\Lloc}{L_{\rm{loc}}} 
\newcommand{\Wloc}{W_{\rm{loc}}} 
\DeclareMathOperator{\Brue}{\mathcal B}
\DeclareMathOperator{\Keg}{\mathcal{K}}
\DeclareMathOperator{\spec}{\sigma}
 \DeclareFontFamily{OMX}{MnSymbolE}{}
 \DeclareSymbolFont{MnLargeSymbols}{OMX}{MnSymbolE}{m}{n}
 \DeclareFontShape{OMX}{MnSymbolE}{m}{n}{
     <-6>  MnSymbolE5
    <6-7>  MnSymbolE6
    <7-8>  MnSymbolE7
    <8-9>  MnSymbolE8
    <9-10> MnSymbolE9
   <10-12> MnSymbolE10
   <12->   MnSymbolE12
 }{}
 \DeclareFontShape{OMX}{MnSymbolE}{b}{n}{
     <-6>  MnSymbolE-Bold5
    <6-7>  MnSymbolE-Bold6
    <7-8>  MnSymbolE-Bold7
    <8-9>  MnSymbolE-Bold8
    <9-10> MnSymbolE-Bold9
   <10-12> MnSymbolE-Bold10
   <12->   MnSymbolE-Bold12
 }{}
\let\llangle\@undefined
\let\rrangle\@undefined
\DeclareMathDelimiter{\llangle}{\mathopen}%
                     {MnLargeSymbols}{'164}{MnLargeSymbols}{'164}
\DeclareMathDelimiter{\rrangle}{\mathclose}%
                     {MnLargeSymbols}{'171}{MnLargeSymbols}{'171}
\newtheorem{theorem}{Theorem}[section]
\newtheorem{lemma}[theorem]{Lemma}
\newtheorem{proposition}[theorem]{Proposition}
\newtheorem{corollary}[theorem]{Corollary}
\theoremstyle{definition}
\newtheorem{definition}[theorem]{Definition}
\newtheorem{geometric setting}[theorem]{Geometric setting}
\newtheorem{setup}[theorem]{Setup}
\newtheorem{reminder}[theorem]{Reminder}
\theoremstyle{remark}
\newtheorem{remark}[theorem]{Remark}
\numberwithin{equation}{section}
\newcommand{\loc}{\mathrm{loc}}
\newcommand{\cc}{\mathrm{c}}
\newcommand{\Lip}{\textnormal{Lip}}
\newcommand{\tensor}{\otimes}
\newcommand{\susp}{\mathcal{S}} 
\newcommand{\abs}[1]{\left\lvert#1\right\rvert}
\newcommand{\norm}[1]{\left\|#1\right\|}
\newcommand{\R}{\mathbb{R}}
\newcommand{\vertiii}[1]{{\left\vert\kern-0.25ex\left\vert\kern-0.25ex\left\vert #1 
    \right\vert\kern-0.25ex\right\vert\kern-0.25ex\right\vert}}
\newcommand{\boundedops}{\mathcal{L}}
\newcommand{\vol}{\mathop\mathrm{vol}}
\newcommand{\rk}{\mathop\mathrm{rk}}
\renewcommand{\rm}{\mathrm}
\newcommand{\innerprod}[1]{\langle#1\rangle}
\DeclareMathOperator{\id}{id}
\DeclareMathOperator{\dom}{dom}
\newcommand{\Cl}{\mathrm{Cl}}
\DeclareMathOperator{\ch}{ch}
\DeclareMathOperator{\RR}{\mathbb{R}}
\DeclareMathOperator{\reals}{\mathbb{R}}
\DeclareMathOperator{\CC}{\mathbb{C}}
\DeclareMathOperator{\complexs}{\mathbb{C}}
\DeclareMathOperator{\NN}{\mathbb{N}}
\DeclareMathOperator{\End}{End}
\DeclareMathOperator{\ind}{index}
\DeclareMathOperator{\scal}{scal}
\DeclareMathOperator{\supp}{supp}
\DeclareMathOperator{\Dirac}{\mathcal{D}} 
\DeclareMathOperator{\dist}{dist}
\DeclareMathOperator{\spinor}{\slashed{\mathfrak S}}
\DeclareMathOperator{\thet}{\vartheta}
\DeclareMathOperator{\EE}{\mathbf{E}}
\DeclareMathOperator{\FF}{\mathbf{F}}
\DeclareMathOperator{\LL}{\mathbf{L}}
\DeclareMathOperator{\HH}{\mathbf{H}}
\DeclareMathOperator{\bulk}{\rm{bulk}}
\DeclareMathOperator{\cone}{\rm{cone}}
\DeclareMathOperator{\link}{\rm{link}}
\DeclareMathOperator{\PP}{\mathcal{P}}
\DeclareMathOperator{\PPP}{\mathscr{P}}
\DeclareMathOperator{\tr}{tr}
\DeclareMathOperator{\conic}{\mathscr C}
\DeclareMathOperator{\SSS}{\mathbb{S}}
\DeclareMathOperator{\eps}{\varepsilon}
\DeclareMathOperator{\specflow}{sf}
\newcommand{\Spin}{\mathrm{Spin}}
\newcommand{\myicon}{$\,\,\,\triangleright$}
\let\c@equation\c@theorem
\begin{document}

\title{Abstract cone operators and Lipschitz rigidity for scalar curvature on singular manifolds}

\author{Simone Cecchini}
\address{Department of Mathematics, Texas A\&M University, College Station, TX, USA}
\email{cecchini@tamu.edu}

\author{Bernhard Hanke}
\address{Institut f\"ur Mathematik,
Universit\"at Augsburg,  Augsburg, Germany}
\email{bernhard.hanke@math.uni-augsburg.de}

\author{Thomas Schick}
\address{Mathematisches Institut,
Georg-August-Universit\"at, 
G{\"o}ttingen,
Germany}
\email{thomas.schick@math.uni-goettingen.de}

\author{Lukas Sch\"onlinner}
\address{Institut f\"ur Mathematik,
Universit\"at Augsburg,  Augsburg, Germany}
\email{lukas.schoenlinner@uni-a.de}

\begin{abstract}
Using the index theory for twisted Dirac operators acting on sections of Lipschitz bundles over non-compact manifolds, we prove  Llarull-type comparison results in scalar curvature geometry. 
They apply to spin Riemannian manifolds with cone-like singularities and Lipschitz comparison maps to spheres.

We use the  language of abstract cone operators which are introduced and studied in a general functional analytic setting and which may be of independent interest.

Applying our discussion to spherical suspensions of odd-dimensional closed manifolds, we  generalize a  Lipschitz rigidity  result of the first three named authors from even to odd dimensions.
Under stronger conditions, this has already been shown by Lee-Tam using geometric flows and by B\"ar using an upper estimate for the smallest Dirac eigenvalue.
\end{abstract}

\keywords{Abstract cone operator, scalar curvature bounds, Lipschitz maps, twisted Dirac operators, low regularity metrics, cone-like singularities}

\subjclass[2000]{Primary: 51F30, 53C23, 53C24 ;  Secondary: 30C65, 53C27, 58J20 }

\maketitle
\tableofcontents

\section{Introduction and summary}

What quantitative bounds on positive scalar curvature can one place on a given smooth manifold?
To prevent Riemannian metrics from scaling, it is useful to compare them with a standard background metric and then look for extremality and rigidity results.
A particularly striking example is Llarull's rigidity theorem, which also inspired the present work.

In the following  let $\SSS^n$ denote the unit sphere in $\R^{n+1}$ with the induced Riemannian metric, distance function and a fixed orientation.

\begin{theorem}[\cite{Lla98}*{Theorem B}] \label{Llarull_Classic}
Let $n \geq 2$, let $(M,g)$ be a closed connected $n$-dimensional smooth spin Riemannian manifold with scalar curvature $\scal_g \geq n(n-1)$.
Let  $f\colon (M,g)\to \SSS^n$ be a smooth $1$-Lipschitz map of non-zero degree.
Then $f$ is a Riemannian isometry.

If $n \geq 3$, then the same conclusions hold under the weaker (infinitesimal) area non-increasing condition that $\left| \Lambda^2d_xf \right| \le 1$ for all $x\in M$.
\end{theorem}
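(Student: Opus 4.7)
The plan is to use the twisted Dirac operator method of Llarull, which combines an Atiyah--Singer index computation with a sharp Weitzenb\"ock estimate that forces rigidity in the equality case.

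First I would construct the twisting bundle by pulling back the spinor bundle $\spinb_{\SSS^n}$ of the round sphere via $f$, obtaining a Hermitian bundle with connection $E \defeq f^*\spinb_{\SSS^n}$ on $M$, and form the twisted Dirac operator $\Dop_E$ on sections of $\spinb_M \tensor E$. For $n$ even, the $\ZZ/2$-graded Atiyah--Singer index theorem yields $\ind(\Dop_E) = \pm\deg(f) \neq 0$; for $n$ odd, the naive complex index vanishes and one passes instead to a $\Cl_n$-linear real index (or uses a suspension construction) to obtain a non-trivial invariant. Either way, this produces a non-trivial harmonic spinor $\psi \in \Ker(\Dop_E)$.

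Next I would invoke the Lichnerowicz--Schr\"odinger--Weitzenb\"ock formula $\Dop_E^2 = (\nabla^E)^*\nabla^E + \tfrac14\scal_g + \calR^E$. At a point $x \in M$, diagonalize $df_x^*df_x$ with singular values $\lambda_1,\ldots,\lambda_n \geq 0$. Because $\SSS^n$ has constant sectional curvature $1$, a direct Clifford-algebraic computation gives the sharp pointwise bound
\[
\calR^E \;\geq\; -\tfrac12\sum_{i<j}\lambda_i\lambda_j
\]
as a self-adjoint endomorphism. Under either the $1$-Lipschitz condition ($\lambda_i \leq 1$) or, more generally, the area non-increasing condition ($\lambda_i\lambda_j \leq 1$ for all pairs), one has $\sum_{i<j}\lambda_i\lambda_j \leq \binom{n}{2} = \tfrac{n(n-1)}{2}$, so combined with $\scal_g \geq n(n-1)$ this yields $\tfrac14\scal_g + \calR^E \geq 0$ pointwise.

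Finally, pairing $\Dop_E^2\psi = 0$ with $\psi$ and integrating, the resulting integrand is a sum of non-negative terms that must vanish identically. This forces simultaneously $\nabla^E\psi \equiv 0$, $\scal_g \equiv n(n-1)$, and equality in the curvature bound wherever $\psi \neq 0$; since $\psi$ is parallel and non-trivial it is nowhere vanishing, so equality holds on all of $M$. In the $1$-Lipschitz case this forces each $\lambda_i(x) = 1$; in the area non-increasing case with $n \geq 3$, equality gives $\lambda_i\lambda_j = 1$ for every pair, a system whose only positive solution (precisely when $n \geq 3$) is $\lambda_i \equiv 1$. Hence $f$ is a local isometry, and since $M$ is closed while $\SSS^n$ is simply connected for $n \geq 2$, $f$ is an isometry. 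The main obstacle is the odd-dimensional case, where the vanishing of the complex index must be circumvented via a real/Clifford-linear variant compatible with the sharpness-critical curvature estimate on $\calR^E$; a secondary delicate point is verifying the equality case of that estimate in the area non-increasing regime.
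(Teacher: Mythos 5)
The paper cites this result from Llarull~\cite{Lla98} and gives no proof of its own, so there is no internal argument to compare against. Your sketch correctly reproduces the structure of Llarull's original spinorial proof: pull back the sphere's spinor bundle along $f$, produce a harmonic twisted spinor from the Atiyah--Singer index theorem, and run the sharp Weitzenb\"ock estimate to force rigidity. Two remarks. For odd $n$ the device Llarull actually uses (and which the present paper adapts in Proposition~\ref{P:infinitesimal_isometry}) is the passage to $M\times\SSS^1(R)$ together with a non-zero-degree map $\SSS^n\times\SSS^1\to\SSS^{n+1}$, letting $R\to\infty$; a $\Cl_n$-linear real index is not obviously compatible with the delicate curvature cancellation, so you should not present it as an interchangeable alternative. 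In the equality analysis, the argument closes because a parallel $\psi$ automatically satisfies $\calR^E\psi=-\tfrac14 n(n-1)\psi$ pointwise, so the chain
\[
-\tfrac14 n(n-1)|\psi|^2 \;=\; \langle\calR^E\psi,\psi\rangle \;\geq\; -\tfrac12\sum_{i<j}\lambda_i\lambda_j\,|\psi|^2 \;\geq\; -\tfrac14 n(n-1)|\psi|^2
\]
sandwiches to equality; it is saturation of the \emph{second} inequality, via $\lambda_i\lambda_j\leq 1$, that yields $\lambda_i\lambda_j=1$ for every pair, and only then does the $n\geq 3$ positivity argument give $\lambda_i\equiv 1$. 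Your write-up elides this two-step structure slightly but the underlying idea is right.
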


Subsequent work has led to similar results for  other comparison manifolds. 
Goette and Semmelmann  generalized the above results to many compact symmetric spaces in \cite{GS02} and  to compact K\"ahler manifolds of positive Ricci curvature in \cite{GS01}.
Lott \cite{Lott_bound} proved Llarull type comparison results for smooth compact manifolds with boundary.

In his effort to understand scalar curvature as a purely metric concept, Gromov in \cite{Gromov4}*{Section 4.5} asked whether in Llarull's theorem, the smoothness of the map $f$ can be dropped.
The following result resolved this question for even-dimensional manifolds, including a lower regularity assumption for $g$.

\begin{theorem}[\cite{CHS}*{Theorem A}]    \label{thm_CHS}
Let $M$ be a closed smooth connected  oriented manifold of even dimension $n \geq 2$ which admits a spin structure, let $g$ be a $W^{1,p}$-regular  Riemannian metric for some $p>n$ with distributional scalar curvature $\scal_g \geq n(n-1)$.
Let $f \colon (M,d_g) \to \SSS^n$ be a  Lipschitz continuous map of non-zero degree.
Furthermore, if $n = 2$, assume that $f$ is $1$-Lipschitz, and if $n \geq 4$, assume that $df$ is area non-increasing almost everywhere.

Then  $f$ is a metric isometry. 
\end{theorem}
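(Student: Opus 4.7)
The plan is to adapt Llarull's twisted Dirac operator argument to the low-regularity setting afforded by the abstract cone operator framework of the paper. Concretely, I would pull back the spinor bundle of $\SSS^n$ (in the even-dimensional case, its positive half) along $f$ to obtain a Lipschitz Hermitian bundle with connection $E \to M$, and then form the twisted Dirac operator $\Dirac_E$ acting on $L^2$-sections of $\spinb_M^g \otimes E$. Since $g$ is only $W^{1,p}$ and $E$ is only Lipschitz, $\Dirac_E$ is not smoothly elliptic; instead one realizes it as an unbounded self-adjoint operator within the functional analytic framework developed here, which is tailored precisely to handle Dirac-type operators in this low-regularity range.

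With $\Dirac_E$ in hand, the core input is the Schr\"odinger--Lichnerowicz identity
\begin{equation*}
\Dirac_E^2 = \nabla^* \nabla + \tfrac{1}{4}\scal_g + \mathcal{R}^E,
\end{equation*}
where $\mathcal{R}^E$ is the curvature endomorphism of the twist bundle. Llarull's pointwise algebraic computation, applied at a.e.\ point where $f$ is differentiable (Rademacher's theorem), yields $\mathcal{R}^E \geq -\tfrac{1}{4} n(n-1)$ under the area non-increasing hypothesis (or under the $1$-Lipschitz hypothesis in the borderline case $n = 2$). Combined with the distributional inequality $\scal_g \geq n(n-1)$, this produces the weak estimate $\Dirac_E^2 \geq \nabla^* \nabla$. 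At the same time, the twisted index $\ind(\Dirac_E^+)$ computes, via the Atiyah--Singer formula, to a nonzero multiple of $\deg(f)$, and so is nonzero by hypothesis; consequently there exists a nontrivial section $\sigma \in \ker(\Dirac_E)$. The curvature inequality then forces $\nabla \sigma = 0$ almost everywhere together with pointwise saturation both of $\scal_g \geq n(n-1)$ and of the Llarull estimate.

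From pointwise saturation of the Llarull estimate against a nonvanishing parallel twisted spinor, Llarull's original algebraic argument shows that the differential $d_x f$ is a linear isometry at almost every $x$. Since $f$ is Lipschitz and both $(M, d_g)$ and $\SSS^n$ are length spaces, integrating along rectifiable curves promotes this infinitesimal isometry to the metric identity $d_{\SSS^n}(f(x), f(y)) = d_g(x, y)$. I expect the principal technical obstacle to lie entirely in the low-regularity analysis: constructing $\Dirac_E$ as a genuine self-adjoint operator on a non-smooth manifold with a merely Lipschitz twist bundle, justifying the Lichnerowicz identity in integrated form when $\scal_g$ is only a distribution, and ensuring the index formula persists in this setting. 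These are precisely the difficulties the abstract cone operator formalism is designed to resolve, so the proof amounts to assembling those analytic inputs into Llarull's classical geometric argument.
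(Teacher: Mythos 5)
Your outline correctly reproduces the spinorial skeleton — pull back $\Sigma_{\SSS^n}$ along $f$, form the twisted Dirac operator, invoke the integrated Schr\"odinger--Lichnerowicz formula with distributional $\scal_g$, bound the twist curvature a.e.\ via Llarull's algebra, force a parallel harmonic spinor from the nonzero index, and saturate the estimates to get that $d_x f$ is a linear isometry almost everywhere. Up to that point the proposal agrees with the proof in \cite{CHS}, subject to one misattribution of machinery: since $M$ is closed and $g$, though only $W^{1,p}$, has no conical degeneration, there is \emph{no} cone to model, and the abstract cone operator formalism developed in Section~2 of the present paper plays no role in the even-dimensional theorem. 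The low-regularity Dirac theory on closed manifolds (essential self-adjointness, $H^1$ domain, Fredholm index, integral Lichnerowicz formula) is handled in \cite{CHS} by elliptic estimates and approximation following Lee--LeFloch and Bartnik--Chru\'sciel; the cone framework is introduced in this paper precisely because the odd-dimensional case is reduced to a spherical suspension which \emph{does} have cone tips.

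The genuine gap is in the final step. From ``$d_x f$ is a linear isometry a.e.'' you claim to obtain $d_{\SSS^n}(f(x),f(y))=d_g(x,y)$ by ``integrating along rectifiable curves.'' Integration along curves gives at best the one-sided bound $d_{\SSS^n}(f(x),f(y))\le d_g(x,y)$ (and even that requires care, since the non-differentiability set, though Lebesgue-null, can still meet every curve from $x$ to $y$). The reverse inequality is equivalent to $f$ being injective, and a Lipschitz map with $|d_x f|=1$ a.e.\ can still fold. Ruling out folds is the hard content of \cite{CHS}: one first shows, using the chirality decomposition $\spinor=\spinor^+\oplus\spinor^-$ of the spinor bundle (available only because $n$ is even) together with the saturated double Clifford identity on the parallel spinor, that $d_x f$ is \emph{orientation preserving} a.e.; one then applies Reshetnyak's theory of quasi-regular maps to conclude that $f$ is a local homeomorphism and hence a covering map of degree $\pm1$, from which the global metric isometry follows. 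This orientation-plus-quasi-regularity step is exactly what fails for odd $n$ and is the reason the present paper has to pass to the even-dimensional spherical suspension; your proposal omits it entirely.
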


\begin{reminder}
  Let $M$ and $N$ be smooth manifolds with continuous Riemannian metrics $g$ and $h$ and associated distance functions $d_g$ and $d_h$.
  Let $f\colon (M, d_g) \to (N,d_h)$ be Lipschitz continuous.  
  If $f$ is (totally) differentiable at $x \in M$, we denote by
  \[
    d_x f\colon T_x M \to T_{f(x)} N
  \]
  the differential of $f$ at $x$.  
  By Rademacher's theorem, $f$ is differentiable almost everywhere on $M$ with differential of regularity $\Lloc^{\infty}$.

We say that $df$ is {\em area non-increasing a.e.} if for almost all $x \in M$ where $f$ is differentiable the operator norm of the induced map
  \[
    \Lambda^2 d_x f \colon \Lambda^2 T_x M \to \Lambda^2 T_{f(x)} N
  \]
 on the second exterior power of $T_x M$ satisfies $| \Lambda^2 d_x f| \leq 1$.

Next, let  $M$ be a smooth manifold and let  $g$ be a continuous Riemannian metric on $M$ satisfying
\[
    g \in \Wloc^{1,p}( M , T^*M \otimes T^*M)
\]
for some $p>n$.
Recall here that,  by the Sobolev embedding theorem, each section in $\Wloc^{1,p}(M, T^*M \otimes T^*M)$, $p > n$, has a unique continuous representative. 
The case $p=\infty$ is precisely the case of Lipschitz continuity.

For such $g$,  the scalar curvature  is defined as a distribution 
\[
    \scal_g \colon C^{\infty}_c(M) \to \R, 
\]
see  Lee-LeFloch \cite{LL15} and \cite{CHS}*{Definition 3.3}.
In particular, we can define lower scalar curvature bounds in the distributional sense by setting $\scal_g\ge T$ for another distribution $T$ if and only if $\llangle \scal_g,\varphi \rrangle \ge \llangle T,\varphi \rrangle$ for  every smooth test function $\varphi\colon M\to \R$ which takes only non-negative values. 
If $T$ is a constant (the case most relevant to us) this means of course that
$\llangle \scal_g,\varphi \rrangle \ge T\int_M \varphi$ for all smooth test functions $\varphi\colon M\to [0,\infty)$.
\end{reminder}

In \cite{CHS}, to prove Theorem \ref{thm_CHS}, by establishing the index and spectral theories of twisted Dirac operators for low-regularity bundles and metrics.
Notably, a priori non-existence results of harmonic spinors arise from low regularity versions of the Schrödinger–Lichnerowicz formula.
Appropriate approximation arguments then allow to show, by following Llarull's line of thought, that in the situation of Theorem \ref{thm_CHS}, the differential $d_x f$  is an isometry almost everywhere.

In Proposition \ref{P:infinitesimal_isometry} of the present article, this reasoning is generalized to odd-dimensional $M$ of dimension at least $3$.
To do so, we combine a spectral flow argument similar to the one in the work of Li, Su and Wang \cite{LiSuWang} and B\"ar \cite{Baer_2024} with the methods developed in \cite{CHS}.

For non-differentiable $f$, the inverse function theorem cannot be applied, so a new argument is needed to conclude that $f$ is a homeomorphism.
In \cite{CHS} we use the theory of quasi-regular maps in the sense of Reshetnyak to show that $f$ is a homeomorphism under the additional assumption that $d_x f$ is almost everywhere orientation preserving.
Intuitively, this  assumption  avoids the folds and wrinkles that are generally responsible for the large flexibility of maps that are only Lipschitz regular.
To obtain this result, we have made decisive use of the fact that $M$ is even-dimensional and the resulting chiral decomposition of the spinor bundle on $M$, leaving open the case of odd $n$ at this point.

After \cite{CHS} had appeared, Theorem \ref{thm_CHS} has been generalised to all $n \geq 2$ in the following cases.
\begin{enumerate}[label=\myicon]
 \item  If $f$ is $1$-Lipschitz by  Lee-Tam \cite{ML22}*{Theorem 1.1} using a combination of the Ricci and harmonic map heat flows to reduce the claim to Theorem \ref{Llarull_Classic}.
\item If $f$ is $1$-Lipschitz and $g$ is smooth by B\"ar \cite{Baer_2024}*{Theorem 1}, using an upper bound for the smallest Dirac eigenvalue in terms of the  hyperspherical radius, see \cite{Baer_2024}*{Main Theorem}.
\end{enumerate}
Combining \cite{ML22}*{Theorem 1.1} with our Proposition \ref{P:infinitesimal_isometry}, we obtain 

\begin{corollary} \label{optimal_result}  Theorem \ref{thm_CHS} holds for all $n \geq 2$.
\end{corollary}

One of the goals of this paper is to present a  purely spinor geometric proof of the following version of Theorem \ref{thm_CHS} in odd dimensions that is independent of geometric flows. 

\begin{theorem} \label{theo:main_odd} Let $M$ be a closed smooth connected oriented manifold of odd dimension $n \geq 3$ admitting a spin structure.
Let $g$ be a Riemannian metric of Sobolev regularity $W^{1,p}$, $p > n+1$, and with distributional scalar curvature $\scal_g \geq n(n-1)$ on $M$.
Let $f \colon M \to \SSS^n$ be a Lipschitz map of non-zero degree which is
area non-increasing almost everywhere. 

Then $f$ is a metric isometry.
\end{theorem}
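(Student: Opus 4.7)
The overall strategy is to reduce the odd-dimensional problem to the even-dimensional setting by passing to a spherical suspension of $M$, at the cost of introducing cone-type singularities that must be accommodated by a more general Llarull-type comparison theorem. Step one, already available through Proposition \ref{P:infinitesimal_isometry}, gives that under the hypotheses of Theorem \ref{theo:main_odd} the differential $d_x f$ is an isometry at almost every $x \in M$, and in particular that $f$ is $1$-Lipschitz. This is not yet enough to conclude that $f$ is a homeomorphism, and the chirality-based Reshetnyak argument from \cite{CHS} is unavailable in odd dimension.

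The substitute I would pursue is to form the spherical suspension $\susp M = ([0,\pi] \times M)/{\sim}$ with warped-product metric $g_\susp = dt^2 + \sin^2(t)\, g$ (the fibers at $t=0$ and $t=\pi$ being collapsed to points), and the suspended map
\[
  \susp f \colon \susp M \longrightarrow \susp \SSS^n = \SSS^{n+1}, \qquad (t,x) \longmapsto (t, f(x)),
\]
where $\SSS^{n+1}$ carries the analogous warped form over $\SSS^n$. Four features are then essential: the ambient dimension $n+1$ is even, so the spin geometry of \cite{CHS} can be re-engaged; a warped-product computation on the smooth locus shows that $\scal_{g_\susp} \geq (n+1)n$ holds distributionally away from the two vertices; a pointwise calculation using the warped structure together with the a.e.\ area non-increasing property of $f$ gives that $\susp f$ is a.e.\ area non-increasing, with $\deg(\susp f) = \deg(f) \neq 0$; and the Sobolev hypothesis $p > n+1$ is chosen precisely so that $g_\susp$ lies in $W^{1,p}$ with $p$ larger than $\dim \susp M = n+1$. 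The remaining obstacle is that $\susp M$ has cone-type singularities at the two vertices, so the smooth even-dimensional rigidity result does not apply directly.

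The argument therefore reduces to a Lipschitz-rigidity theorem for even-dimensional spin manifolds with cone-type singularities, which is the principal content of the body of the paper and is obtained via the abstract cone operator formalism. Granting it, $\susp f$ is a metric isometry from $\susp M$ to $\SSS^{n+1}$. Because $\susp f$ respects the warped-product decomposition by construction, the equatorial slice $\{t=\pi/2\} \subset \susp M$, canonically identified with $M$, is sent isometrically onto the equator of $\SSS^{n+1}$, canonically $\SSS^n$, exhibiting $f$ itself as the desired metric isometry.

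The hard part is clearly the cone-singular Llarull theorem invoked above. One must set up an appropriate twisted Dirac operator on $\susp M$, establish a Schr\"odinger-Lichnerowicz-type Weitzenb\"ock identity retaining distributional positivity under the limited regularity of $g$, and arrange self-adjoint realisations with suitable behaviour at the cone tips so that the resulting index pairs non-trivially with $\deg(\susp f)$. Ruling out harmonic spinors via the positivity of the curvature term then yields infinitesimal rigidity across the singular locus, and analysis of the equality case propagates this to the global metric isometry statement.
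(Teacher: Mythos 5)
Your high-level strategy matches the paper's: invoke Proposition \ref{P:infinitesimal_isometry} to reduce to the $1$-Lipschitz case, then pass to the spherical suspension $\susp M$ with its warped-product $W^{1,p}$ metric, set up the twisted Dirac operator on $\susp M$ as an abstract cone operator, and exploit index theory and a Schr\"odinger--Lichnerowicz identity. You also correctly identify why $p>n+1$ is needed and why the suspension has $\scal \geq (n+1)n$. So the outline of the reduction from odd to even dimension is right.

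The gap is in your final reduction step. You want to invoke ``a Lipschitz-rigidity theorem for even-dimensional spin manifolds with cone-type singularities,'' conclude that $\susp f$ is a metric isometry onto $\SSS^{n+1}$ minus two points, and then restrict to the equatorial slice. But the cone-singular Llarull theorem proved in the paper (Theorem \ref{LlarullGeneralizedCone}) is explicitly restricted to \emph{smooth} Riemannian metrics $G$; Definitions \ref{generalizedConeMetric} and \ref{MfdConeLikeSing} both require smoothness (or at least $C^2$) of the link metrics, which fails for $\susp g = dr^2 + \sin^2(r)\,g$ when $g$ is only $W^{1,p}$. There is no standalone low-regularity cone-singular Llarull theorem in the paper that you can grant and then specialize; such a statement would be essentially equivalent to Theorem \ref{theo:main_odd} itself, so the reduction is circular. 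Moreover, even granting it, the restriction-to-equator argument requires more care than you indicate: the path metric of the slice $\{\pi/2\}\times M$ inside $(\susp M, d_{\susp g})$ need not a priori coincide with $d_g$ (geodesics may dip towards the poles where the warping factor shrinks), and one must argue via length-preservation of paths to get an actual isometry of $(M,d_g)$.

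The paper's actual conclusion is more direct and avoids both of these issues. After producing a non-trivial $L^2$-harmonic spinor $\psi\in\dom(\bar\Dirac_E)$ from the index formula of Proposition \ref{prop:IndexFormula}, and after using Proposition \ref{P:infinitesimal_isometry} to know that $d\susp f$ is an isometry a.e., the integral Schr\"odinger--Lichnerowicz formula of Theorem \ref{thm:SL_cone} forces the pointwise equality case of the curvature estimate a.e. That equality case, combined with the fact that $n+1$ is even, shows (Proposition \ref{prop:differential_orientpres}) that $d_x\susp f$ is \emph{orientation preserving} wherever defined, hence so is $d_x f$. One then finishes by applying \cite{CHS}*{Theorem 2.4} to $f$ directly (an a.e.\ orientation-preserving infinitesimal isometry between a connected oriented manifold with continuous metric and a simply connected smooth target is a metric isometry), without ever needing to assert that $\susp f$ itself is a global metric isometry first.
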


The reasoning for this result can be outlined as follows. 
By Proposition \ref{P:infinitesimal_isometry}, we can assume that $f$ is $1$-Lipschitz.
To go from odd to even dimensions, we  perform a spherical suspension of the map $f \colon M \to \SSS^n$.
That is, we consider the open Riemannian manifold $\susp M := M \times (0,\pi)$ with the Riemannian metric $dr^2 + \sin(r)^2 g$, where $r$ is the canonical coordinate on $(0, \pi)$.
This construction is functorial for $1$-Lipschitz maps, and we have $\susp \SSS^n = \SSS^{n+1}\setminus\{N,S\}$, the unit $(n+1)$-sphere with north and south poles removed.
While the metric completion of $\susp \SSS^n$ is an even-dimensional smooth manifold, this is not the case for $\susp M$ in general.
Such objects are known as {\em manifolds with conical singularities} in the literature.

This leads us to consider versions of the theorems \ref{Llarull_Classic} and \ref{thm_CHS} for manifolds $M$ with conical singularities, which is the subject of this article.
In the smooth case, there is a well-established index theory for manifolds with conical singularities, going back to the work of Cheeger \cite{Cheeger}, Br\"uning-Seeley \cite{BS88} and Br\"uning \cite{Bruening}, among others.
In our situation, however, we face two fundamental challenges:

\begin{enumerate}[label=\myicon]
\item The cross-sectional twisted Dirac operator acts on Lipschitz sections, so we have to combine this type of singularity with the cone point singularity produced when passing to the spherical suspension of $M$.
\item The metric estimates are very delicate and do not allow for any deformations.
In particular, we cannot assume that our twist bundles are trivialized with a trivial connection near the cone tips, but we have to allow curvature of the twist bundle all the way along the cone region.
\end{enumerate}

In Section \ref{abstr_cone} of this paper we develop an abstract functional analysis setup that allows us to treat these situations in a unified way.
The main objects are so-called {\em abstract cone operators} which we introduce in an axiomatic setting in Definition \ref{abstractcone}.
These are closable unbounded linear operators between Hilbert spaces, modelling conical suspensions of essentially self-adjoint unbounded operators between Hilbert spaces.
In the case of the spherical suspension of a manifold $(M,g)$ as in Theorem \ref{theo:main_odd}, this essentially self-adjoint operator is the twisted Dirac operator studied in \cite{CHS}.
Theorem \ref{thm:RegularSingular} discusses the main properties of abstract cone operators, including their Fredholm property and the determination of their domain.
In spirit, our exposition can be seen as a functional-analytic distillation of some ideas developed in \cite{Bruening}.
We  expect that this may also be helpful in other situations.

In Section \ref{spherical_suspension}, we apply the theory developed in Section \ref{abstr_cone} to spherical suspensions of low-regularity metrics, thereby proving Theorem \ref{theo:main_odd}.
The condition $p > n+1$ in Theorem  \ref{theo:main_odd} is slightly stronger than the expected condition $p > n$.
This is because the dimension increases by one under the spherical suspension.
We are confident that, with a more refined analysis, our approach to Theorem  \ref{theo:main_odd} will also cover the more general case of $p > n$.
Note that according to Corollary \ref{optimal_result}, Theorem \ref{theo:main_odd} does hold  for all $p > n$.

An immediate corollary of our Theorem \ref{theo:main_odd}  is that the rigidity of disks \cite{CHS}*{Theorem B}  generalises to the odd-dimensional case.
We recall here that the doubling procedure in the proof of \cite{CHS}*{Theorem B} produces Lipschitz Riemannian metrics, which are of Sobolev regularity $W^{1,\infty}$, so that our Theorem \ref{theo:main_odd} applies.

In  Section \ref{Lip_Cone}, we present a second, novel application of the material developed in Section \ref{abstr_cone}.
Specifically,  we prove a Llarull comparison theorem for even-dimensional Riemannian manifolds $(N,G)$ with cone-like singularities and for Lipschitz comparison maps $f$ to spheres.
For a positive mass theorem for asymptotically flat spin manifolds with isolated conical singularities, see \cite{DaiSunWang}*{Theorem 1.1}.
In order to keep the exposition transparent, we restrict our attention to smooth Riemannian metrics $G$.

Near the singular points, the metrics are assumed to be of a particular form as described in the next definition.

\begin{definition}\label{generalizedConeMetric} Let $M$ be a closed smooth manifold.
Let $0 < \vartheta \leq 1$ and $g_r\in C^{\infty}((0,\vartheta), C^\infty(M, T^*M\otimes T^*M))$ for $r \in (0, \thet)$ be a smooth family of Riemannian metrics on $M$.
Furthermore, let
\begin{equation*}
	g_0 \in C^2(M, T^*M\otimes T^*M)
\end{equation*}
be a $C^2$-Riemannian metric.

Assume that 
   \begin{equation*}
	 g_r \overset{r\to 0}{\longrightarrow} g_0\qquad\text{in }C^2(M, T^*M\otimes T^*M) 
  \end{equation*}
and that, for $x \in M$, we have
 \[
      \lim_{r \to 0} \abs{r \, \partial_r g_{r} }_{(T_x M, g_r)} = 0 , \qquad  \lim_{r \to 0} \abs{r^2 \, \partial_r^2 g_{r} }_{(T_xM, g_r)}  = 0. 
\]
Then the metric $\conic g_r:= d r^2 + r^2 g_r$ on $(0,\vartheta)\times M$ is called a \emph{generalized cone metric}.
\end{definition}

\begin{definition}\label{MfdConeLikeSing}
A \emph{compact Riemannian manifold with cone-like singularities} consists of the following data. 
\begin{enumerate}[label=\myicon]
 \item a smooth Riemannian manifold $(N, G)$, 
 \item a compact smooth submanifold $\mathcal{K} \subset N$, possibly with boundary,
 \item some $0 < \vartheta \leq 1$ and a smooth Riemannian isometry
\[
      \nu\colon (N\setminus \mathcal{K} , G\vert_{N\setminus \mathcal{K}}) \to ((0,\vartheta) \times \partial \mathcal{K}, {\conic g_r})
 \]
 where ${\conic g_r}$ is a generalized cone metric as in Definition \ref{generalizedConeMetric}.
\end{enumerate}
\end{definition}

Spherical suspensions of closed smooth Riemannian manifolds give examples of manifolds with cone-like singularities.

Let $n \geq 1$, let $(N,G)$ be a compact connected Riemannian manifold of dimension $n+1$ with cone-like singularities.
Let $\bar N$ be the metric completion of $N$ with respect to the path metric induced by $G$.
Note that $\bar N$ is compact.
There are finitely many points $x_1, \ldots, x_k \in \bar N$, called the \emph{cone tips} of $\bar N$, such that 
\begin{equation*}
	\bar N\setminus N = \{x_1, \ldots, x_k\}.
\end{equation*}
The cone tips are in one-to-one correspondence with the connected  components  of $\partial \mathcal{K}$.
The connected component of $\partial \mathcal{K}$ corresponing to the cone tip $x_i$ is called  the {\em link manifold} of $x_i$.
The space $\bar N$ is a topological manifold,  if and only if the link manifold of each $x_i$ is homeomorphic to the $n$-sphere.

Let $f \colon (N, d_G) \to \SSS^{n+1}$ be a Lipschitz continuous map. 
Let  $\Lambda > 0$ be a Lipschitz constant for $f$.
Since $\SSS^{n+1}$ is a complete metric space, the map $f$ extends uniquely to a $\Lambda$-Lipschitz map $\bar f\colon \bar N \to \SSS^{n+1}$.
Choose $0 < \eps < \tfrac{\pi}{2}$ small enough such that the open $\eps$-balls around $\bar f(x_i)$ and $\bar f(x_j)$, $1 \leq i < j \leq k$, are disjoint if $\bar f(x_i) \neq \bar f(x_j)$.
Let $V \subset \SSS^{n+1}$ be the union of these open $\eps$-balls. 
Possibly after passing to a larger $\mathcal{K} \subset N$, we can assume that $f(N \setminus \rm{ int}(\mathcal{K})) \subset V$, in particular, $f( \partial \mathcal{K} ) \subset V$.
Then $f$ induces a map $H_{n+1}(\mathcal{K}, \partial \mathcal{K}; \mathbb{Z}) \to H_{n+1}(\SSS^{n+1}, V; \mathbb{Z}) \cong \mathbb{Z}$. 
If $N$ is oriented, the image of the orientation class of $(\mathcal{K}, \partial \mathcal{K})$ defines the  homological mapping degree of $f$, denoted $\deg (f) \in \mathbb{Z}$.

\begin{theorem}\label{LlarullGeneralizedCone}
Let $n \geq 3$ be odd and let $(N^{n+1},G)$ be a compact connected oriented Riemannian manifold with cone-like singularities.
Assume that $N$ admits a spin structure and that  $\scal_G \geq (n+1)n$.
Let  $f\colon N \to \SSS^{n+1}$ be a Lipschitz continuous map such that $f$ is area non-increasing almost everywhere and $f$ has non-zero degree.
        
Let $x_1, \ldots, x_k$ be the cone tips of $\bar N$ and let $\bar f \colon \bar N \to \SSS^{n+1}$ be the metric extension of $f$.

Then  $f$ is a smooth Riemannian isometry of $(N,G)$ onto $\SSS^{n+1} \setminus \{\bar f(x_1), \ldots, \bar f (x_k)\}$.
In particular,  $N$ is diffeomorphic to a $k$-punctured sphere.
\end{theorem}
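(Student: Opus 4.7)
The plan is to carry out the spinor Llarull rigidity argument in the even dimension $n+1=\dim N$, using the abstract cone operator framework of Section \ref{abstr_cone} to handle the twisted Dirac operator near the cone tips. By a scaling / monotonicity argument in the spirit of Proposition \ref{P:infinitesimal_isometry}, I first reduce to the case $\Lambda=1$, so that $f$ is $1$-Lipschitz and still area non-increasing a.e. Since $N$ is spin and $\dim N$ is even, the spinor bundle $\spinor N$ splits chirally; I introduce the Lipschitz-regular twist bundle $E := f^{*}\spinor \SSS^{n+1}$ and form the associated twisted Dirac operator $D^E$ on the regular part of $N$, using the low-regularity twisted Dirac setup of \cite{CHS}.

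The main analytic step is the analysis of $D^E$ near each cone tip. In the coordinates of Definition \ref{generalizedConeMetric} the metric reads $\conic g_r = dr^2 + r^2 g_r$ with $|r\,\partial_r g_r|_{g_r}\to 0$ and $|r^2\partial_r^2 g_r|_{g_r}\to 0$, and a direct calculation casts $D^E$ in the form $\partial_r + \tfrac{1}{r}A(r) + (\text{l.o.t.})$, where $A(r)$ is a smooth family of cross-sectional twisted Dirac-type operators converging, as $r\to 0$, to an essentially self-adjoint operator on the link $\partial \mathcal{K}$. This is precisely the setting axiomatised in Definition \ref{abstractcone}, and Theorem \ref{thm:RegularSingular} then guarantees that $D^E$ admits a closed Fredholm realization on $L^2(N;\spinor N \otimes E)$ with an explicitly described domain. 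A local Atiyah--Singer computation on the bulk, together with the absence of a singular contribution ensured by the domain description of Theorem \ref{thm:RegularSingular}, identifies the chirally split index of $D^E$ with a non-zero universal constant times $\deg(f)$. Since $\deg(f)\neq 0$, there exists a non-trivial $\sigma \in \ker D^E$.

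Standard Llarull analysis then takes over. The low-regularity Schrödinger--Lichnerowicz identity of \cite{CHS},
\begin{equation*}
(D^E)^2 = \nabla^{*}\nabla + \tfrac{1}{4}\scal_G + \mathcal{R}^E,
\end{equation*}
combined with Llarull's algebraic estimate for $\mathcal{R}^E$ in terms of $|\Lambda^2 df|$ and the hypothesis $\scal_G \geq n(n+1)$, yields $\tfrac{1}{4}\scal_G + \mathcal{R}^E \geq 0$ a.e. Testing against $\sigma$ and integrating by parts, which is permissible in the domain provided by Theorem \ref{thm:RegularSingular}, forces $\nabla \sigma = 0$ and equality in the Llarull pointwise bound on the set $\{\sigma\neq 0\}$; parallel transport spreads non-vanishing across $N$, so $\scal_G = n(n+1)$ a.e.\ and $d_x f$ is a linear isometry at every differentiability point of $f$. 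Since $n+1$ is even, the chirality decomposition of $\sigma$ permits the argument of \cite{CHS} showing that $d_x f$ is orientation preserving a.e., and Reshetnyak's theory of quasi-regular maps then upgrades $f$ to a smooth local isometry of $N$ onto an open subset of $\SSS^{n+1}$. Connectedness together with $\deg(f)\neq 0$ identifies the image with $\SSS^{n+1}\setminus \{\bar f(x_1),\ldots,\bar f(x_k)\}$, yielding the claimed smooth Riemannian isometry.

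The principal obstacle I expect is the Fredholm / index analysis at the cone tips: one has to verify that the cross-sectional operators $A(r)$ genuinely fit Definition \ref{abstractcone} for the twisted Dirac operator of a \emph{generalised} cone metric with Lipschitz twist bundle, and that the boundary conditions coming out of Theorem \ref{thm:RegularSingular} do not subtract from the topological lower bound $|\deg(f)|$ on the index. The Llarull algebraic estimate in the low regularity of $E$ and the propagation of non-vanishing of the parallel spinor $\sigma$ across the singular region will also require approximation arguments modelled on those developed in \cite{CHS}.
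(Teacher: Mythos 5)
Your overall architecture coincides with the paper's: realize the twisted Dirac operator $\Dirac_E$ with $E=f^*\spinor\SSS^{n+1}$ as an abstract cone operator near the tips, get Fredholmness, compute the index as a nonzero multiple of $\deg(f)$, feed the resulting harmonic spinor into the integral Schr\"odinger--Lichnerowicz formula together with Llarull's algebraic estimate, and upgrade the a.e.\ orientation-preserving isometry $df$ via the quasiregular-map rigidity of \cite{CHS} and Myers--Steenrod. However, two of the key steps are asserted rather than proved, and in one case the asserted mechanism is wrong. First, to place $\Dirac_E$ in the framework of Definition \ref{abstractcone} you must verify the spectral gap \ref{AnSpGap} for the link operator, and this is exactly where the curvature hypothesis enters at the tips: by Proposition \ref{LinkScalCurvBehaviour}, $\lim_{r\to 0} r^2\scal_{\conic g_r}=\scal_{g_0}-n(n-1)$, so $\scal_G\ge 0$ forces $\scal_{g_0}\ge n(n-1)>1$, and Lichnerowicz on the closed link then excludes spectrum of the limiting link Dirac operator in $[-\tfrac12,\tfrac12]$. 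Saying that the cross-sectional family converges to an essentially self-adjoint operator is not enough; without the gap (and without the smallness conditions \ref{AnSetup6}--\ref{boundedperturbtwo}, checked via the radial trivialization of $E_0$ near the points $\bar f(x_i)$ and the $\Lambda$-Lipschitz bound) Theorem \ref{thm:RegularSingular} does not apply. Also, your preliminary ``reduction to $\Lambda=1$'' in the spirit of Proposition \ref{P:infinitesimal_isometry} is both unjustified --- that proposition is an $S^1$-stabilization argument for closed odd-dimensional manifolds and does not transfer to the singular even-dimensional $N$ --- and unnecessary, since the curvature estimate of \cite{CHS}*{Proposition 6.1} only requires the area non-increasing hypothesis.

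Second, the index computation. Your claim that ``the absence of a singular contribution [is] ensured by the domain description of Theorem \ref{thm:RegularSingular}'' is not correct: for Dirac operators on cones the index formula generically contains $\eta$-invariant contributions of the link, and no description of the closed extension's domain removes them. The paper's Proposition \ref{prop:IndexFormulaGenCone} instead deforms the metric to one that is exactly conical (with smooth link metric) near the tips and the connection on $E$ to one that is flat and trivialized there, keeps the index constant along the deformation via Proposition \ref{lem:deformationLemma}, and then applies Chou's index theorem \cite{Chou}; the crucial point is that, because $E_0=E_0^+\oplus E_0^-$, the operator splits as $\Dirac^{++}\oplus\Dirac^{--}$ and the two $\eta$-terms cancel, leaving $(-1)^{\frac{n+1}{2}}\deg(f)\,\chi(\SSS^{n+1})=\pm 2\deg(f)$. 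Without this (or an equivalent) argument the harmonic spinor, and hence everything downstream, is not available. Finally, the endgame needs more than connectedness and $\deg(f)\neq 0$: one must show that $\Sigma:=f^{-1}\{\bar f(x_1),\dots,\bar f(x_k)\}$ is empty and that $f$ restricted to $N\setminus\Sigma$ is proper onto the simply connected punctured sphere (here $n\ge 3$ is used) before the metric-isometry theorem of \cite{CHS} and Myers--Steenrod yield the smooth Riemannian isometry onto $\SSS^{n+1}\setminus\{\bar f(x_1),\dots,\bar f(x_k)\}$.
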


This theorem holds without further differential-topological assumptions on the link manifolds.
If the link manifolds are diffeomorphic to standard spheres and $f$ is smooth, then Theorem \ref{LlarullGeneralizedCone} follows from \cite{ChuLeeZhu}*{Theorem 1.4}.

\textit{Acknowledgments:} 
We are grateful to Claude LeBrun for useful remarks.
The authors gratefully acknowledge support by the SPP 2026 ``Geometry at Infinity'' funded by the Deutsche Forschungsgemeinschaft (DFG, German Research Foundation).
S.~C.~was supported by a grant from the Simons Foundation (MPS-TSM-00007902, SC).
B.H.~and T.~S.~were supported by the Hausdorff Research Institute for Mathematics funded by the DFG under Germany's Excellence Strategy – EXC-2047/1 – 390685813.

\section{Abstract cone operators} \label{abstr_cone}

\subsection{Definition, Fredholm property} \label{sec:BrueningSetting}

If $V$ and $W$ are normed vector spaces, we denote by $\boundedops(V,W)$ the space of bounded linear maps $V \to W$ equipped with the operator norm.
For $V = W$ we write  $\boundedops(V)$ instead of $\boundedops(V,V)$.
In the following we will work with complex Hilbert spaces for the sake of clarity.
The case of real Hilbert spaces is analogous.

Let us assume that we are given the following data.
\begin{enumerate}[label=\textup{(\Roman*)}]
  \item \label{uno} separable Hilbert spaces $\EE$ and $\FF$ and a closable, densely defined  linear operator 
 \[  
    \Brue\colon \EE \supset \dom(\Brue) \to \FF ;
\] 
   \item \label{due} orthogonal decompositions 
\[
     \EE = \EE_{\bulk}\oplus \EE_{\cone}, \quad  \FF = \FF_{\bulk}\oplus \FF_{\cone} ;
\]
  \item  \label{tre} a separable Hilbert space  $\LL$, a real number $0 <  \thet \leq 1$, and  Hilbert space isometries
\[
     \Phi_{\EE} \colon \EE_{\cone}  \to L^2\left((0,\thet),\LL \right) , \quad \Phi_{\FF} \colon \FF_{\cone}   \to  L^2\left((0,\thet),\LL \right) ;
  \]
    \item \label{quattro}  an essentially self-adjoint operator, called the \emph{link operator} of $\Brue$,
   \[
      S_0  \colon \LL \supset \dom(S_0)  \to \LL ;
  \]
  \item \label{cinque}  an essentially bounded measurable map,  called the {\em perturbation} of $S_0$,
  \[
    S_1 \colon (0,\thet)\to\mathcal L(\dom(S_0),\LL),
   \]
   where $\dom(S_0)$ is equipped with the (possibly incomplete) graph norm of $S_0$.
 \end{enumerate}

It is not required, and in applications usually not true, that $\dom(\Brue) \subset \EE$ splits as a direct sum along the decomposition $\EE = \EE_{\cone} \oplus \EE_{\bulk}$.
 
From now on, we identify $\EE_{\cone}$ with $L^2((0,\thet), \LL)$ via $\Phi_{\EE}$, and $\FF_{\cone}$ with $L^2((0,\thet), \LL)$  via $\Phi_{\FF}$.
We obtain pointwise multiplication operations on $\EE$  and on $\FF$ as follows:
For  $\varphi\in C^\infty_\cc([0,\thet),\R)$ and  $u = (u_b,u_c)\in \EE_{\bulk}\oplus \EE_{\cone} = \EE_{\bulk} \oplus L^2((0,\thet), \LL)$, we set
  \begin{align*}
    \varphi\cdot u & := (0,  \varphi \cdot u_c) \in \EE , \\
    (1-\varphi)\cdot u & := (u_b, (1-\varphi) \cdot u_c) \in \EE . 
  \end{align*}
The construction for $\FF$ is analogous.

Let  $\bar S_0 \colon \LL \supset \dom(\bar S_0) \to \LL$ denote the (unique) self-adjoint extension of $S_0$ and let  $\sigma(\bar S_0) \subset \R$ be its spectrum.
Moreover, for $r \in (0,\thet)$, let $\bar S_1(r) \colon \dom(\bar S_0) \to \LL$ denote the (unique) bounded extension of $S_1(r)$.
We have  $\| \bar S_1(r)\|_{\boundedops(\dom(\bar S_0), \LL)} = \|S_1(r)\|_{\boundedops(\dom(S_0), \LL)}$ so that the family $\bar S_1 \colon (0,\thet) \to \boundedops ( \dom(\bar S_0), \LL)$ is essentially bounded.
 
 Let  
\[
      \bar{\Brue} \colon \EE \supset \dom(\bar{\Brue}) \to \FF
\] 
be the closure of $\Brue$.
We equip $\dom(\Brue)$ and $\dom(\bar \Brue)$  with the graph norm of $\bar \Brue$, and $\dom(\bar S_0)$ with the graph norm of $\bar S_0$.

For $k \in \NN_0 \cup \{\infty\}$, let $C_\cc^k((0,\thet),\dom(S_0))$ denote the space of compactly supported $C^k$-functions on $(0,\thet)$ with values in the normed vector space $\dom(S_0)$.
Furthermore, for $k \in \NN_0 \cup \{ \infty \}$,  let $C^{k}_{\cc,0}([0,\thet), \R)$ denote the space of compactly supported real valued $C^k$-functions on $[0,\thet)$ that are equal to $1$ near $0$.

 Now consider the following conditions.
 
     \begin{enumerate}[start=0,label=\textup{(AC\arabic*)}]
 \item (Locality of domain)\label{AnSetup3} Let $\psi \in C^\infty_{\cc,0}([0,\thet),\R)$.
Then we have $\psi \cdot \dom (\Brue) \subset \dom (\Brue)$.
    \item (Domain over cone part) \label{AnSetup4} 
  We have
  \[
        C_\cc^\infty((0,\thet),\dom(S_0)) \subset \dom( \Brue).
       \]
Furthermore, for all $\psi \in C^{\infty}_{\cc,0} ([0,\thet), \R)$, the inclusion
 \[
    \psi \cdot C_\cc^\infty((0,\thet),\dom(S_0)) \subset \psi \cdot \dom(\Brue) 
 \]
 is dense with respect to the graph norm of $\dom(\Brue)$.
        \item (Locality of operator) \label{AnSetup2} $\Brue$ is  local on the cone part,
    \begin{equation*}
      \Brue( C_\cc^\infty((0,\thet),\dom(S_0))) \subset \FF_{\cone}.
    \end{equation*}
  Moreover, $\Brue$ is local away from the cone part: If $\varphi , \psi \in C_{\cc,0}^{\infty}([0,\thet), \RR)$ with $\psi|_{\supp \varphi} \equiv 1$, then 
    \[
          \varphi \Brue (1-\psi) = 0.
     \]
      \item (Product structure over cone part) \label{AnSetup5} 
      For all $u\in  C^\infty_\cc((0,\thet),\dom(S_0))$ and all $r \in (0,\thet)$, we have
    \begin{equation*} 
      \Brue u(r) =\partial_ru(r)+\tfrac{1}{r}\left(S_0+S_1(r) \right)u(r) .
    \end{equation*}
     \item (Spectral gap) \label{AnSpGap} We have 
     \[
          \sigma(\bar S_0) \cap [-\tfrac{1}{2}, \tfrac{1}{2}] = \emptyset.
      \]
      In particular, by functional calculus, $\bar S_0$ has a bounded inverse $\bar S_0^{-1} \colon \LL \to \dom(\bar S_0)$.
     \item (Small perturbation I) \label{AnSetup6}  For all $r \in (0,\thet)$, the bounded operator  
  \[
        \bar S_0^{-1}  \bar S_1(r)\colon \dom(\bar S_0) \to \dom(\bar S_0) \to \LL
  \]
  extends to a bounded operator $\LL \to \LL$ and  $\bar S_0^{-1} \bar S_1 \colon (0,\thet) \to \boundedops(\LL)$ is essentially bounded.
     \item (Small perturbation II) \label{boundedperturbtwo}
     We have 
\begin{align*}
 \norm{  \bar S_1\bar  S_0^{-1}}_{L^{\infty}((0, \vartheta), \boundedops(\LL))} & \leq  \inf_{s \in \sigma(\bar S_0)} \left|  \frac{ 2s +1}{4s}\right| , \\
    \norm{  \bar  S_0^{-1} \bar S_1}_{L^{\infty}((0, \vartheta), \boundedops(\LL))} & \leq  \inf_{s \in \sigma(\bar S_0)} \left|  \frac{ 2s - 1}{4s}\right| . 
\end{align*}
  \end{enumerate}

\begin{definition} \label{abstractcone} An {\em abstract cone operator} consists of data \ref{uno} - \ref{cinque} subject to conditions \ref{AnSetup3} - \ref{AnSetup6}.
\end{definition}

Let $\Brue \colon \EE \supset \dom(\Brue) \to \FF$ be an abstract cone operator.
Using  algebraic tensor products of complex vector spaces, we have dense inclusions
\begin{align*}
   C_{\cc}^{0}((0,\thet),\CC) \otimes  \dom(S_0)   &  \subset L^2((0,\thet),\CC) \otimes \LL   \subset  L^2((0,\thet), \LL), \\
   C_{\cc}^{\infty}((0,\thet),\CC) \otimes  \dom(S_0)   &  \subset C_{\cc}^\infty((0,\thet),\dom(S_0))   \subset  L^2((0,\thet), \LL) .
 \end{align*} 
This elementary fact is used at several places in our argument.

By \ref{AnSetup4} and  \ref{AnSetup2}, the restriction of $\Brue$ yields a densely defined closable operator 
 \[
     \Brue_{\cone} \colon L^2((0,\thet), \LL)\supset C_{\cc}^\infty((0,\thet),\dom(S_0))  \to L^2((0,\thet), \LL)
 \]
given by the formula \ref{AnSetup5}.
We consider its closure
\[
    \bar \Brue_{\cone} \colon L^2((0,\thet), \LL)\supset \dom(\bar \Brue_{\cone})  \to L^2((0,\thet), \LL) .
\]

\begin{lemma} \label{maptocone} Let $\varphi \in C_{\cc}^{\infty}([0,\thet), \RR)$.
Then multiplication with $\varphi$ induces a bounded map $\dom(\bar \Brue) \to \dom(\bar \Brue_{\cone})$ and, for all $u \in \dom(\bar \Brue)$, the Leibniz formula holds:
\[
     \bar \Brue_{\cone}(\varphi \cdot u) = \varphi \cdot \bar \Brue u + \varphi' \cdot u . 
 \]
\end{lemma}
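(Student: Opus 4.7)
The plan is a three-step density argument: establish the Leibniz formula first on smooth test sections via the pointwise formula \ref{AnSetup5}, then extend it to $\dom(\Brue)$ using the locality axioms \ref{AnSetup3}, \ref{AnSetup2} and the density axiom \ref{AnSetup4}, and finally pass to $\dom(\bar\Brue)$ by closedness. The asserted boundedness will fall out of the resulting formula with a constant depending only on $\|\varphi\|_{\infty}$ and $\|\varphi'\|_{\infty}$.

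For Step 1 I take $u \in C_{\cc}^{\infty}((0,\thet), \dom(S_0))$ and note that $\varphi u$ still lies in this space. Since $\tfrac{1}{r}(S_0 + S_1(r))$ is $\CC$-linear in the fiber, it commutes with multiplication by the scalar $\varphi(r)$, and the ordinary product rule for $\partial_r$ applied to \ref{AnSetup5} yields $\Brue_{\cone}(\varphi u) = \varphi' u + \varphi \Brue_{\cone} u$ directly in $L^2((0,\thet), \LL) = \FF_{\cone}$.

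The main obstacle is the extension to $u \in \dom(\Brue)$, because $\varphi \in C_{\cc}^{\infty}([0,\thet),\R)$ need not vanish near $0$, so neither \ref{AnSetup3} nor \ref{AnSetup2} applies to $\varphi$ itself. My plan is to insert a chain of auxiliary cutoffs $\tilde\varphi, \psi \in C_{\cc,0}^{\infty}([0,\thet),\R)$ with $\tilde\varphi \equiv 1$ on $\supp\varphi$ and $\psi \equiv 1$ on $\supp\tilde\varphi$. Applying \ref{AnSetup2} to the pair $(\tilde\varphi, \psi)$ gives $\tilde\varphi \Brue((1-\psi) u) = 0$, and because $\varphi = \varphi\tilde\varphi$ on the cone factor, premultiplication by $\varphi$ collapses this to $\varphi \Brue u = \varphi \Brue(\psi u)$. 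I then use \ref{AnSetup4} to pick $v_n \in C_{\cc}^{\infty}((0,\thet), \dom(S_0))$ such that $w_n := \psi v_n \to \psi u$ in the graph norm; observing that $w_n$ is itself in $C_{\cc}^{\infty}((0,\thet), \dom(S_0))$, Step 1 gives $\Brue_{\cone}(\varphi w_n) = \varphi' w_n + \varphi \Brue_{\cone} w_n$. Passing to the limit, bounded multiplication by $\varphi$ and $\varphi'$ (together with $\psi \equiv 1$ on $\supp\varphi'$) and the fact that $\Brue_{\cone} w_n \in \FF_{\cone}$ converges to $\Brue(\psi u) \in \FF_{\cone}$ identify $\varphi u \in \dom(\bar\Brue_{\cone})$ with $\bar\Brue_{\cone}(\varphi u) = \varphi' u + \varphi \Brue u$.

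For Step 3, given $u \in \dom(\bar\Brue)$, I approximate by $u_k \in \dom(\Brue)$ in the graph norm of $\bar\Brue$ and apply the formula from Step 2 to each $u_k$. Since $\varphi u_k \to \varphi u$ in $\EE_{\cone}$ and $\varphi' u_k + \varphi \Brue u_k \to \varphi' u + \varphi \bar\Brue u$ in $\FF_{\cone}$, closedness of $\bar\Brue_{\cone}$ delivers the Leibniz formula for $u$, and the estimate $\|\bar\Brue_{\cone}(\varphi u)\|_{\FF_{\cone}} \leq (\|\varphi\|_{\infty} + \|\varphi'\|_{\infty})\|u\|_{\dom(\bar\Brue)}$ reads off directly, establishing boundedness.
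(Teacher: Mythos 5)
Your proof is correct and follows essentially the same density argument as the paper: establish the product rule on $C_\cc^\infty((0,\thet),\dom(S_0))$ via \ref{AnSetup5}, propagate to $\dom(\Brue)$ using \ref{AnSetup4} and \ref{AnSetup2}, then pass to $\dom(\bar\Brue)$ by closedness. One small point where your argument is actually a bit more careful than the paper's: since $\varphi\in C_\cc^\infty([0,\thet),\RR)$ need not equal $1$ near $0$, it does not itself lie in $C_{\cc,0}^\infty$, so \ref{AnSetup2} cannot be invoked for the pair $(\varphi,\psi)$ directly; your insertion of the intermediate $\tilde\varphi\in C_{\cc,0}^\infty$ with $\tilde\varphi\equiv1$ on $\supp\varphi$ and $\psi\equiv1$ on $\supp\tilde\varphi$, together with $\varphi=\varphi\tilde\varphi$ as multiplication operators, cleanly justifies the step $\varphi\Brue((1-\psi)u)=0$ that the paper states somewhat tersely.
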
 

\begin{proof} 
Let  $u \in \dom(\bar \Brue)$ and let $u_n \in \dom(\Brue)$ be a sequence approximating $u$ in $\dom( \bar \Brue)$.
 Then $\varphi \cdot u_n \stackrel{n \to \infty}{\longrightarrow} \varphi \cdot u$ in $\EE$.
Let $\psi \in C_{\cc,0}^{\infty}([0,\thet), \RR)$ such that $\psi|_{\supp \varphi} \equiv 1$.
By  \ref{AnSetup4}, for each $n$, we find a sequence $\tilde u_{n,k} \in C_{\cc}^{\infty}((0,\thet), \dom(S_0))$, $k \in \NN$, such that $\tilde u_{n,k} \stackrel{k \to \infty}{\longrightarrow} \psi u_n$  in   $\dom(\Brue)$.
We have $\varphi \cdot \tilde u_{n,k} \stackrel{k \to \infty}{\longrightarrow} \varphi \cdot \psi u_n$ in $L^2((0,\thet), \LL)$.
Furthermore, for each $k$, \ref{AnSetup5} implies
\[
    \Brue(\varphi \cdot \tilde u_{n,k}) = \varphi \cdot \Brue(\tilde u_{n,k}) + \varphi' \cdot \tilde u_{n,k}.
\]
For $k \to \infty$, the right hand side converges to $\varphi \cdot \Brue(\psi u_n) +  \varphi' \cdot \psi u_n$ in $L^2((0,\thet), \LL)$.
Hence, 
\begin{equation} \label{easyleib}
        \Brue( \varphi \cdot \psi u_n) = \varphi \cdot \Brue( \psi u_n) + \varphi' \cdot (\psi u_n).
 \end{equation}

We compute
\begin{align*}
       \Brue(\varphi \cdot u_n) & = \Brue( \varphi \cdot \psi u_n) \\
                                              & \stackrel{\eqref{easyleib}}{=} \varphi \cdot \Brue( \psi u_n) + \varphi' \cdot (\psi u_n) \\
                                              & = \varphi \cdot \Brue(u_n) - \varphi \cdot \Brue((1-\psi) u_n) +  \varphi' \cdot u_n  \\
                                              & \stackrel{\ref{AnSetup2}}{=} \varphi \cdot \Brue(u_n) + \varphi' \cdot u_n.
 \end{align*}
 Hence, in $\FF$, we get
 \[
      \Brue(   \varphi \cdot u_n)  =  \varphi \cdot \Brue(u_n) + \varphi' \cdot u_n \stackrel{n \to \infty}{\longrightarrow}  \varphi \cdot \bar \Brue (u) + \varphi' \cdot u  .
 \]
This shows $\varphi \cdot u \in \dom (\bar \Brue_{\cone})$ and the Leibniz formula.
The boundedness of multiplication with $\varphi$ is an immediate consequence. 
\end{proof}

 \begin{definition} Let $\Brue \colon \EE \to \FF$ be an abstract cone operator. 
Pick $\varphi,\psi\in C^\infty_{\cc,0}([0,\vartheta);\RR)$ such that $\psi|_{\supp \varphi}  \equiv 1$.
An {\em interior parametrix} of $\bar \Brue$ for $\varphi$ and $\psi$  is a bounded operator $P \colon \FF \to \dom(\bar \Brue)$ together with compact operators $R$ on $\FF$ and  $L$ on $\dom( \bar \Brue)$ such that
    \begin{align*}
        \bar \Brue (1-\varphi) P (1-\psi) & =(1-\psi)+R, \\
        (1-\varphi) P(1-\psi)\bar \Brue&=(1-\psi)+L. 
      \end{align*}
\end{definition}

Consider the densely defined closable operators
\begin{align*}
     \partial_r & \colon L^2((0,\vartheta), \LL) \supset  C_{\cc}^{\infty}( (0,\vartheta), \dom(S_0)) \to L^2((0,\vartheta),\LL),  \\
     \tfrac{1}{r} S_0 & \colon L^2((0,\vartheta), \LL) \supset  C_{\cc}^{\infty}( (0,\vartheta), \dom(S_0)) \to L^2((0,\vartheta),\LL),
 \end{align*}
and let
\begin{align} 
   \label{cute1}   \overline{\partial_r}& \colon L^2((0,\vartheta), \LL) \supset  \dom(\overline{\partial_r} ) \to L^2((0,\vartheta),\LL), \\
    \label{cute2}  \overline{\tfrac{1}{r} S_0} & \colon L^2((0,\vartheta), \LL) \supset  \dom( \overline{\tfrac{1}{r} S_0})  \to L^2((0,\vartheta), \LL) 
 \end{align}
be their closures.

\begin{theorem}\label{thm:RegularSingular}
  Let $\Brue$ be an abstract cone operator.
  Then
  \begin{enumerate}[label=\textup{(\alph*)}]  
  \item \label{b} We have $\dom(\bar \Brue_{\cone}) = \dom(\overline{\partial_r} ) \cap \dom( \overline{\tfrac{1}{r} S_0})$. 
  \item \label{c} The graph norm on $ \dom(\bar \Brue_{\cone})$ is equivalent to the Sobolev $1$-norm
\[
     \norm{u}_{H^1_{\cone}} := \left(  \norm{u}^2_{L^2((0,\thet), \LL)} + \norm{\overline{\partial_r} u}^2_{L^2((0,\thet), \LL)}  + \norm{\overline{\tfrac{1}{r} S_0} u}^2_{L^2((0,\thet), \LL)}\right)^{\tfrac{1}{2}} .
\]
  \end{enumerate}
Furthermore, for all $\varphi\in C_{\cc,0}^\infty([0,\thet), \RR)$,  the following statements hold.
 
  \begin{enumerate}[label=\textup{(\alph*)}]  
  \setcounter{enumi}{2}
      \item \label{d}  The graph norm on $\dom(\bar \Brue)$ is equivalent to the Sobolev $1$-norm
  \[
    \norm{u}_{H^1} := \left(  \norm{u}^2_{\EE}  + \norm{\bar \Brue((1-\varphi)u)}^2_{\FF} + \norm{\overline{\partial_r}(\varphi u)}^2_{L^2((0,\thet),\LL)} + \norm{\overline{\tfrac{1}{r}  S_0} \varphi u}^2_{L^2((0,\thet),\LL)}\right)^{\tfrac{1}{2}}.
  \]
   Here, the right hand side is well defined by Lemma \ref{maptocone} and part \ref{b}.
  \item\label{e} An element  $u \in \EE$ lies  in $\dom(\overline\Brue)$ if and only if we have
  \begin{enumerate}[label=\textup{(\roman*)}] 
  \item (Interior regularity)  $(1-\varphi)u \in \dom(\bar \Brue)$.
  \item (Cone regularity) \label{reg_cone} $\varphi u \in \dom (\bar \Brue_{\cone})$. 
  \end{enumerate}
  \label{item:RegularSingular3}
  \end{enumerate}
Thus, $\dom(\bar \Brue)$ is independent of $S_1$ and  the graph norm on $\dom(\bar \Brue)$ is independent of $S_1$ up to equivalence.
  \begin{enumerate}[label=\textup{(\alph*)}]  
    \setcounter{enumi}{4}
    \item  \label{fred} Assume that, for all  $\varphi,\psi\in C^\infty_{\cc,0}([0,\vartheta);\RR)$ such that $\psi|_{\supp \varphi}  \equiv 1$, there exists an interior parametrix of $\bar \Brue$ for $\varphi$ and $\psi$. 
    Then $\bar \Brue$ is Fredholm.
    \end{enumerate}

\end{theorem}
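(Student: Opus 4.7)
My plan is to treat all five parts from a single coercivity estimate on the cone model, then deduce \ref{d}--\ref{fred} by cut-off arguments. For \ref{b} and \ref{c}, I would work on the core $C^\infty_\cc((0,\vartheta),\dom(S_0))$ and expand $\|\Brue_{\cone} u\|^2$ using formula \ref{AnSetup5}. Integration by parts in $r$, together with essential self-adjointness of $S_0$, converts the cross term into $\int_0^\vartheta\tfrac{1}{r^2}\langle u, S_0 u\rangle\,dr$ modulo $S_1$-corrections. The spectral gap \ref{AnSpGap} lets me decompose $\LL = \LL_+ \oplus \LL_-$ into the positive/negative spectral subspaces of $\bar S_0$; on $\LL_+$ the cross integral is nonnegative, while on $\LL_-$ it is absorbed into $\|\tfrac{1}{r}S_0 u\|^2$ via the Hardy-type bound $\|\tfrac{1}{r}v\|_{L^2}\le 2\|\tfrac{1}{r}S_0 v\|_{L^2}$ coming from $\|\bar S_0^{-1}\|\le 2$. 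The asymmetric constants in \ref{boundedperturbtwo} are exactly what closes this absorption once $S_1$ is reinserted, producing the two-sided estimate
\[
c\bigl(\|\partial_r u\|^2 + \|\tfrac{1}{r}S_0 u\|^2\bigr) \le \|\Brue_\cone u\|^2 + C\|u\|^2 \le C'\bigl(\|\partial_r u\|^2 + \|\tfrac{1}{r}S_0 u\|^2 + \|u\|^2\bigr)
\]
on the core. Closing in the graph norm and invoking density of $C_\cc^\infty((0,\vartheta),\CC)\otimes \dom(S_0)$ in both candidate spaces proves \ref{c}; the same estimate applied independently to $\overline{\partial_r}$ and $\overline{\tfrac{1}{r}S_0}$ then yields the equality of domains in \ref{b}.

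Parts \ref{d} and \ref{e} follow by combining the cone estimate with Lemma \ref{maptocone}. For \ref{d} and the ``only if'' direction of \ref{e}: given $u\in\dom(\bar\Brue)$ and a cutoff $\varphi$, Lemma \ref{maptocone} puts $\varphi u\in\dom(\bar\Brue_\cone)$ with Leibniz, so the $H^1_\cone$-bound from \ref{c} controls the cone seminorm, while the bulk term $(1-\varphi)u$ is handled by \ref{AnSetup3} and \ref{AnSetup2}. For the ``if'' direction of \ref{e}, given $u$ satisfying interior and cone regularity, I would approximate $(1-\varphi)u$ in the graph norm of $\bar\Brue$ by definition of closure, and approximate $\varphi u\in\dom(\bar\Brue_\cone)$ by elements of $\psi\cdot C^\infty_\cc((0,\vartheta),\dom(S_0))\subset\dom(\Brue)$ (for a slightly larger cutoff $\psi$ with $\psi|_{\supp\varphi}\equiv 1$) using the density clause in \ref{AnSetup4}. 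The sum converges to $u$ in the graph norm of $\bar\Brue$ by the bound from \ref{c}, so $u\in\dom(\bar\Brue)$. The $S_1$-independence is built into the equivalences, since \ref{AnSetup6} and \ref{boundedperturbtwo} force the $S_1$-corrections to control each other.

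For \ref{fred}, I would build a cone parametrix, patch it to the given interior parametrix via cutoffs, and verify compactness of the remainder. Using the spectral decomposition of $\bar S_0$, the model $\partial_r + \tfrac{1}{r}\bar S_0$ admits an explicit inverse: on $\LL_+$ via the Volterra formula $u(r) = \int_0^r (s/r)^{\bar S_0} f(s)\,ds$, and on $\LL_-$ via the analogous formula integrating from $r=\vartheta$ downward, with $L^2$-boundedness guaranteed by the Schur test and the spectral gap $|\bar S_0|\ge 1/2$. A Neumann series, convergent by \ref{AnSetup6}--\ref{boundedperturbtwo}, upgrades this to an inverse of $\bar\Brue_\cone$ modulo a boundary contribution at $r=\vartheta$. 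Patching with the interior parametrix via $\varphi$ and $1-\varphi$ yields a global $Q\colon\FF\to\dom(\bar\Brue)$ whose remainders $\bar\Brue Q - \Id$ and $Q\bar\Brue - \Id$ are supported in the overlap collar; compactness on the cone side reduces, via \ref{b}, to the compactness of the embedding $\{u\in\dom(\bar\Brue_\cone) : \supp u\subset[0,\vartheta'']\}\hookrightarrow L^2$ for $\vartheta''<\vartheta$, which follows by a Rellich-type argument using the improved regularity $\tfrac{1}{r}S_0 u\in L^2$. The main obstacle throughout is the coercivity in \ref{b}: the negative spectral part of $S_0$ turns the cross-term into a liability, and closing the absorption requires the Hardy constant $2$ and the two asymmetric inequalities in \ref{boundedperturbtwo} to match exactly; once this estimate is in place the remaining items are largely bookkeeping.
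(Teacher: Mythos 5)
Your coercivity route is genuinely different from the paper's, which constructs explicit one-sided inverses $\PPP_{0,s}, \PPP_{1,s}$ on each spectral mode (Volterra kernels integrating from $0$ or from $\thet$ depending on the sign of $s$, with $L^2$-bounds via Schur's test) and then builds the global parametrix $\PPP$ by a direct-integral decomposition. Unfortunately, the absorption step in your argument does not close as stated. The bound $\|\tfrac{1}{r}v\|\le 2\|\tfrac{1}{r}\bar S_0 v\|$ (which is just $\|\bar S_0^{-1}\|\le 2$, not a Hardy inequality) gives $2\Re\langle\partial_r u,\tfrac{1}{r}S_0 u\rangle \ge -2\|\tfrac{1}{r}S_0 u\|^2$ on the negative spectral part, and plugging this into the expansion of $\|\Brue_{\cone}u\|^2$ produces $\|\partial_r u\|^2 - \|\tfrac{1}{r}S_0 u\|^2$, which has the wrong sign to control $\|\tfrac{1}{r}S_0 u\|$. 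What is actually needed is the radial Hardy inequality $\|\tfrac{1}{r}u\|\le 2\|\partial_r u\|$ on $(0,\thet)$; after mode decomposition the cross term contributes $s\|\tfrac{1}{r}u\|^2$ and the best coercivity constant one can extract is $\inf_{s\in\sigma(\bar S_0)}\tfrac{(2s+1)^2}{4s^2+1}$, which tends to $0$ as the spectrum approaches $-\tfrac{1}{2}$. It is positive only because $\sigma(\bar S_0)$ is a closed set disjoint from the compact interval $[-\tfrac12,\tfrac12]$ and is therefore at positive distance from $\pm\tfrac12$ — this is the ingredient that closes the argument, not \ref{boundedperturbtwo}, whose asymmetric constants are calibrated to the parametrix norm bounds of Lemma \ref{lemma:BoundOnS0Param}, not to the cross-term estimate. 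You should also be careful about the reverse inclusion in \ref{b}: a priori an element of $\dom(\overline{\partial_r})\cap\dom(\overline{\tfrac{1}{r}S_0})$ comes with two separate approximating sequences, and the coercivity estimate alone does not produce the single sequence along which all three quantities converge; the paper's parametrix $\PPP$ and Proposition \ref{lem:BoundParamMaxExt} are what supply that.

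The Fredholm argument in your last paragraph has a more serious gap. You propose to reduce compactness of the cone-side remainder to compactness of the embedding $\{u\in\dom(\bar\Brue_{\cone})\colon\supp u\subset[0,\thet'']\}\hookrightarrow L^2$, but in the abstract setting of Definition \ref{abstractcone} the inclusion $\dom(\bar S_0)\hookrightarrow\LL$ is not assumed to be compact ($\LL$ is an arbitrary separable Hilbert space), so no such Rellich-type compactness is available. The paper deliberately avoids this: in the proof of \ref{fred}, the cone-side remainders $X=\chi\{\tfrac{1}{r}S_1\PPP\}\psi+\chi'\PPP\psi$ and $Y=\chi\PPP V\psi'$ are \emph{not} compact — they are made small in operator norm by choosing cutoffs with $|r\chi'|, |r\psi'|\le\eps$ (Lemma \ref{lem:DefFunction}) together with \ref{boundedperturbtwo}, and then $(\id+X)^{-1}$ and $(\id+Y)^{-1}$ are produced by a Neumann series. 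The only compactness used is the one packaged in the interior parametrix hypothesis. Without this small-norm mechanism your patching construction would leave a non-compact remainder near the cone tip.
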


\begin{remark} \label{absorb} Given an abstract cone operator $\Brue$ and some $0 < \thet' < \thet$, we can absorb arbitrary large portions of the cone part into the bulk part as follows. 
Let $\chi \colon (0,\thet) \to \{ 0,1\}$ be the characteristic map of $(0,\thet')$. 
Then $\chi$ and $1-\chi$ act on $\EE = \EE_{\bulk} \oplus \EE_{\cone} = \EE_{\bulk} \oplus L^2((0,\thet), \LL)$ by $\chi (u_b, u_c) :=  (0 , \chi u_c)$ and $(1-\chi)(u_b, u_c) := (u_b, (1-\chi) u_c)$.
Similarly on $\FF$. 
Then the operator  $\Brue \colon \EE \to \FF$ together with the decompositions 
\begin{align*}
    \EE = \EE'_{\bulk} \oplus \EE'_{\cone}, & \qquad \EE'_{\bulk} := (1-\chi) \EE, \quad \EE'_{\cone} := \chi  \EE, \\
    \FF = \FF'_{\bulk} \oplus \FF'_{\cone}, & \qquad \FF'_{\bulk} :=   (1-\chi) \FF,\quad \FF'_{\cone} :=  \chi  \FF, \\
\end{align*}
is an abstract cone operator.
Thus a version of Theorem \ref{thm:RegularSingular} still holds if \ref{AnSetup6} and \ref{boundedperturbtwo} are only satisfied with $(0,\thet)$ replaced by $(0,\thet')$ for some $0 < \thet' < \thet$.
\end{remark}

\subsection{Cone parametrix} \label{cone_para}  

Let $0 < \thet \leq 1$, let $\LL$ be a separable Hilbert space, let $S_0$ be an essentially self-adjoint unbounded linear operator on $\LL$ and let 
 \[
    S_1 \colon (0,\thet) \to\mathcal L(\dom(S_0),\LL)
\]
be an essentially bounded measurable map.
Let $\bar S_1 \colon (0,\thet) \to\mathcal L(\dom(\bar S_0),\LL)$ be the essentially bounded map induced by $S_1$.
Furthermore, assume that
 \[
          \sigma(\bar S_0) \cap [-\tfrac{1}{2}, \tfrac{1}{2}] = \emptyset
 \]
 and that  $\bar S_0^{-1} \bar S_1 \colon (0, \thet) \to \boundedops(\dom(\bar S_0) )$ induces an essentially bounded map $\bar S_0^{-1} \bar S_1 \colon (0,\thet) \to \boundedops(\LL)$.

Define an unbounded, densely defined, closable linear operator
\[
    \Keg \colon L^2((0,\thet) , \LL) \supset  C^\infty_\cc((0,\thet),\dom(S_0)) \to L^2((0,\thet), \LL) 
\]
by
\begin{equation} \label{prod_form}
     \Keg  u(r) =\partial_ru(r)+\tfrac{1}{r}\left(S_0+S_1(r) \right)u(r) . 
\end{equation}
Let $\bar \Keg \colon  L^2((0,\thet) , \LL) \supset  \dom ( \bar \Keg) \to L^2((0,\thet), \LL)$  be its closure.

The operator $\Keg$ models the operator $\Brue_{\cone}$ for an abstract cone operator $\Brue$.
However, in this subsection, we work without assumption \ref{boundedperturbtwo}.

\begin{lemma}\label{lem:extend_domain}
We have
\[
     C_\cc^1((0,\thet), \dom( \bar S_0)) \subset \dom( \bar \Keg) 
\]
and the inclusion 
\[
     C_\cc^{\infty}((0,\thet), \dom( S_0)) \subset  C_\cc^1((0,\thet), \dom( \bar S_0))
\]
is dense with respect to the graph norm of $\bar \Keg$.

Furthermore, working with $\bar \Keg$, $\bar S_0$ and $\bar S_1$,  the formula \eqref{prod_form} holds for all $u \in C_\cc^1((0,\thet), \dom( \bar S_0))$.
\end{lemma}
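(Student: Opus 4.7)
The plan is to prove all three claims simultaneously by constructing, for every $u\in C_\cc^1((0,\thet),\dom(\bar S_0))$, a sequence $u_N\in C_\cc^\infty((0,\thet),\dom(S_0))$ such that $u_N\to u$ and $\Keg u_N\to \partial_r u+\tfrac{1}{r}(\bar S_0+\bar S_1(r))u$ in $L^2((0,\thet),\LL)$. This establishes $u\in\dom(\bar\Keg)$, the formula \eqref{prod_form} interpreted with $\bar S_0$ and $\bar S_1$, and density of $C_\cc^\infty((0,\thet),\dom(S_0))$ in $C_\cc^1((0,\thet),\dom(\bar S_0))$ in the graph norm of $\bar\Keg$, all at once.

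First I would verify that $\partial_r u+\tfrac{1}{r}(\bar S_0+\bar S_1(r))u$ defines an element of $L^2((0,\thet),\LL)$. The compact support of $u$ in $(0,\thet)$ bounds $1/r$ on $\supp u$, while continuity of $u$ with values in $\dom(\bar S_0)$ (graph norm) makes $\partial_r u$ and $\bar S_0 u$ continuous into $\LL$ with compact support, and the essential boundedness of $\bar S_1\colon(0,\thet)\to\boundedops(\dom(\bar S_0),\LL)$ combined with the pointwise estimate $\|\bar S_1(r)u(r)\|_\LL\leq\|\bar S_1(r)\|\cdot\|u(r)\|_{\bar S_0}$ handles the perturbation term.

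Next I would construct the sequence $u_N$. Fix $\chi\in C_\cc^\infty((0,\thet),\RR)$ equal to $1$ on $\supp u$. By the vector-valued Weierstrass theorem---which follows from Bernstein-polynomial approximation of continuous Banach-valued functions applied to $\partial_r u$ followed by integration---one can approximate $u$ in $C^1([0,\thet],\dom(\bar S_0))$ by polynomials $p_N(r)=\sum_{k=0}^{d_N} r^k a_k^{(N)}$ with coefficients $a_k^{(N)}\in\dom(\bar S_0)$. Essential self-adjointness of $S_0$ makes $\dom(S_0)$ a core for $\bar S_0$, hence dense in $\dom(\bar S_0)$ in the graph norm, so I may replace each $a_k^{(N)}$ by some $b_k^{(N)}\in\dom(S_0)$ close enough in graph norm that the polynomial $\tilde p_N(r):=\sum_k r^k b_k^{(N)}$ remains $C^1$-close to $p_N$ in $\dom(\bar S_0)$. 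Setting $u_N:=\chi\tilde p_N\in C_\cc^\infty((0,\thet),\dom(S_0))$ then produces a sequence converging to $u$ in $C^1([0,\thet],\dom(\bar S_0))$ by the Leibniz rule and the choice of $\chi$.

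Finally, uniform convergence of $u_N\to u$ in $\dom(\bar S_0)$ with common compact support yields $L^2$-convergence of $u_N$ to $u$, of $S_0 u_N=\bar S_0 u_N\to\bar S_0 u$ in $\LL$, and (via the graph-norm estimate and essential boundedness of $\bar S_1$) of $\bar S_1(r)u_N(r)\to\bar S_1(r)u(r)$ in $\LL$. Uniform convergence of $\partial_r u_N\to\partial_r u$ in $\LL$ handles the derivative term, and the uniform bound on $1/r$ over the supports justifies dividing by $r$. Thus $\Keg u_N$ converges in $L^2((0,\thet),\LL)$ to the target expression, completing the proof of all three parts. I expect the main technical input to be the vector-valued Weierstrass step; once uniform polynomial approximation of continuous Banach-valued functions is granted, the rest is standard bookkeeping about uniform convergence on compact supports.
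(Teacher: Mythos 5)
Your proposal is correct and follows essentially the same strategy as the paper: approximate $u$ in $C^1$ by functions valued in a finite-dimensional subspace, use density of $\dom(S_0)$ in $\dom(\bar S_0)$ to push those values into $\dom(S_0)$, and then observe that $C^1$-convergence over a fixed compact subset of $(0,\thet)$ implies convergence in the graph norm of $\Keg$. The paper achieves the finite-dimensional $C^1$-approximation via a piecewise-linear approximation of $u'$ followed by integration and mollification, whereas you use Bernstein polynomials and a smooth cutoff; both are routine and accomplish the same reduction, so this is a cosmetic rather than conceptual difference.
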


\begin{proof}
Let $u \in C_\cc^1((0,\thet), \dom( \bar S_0))$.
We must show that there exists a sequence  $u_k \in C^\infty_{\cc}((0,\thet),\dom(S_0))$ which converges in $L^2((0,\thet), \LL)$  to $u$ and such that $\Keg u_k$ converges in $L^2((0,\thet), \LL)$ to the function $\mathscr{K} u \colon (0,\thet) \to \LL$, $r \mapsto \partial_ru(r)+\frac{1}{r}(\bar{S}_0+\bar S_1(r))u(r) $.

Let  $0 < c < \tfrac{1}{2}$ such that $\supp(u)\subset [c,1-c]$, and choose a piecewise linear compactly supported function $U\colon (0,\thet)\to  \dom(\bar S_0)$ with $\supp(U)\subset [c,1-c]$  that approximates $u' \in C_\cc^0((0,\thet), \dom(\bar S_0))$ in $C^0$-norm and such that $\int_0^1 U(r)\,dr =0$.
 This  is possible as  $\int_0^1u'(r)\,dr =0$.
 By construction, $U$ takes values in a finite dimensional subspace $V\subset\dom(\bar S_0)$.

Then $v_1 \colon (0,\thet) \to \dom ( \bar S_0)$, $v_1(r):=\int_0^r U(t)\,dt$, is a  $C^1$-function with values in a compact subset $K \subset V\subset \dom(\bar S_0)$ such that $\supp(v_1)\subset [c,1-c]$ and such that $v_1$ approximates $u$ in $C^1$-norm.

Since $\dom(S_0)$ is dense in $\dom(\bar S_0)$, for all $\eps > 0$, there is a linear map $\pi \colon V \to \dom(S_0)$ such that $\abs{\pi(x)-x}_{\dom(\bar S_0 )} < \eps$ for all $x\in K$. 
Then $v_2 :=\pi \circ v_1\in C^1( (0,\thet) , \dom(S_0))$ is a  $C^1$-function which approximates $u$ in $C^1$-norm and such that  $\supp(v_2)\subset [c,1-c]$.

The image of $v_2$ is contained in $\pi(V) \subset \dom(S_0)$. 
Since $\pi(V)$ is finite-dimensional it is a (complete) Hilbert space.
By convolution with mollifiers, we find $v_3\in C^\infty_{\cc}((0,\thet),\pi(V))$ which approximates $v_2$, hence $u$, in $C^1$-norm and such that  $\supp(v_3)\subset [c/2,1-c/2]$.

Summarizing, we find a sequence $u_k \in C^\infty_{\cc}((0,\thet),\dom(S_0))$ supported in  $ [c/2,1-c/2]$ and such that $u_k \longrightarrow u$ in $C^1((0,\thet), \dom(\bar S_0))$.
In particular, we get $u_k \longrightarrow u$ in $L^2((0,\thet), \LL)$. 

Furthermore, 
 \begin{align*}
   \| \Keg (u_k) & - \mathscr{K} u\|_{L^2((0,\thet), \LL)}^2  =  \int_0^{\thet} \abs{\partial_r(u_k - u)(r)+\frac{1}{r}(\bar S_0+ \bar S_1(r))(u_k-u)(r)}_{\LL}^2\,dr\\
      & \le \left(\sup_{r \in (0,\thet)}\norm{\partial_r (u_k-u)(r)}_{\LL} + \frac{2}{c}\left(\sup_{r \in (0,\thet)} \norm{(\bar S_0 + \bar S_1(r))(u_k-u)(r)}_{\LL}   \right)\right)^2\\
      & \le \frac{4}{c^2} \left( \| u_k - u \|_{C^1}  + \left( \| \bar S_0\|_{\boundedops(\dom(\bar S_0), \LL)} + \|\bar S_1 \|_{L^{\infty}((0,\thet), \boundedops(\dom(\bar S_0), \LL))}\right) \| u_k - u\|_{C^0} \right)^2\\
       & \le\frac{4}{c^2}   \left( 1 + \| \bar S_0\|_{\boundedops(\dom(\bar S_0), \LL)} + \|\bar S_1 \|_{L^{\infty}((0,\thet), \boundedops(\dom(\bar S_0), \LL))} \right)^2 \| u_k - u\|^2_{C^1} .
      \end{align*}
  Hence $\Keg (u_k) \longrightarrow \mathscr{K} u$ in $L^2((0,\thet), \LL)$.
  This shows our claim.
  \end{proof}

By Lemma \ref{lem:extend_domain}, we can assume, without altering the closed extension $\bar \Keg$,  that $S_0$ is self-adjoint.
In a first step we construct a solution operator for the first order differential operator $\partial_r + \tfrac{1}{r} S_0$, see Proposition \ref{difficult}.
This relies on the spectral decomposition of the self-adjoint operator $S_0$.

For $s > - \frac{1}{2}$ consider the Hilbert-Schmidt integral kernel $K_{0,s} \in L^2((0,1)^2, \R)$,
\begin{equation}
     \label{kernel0}  K_{0,s}(r,y)  =  \begin{cases}  \big( \frac{y}{r}\big)^s,  &  0\le\frac{y}{r}\le 1, \\
                                                   0, & \text{otherwise},
                                                      \end{cases}
\end{equation}
and the corresponding integral operator  $\PP_{0,s} \in \boundedops(L^2((0,1), \CC))$, 
\[
     \PP_{0,s}(\omega)(r) = \int_0^1 K_{0,s}(r,y) \omega (y) \, d y = \int_0^r \left(\frac{y}{r}\right)^s \omega(y) \,d y.
\]
In a similar fashion,  for $s < \tfrac{1}{2}$, consider the integral kernel  $K_{1,s} \in L^2((0,1)^2, \R)$,                                            
\begin{equation} 
    \label{kernel1}  K_{1,s}(r,y)  = \begin{cases}  \big(\frac{y}{r}\big)^s,  &  0\le\frac{r}{y}\le 1, \\
                                                   0, & \text{otherwise},
                                                   \end{cases}
\end{equation} 
and the corresponding integral operator $\PP_{1,s} \in \boundedops( L^2((0,1)^2, \R))$, 
\[
     \PP_{1,s} \omega(r) :=  - \int_0^1 K_{1,s}(r,y) \, d y = \int_{1}^{r} \left( \frac{y}{r}\right)^s \omega(y) \, d y . 
\]

\begin{lemma}\label{elementary} 
  Let $\omega \in C_\cc^{0}((0,1), \CC)$ and $\nu\in C_\cc^1((0,1),\CC)$.
	\begin{enumerate}[start=1,label=\textup{(\roman*)}]
		\item \label{ODE1} 
          For $s > - \tfrac{1}{2}$, we get $\PP_{0,s} \omega \in C^1((0,1), \CC)$ and
		      \begin{equation*}
			      \left(\partial_r + \tfrac{s}{r} \id\right) \PP_{0,s} \omega = \omega.
		      \end{equation*}
          For $s>\tfrac12$ we get
          \begin{equation*}
            \PP_{0,s}\left(\partial_r + \tfrac sr \id\right)\nu = \nu.
          \end{equation*}
		\item \label{ODE2} 
          For  $s  < \tfrac{1}{2}$, we get $\PP_{1,s} \omega \in C^1((0,1), \CC)$ and 
		      \begin{equation*}
			      \left(\partial_r + \tfrac{s}{r} \id\right) \PP_{1,s} \omega = \omega.
		      \end{equation*}
          For $s < - \tfrac12$ we get
          \begin{equation*}
            \PP_{1,s}\left(\partial_r + \tfrac sr \id\right)\nu = \nu.
          \end{equation*}
		\item\label{ODE3}
          Let $K \subset (0,1)$ be a compact subset.
		      Then there is a constant $C = C(K) > 0$ such that, on $K$,
		      \begin{align*}
			       & |\PP_{0,s} \omega| \leq C \textrm{ and } |  (\PP_{0,s} \omega)'| \leq C   \quad \textrm{for} s > - \tfrac{1}{2} , \\
			       & |\PP_{1,s} \omega| \leq C \textrm{ and }  | (\PP_{1,s} \omega)' | \leq C \quad  \textrm{for}  s < \tfrac{1}{2}  .
		      \end{align*}
	\end{enumerate}
\end{lemma}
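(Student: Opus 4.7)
The plan is to exploit the standard integrating factor $r^s$ for the first-order ODE $(\partial_r + s/r)u = \omega$, which rewrites it as $\partial_r(r^s u) = r^s \omega$. The operators $\PP_{0,s}$ and $\PP_{1,s}$ are then recognised as the variation-of-constants solutions corresponding to integration from $0$ or from $1$, respectively, and parts (i) and (ii) both reduce to straightforward applications of the fundamental theorem of calculus.

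Concretely for \ref{ODE1}, I would rewrite $\PP_{0,s}\omega(r) = r^{-s}\int_0^r y^s \omega(y)\,dy$. Since $\omega \in C_\cc^0((0,1),\CC)$ vanishes near $0$, the inner integral is $C^1$ in $r$ and the factor $r^{-s}$ is smooth on $(0,1)$, so $\PP_{0,s}\omega \in C^1((0,1),\CC)$. Differentiating by the product rule and the fundamental theorem of calculus yields $(\partial_r + s/r)\PP_{0,s}\omega = \omega$. For the left-inverse formula with $\nu\in C_\cc^1$, the key observation is $y^s(\partial_y + s/y)\nu(y) = \partial_y(y^s \nu(y))$, which collapses $\PP_{0,s}((\partial_r + s/r)\nu)(r)$ to $r^{-s}\int_0^r \partial_y(y^s\nu(y))\,dy = r^{-s}[y^s\nu(y)]_0^r = \nu(r)$; the boundary contribution at $y = 0$ vanishes because $\nu$ is compactly supported away from $0$ and $s > 0$. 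Part \ref{ODE2} follows by an identical argument, integrating from $1$ instead of from $0$ and using that $\nu$ also vanishes near $y = 1$.

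For \ref{ODE3}, I would fix $\delta > 0$ with $K \subset [\delta, 1-\delta]$ and observe that the kernels $K_{0,s}$ and $K_{1,s}$ are uniformly bounded on $K\times(0,1)$ by a constant depending only on $\delta$ and $s$, which yields pointwise bounds on $\PP_{0,s}\omega$ and $\PP_{1,s}\omega$ over $K$ via $\norm{\omega}_{L^1}$. The derivative bounds then follow for free from the ODE identities established in \ref{ODE1} and \ref{ODE2}, since $s/r$ is uniformly bounded on $[\delta,1-\delta]$. No substantive obstacle arises; the statement is a purely one-dimensional calculus exercise, but it must be phrased in exactly this form so that, after spectral decomposition of $S_0$, these scalar operators can be assembled into a cone parametrix for $\bar{\Keg}$ in the upcoming Proposition~\ref{difficult}, with the cutoffs $s > -\tfrac12$ and $s < \tfrac12$ matching the Hilbert--Schmidt windows of the kernels $K_{0,s}$ and $K_{1,s}$.
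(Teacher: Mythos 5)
Parts \ref{ODE1} and \ref{ODE2} are fine: the integrating-factor computation is exactly what the paper means by ``standard integration theory.''

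For part \ref{ODE3}, however, your argument has a genuine gap, and it is the part of the lemma that carries real content. First, the kernel $K_{0,s}$ is \emph{not} uniformly bounded on $K\times(0,1)$ when $-\tfrac12<s<0$: for $r\in K$ fixed and $y\to 0$ one has $K_{0,s}(r,y)=(r/y)^{|s|}\to\infty$, so your claimed pointwise kernel bound fails even for a single such $s$. (The kernel $K_{1,s}$ happens to be bounded on $K\times(0,1)$, but $K_{0,s}$ is not.) What is bounded, uniformly in $r\in K$, is the $L^1$-norm $\int_0^1 K_{0,s}(r,y)\,dy$; this forces you to pair with $\norm{\omega}_\infty$ rather than $\norm{\omega}_{L^1}$, which is exactly the estimate the paper writes down.

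Second, and more importantly, the constant $C=C(K)$ in the statement is required to be \emph{uniform over $s$} (for $s>-\tfrac12$ respectively $s<\tfrac12$). This is essential: part \ref{ODE3} is invoked to deduce that $\PP\omega$ is bounded on $\sigma(S_0)\times K$ prior to the functional-calculus construction of $\PPP$, and $\sigma(S_0)$ is unbounded in general. Your phrasing ``a constant depending only on $\delta$ and $s$'' does not deliver this. The derivative step compounds the problem: you argue that the bound on $(\PP_{0,s}\omega)'=\omega-\tfrac{s}{r}\PP_{0,s}\omega$ follows ``for free'' because ``$s/r$ is uniformly bounded on $[\delta,1-\delta]$,'' but $|s/r|\le |s|/\delta$ is unbounded as $|s|\to\infty$; to control $\tfrac{s}{r}\PP_{0,s}\omega$ uniformly in $s$ one must show that $|\PP_{0,s}\omega|$ decays at rate $1/|s|$, which your argument never establishes. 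The paper handles this by computing $\bigl|\tfrac{s}{r}\PP_{i,s}\omega(r)\bigr|$ directly in closed form and checking that the explicit expressions remain bounded as $s\to+\infty$ (case $i=0$) and $s\to-\infty$ (case $i=1$), with a separate treatment of the logarithmic borderline $s=-1$. That explicit computation, absent from your proposal, is the substance of part \ref{ODE3}.
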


\begin{proof} The assertions \ref{ODE1} and \ref{ODE2} follow from standard integration theory.

Let $r \in (0,1)$, then we obtain 
\begin{align*}
  |\PP_{1,s} \omega(r)| & \leq  \tfrac{1}{r^s} \|\omega\|_{\infty} \int_r^1 y^s d y   = \|\omega\|_{\infty} \tfrac{1}{r^s} \abs{ \tfrac{1 - r^{s+1}}{s+1} }  , \quad s <  \tfrac{1}{2} , s \neq -1, \\
  |\PP_{1,s} \omega(r)| & \leq   \|\omega\|_{\infty} \int_r^1 y^{-1} d y   = \|\omega\|_{\infty} \abs{\log(r)}   , \quad s = -1,   \\
  |\PP_{0,s} \omega(r)| & \leq   \tfrac{1}{r^s} \| \omega \|_{\infty} \int_0^r y^s d y =  \| \omega \|_{\infty}  \left| \tfrac{1}{s+1} \right|   , \quad s > - \tfrac{1}{2}, \\
  \end{align*}
and 
\begin{align*}
               \abs{\tfrac1r s\PP_{1,s}\omega(r)} & \leq \tfrac{1}{r^{s+1}} \norm{\omega}_{\infty} \abs{\int_r^1 sy^sd y} \leq \norm{\omega}_{\infty}  \abs{ \tfrac{s}{r^{s+1}} }  \abs{\tfrac{1-r^{s+1}}{s+1}}, \quad s < \tfrac{1}{2}, s \neq -1 , \\
              \abs{\tfrac1r s\PP_{1,s}\omega(r)} & \leq \norm{\omega}_\infty \int_{r}^1 y^{-1} d y = \norm{\omega}_\infty \abs{\log(r)},  \quad s = -1, \\
            \abs{\tfrac1r s\PP_{0,s}\omega(r)} & \leq \tfrac{1}{r^{s+1}}\norm{\omega}_\infty \abs{\int_0^r sy^sd y} \leq \norm{\omega}_\infty\abs{\tfrac{s}{s+1}},  \quad  s > - \tfrac{1}{2} . 
\end{align*}
For $r$ contained in a compact subset of $(0,1)$, the right hand sides are uniformly bounded for all $s < \tfrac{1}{2}$, respectively for all $s > - \tfrac{1}{2}$.
In combination with \ref{ODE1} and \ref{ODE2}, we obtain  \ref{ODE3}.
\end{proof} 

In the following, $\tfrac{1}{r}$ denotes multiplication with the function $(0,1) \to \RR$, $r \mapsto \tfrac{1}{r}$, considered as a $\CC$-linear endomorphism on the space of maps $(0,1) \to \CC$.

\begin{lemma}\label{lem:ConeParamEstimates}
Left and right composition of $\PP_{0,s}$ and $\PP_{1,s}$ with $\frac{1}{r}$ define bounded operators on $L^2((0,1),\complexs)$ that satisfy the following operator norm estimates.
    \begin{enumerate}[label=\textup{(\roman*)}]
      \item \label{1} $\norm{ \tfrac1r \PP_{0,s} } \leq \abs{s+\tfrac12 }^{-1} \quad \textrm{for} s> -\tfrac12$ ,
      \item \label{2} $\norm{ \PP_{0,s} \tfrac1r } \leq \abs{s-\tfrac12 }^{-1}  \quad \textrm{for}  s >  \tfrac12$ , 
      \item \label{3} $\norm{ \tfrac1r \PP_{1,s} }  \leq \abs{s+\tfrac12 }^{-1} \quad \textrm{for}   s<  -\tfrac12$ , 
      \item \label{4} $\norm{  \PP_{1,s} \tfrac1r } \leq \abs{s-\tfrac12 }^{-1} \quad \textrm{for}   s< \tfrac12$.
    \end{enumerate}
 \end{lemma}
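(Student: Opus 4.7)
The plan is to unify all four estimates by a single change of variables $y = rt$ in each integral kernel, after which each of the operators $\tfrac{1}{r}\PP_{0,s}$, $\PP_{0,s}\tfrac{1}{r}$, $\tfrac{1}{r}\PP_{1,s}$, $\PP_{1,s}\tfrac{1}{r}$ becomes a one-parameter integral of dilations $r \mapsto \omega(rt)$ of the input. Extending $\omega\in L^2((0,1),\CC)$ by zero to $(0,\infty)$, I expect to find
\[
\tfrac{1}{r}\PP_{0,s}\omega(r) = \int_0^1 t^s \omega(rt)\,dt, \qquad \PP_{0,s}\tfrac{1}{r}\omega(r) = \int_0^1 t^{s-1}\omega(rt)\,dt,
\]
while $\tfrac{1}{r}\PP_{1,s}\omega(r)$ and $\PP_{1,s}\tfrac{1}{r}\omega(r)$ are given by the analogous expressions with $\int_0^1$ replaced by $-\int_1^{\infty}$, the integrands vanishing automatically once $rt \geq 1$.

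The second step would be to apply Minkowski's integral inequality in the variable $t$, combined with the elementary scaling identity
\[
\norm{\omega(t\,\cdot)}_{L^2_r((0,\infty),\CC)} = t^{-1/2}\norm{\omega}_{L^2((0,1),\CC)}.
\]
This reduces each of the four norm estimates to the evaluation of an elementary power integral in $t$: namely $\int_0^1 t^{s-1/2}\,dt$, $\int_0^1 t^{s-3/2}\,dt$, $\int_1^\infty t^{s-1/2}\,dt$, $\int_1^\infty t^{s-3/2}\,dt$ for (i)--(iv), respectively. The convergence thresholds for these integrals are exactly the ranges of $s$ stated in the lemma, and their values come out to $\abs{s+\tfrac12}^{-1}$, $\abs{s-\tfrac12}^{-1}$, $\abs{s+\tfrac12}^{-1}$, $\abs{s-\tfrac12}^{-1}$, matching the claimed bounds on the nose.

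The only (minor) obstacle is the bookkeeping in cases (iii) and (iv): the effective range of $t$ is $(1, 1/r)$ rather than a fixed interval, so one must first observe that after the substitution the dilated function $\omega(rt)$ is supported in $r \in (0, 1/t)$. On that set the scaling identity still reads $\norm{\omega(rt)}_{L^2_r((0,1/t))} = t^{-1/2}\norm{\omega}_{L^2((0,1))}$, and Minkowski's inequality then yields a $t$-integral over $(1, \infty)$ with no residual $r$-dependence. Alternatively, one could apply Schur's test with the weight $h(r) = r^{-1/2}$ to the four explicit kernels $y^s/r^{s+1}$, $y^{s-1}/r^s$ and their $\PP_{1,s}$ analogues, which are homogeneous of degree $-1$ and yield the same bounds; but the Minkowski route above is cleaner and produces all four estimates from a single template.
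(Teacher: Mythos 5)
Your proposal is correct, and it takes a genuinely different route from the paper. The paper proves parts \ref{1} and \ref{3} by Schur's test with the test functions $p(y) = y^{-1/2}$, $q(r) = r^{-1/2}$, and then derives \ref{2} and \ref{4} from the algebraic identities $\PP_{0,s}\tfrac{1}{r} = \tfrac{1}{r}\PP_{0,s-1}$ and $\PP_{1,s}\tfrac{1}{r} = \tfrac{1}{r}\PP_{1,s-1}$. Your approach instead rewrites each operator, via the substitution $y = rt$, as an integral $\int t^{a}\,\omega(rt)\,dt$ of dilations, then applies Minkowski's integral inequality together with the exact scaling identity $\norm{\omega(t\,\cdot)}_{L^2((0,\infty))} = t^{-1/2}\norm{\omega}_{L^2((0,1))}$. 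This is in essence the classical Hardy-inequality argument, and since the four kernels are all homogeneous of degree $-1$, it is not surprising that both methods produce the same sharp constants. I checked the details: the change of variables gives the claimed representations, the support bookkeeping in cases \ref{3} and \ref{4} works exactly as you describe (for $t \geq 1$ one has $\int_0^1 |\omega(rt)|^2\,dr = t^{-1}\norm{\omega}_{L^2}^2$ with equality since $\supp\omega \subset (0,1) \subset (0,t)$), and the resulting power integrals $\int_0^1 t^{s-1/2}\,dt$, $\int_0^1 t^{s-3/2}\,dt$, $\int_1^\infty t^{s-1/2}\,dt$, $\int_1^\infty t^{s-3/2}\,dt$ converge precisely on the stated ranges of $s$ and evaluate to the stated bounds. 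Your method handles all four cases by a single template, whereas the paper's approach needs the extra shift identity to pass from \ref{1}, \ref{3} to \ref{2}, \ref{4}; on the other hand, the paper's Schur computation is shorter to write out once one has guessed the test functions. Either is a reasonable choice here.
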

 
\begin{proof}
  Let  $p(y)= \frac{1}{\sqrt{y}}$ and $q(r) = \frac{1}{\sqrt{r}}$.
  We compute
  \begin{equation*}
     \int_0^1  \frac{1}{r} K_{0,s}(r,y)  p(y)d y  = \frac{1}{r^{s+1}}\int_0^r y^{s-\frac12} d y = \frac{y^{s+\frac12}}{\left( s+\tfrac12 \right)r^{s+1}}\Big|_0^r = \frac{1}{s+\tfrac12} q(r)
  \end{equation*}
  and 
  \[
    \int_0^1 \frac{1}{r} K_{0,s}(r,y) q(r)  d r  = y^s \int_y^1 r^{-s-\frac32} d r  = \frac{y^s}{(-s-\frac12)r^{s+\frac12}}\Big|_y^1 =  -\frac{y^s}{s+\frac12}\left( 1 - \frac{1}{y^{s+\frac12}} \right) \stackrel{s > - \tfrac{1}{2}}{\leq}  \frac{1}{s+\frac12} p(y).
  \]
 From this, assertion \ref{1}  follows from Schur's test  (see \cite{Halmos_Sunder}*{Theorem 5.2}). 
 Assertion \ref{3} is proved similarly, working with the same test functions $p(y) = \frac1{\sqrt{y}}$ and $q(r) = \frac1{\sqrt{r}}$.
 We have $P_{0,s}\tfrac{1}{r} = \tfrac{1}{r}P_{0,s-1}$ for $s > \tfrac{1}{2}$ and $P_{1,s}\tfrac{1}{r} = \tfrac{1}{r} P_{1, s-1}$ for $s< \tfrac{1}{2}$.
 From this, the remaining assertions \ref{2} and \ref{4} follow.
\end{proof}

Consider  the decomposition into positive and negative part of the spectrum
  \[ 
     \spec_-(S_0) := \spec(S_0)\cap (-\infty,-\tfrac{1}{2}) , \qquad \spec_+(S_0):= \spec(S_0)\cap (\tfrac{1}{2},\infty) .
  \]
Given $\omega \in C_{\cc}^0((0,1), \CC)$, we define $\mathcal{\PP} \omega  \colon \sigma(S_0)  \times (0,1) \to \CC$,
\begin{equation} \label{def_PP}
       \PP\omega(s,r) := \begin{cases} \PP_{1,s}\omega (r),  & s \in \sigma_-(S_0) , \\
                                                \PP_{0,s}\omega(r),  & s \in \sigma_+(S_0) . 
                                       \end{cases}
\end{equation}
Since  the integral kernels $K_{0,s}, K_{1,s} \in L^{2} ( (0,1)^2, \R)$ from \eqref{kernel0} and  \eqref{kernel1} depend continuously on $s$, the map $\PP\omega$ is continuous.
Furthermore, by Lemma \ref{elementary} \ref{ODE3}, for each compact subset $K \subset (0,1)$, the map $\PP\omega$  is bounded on $\sigma(S_0) \times K$.
Hence, for each $r \in (0,1)$, functional calculus yields a bounded linear operator $\PP\omega(S_0,r) \colon \LL \to \LL$, and the map $(0,1) \to \boundedops(\LL)$, $r \mapsto \PP\omega(S_0,r)$, is continuous.

\begin{lemma} \label{lengthy}  There exists a unique bounded linear operator $\PPP \colon L^2((0,1), \LL) \to L^2((0,1), \LL)$ such that for all $\omega \in C_{\cc}^0((0,1), \CC)$ and $\xi \in \LL$, we have 
\[
   \PPP (\omega \otimes \xi) (\cdot ) =   \PP\omega(S_0,\cdot) (\xi) , \quad a.e.
\]
\end{lemma}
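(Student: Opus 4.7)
The plan is to use the spectral theorem to diagonalize $\bar S_0$, so that the construction of $\PPP$ reduces to a measurable family of scalar integral operators on $L^2((0,1),\CC)$, and then to establish uniform $L^2$-boundedness of this family by a Schur-type estimate.

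First, I would apply the spectral theorem to the (unique) self-adjoint extension $\bar S_0$ to obtain a unitary identification $\LL \cong L^2(\spec(\bar S_0),\mu;\mathcal H)$ for some auxiliary Hilbert space $\mathcal H$ (absorbing multiplicities into a direct integral if necessary), under which $\bar S_0$ becomes multiplication by $s$. This yields a corresponding identification $L^2((0,1),\LL)\cong L^2((0,1)\times\spec(\bar S_0),dr\otimes d\mu;\mathcal H)$, and on the right-hand side I would define $\PPP$ by the integral kernel
\[
 (\PPP F)(r,s) := \int_0^1 K_s(r,y)\,F(y,s)\,d y ,
\]
where $K_s := K_{0,s}$ for $s\in\spec_+(S_0)$ and $K_s := K_{1,s}$ for $s\in\spec_-(S_0)$, acting by $\id_{\mathcal H}$ in the $\mathcal H$-variable. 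By Fubini's theorem
\[
  \|\PPP F\|^2 = \int_{\spec(\bar S_0)} \|\PPP_s\,F(\cdot, s)\|^2_{L^2((0,1),\mathcal H)}\, d\mu(s),
\]
where $\PPP_s$ is the scalar integral operator on $L^2((0,1),\CC)$ with kernel $K_s$. Thus boundedness of $\PPP$ reduces to the uniform estimate $\sup_{s\in\spec(\bar S_0)} \|\PPP_s\|_{\boundedops(L^2((0,1),\CC))}<\infty$.

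The heart of the argument is this uniform estimate, which I would establish by Schur's test with weights $p(y)=q(y)=y^{-1/2}$, in direct analogy with the computation in the proof of Lemma \ref{lem:ConeParamEstimates}, only without the additional factor $\tfrac{1}{r}$. The outcome is a bound of the form $\|\PPP_s\|\leq \bigl|s^2-\tfrac14\bigr|^{-1/2}$ for $s\in\spec(\bar S_0)$. The spectral gap hypothesis \ref{AnSpGap} forces $\spec(\bar S_0)$ to be a closed subset of $\RR$ disjoint from the compact set $[-\tfrac12,\tfrac12]$, hence at strictly positive distance from $\{\pm\tfrac12\}$; this upgrades the pointwise bound into a genuine uniform bound independent of $s$.

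Finally, on an elementary tensor $\omega\otimes\xi$ with $\omega\in C_\cc^0((0,1),\CC)$ and $\xi\in\LL$, the construction gives $(\PPP(\omega\otimes\xi))(r)=\PP\omega(\bar S_0,r)\xi$ almost everywhere: under the spectral representation, $s\mapsto\PP\omega(s,r)$ is, for each fixed $r$, a bounded Borel function of $s$, and the bounded Borel functional calculus for $\bar S_0$ corresponds precisely to pointwise multiplication in $s$. Uniqueness of $\PPP$ is immediate from the density of $C_\cc^0((0,1),\CC)\otimes_{\mathrm{alg}}\LL$ in $L^2((0,1),\LL)$. The main obstacle is the uniform Schur estimate, in particular turning \ref{AnSpGap} into a quantitative separation of the spectrum from $\pm\tfrac12$; the remaining steps amount to bookkeeping in the direct-integral formalism.
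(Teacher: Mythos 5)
Your proposal is correct and follows essentially the same route as the paper's proof: diagonalize $\bar S_0$ via the spectral theorem, reduce boundedness of $\PPP$ to a uniform bound on the fiberwise scalar operators $\mathcal{P}_s$, and invoke the spectral gap for uniformity in $s$. The only cosmetic difference is the Schur constant: you derive $\|\mathcal{P}_s\|\le|s^2-\tfrac14|^{-1/2}$ by a fresh Schur computation with weight $y^{-1/2}$, whereas the paper deduces $\|\mathcal{P}_s\|\le\|\tfrac1r\mathcal{P}_s\|\le|s+\tfrac12|^{-1}$ directly from Lemma~\ref{lem:ConeParamEstimates} using $1\le\tfrac1r$ on $(0,1)$; both bounds are uniform over $\sigma(\bar S_0)$.
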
 

\begin{proof} 
We use the spectral theorem in the form of a  direct integral decomposition, see \cite{Hall}*{Theorem 10.9}, writing 
\begin{equation*}
  \LL = \int_{\spec(S_0)} \LL_s\,d\mu(s),  \qquad S_0 = \int_{\spec(S_0)} s \cdot \id_{\LL_s}\,d\mu(s) .
\end{equation*}

As a preparation, for each separable Hilbert space $\HH$ and $s \in \sigma(S_0)$, we obtain a bounded linear operator 
\[
    \mathcal{P}_s \colon L^2((0,1), \HH) \to L^2((0,1), \HH)
\]
as follows.
Let $(e_i)_{i \in \NN}$ be an orthonormal basis of $\HH$. 
Then, for $\omega \in L^2((0,1), \HH)$, we put $\omega_i := \langle \omega, e_i\rangle \in L^2((0,1), \CC)$ and set
\[ 
      \mathcal{P}_s (\omega) (r) := \sum_{i=1}^n \mathcal{P}\omega_i(s,r)  \, e_i . 
\]
This definition is independent of the choice of $(e_i)$. 
Furthermore, by Lemma \ref{lem:ConeParamEstimates} \ref{1} and \ref{3},  the operator norm of $\mathcal{P}_s$ satisfies
\begin{equation} \label{operatornormbound}
    \norm{\mathcal{P}_s}_{L^2} \leq |s + \tfrac{1}{2}|^{-1} .
 \end{equation}

Now, for
\[
    u = u(s)   \in  L^2((0,1), \LL)  = L^2\left( (0,1) , \int_{\sigma(S_0)} \LL_s \, d\mu(s) \right) =  \int_{\spec(S_0)} L^2((0,1), \LL_s)  \, d \mu(s),
\]
we define the measurable family $\PPP(u) \in \big( L^2((0,1), \LL_s)\big)_{s \in \sigma(S_0)}$ by
\[
     \PPP(u)(s)  :=   \mathcal{P}_s (u(s)) .
\]
The $L^2$-norm of $\PPP(u)$ is estimated as 
\[
   \|  \PPP(u) \|_{L^2}^2 = \int_{\spec(S_0)} \left| \mathcal{P}_s(u(s))\right|^2   d\mu(s) \stackrel{\eqref{operatornormbound}}{\leq}     \sup_{s \in \sigma(S_0)} |s + \tfrac{1}{2}|^{-2} \| u\|_{L^2}^2.  
 \]
Thus $\PPP$ defines a bounded linear map $L^2((0,1), \LL) \to L^2((0,1), \LL)$ with $\norm{\PPP}_{L^2} \leq \sup_{s \in \sigma(S_0)} |s + \tfrac{1}{2}|^{-2}$.

For $\omega \in L^2((0,1), \CC)$, $\xi = \xi(s) \in \LL = \int_{\sigma(S_0)} \LL_s \, d\mu(s)$,  and almost all $r \in (0,1)$, we have
\[
   \PPP(u) (r) =  \PPP (\omega \otimes \xi)(r) =   \int_{\sigma(S_0)} \mathcal{P}_s(\omega  \otimes \xi(s)) (r) \, d\mu(s)  =  \PP\omega(S_0, r ) (\xi) .
\]
The uniqueness of $\PPP$ follows from the fact that the algebraic tensor product $C^0_{\cc}((0,1), \CC) \otimes \LL$ is dense in $L^2((0,1), \LL)$.
This completes the proof of  Lemma \ref{lengthy}.
\end{proof}

\begin{proposition} \label{difficult} 
Let $u \in  C_{\cc}^0((0,1), \CC)\otimes \dom(S_0)$. 
Then 
	\begin{enumerate}[label=\textup{(\roman*)}]
    \item \label{oans} $\PPP u \in C^1((0,1), \dom(S_0))$ and $\left(\partial_r + \tfrac1r S_0\right) \PPP  u  = u$.
    \item \label{zwoa} $   \PPP\left(\partial_r + \tfrac1r S_0\right) u = u$. 
  \end{enumerate}
\end{proposition}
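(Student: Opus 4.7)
The plan is to reduce both assertions to the pointwise ODE identities of Lemma \ref{elementary} via the spectral decomposition of $S_0$. By Lemma \ref{lem:extend_domain} we may assume $S_0$ is self-adjoint, and we use its direct integral decomposition $\LL = \int_{\sigma(S_0)} \LL_s\,d\mu(s)$, $S_0 = \int s\cdot\id_{\LL_s}\,d\mu(s)$. An element $\xi \in \LL$ then corresponds to a measurable section $s \mapsto \xi(s) \in \LL_s$, and $\xi \in \dom(S_0)$ iff $\int (1+s^2)|\xi(s)|^2\,d\mu(s) < \infty$. By linearity I reduce to $u = \omega \otimes \xi$ with $\omega \in C^0_{\cc}((0,1),\CC)$ and $\xi \in \dom(S_0)$; Lemma \ref{lengthy} then realises $\PPP u(r)$ as the measurable section $s \mapsto \PP\omega(s,r)\xi(s)$.

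For part (i), the spectral gap $\sigma(S_0) \subset (-\infty,-\tfrac12)\cup(\tfrac12,\infty)$ ensures that for each compact $K \subset (0,1)$, item (iii) of Lemma \ref{elementary} yields a constant $C(K)$ with $|\PP\omega(s,r)|$ and $|\partial_r\PP\omega(s,r)|$ both bounded by $C(K)$ uniformly in $r \in K$ and $s \in \sigma(S_0)$. Combined with the pointwise ODE from items (i),(ii) of the same lemma — $\partial_r\PP\omega(s,r) = \omega(r) - (s/r)\PP\omega(s,r)$ — this also bounds $|s\,\PP\omega(s,r)|$ uniformly in $s$ for $r \in K$. Hence $\PPP u(r) \in \dom(S_0)$ for each $r$, with $S_0\PPP u(r)$ represented spectrally by $s \mapsto s\,\PP\omega(s,r)\xi(s)$, and the difference quotients of $\PP\omega(s,r)\xi(s)$ are dominated by $C(K)|\xi(s)| \in L^2(\mu)$. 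Dominated convergence therefore gives that $r \mapsto \PPP u(r)$ is $\LL$-norm differentiable with derivative represented by $s \mapsto \omega(r)\xi(s) - (s/r)\PP\omega(s,r)\xi(s)$; in $\LL$ this equals $u(r) - (1/r)S_0\PPP u(r)$. Continuity of $\partial_r\PPP u$ and of $S_0\PPP u$ as $\LL$-valued functions follows from further dominated convergence arguments with the same dominating function, establishing $\PPP u \in C^1((0,1),\dom(S_0))$ (understood as $\PPP u$ being $\dom(S_0)$-valued, $C^1$ in the $\LL$-norm, and with $S_0\PPP u$ continuous in $\LL$), and the identity $(\partial_r + (1/r)S_0)\PPP u = u$.

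For part (ii), I expand $(\partial_r + (1/r)S_0)u = \omega'\otimes\xi + (1/r)\omega\otimes S_0\xi$, which lies in $C^0_{\cc}((0,1),\CC)\otimes\LL$ under the regularity on $u$ required for $\partial_r u$ to make classical sense (e.g.\ $\omega \in C^1_{\cc}$). Applying Lemma \ref{lengthy} termwise, $\PPP\bigl((\partial_r + (1/r)S_0)u\bigr)(r)$ has spectral representative $\bigl(\PP\omega'(s,r) + s\,\PP(\omega/r)(s,r)\bigr)\xi(s)$. For $s \in \sigma_+(S_0) \subset (\tfrac12,\infty)$ the second formula in item (i) of Lemma \ref{elementary} collapses the bracket to $\omega(r)$; for $s \in \sigma_-(S_0) \subset (-\infty,-\tfrac12)$ the second formula in item (ii) does the same. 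The result has spectral representative $\omega(r)\xi(s) = u(r)$, proving (ii).

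The main obstacle I anticipate is the careful dominated convergence bookkeeping in part (i), in particular producing an $L^2(\mu)$-dominating function for the difference quotients of $\PP\omega(s,r)\xi(s)$ uniformly for $r$ in compact subsets of $(0,1)$ — with Lemma \ref{elementary}~(iii) and the ODE as the essential inputs, and working only with $\xi \in \dom(S_0)$ rather than the stronger $\xi \in \dom(S_0^2)$, which forces the $C^1$ statement to be read in the $\LL$-norm rather than in the graph norm of $S_0$.
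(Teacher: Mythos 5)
Your approach is essentially the same as the paper's: reduce to simple tensors $u = \omega\otimes\xi$, pass through the spectral decomposition of $S_0$ (your direct-integral picture is the same as the paper's functional calculus), invoke the pointwise ODE identities and the uniform-in-$s$ bounds from Lemma~\ref{elementary}, and close with a dominated convergence argument; part~\ref{zwoa} is likewise handled exactly as in the paper, including the observation that one must really take $\omega\in C^1_{\cc}$ for $\partial_r u$ to make sense.

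Your closing caveat, however, is off: you do not need $\xi\in\dom(S_0^2)$, and the $C^1$-statement does hold in the graph norm of $\dom(S_0)$, which is in fact what is needed downstream (Proposition~\ref{lemma:coneParametrix} feeds $\chi_k\psi\PPP u\in C^1_{\cc}((0,1),\dom(S_0))$ into Lemma~\ref{lem:extend_domain}, whose hypothesis is $C^1$ in the graph norm of $\bar S_0$). The point you are missing is that the uniform bounds of Lemma~\ref{elementary}~\ref{ODE3} apply to \emph{both} $\PP\omega(\cdot,r)$ and $\partial_r\PP\omega(\cdot,r)$, so one can run the same dominated convergence argument on the spectral representative $s\mapsto s\,\PP\omega(s,r)\xi(s)$ of $S_0\PPP u(r)$: its difference quotients in $r$ are dominated by $C(K)\,|s|\,|\xi(s)|$, which lies in $L^2(\mu)$ precisely because $\xi\in\dom(S_0)$. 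One obtains that $S_0\PPP u$ is also $C^1$ in $\LL$-norm with $\partial_r(S_0\PPP u)(r)=\partial_r\PP\omega(S_0,r)\,S_0\xi$, and hence $\PPP u\in C^1((0,1),\dom(S_0))$ in the graph norm. Equivalently: $\PP\omega(S_0,r)$ and $\partial_r\PP\omega(S_0,r)$ are bounded operators commuting with $S_0$, so they preserve $\dom(S_0)$ and are bounded in its graph norm, and their $r$-dependence is controlled by the same dominated convergence. With that correction, your proposal matches the paper's proof.
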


\begin{proof}
 By linearity, it suffices to show the claims for simple tensors $u = \omega \otimes \xi \in   C_{\cc}^0((0,1), \CC)\otimes \dom(S_0)$.
 
 We start with the proof of \ref{oans}.
Functional calculus applied to the measurable function $s \PP\omega \colon \sigma(S_0) \times (0,1) \to \CC$, $(s,r) \mapsto s \PP\omega(s,r)$, yields, for each $r \in (0,1)$, an unbounded operator 
\[
   (s\PP\omega)(S_0,r) \colon \LL \supset \dom(S_0)  \to \LL. 
\]
Since $\xi \in \dom(S_0)$, we have $\PPP  u(r) = \PP\omega(S_0, r)\xi  \in \dom(S_0)$ and 
\begin{equation} \label{funny2} 
    S_0 \PPP  u(r) = S_0( \PP\omega(S_0,r) \xi) = (s\PP\omega)(S_0,r)\xi .
\end{equation}
Define $\partial_r \PP\omega \colon \sigma(S_0) \times (0,1) \to \CC$,
\begin{equation} \label{defdel}
       \partial_r \PP\omega (s,r) := \begin{cases} (\PP_{1,s}\omega)' (r) =\omega(r) - \tfrac{s}{r} \PP_{1,s}\omega(r),  & s \in \sigma_-(S_0) , \\
                                                (\PP_{0,s}\omega)'(r) = \omega(r)-\tfrac{s}{r} \PP_{0,s} \omega(r),  & s \in \sigma_+(S_0) . 
                                       \end{cases}
\end{equation}
Since  the integral kernels $K_{0,s}, K_{1,s} \in L^{\infty} ( (0,1)^2, \R)$ from \eqref{kernel0} and  \eqref{kernel1} depend continuously on $s$,  Lemma \ref{elementary} \ref{ODE1} and \ref{ODE2} imply that the function $\partial_r \PP\omega$ is continuous.
Furthermore, by Lemma \ref{elementary} \ref{ODE3}, for each compact subset $K \subset (0,1)$,  the function $\partial_r \PP\omega$  is bounded on  $\sigma(S_0) \times K$.  
Hence, by  the dominated convergence theorem, we obtain $\PPP  u \in C^1((0,1),  \dom(S_0))$ and
\begin{equation} \label{funny3}
    \partial_r \PPP u(r) = \partial_r \PP\omega ( S_0,r) \xi  . 
\end{equation} 
Combining the previous computations, for $r \in (0,1)$, we obtain
\[
   ( \partial_r + \tfrac{1}{r} S_0) \PPP u(r) \stackrel{\eqref{funny2}, \eqref{funny3}}{=} \partial_r \PP\omega(S_0,r)\xi + \tfrac{1}{r} ( s\PP\omega)(S_0,r)\xi   = \big(    \partial_r \PP\omega    + \tfrac{s}{r} \PP\omega \big) (S_0, r)  \stackrel{\eqref{defdel}}{=}      \omega \otimes \xi = u(r) .  
\]
This finishes the proof of \ref{oans}.

For \ref{zwoa}, let $u = \omega \otimes \xi \in C_{\cc}^1((0,1), \CC)\otimes \dom(S_0)$. 
 Define $\PP(\partial_r + \frac{s}{r}) \omega \colon \sigma(S_0) \times (0,1) \to \CC$,
\begin{equation*} 
       \PP (\partial_r + \tfrac{s}{r})\omega (s,\cdot) := \begin{cases} \left( \PP_{1,s} \partial_r \omega + \PP_{1,s} (\tfrac{s}{r} \omega)\right) (\cdot) , & s \in \sigma_-(S_0) , \\
                                                 \left(  \PP_{0,s} \partial_r \omega +  \PP_{0,s} ( \tfrac{s}{r} \omega) \right) (\cdot),  & s \in \sigma_+(S_0) . 
                                       \end{cases}
\end{equation*}
 
By Lemma \ref{elementary}, we have  $\PP (\partial_r + \tfrac{s}{r})\omega (s,\cdot) = \omega(\cdot)$.
Furthermore, by functional calculus,  for $r \in (0,1)$ we get
\[
    \PP (\partial_r + \tfrac{s}{r})\omega (S_0,r) \xi = \partial_r \omega\otimes \xi  + \tfrac1r\omega\otimes S_0\xi =   \left(\partial_r + \tfrac1r S_0\right) u . 
\]
Summarizing, 
  \begin{align*}
    \PPP\left(\partial_r + \tfrac1r S_0\right) u(r) = \left( \PP (\partial_r + \tfrac{s}{r})\omega \right)(S_0,r) \, \xi  = \omega(r) \cdot \xi = (\omega \otimes \xi)(r). 
  \end{align*}
We conclude $\PPP\left(\partial_r + \tfrac1r S_0\right) u = u$, as required.
\end{proof}

\begin{lemma}\label{lemma:BoundOnS0Param}
\begin{enumerate}[start=1,label=\textup{(\roman*)}]
  \item \label{um} There exists  a unique bounded linear operator 
  \[
       \left\{ \tfrac1r S_0\PPP \right\} \colon L^2((0,1), \LL) \to L^2((0,1), \LL)
  \]
  with the following property: 
  Let $u \in C^{0}_{\cc}((0,1), \CC) \otimes \dom(S_0)$. 
  Then the map $\frac{1}{r} S_0 (\PPP u) \colon (0,1) \to \LL$,  which is well defined by Proposition \ref{difficult}, satisfies
  \[
      \tfrac1r S_0 (\PPP u ) =  \left\{ \tfrac1r S_0\PPP \right\} (u)   \quad a.e. 
  \]
Furthermore, we have
\[
   \norm{   \left\{ \tfrac1r S_0\PPP \right\}}_{L^2}  \leq \sup_{ s\in\spec(S_0)} \frac{|s|}{|s + \frac12|}.
\]
 \item \label{dois} There exists  a unique bounded linear operator 
 \[
        \left\{ \PPP \tfrac{1}{r} S_0  \right\} \colon L^2((0,1), \LL) \to L^2((0,1), \LL) 
 \]
 with the following property: 
 Let  $u\in C^{0}_{\cc}((0,1), \CC) \otimes \dom(S_0)$. 
 Then the map  $\tfrac{1}{r} S_0 u \in C^0_{\cc}( (0,1) , \LL)$,  $r \mapsto \tfrac{1}{r}S_0( u(r))$, satisfies
 \[
    \PPP(\tfrac{1}{r} S_0 u) = \left\{ \PPP \tfrac{1}{r} S_0  \right\}(u)      \quad a.e. 
\] 
Furthermore, we have
\[
    \norm{  \left\{ \PPP \tfrac{1}{r} S_0  \right\} }_{L^2} \leq \sup_{s\in \spec(S_0)}  \frac{|s|}{|s - \frac12|} .
\]
\end{enumerate}
\end{lemma}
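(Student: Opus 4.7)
The plan is to follow the template of Lemma \ref{lengthy}. The spectral theorem in direct integral form provides
\[
  \LL = \int_{\sigma(\bar S_0)} \LL_s\, d\mu(s), \qquad \bar S_0 = \int_{\sigma(\bar S_0)} s \cdot \id_{\LL_s}\, d\mu(s),
\]
and, as in the proof of Lemma \ref{lengthy}, $\PPP$ is realised fibrewise through the scalar operators $\mathcal{P}_s$ with integral kernels $K_{1,s}$ for $s \in \sigma_-(S_0)$ and $K_{0,s}$ for $s \in \sigma_+(S_0)$. The task is to pre- or postcompose these with multiplication by $\tfrac{s}{r}$ in a measurable way, verify the norm bounds, and match the resulting operator with the formal composition on the dense subspace $C^0_\cc((0,1),\CC) \otimes \dom(S_0)$.

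For part \ref{um}, I would define for each $s \in \sigma(\bar S_0)$ the bounded operator $\tfrac{s}{r}\mathcal{P}_s$ on $L^2((0,1),\LL_s)$. By Lemma \ref{lem:ConeParamEstimates} \ref{1} (applied for $s \in \sigma_+(S_0)$, where in particular $s > -\tfrac12$) and \ref{3} (for $s \in \sigma_-(S_0)$, where $s < -\tfrac12$), one has $\norm{\tfrac{1}{r}\mathcal{P}_s} \leq |s+\tfrac12|^{-1}$ and hence $\norm{\tfrac{s}{r}\mathcal{P}_s} \leq |s|/|s+\tfrac12|$. Condition \ref{AnSpGap} guarantees that $\sigma(\bar S_0)$ is disjoint from the compact interval $[-\tfrac12,\tfrac12]$, so the supremum over $\sigma(\bar S_0)$ is finite and the fibrewise operators assemble, exactly as in Lemma \ref{lengthy}, into a bounded operator $\{\tfrac{1}{r}S_0\PPP\}$ on $L^2((0,1),\LL)$ with the stated norm bound. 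To check the defining identity on a simple tensor $u = \omega \otimes \xi$ with $\omega \in C^0_\cc((0,1),\CC)$ and $\xi \in \dom(S_0)$, I invoke Proposition \ref{difficult} \ref{oans}, which ensures $\PPP u \in C^1((0,1),\dom(S_0))$; then functional calculus gives $S_0(\PPP u(r)) = S_0(\PP\omega(S_0,r)\xi) = (s\PP\omega)(S_0,r)\xi$, which is exactly the pointwise representative of $\{\tfrac{1}{r}S_0\PPP\}(u)$ produced by the direct integral. Uniqueness is immediate from the density of $C^0_\cc((0,1),\CC) \otimes \dom(S_0)$ in $L^2((0,1),\LL)$.

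For part \ref{dois} the construction is symmetric. The relevant fibrewise operator is $s\mathcal{P}_s\tfrac{1}{r}$, for which Lemma \ref{lem:ConeParamEstimates} \ref{2} and \ref{4} give $\norm{s\mathcal{P}_s\tfrac{1}{r}} \leq |s|/|s-\tfrac12|$; \ref{AnSpGap} again yields a finite supremum. The only point that needs a little care is the input: for $u = \omega \otimes \xi \in C^0_\cc((0,1),\CC) \otimes \dom(S_0)$, the expression $\tfrac{1}{r}S_0 u = \tfrac{1}{r}\omega \otimes S_0\xi$ lies in $C^0_\cc((0,1),\CC) \otimes \LL \subset L^2((0,1),\LL)$ because $\omega$ is compactly supported away from $0$, so $\PPP$ is well defined on it. Through the direct integral, $\PPP(\tfrac{1}{r}S_0 u)$ is the fibrewise action of $\mathcal{P}_s\tfrac{s}{r}$ on $\omega\otimes\xi(s)$, which agrees with $(s\mathcal{P}_s\tfrac{1}{r})(\omega\otimes\xi(s))$; this identifies it with $\{\PPP\tfrac{1}{r}S_0\}(u)$. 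Uniqueness is, as before, by density.

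The main obstacle is little more than bookkeeping: the norm estimates are supplied by Lemma \ref{lem:ConeParamEstimates}, the pointwise identity on the dense subspace is controlled by Proposition \ref{difficult}, and the spectral direct integral machinery from Lemma \ref{lengthy} is reused verbatim. The only subtlety is to check that the relevant families are measurable in $s$, which is clear from the continuous dependence of the kernels $K_{0,s}, K_{1,s}$ on $s$ that was already exploited earlier.
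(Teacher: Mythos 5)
Your argument matches the paper's proof essentially verbatim: both construct the operators fibrewise via the direct integral decomposition from Lemma \ref{lengthy}, apply Lemma \ref{lem:ConeParamEstimates} \ref{1}, \ref{3} (for part \ref{um}) and \ref{2}, \ref{4} (for part \ref{dois}) to obtain the norm bounds, and appeal to density for uniqueness. You are somewhat more explicit than the paper in checking the pointwise identities on simple tensors via Proposition \ref{difficult} and functional calculus, but this is exactly what the paper's ``the remaining assertions are immediate'' refers to.
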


\begin{proof}
Let 
\[
    u = u(s)   \in  L^2((0,1), \LL)  \in   \int_{\spec(S_0)} L^2((0,1), \LL_s)  \, d \mu(s).
\]

For \ref{um}, we define the measurable family $ \left\{ \tfrac1r S_0\PPP \right\}(u) \in ( L^2((0,1), \LL_s))_{s \in \sigma(S_0)}$ by
\[
    \left\{ \tfrac1r S_0\PPP \right\}(u)(s) :=     \tfrac{1}{r} \,s \,  \mathcal{P}_s  (u(s)) .
\]
Using Lemma \ref{lem:ConeParamEstimates} \ref{1} and \ref{3}, we obtain 
\[
   \| \left\{ \tfrac1r S_0\PPP \right\}(u)  \|_{L^2}^2 = \int_{\spec(S_0)} |s|^2 \left| \tfrac{1}{r}\mathcal{P}_s(u(s))\right|^2   d\mu(s)   \leq \left(\sup_{ s\in\spec(S_0)} \frac{|s|}{|s + \frac12|}\right)^2 \|u\|_{L^2}^2.
\]
Hence  $\left\{ \tfrac1r S_0\PPP \right\}$ defines a bounded linear operator on $L^2((0,1), \LL)$.
The remaining assertions of \ref{um} are immediate.

For \ref{dois}, we define the measurable family $ \left\{ \PPP \tfrac{1}{r} S_0  \right\} (u) \in ( L^2((0,1), \LL_s))_{s \in \sigma(S_0)}$ by
\[
   \left\{ \PPP \tfrac{1}{r} S_0  \right\} (u)(s)   :=      \,s \,  \mathcal{P}_s  (\tfrac{1}{r} u(s)) .
\]
Using Lemma \ref{lem:ConeParamEstimates} \ref{2} and \ref{4}, we obtain 
\[
   \| \left\{ \tfrac1r S_0\PPP \right\}(u)  \|_{L^2}^2 = \int_{\spec(S_0)} |s|^2 \left|\mathcal{P}_s ( \tfrac{1}{r}u(s))\right|^2   d\mu(s)   \leq \left(\sup_{ s\in\spec(S_0)} \frac{|s|}{|s - \frac12|}\right)^2 \|u\|_{L^2}^2.
\]
Hence  $\left\{ \tfrac1r S_0\PPP \right\}$ defines a bounded linear operator on $L^2((0,1), \LL)$.
The remaining assertions of \ref{dois} are immediate.
\end{proof}

Lemma \ref{lemma:BoundOnS0Param} has the following immediate consequence for the operator $S_1$.

\begin{lemma}\label{cor:BoundOnParamS1}
Define non-negative real numbers
\begin{align*}
     C_1 & :=  \norm{S_1 S_0^{-1}}_{L^{\infty}((0,\thet), \boundedops(\LL))}  \cdot \sup_{ s\in\spec(S_0)} \frac{|s|}{|s + \frac12|}, \\
     C_2 & := \norm{S_0^{-1} S_1}_{L^{\infty}((0,\thet), \boundedops(\LL)))}  \cdot \sup_{s\in \spec(S_0)}  \frac{|s|}{|s - \frac12|} .
\end{align*}
\begin{enumerate}[start=1,label=\textup{(\roman*)}]
  \item \label{egy} Consider the bounded operator 
 \[
      \left\{ \tfrac1r S_1\PPP \right\} := S_1 S_0^{-1} \circ  \left\{ \tfrac1r S_0 \PPP \right\} \colon L^2((0,\thet), \LL) \to L^2((0,\thet), \LL).
 \]
 Then  $\norm{\left\{ \tfrac1r S_1\PPP \right\}}_{L^2} \leq C_1$. 
 Furthermore, the following holds: Let $u  \in C^0_{\cc}((0,\thet), \CC) \otimes \dom(S_0)$ and consider the map $ \tfrac{1}{r} S_1 (\PPP u) \colon (0,\thet) \to \LL$ which is  well defined by Proposition \ref{difficult}.
Then 
\[
     \tfrac{1}{r} S_1 (\PPP u) =  \left\{ \tfrac1r S_1\PPP \right\} (u)  \qquad a.e.
 \]
 \item \label{ket} Consider the bounded operator 
 \[
       \left\{ \PPP \tfrac{1}{r} S_1  \right\} := \left\{ \PPP \tfrac{1}{r} S_0  \right\} \circ S_0^{-1} S_1 \colon L^2((0,\thet), \LL) \to L^2((0,\thet), \LL) .
 \]
Then  $\norm{\left\{ \PPP \tfrac{1}{r} S_1  \right\}}_{L^2} \leq C_2$. 
 Furthermore, the following holds: Let $u \in C^0_{\cc}((0,\thet), \CC) \otimes \dom(S_0)$ and consider the map $ \tfrac{1}{r} S_1 u  \in L^2((0,\thet), \LL)$. 
Then 
\[
    \PPP( \tfrac{1}{r} S_1 u) = \left\{ \PPP \tfrac{1}{r} S_1  \right\} (u) \qquad a.e. 
\]
\end{enumerate}
\end{lemma}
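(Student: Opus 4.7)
Both parts of the lemma are essentially corollaries of Lemma \ref{lemma:BoundOnS0Param}, obtained by factoring $S_1$ through $S_0$ using one of the identities $S_1 = (S_1 S_0^{-1}) S_0$ or $S_1 = S_0 (S_0^{-1} S_1)$. The norm bounds then follow from submultiplicativity together with the essential $L^\infty$-bounds on $S_1 S_0^{-1}$ and $S_0^{-1} S_1$, and the pointwise identities for $u$ in the algebraic tensor product reduce to the corresponding identities proved in Lemma \ref{lemma:BoundOnS0Param}.

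For part \ref{egy}, I would first observe that since $S_1 S_0^{-1} \in L^\infty((0,\thet), \boundedops(\LL))$, pointwise multiplication by $S_1 S_0^{-1}$ defines a bounded operator on $L^2((0,\thet), \LL)$ with norm at most $\norm{S_1 S_0^{-1}}_{L^\infty((0,\thet), \boundedops(\LL))}$. Since $\left\{ \tfrac1r S_1 \PPP \right\}$ is defined as the composition $(S_1 S_0^{-1}) \circ \left\{ \tfrac1r S_0 \PPP \right\}$, submultiplicativity of the operator norm combined with the estimate $\norm{\left\{ \tfrac1r S_0 \PPP \right\}}_{L^2} \leq \sup_{s \in \spec(S_0)} |s|/|s+\tfrac{1}{2}|$ from Lemma \ref{lemma:BoundOnS0Param} \ref{um} yields the bound $C_1$. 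For the a.e.\ identity, I would take $u \in C^0_\cc((0,\thet), \CC) \otimes \dom(S_0)$ and invoke Proposition \ref{difficult} \ref{oans}, which gives $\PPP u \in C^1((0,\thet), \dom(S_0))$, so that $S_0(\PPP u)(r) \in \LL$ makes pointwise sense. Writing $S_1(r) v = (S_1(r) S_0^{-1}) S_0 v$ for $v \in \dom(S_0)$, I would obtain
\[
  \tfrac{1}{r} S_1(r) (\PPP u)(r) = (S_1(r) S_0^{-1}) \cdot \tfrac{1}{r} S_0 (\PPP u)(r) \quad \text{for every } r \in (0,\thet),
\]
and Lemma \ref{lemma:BoundOnS0Param} \ref{um} identifies $\tfrac{1}{r} S_0 (\PPP u) = \left\{ \tfrac1r S_0 \PPP \right\}(u)$ a.e. Applying the bounded operator $S_1 S_0^{-1}$ to both sides completes the identification.

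Part \ref{ket} proceeds analogously via the dual factorization $S_1 u = S_0 (S_0^{-1} S_1 u)$. The boundedness of $\left\{ \PPP \tfrac{1}{r} S_1 \right\}$ with norm bound $C_2$ is again a composition estimate using Lemma \ref{lemma:BoundOnS0Param} \ref{dois} and the essential bound on $S_0^{-1} S_1$. For the a.e.\ identity, I would take $u \in C^0_\cc((0,\thet), \CC) \otimes \dom(S_0)$, set $v := S_0^{-1} S_1 u$, and use $\tfrac{1}{r} S_1 u = \tfrac{1}{r} S_0 v$ pointwise (valid since $v(r) \in \dom(\bar S_0)$). The remaining step is the identity $\PPP(\tfrac{1}{r} S_0 v) = \left\{ \PPP \tfrac{1}{r} S_0 \right\}(v)$, which Lemma \ref{lemma:BoundOnS0Param} \ref{dois} supplies when $v$ belongs to the tensor product. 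Our $v$ need not lie there, but it sits in $L^2((0,\thet), \dom(\bar S_0))$ with compact support strictly inside $(0,\thet)$, and I would extend the identity to $v$ by a density argument: approximate $v$ by a sequence $v_k \in C^0_\cc((0,\thet), \CC) \otimes \dom(S_0)$ in the graph norm of $\bar S_0$, with uniformly compact supports inside $(0,\thet)$, in the manner of Lemma \ref{lem:extend_domain}. On the common compact support, $\tfrac{1}{r}$ is bounded, so $\tfrac{1}{r} S_0 v_k \to \tfrac{1}{r} S_0 v$ in $L^2$; boundedness of $\PPP$ and of $\left\{ \PPP \tfrac{1}{r} S_0 \right\}$ then promotes the identity from $v_k$ to $v$.

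No step presents a serious difficulty; the entire proof is a composition argument mediated by Lemma \ref{lemma:BoundOnS0Param}. The only mildly technical point is the density extension in part \ref{ket}, which amounts to a routine approximation on compact subsets of $(0,\thet)$.
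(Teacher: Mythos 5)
The paper offers no proof of this lemma---it simply declares it an ``immediate consequence'' of Lemma \ref{lemma:BoundOnS0Param}---so there is no argument in the paper to compare against; you are filling a gap that the authors left implicit. Your proof is correct. Part \ref{egy} really is immediate in the way you describe: since $\PPP u \in C^1((0,\thet),\dom(S_0))$ by Proposition \ref{difficult}, the pointwise factorization $S_1(r) = (S_1(r)S_0^{-1})\,S_0$ on $\dom(S_0)$ combined with Lemma \ref{lemma:BoundOnS0Param}\ref{um} and boundedness of the multiplication operator $S_1 S_0^{-1}$ on $L^2((0,\thet),\LL)$ does the whole job, and the norm bound is submultiplicativity.

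For part \ref{ket}, you correctly identify the one genuinely non-immediate point: $v := S_0^{-1}S_1 u$ has compact support and values in $\dom(\bar S_0)$ but does not lie in the algebraic tensor product $C^0_\cc((0,\thet),\CC)\otimes\dom(S_0)$, so Lemma \ref{lemma:BoundOnS0Param}\ref{dois} does not apply to it directly. Your density argument resolves this cleanly: approximate $v$ in $L^2((0,\thet),\dom(\bar S_0))$ (graph norm) by $v_k\in C^0_\cc((0,\thet),\CC)\otimes\dom(S_0)$ with supports uniformly confined to a fixed compact subset of $(0,\thet)$ (achievable by first approximating in the Hilbert space $L^2((0,\thet),\dom(\bar S_0))$, using density of $\dom(S_0)\subset\dom(\bar S_0)$, and then multiplying by a fixed bump function); the uniform compactness keeps $\tfrac1r$ bounded so that $\tfrac1r S_0 v_k \to \tfrac1r S_0 v$ in $L^2$, and boundedness of $\PPP$ and of $\{\PPP\tfrac1r S_0\}$ transports the identity from $v_k$ to $v$. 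An alternative, slightly shorter route would be to argue directly from the direct-integral formula $\{\PPP\tfrac1r S_0\}(w)(s)=s\,\mathcal P_s(\tfrac1r w(s))$ established in the proof of Lemma \ref{lemma:BoundOnS0Param}, which gives the identity $\PPP(\tfrac1r S_0 w)=\{\PPP\tfrac1r S_0\}(w)$ for any compactly supported $w$ with values in $\dom(\bar S_0)$ without a separate approximation step; but your density argument is self-contained and equally valid.
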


\begin{lemma} \label{little_shit} 
The operator 
\[
      \left\{\tfrac{1}{r}\PPP\right\} := S_0^{-1} \circ  \left\{\tfrac{1}{r} S_0 \PPP\right\} \colon L^2((0,1), \LL) \to L^2((0,1), \LL) 
 \]
 is bounded. 
 Furthermore $\PPP = r   \left\{\tfrac{1}{r}\PPP\right\}$.
\end{lemma}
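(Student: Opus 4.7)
The plan is to deduce both parts directly from what is already known, with essentially no new analysis: boundedness is a composition statement, and the identity $\PPP = r\left\{\tfrac{1}{r}\PPP\right\}$ is verified on a dense subset where everything is pointwise and lives in $\dom(S_0)$.

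For boundedness, I would first note that $\left\{\tfrac{1}{r}S_0\PPP\right\}$ is a bounded operator on $L^2((0,1),\LL)$ by Lemma \ref{lemma:BoundOnS0Param}\ref{um}. Since the spectral gap assumption of this subsection gives a bounded inverse $\bar{S}_0^{-1}\colon \LL \to \dom(\bar{S}_0)\subset\LL$, pointwise application of $\bar{S}_0^{-1}$ defines a bounded operator on $L^2((0,1),\LL)$ of the same norm. The composition $S_0^{-1}\circ \left\{\tfrac{1}{r}S_0\PPP\right\}$ is therefore bounded, proving the first assertion.

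For the identity $\PPP = r\left\{\tfrac{1}{r}\PPP\right\}$, I would test on the dense algebraic tensor product $C^0_\cc((0,1),\CC)\otimes\dom(S_0)$. For a simple tensor $u=\omega\otimes\xi$, Proposition \ref{difficult} \ref{oans} gives $\PPP u \in C^1((0,1),\dom(S_0))$, so the pointwise expression $\tfrac{1}{r}S_0(\PPP u)(r)$ makes sense for every $r\in(0,1)$. By Lemma \ref{lemma:BoundOnS0Param}\ref{um}, this pointwise expression agrees a.e.\ with $\left\{\tfrac{1}{r}S_0\PPP\right\}u$. Applying $S_0^{-1}$ pointwise and using that $S_0$ and $S_0^{-1}$ are mutual inverses on $\dom(S_0)$, we obtain
\[
  \left\{\tfrac{1}{r}\PPP\right\}u(r) \;=\; S_0^{-1}\bigl(\tfrac{1}{r}S_0\,\PPP u(r)\bigr) \;=\; \tfrac{1}{r}\,\PPP u(r) \qquad \text{a.e.}
\]
Multiplying by $r$ yields $r\left\{\tfrac{1}{r}\PPP\right\}u = \PPP u$ a.e.\ on this dense subset.

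Finally, I would extend by continuity: multiplication by $r$ is a bounded operator on $L^2((0,1),\LL)$ (indeed with norm at most $1$, since $r\in(0,1)$), and both $\PPP$ and $\left\{\tfrac{1}{r}\PPP\right\}$ are bounded on $L^2((0,1),\LL)$ by Lemma \ref{lengthy} and the first part of the present lemma, respectively. Since $C^0_\cc((0,1),\CC)\otimes\dom(S_0)$ is dense in $L^2((0,1),\LL)$, the identity $\PPP = r\left\{\tfrac{1}{r}\PPP\right\}$ extends to all of $L^2((0,1),\LL)$. I do not anticipate any real obstacle here; the only point that requires a moment's care is the pointwise/a.e.\ bookkeeping when passing from the classical identity $S_0^{-1}S_0=\id$ on $\dom(S_0)$ to an $L^2$-operator identity, which is handled cleanly by testing on simple tensors where $\PPP u$ is a bona fide $C^1$-function into $\dom(S_0)$.
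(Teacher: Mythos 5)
Your proof is correct and follows the same route the paper takes, only with more of the bookkeeping written out: the paper's proof is a two-liner that calls the identity ``obvious'' on $C^0_\cc((0,1),\CC)\otimes\dom(S_0)$ and then invokes density, and you are simply unpacking why it is obvious there (via Proposition \ref{difficult}\ref{oans} and Lemma \ref{lemma:BoundOnS0Param}\ref{um}) before extending by continuity. No gaps.
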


\begin{proof} The first assertion holds since $S_0^{-1} \colon \LL \to \dom(S_0) \to \LL$ is bounded.
The second claim is obvious for  $u \in C_c((0,1), \CC) \otimes \dom(S_0)$ and follows in general since $C_c((0,1), \CC) \otimes \dom(S_0) \subset L^2((0,1), \LL)$ is dense.
\end{proof}

\begin{lemma}\label{lem:DefFunction}
For each $0 < \eps < 1$,  there is a $C^\infty$-function $\tau_{\eps}\in C_{\cc,0}^{\infty} (  [0,\infty), [0,1])$ such that
\[ 
   \supp(\tau_{\eps}) \subset [0,\eps], \qquad  \sup_{ r \in (0,\eps) } \big|r \cdot  \tau'_{\eps}(r)\big| \le \eps .
\]
\end{lemma}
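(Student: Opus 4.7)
The plan is to construct $\tau_\eps$ by a logarithmic reparametrization of a standard cutoff. The naive approach of rescaling a fixed bump function supported in $[0,1]$ to one supported in $[0,\eps]$ produces $|\tau'_\eps|$ of order $1/\eps$, hence $|r\tau'_\eps(r)|$ of order $1$ near $r=\eps$, which is too large. The key observation is that $\int_{\delta}^{\eps} dr/r = \log(\eps/\delta)$ is unbounded as $\delta \to 0^+$, so one can spread the transition from $1$ to $0$ across a logarithmically long interval, absorbing the factor of $r$.

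Concretely, I would fix once and for all a smooth auxiliary function $\phi \in C^\infty(\RR, [0,1])$ with $\phi \equiv 1$ on $(-\infty,0]$ and $\phi \equiv 0$ on $[1,\infty)$, and set $M := \norm{\phi'}_\infty$. Given $0 < \eps < 1$, I would then choose $\delta_1 := \eps$ and $\delta_2 := \eps \cdot e^{-M/\eps}$, so that $0 < \delta_2 < \delta_1 = \eps$, and define
\[
\tau_\eps(r) := \begin{cases} 1, & 0 \leq r \leq \delta_2, \\[2pt] \phi\!\left( \dfrac{\log(r/\delta_2)}{\log(\delta_1/\delta_2)} \right), & \delta_2 \leq r \leq \delta_1, \\[6pt] 0, & r \geq \delta_1. \end{cases}
\]
Since $\phi$ is constant near $0$ and $1$, the three pieces glue to a smooth function; the conditions $\tau_\eps \equiv 1$ near $r=0$, $\tau_\eps \in [0,1]$, and $\supp\tau_\eps \subset [0,\eps]$ are then immediate, giving $\tau_\eps \in C^\infty_{\cc,0}([0,\infty),[0,1])$.

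Finally, on the transition interval $(\delta_2, \delta_1)$ the chain rule gives
\[
\tau'_\eps(r) = \phi'\!\left(\frac{\log(r/\delta_2)}{\log(\delta_1/\delta_2)}\right) \cdot \frac{1}{r\,\log(\delta_1/\delta_2)},
\]
and since $\log(\delta_1/\delta_2) = M/\eps$ by construction, one obtains $|r\,\tau'_\eps(r)| \leq M/(M/\eps) = \eps$. Outside $(\delta_2,\delta_1)$ the derivative vanishes, so the estimate $\sup_{r\in(0,\eps)} |r\,\tau'_\eps(r)| \leq \eps$ holds globally. There is no real obstacle beyond recognizing that \emph{logarithmic} (rather than linear) interpolation between $1$ and $0$ is needed to absorb the factor of $r$; once $\delta_2$ is chosen exponentially smaller than $\delta_1$ in terms of $1/\eps$, the computation is direct.
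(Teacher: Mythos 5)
Your proof is correct, and it rests on the same mechanism the paper uses: a logarithmic cutoff whose transition from $1$ to $0$ is spread over an interval of logarithmic length of order $1/\eps$, so that the factor $r$ is absorbed by the derivative $\frac{1}{r\log(\delta_1/\delta_2)}$ of the reparametrization. The only difference is presentational: the paper invokes the cutoff functions of B\"ar--Hanke (Lemma 2.8 there), which come with the bound $|\tau'_{\delta,\eps}(r)|\le C_1\, r^{-1}|\ln\delta|^{-1}$, and then simply tunes $\delta=\min\{e^{-C_1/\eps},\tfrac18\}$, whereas you build the logarithmic cutoff explicitly; the underlying computation is identical.
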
 

\begin{proof} By \cite{BaerHanke}*{Lemma 2.8}, for each $0 < \delta < \tfrac{1}{4}$ and $0 < \eps < 1$, there exists a $C^{\infty}$-function $\tau_{\delta, \eps} \colon \R \to \R$ with the following properties:
\begin{enumerate}[label=\textup{(\roman*)}]
\item\label{eq:tauepsi}
$\tau_{\delta,\eps}(r)  =1$ for $r\le \delta \eps$;
\item\label{eq:tauepsii}
$\tau_{\delta , \eps}(r)  =0$ for $r\ge\eps $;
\item
$0\le \tau_{\delta, \eps} \le 1$ everywhere;
\item\label{eq:tauepsiv}
there is a constant $C_1 >0$ such that $\big|\tau_{\delta, \eps}'(r)\big| \leq C_1 \cdot r^{-1} \cdot | \ln \delta|^{-1}$ for all $r > 0$.
\end{enumerate}
Finally, set  $\tau_{\eps} := \tau_{\delta, \eps}$ with $\delta = \min\{ \exp(-C_1/ \eps), \tfrac{1}{8}\}$.
\end{proof}

\begin{proposition}\label{lemma:coneParametrix} 
    Let $\psi\in C_\cc^\infty([0,\thet), \RR)$. 
    Then, for every $u\in L^2((0,\thet),\LL)$, we have $\psi \PPP u \in \dom(\bar \Keg )$ and
    \begin{equation*}
      \bar \Keg \psi \PPP u = \psi \cdot u + \psi \cdot  \left\{ \tfrac{1}{r}S_1 \PPP  \right\} u +  \psi'  \PPP  u.
    \end{equation*}
    In particular, $u \mapsto \psi \PPP u$ defines a bounded operator $L^2((0,\thet), \LL) \to \dom(\bar \Keg)$.
\end{proposition}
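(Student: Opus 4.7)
The plan is to verify the Leibniz-type identity first for $u$ in the dense algebraic tensor product $C^0_\cc((0,\thet), \CC) \otimes \dom(S_0)$, where $\PPP u$ is genuinely $C^1$ with values in $\dom(S_0)$ by Proposition \ref{difficult}, and then extend to general $u \in L^2((0,\thet), \LL)$ by density and closedness of $\bar\Keg$. The subtlety is that $\psi$ only has compact support in $[0,\thet)$, so $\psi \PPP u$ is not compactly supported away from the cone tip, and Lemma \ref{lem:extend_domain} does not apply directly. I will therefore introduce the cut-off $\tau_\eps$ of Lemma \ref{lem:DefFunction} and work with $(1-\tau_\eps)\psi \PPP u$, which is compactly supported in $(0,\thet)$.

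For $u \in C^0_\cc((0,\thet), \CC) \otimes \dom(S_0)$, the function $v_\eps := (1-\tau_\eps) \psi \PPP u$ lies in $C^1_\cc((0,\thet), \dom(\bar S_0))$, hence belongs to $\dom(\bar \Keg)$ by Lemma \ref{lem:extend_domain}. Applying the product formula \eqref{prod_form} together with Proposition \ref{difficult}\ref{oans} gives
\[
  \bar \Keg v_\eps = -\tau'_\eps \psi \PPP u + (1-\tau_\eps)\bigl( \psi' \PPP u + \psi \cdot u + \psi \cdot \tfrac{1}{r} \bar S_1 \PPP u \bigr).
\]
By Lemma \ref{cor:BoundOnParamS1}\ref{egy}, $\tfrac{1}{r} \bar S_1 \PPP u = \bigl\{\tfrac{1}{r} S_1 \PPP\bigr\} u$ almost everywhere, and this function is in $L^2((0,\thet), \LL)$. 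As $\eps \to 0$, dominated convergence yields $v_\eps \to \psi \PPP u$ in $L^2((0,\thet), \LL)$, and the three terms multiplied by $(1-\tau_\eps)$ converge in $L^2$ to $\psi' \PPP u + \psi \cdot u + \psi \cdot \bigl\{\tfrac{1}{r} S_1 \PPP\bigr\} u$.

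The main obstacle is controlling the error term $\tau'_\eps \psi \PPP u$ in $L^2$. Here Lemma \ref{little_shit} is decisive: writing $\PPP u = r \cdot \bigl\{ \tfrac{1}{r}\PPP\bigr\} u$ with $\bigl\{\tfrac{1}{r}\PPP\bigr\} u \in L^2((0,\thet), \LL)$, we obtain
\[
  \|\tau'_\eps \psi \PPP u\|_{L^2} = \bigl\|(r\tau'_\eps) \psi \bigl\{\tfrac{1}{r}\PPP\bigr\} u \bigr\|_{L^2} \le \eps \, \|\psi\|_\infty \, \bigl\|\bigl\{\tfrac{1}{r}\PPP\bigr\} u\bigr\|_{L^2},
\]
using the bound $|r \tau'_\eps(r)| \le \eps$ from Lemma \ref{lem:DefFunction}. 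This tends to $0$ as $\eps \to 0$. Since $\bar \Keg$ is closed, we conclude $\psi \PPP u \in \dom(\bar \Keg)$ with the asserted formula.

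Finally, for general $u \in L^2((0,\thet), \LL)$, choose $u_n \in C^0_\cc((0,\thet), \CC) \otimes \dom(S_0)$ with $u_n \to u$ in $L^2$. By the boundedness of $\PPP$ (Lemma \ref{lengthy}) and of $\bigl\{\tfrac{1}{r} S_1 \PPP\bigr\}$ (Lemma \ref{cor:BoundOnParamS1}\ref{egy}), each term on the right hand side converges in $L^2$ when $u$ is replaced by $u_n$, as does $\psi \PPP u_n \to \psi \PPP u$. Another application of closedness of $\bar \Keg$ delivers the identity for all $u \in L^2((0,\thet), \LL)$, and the graph-norm boundedness of $u \mapsto \psi \PPP u$ is then immediate from the uniform bounds on $\PPP$, $\bigl\{\tfrac{1}{r} S_1 \PPP\bigr\}$, and multiplication by $\psi$ and $\psi'$.
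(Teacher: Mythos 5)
Your proposal is correct and follows essentially the same route as the paper: the paper also introduces the cutoff $\chi_k := 1-\tau_{1/k}$ from Lemma \ref{lem:DefFunction}, applies Lemma \ref{lem:extend_domain} to $\chi_k \psi \PPP u \in C^1_\cc((0,\thet),\dom(S_0))$, uses Proposition \ref{difficult} and the product formula to compute $\bar\Keg(\chi_k\psi\PPP u)$, controls the error term $\chi_k'\psi\PPP u = (r\chi_k')\psi\{\tfrac1r\PPP\}u$ via Lemma \ref{little_shit} and the bound $|r\tau'_\eps|\le\eps$, and then closes with a density argument using Lemma \ref{lengthy} and Lemma \ref{cor:BoundOnParamS1}. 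The only difference is cosmetic ($\eps\to0$ versus $k\to\infty$).
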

  
\begin{proof}
    First, let $u \in C_{\cc}^0((0,\thet), \CC) \otimes \dom(S_0)$.
    For $k \geq 1$, put $\chi_k :=1-\tau_{1/k} \colon C^{\infty}((0,\thet), [0,1])$ where $\tau_{1/k}$ is taken from  Lemma \ref{lem:DefFunction}.
    
    Due to  Proposition \ref{difficult} \ref{oans}, we have $\chi_k \psi \PPP u \in C_{\cc}^1((0,\thet), \dom(S_0))$ and $(\partial_r + \tfrac{1}{r} S_0) \PPP u = u$.
   With Lemma \ref{lem:extend_domain}, we get
    \begin{align} 
        \bar \Keg \chi_k \psi \PPP u &=  \left( \partial_r + \tfrac1r S_0\right) \chi_k \psi \PPP u + \tfrac1r S_1(r) \chi_k \psi \PPP u \\
      &= \chi_k \psi u + \chi'_k \psi \PPP u + \chi_k \psi' \PPP  u + \tfrac1r S_1(r) \chi_k \psi   \PPP u    \nonumber \\
      & = \chi_k \psi u + (r \cdot \chi'_k) \psi \left( \tfrac{1}{r} \PPP \right)u + \chi_k \psi' \PPP  u + \chi_k \psi  (\tfrac1r S_1(r) )  \PPP u \nonumber . 
    \end{align}
Taking the limit $k\to\infty$ and using Lemma \ref{little_shit}, we conclude
that $\psi \PPP u \in \dom(\bar \Keg)$ and
 \begin{equation} \label{firstcase} 
    \bar \Keg \psi \PPP u =  \psi u +  \psi' \PPP  u + \psi  \left\{\tfrac1r S_1(r)   \PPP\right\} u .
\end{equation}
     
    Now let $u\in L^2((0,\thet),\LL)$.
   Pick  a sequence $u_n  \in C_\cc^0((0,\thet), \CC) \otimes \dom(S_0)$ such that $u_n \stackrel{n\to  \infty}{\longrightarrow} u$  in $L^2((0,\thet),\LL)$. 
    Since, by Lemma \ref{lengthy} and \ref{cor:BoundOnParamS1}, $\PPP$ and $\left\{\tfrac1r S_1(r) \PPP\right\}$ are bounded operators on $L^2((0,\thet),\LL)$, the sequence
    \[
     \bar \Keg \psi \PPP u_n =  \psi u_n +  \psi' \PPP  u_n + \psi  \left\{\tfrac1r S_1(r)   \PPP\right\} u_n
    \]
    converges in $L^2((0,\thet), \LL)$ to $\psi u +  \psi' \PPP  u + \psi  \left\{\tfrac1r S_1(r)   \PPP\right\} u$. 
    
    The last assertion of Proposition \ref{lemma:coneParametrix} is immediate.
\end{proof}

\begin{proposition}\label{lem:BoundParamMaxExt}
	Let  $u\in \dom(\bar \Keg) \subset L^2((0,\thet), \LL)$.
	Then
	\begin{equation*}
		\PPP  \bar \Keg u =  u + \left\{ \PPP  \tfrac{1}{r} S_1 \right\}u .
	\end{equation*}
\end{proposition}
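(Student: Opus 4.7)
The plan is to verify the claimed identity on a dense subspace where both sides can be computed termwise, and then extend by $L^2$-continuity of the bounded operators $\PPP$ and $\left\{\PPP \tfrac{1}{r} S_1\right\}$.

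\textbf{Step 1 (Identity on the algebraic tensor product).} For $u \in C_\cc^1((0,\thet), \CC) \otimes \dom(S_0)$, decompose $\Keg u = (\partial_r + \tfrac{1}{r} S_0) u + \tfrac{1}{r} S_1 u$ and apply $\PPP$. Proposition \ref{difficult}\ref{zwoa} handles the first summand, giving $\PPP(\partial_r + \tfrac{1}{r} S_0) u = u$, while Lemma \ref{cor:BoundOnParamS1}\ref{ket} handles the second, giving $\PPP(\tfrac{1}{r} S_1 u) = \left\{\PPP \tfrac{1}{r} S_1\right\} u$ almost everywhere. Adding yields
\[
\PPP \Keg u = u + \left\{\PPP \tfrac{1}{r} S_1\right\} u.
\]

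\textbf{Step 2 (Extension to $C_\cc^\infty((0,\thet), \dom(S_0))$).} Given $u \in C_\cc^\infty((0,\thet), \dom(S_0))$ with $\supp u \subset [c, \thet - c]$, the gap is that $u$ need not be a finite sum of simple tensors. I would adapt the finite-dimensional-projection-plus-mollification construction from the proof of Lemma \ref{lem:extend_domain}: the images $u(\supp u)$ and $\partial_r u(\supp u)$ are compact in $\dom(\bar S_0)$ and thus can be jointly $\eps$-approximated by a linear projection $\pi$ onto a finite-dimensional subspace $V \subset \dom(S_0)$; mollification then yields a sequence $u_k \in C_\cc^\infty((0,\thet), \CC) \otimes \dom(S_0)$, with supports in a common compact subset of $(0,\thet)$, converging to $u$ in $C^1((0,\thet), \dom(\bar S_0))$. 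Since $\tfrac{1}{r}$ and $\bar S_1(r)$ are uniformly bounded on this support, $\Keg u_k \to \Keg u$ in $L^2((0,\thet), \LL)$, and the identity of Step 1 passes to the limit through the $L^2$-bounded operator $\PPP$.

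\textbf{Step 3 (Extension to $\dom(\bar \Keg)$).} By definition of the closure, for $u \in \dom(\bar \Keg)$ there is a sequence $u_n \in C_\cc^\infty((0,\thet), \dom(S_0))$ with $u_n \to u$ and $\Keg u_n \to \bar \Keg u$ in $L^2((0,\thet), \LL)$. Apply Step 2 to each $u_n$:
\[
\PPP \Keg u_n = u_n + \left\{\PPP \tfrac{1}{r} S_1\right\} u_n.
\]
Letting $n \to \infty$ and invoking the $L^2$-boundedness of $\PPP$ (Lemma \ref{lengthy}) on the left and of $\left\{\PPP \tfrac{1}{r} S_1\right\}$ (Lemma \ref{cor:BoundOnParamS1}\ref{ket}) on the right produces the desired identity $\PPP \bar \Keg u = u + \left\{\PPP \tfrac{1}{r} S_1\right\} u$.

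The main obstacle is Step 2: although the formal calculation is immediate from the two earlier results, their precise hypotheses only cover the algebraic tensor product $C_\cc^1((0,\thet), \CC) \otimes \dom(S_0)$, so one must genuinely bridge this to the space of compactly supported smooth sections with values in $\dom(S_0)$. Fortunately this is exactly the kind of finite-dimensional approximation carried out in Lemma \ref{lem:extend_domain}, so once that technique is imported the rest of the argument is a direct continuity passage.
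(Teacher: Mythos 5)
Your proposal is correct and follows essentially the same route as the paper's proof: verify the identity on the algebraic tensor product $C_\cc^\infty((0,\thet),\CC)\otimes\dom(S_0)$ using Proposition \ref{difficult}\ref{zwoa} and Lemma \ref{cor:BoundOnParamS1}\ref{ket}, then extend to $\dom(\bar\Keg)$ by graph-norm density and $L^2$-boundedness of $\PPP$ and $\left\{\PPP\tfrac{1}{r}S_1\right\}$. The paper compresses your Steps~2 and~3 into a single sentence, implicitly relying on the fact (supplied by the proof of Lemma \ref{lem:extend_domain}, whose approximating sequence lands in $C_\cc^\infty((0,\thet),\CC)\otimes\dom(S_0)$) that the algebraic tensor product is graph-norm dense in $\dom(\bar\Keg)$; your explicit intermediate step through $C_\cc^\infty((0,\thet),\dom(S_0))$ makes that same point more carefully.
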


\begin{proof}
  First, let $u\in C_\cc^\infty((0,\thet), \CC)\otimes \dom(S_0)$. 
  Then we get with Proposition \ref{difficult} \ref{zwoa}
  \begin{equation*}
    \PPP \bar \Keg u = \PPP(\partial_r + \tfrac1rS_0)u + \PPP\tfrac1r S_1 u = u + \PPP \tfrac1r S_1 u.
  \end{equation*}
  Now let $u\in \dom(\bar \Keg)$ and choose a sequence $u_n$ in $C_\cc^\infty((0,\thet),\CC)\otimes \dom(S_0)$ converging to $u$ with respect to the graph norm on $\dom(\bar\Keg)$. 
  Using that $\PPP$ and $\left\{ \PPP\tfrac1r S_1 \right\}$ are bounded operators on $L^2((0,\thet),\LL)$, we conclude the proof. 
\end{proof}

\subsection{Small perturbations} \label{small_pert}

We work in the setting of Subsection \ref{cone_para}, but now also assume \ref{boundedperturbtwo}, i.e., 
\begin{align}
    \label{small1}     \norm{S_1 S_0^{-1}}_{L^{\infty}((0,\thet), \boundedops(\LL))} &  \leq \inf_{ s\in\spec(S_0)} \left| \frac{2s + 1}{4s} \right|, \\
     \label{small2}     \norm{S_0^{-1} S_1}_{L^{\infty}((0,\thet), \boundedops(\LL))} &  \leq \inf_{s\in \spec(S_0)} \left|  \frac{2s - 1}{4s} \right| .
\end{align}
Note that this includes the case $S_1 = 0$.
Under these assumptions, we can perturb the cone parametrix $\PPP$ constructed in Subsection \ref{cone_para} to improve Proposition \ref{lem:BoundParamMaxExt}, see Proposition \ref{relationmaxmin}.
First we need a seemingly tautological lemma for the operators defined in Lemma \ref{cor:BoundOnParamS1}.

\begin{lemma} \label{crazy}
For all $j \geq 1$  we have 
\[
      \left\{ \PPP \tfrac{1}{r} S_1 \right\}^j  \PPP = \PPP  \left\{ \tfrac{1}{r} S_1 \PPP \right\}^j \in \boundedops( L^2((0, \thet), \LL)).
\]
\end{lemma}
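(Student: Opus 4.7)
The plan is to proceed by induction on $j$. Once the base case $j = 1$ is established, the inductive step is a straightforward chain of associativity: assuming the identity for some $j \geq 1$,
\[
\{\PPP \tfrac{1}{r} S_1\}^{j+1} \PPP = \{\PPP \tfrac{1}{r} S_1\} \big( \{\PPP \tfrac{1}{r} S_1\}^j \PPP \big) = \{\PPP \tfrac{1}{r} S_1\} \PPP \{\tfrac{1}{r} S_1 \PPP\}^j = \PPP \{\tfrac{1}{r} S_1 \PPP\}^{j+1},
\]
where the second equality uses the inductive hypothesis and the last invokes the base case.

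For the base case, both sides are bounded operators on $L^2((0,\thet), \LL)$, so by continuity it suffices to verify equality on the dense subspace $V := C^\infty_\cc((0,\thet), \CC) \otimes \dom(S_0)$. Fix $u \in V$. By Proposition \ref{difficult}, $\PPP u \in C^1((0,\thet), \dom(S_0))$, so the pointwise product $f(r) := \tfrac{1}{r} S_1(r) (\PPP u)(r)$ defines an element of $L^2((0,\thet), \LL)$ which, by Lemma \ref{cor:BoundOnParamS1}\ref{egy}, equals $\{\tfrac{1}{r} S_1 \PPP\}(u)$ almost everywhere. Applying $\PPP$ gives $\PPP \{\tfrac{1}{r} S_1 \PPP\}(u) = \PPP(f)$ for the right-hand side. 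Unwinding the definition $\{\PPP \tfrac{1}{r} S_1\} = \{\PPP \tfrac{1}{r} S_0\} \circ S_0^{-1} S_1$ similarly yields $\{\PPP \tfrac{1}{r} S_1\}(\PPP u) = \{\PPP \tfrac{1}{r} S_0\}(h)$, where $h(r) := S_0^{-1} S_1(r) (\PPP u)(r) \in \dom(\bar S_0)$ satisfies $\tfrac{1}{r} S_0 h = f$ pointwise a.e.

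The crux of the base case is thus the identity $\{\PPP \tfrac{1}{r} S_0\}(h) = \PPP(f)$. If $h$ lay in $V$, this would be immediate from Lemma \ref{lemma:BoundOnS0Param}\ref{dois}. Instead, the plan is to construct approximants $h_n \in V$ with $h_n \to h$ and $\tfrac{1}{r} S_0 h_n \to f$ in $L^2((0,\thet), \LL)$; then Lemma \ref{lemma:BoundOnS0Param}\ref{dois} applied to each $h_n$, combined with $L^2$-continuity of $\{\PPP \tfrac{1}{r} S_0\}$ and of $\PPP$, will produce the identity in the limit.

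The hard part will be the construction of this sequence, since the unbounded factor $r^{-1}$ prevents the naive $L^2$-density of $V$ from transferring to graph-norm density for the operator $\tfrac{1}{r} S_0$. To overcome this, I will switch to the direct integral decomposition $\LL = \int^\oplus_{\sigma(\bar S_0)} \LL_s\, d\mu(s)$, under which $\tfrac{1}{r} S_0$ acts as pointwise multiplication by the measurable function $(r,s) \mapsto s/r$ on a product measure space. Since $\tfrac{1}{r} S_0 h = f \in L^2$ by hypothesis, a two-step truncation — first restricting $h$ to the region $\{(r,s) : |s| \leq N,\ a \leq r \leq \thet - a\}$ on which $s/r$ is bounded, and then mollifying in $r$ while approximating values in $\dom(\bar S_0)$ by elements of $\dom(S_0)$ via density — will produce the required $h_n \in V$, with convergence of $\tfrac{1}{r} S_0 h_n$ to $f$ following from dominated convergence once $s/r$ is bounded on the truncation regions. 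This parallels the approximation argument in the proof of Lemma \ref{lem:extend_domain}.
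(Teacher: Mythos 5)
Your reduction to $j=1$ via associativity and the $L^2$-density of $V$ is exactly what the paper does; the base case, however, is handled by a genuinely different route. Instead of re-entering the direct integral decomposition, the paper first shows that the pointwise identity $\{\PPP \tfrac{1}{r}S_1\}w = \PPP(\tfrac{1}{r}S_1 w)$ extends from the tensor product $V$ to all of $C^0_\cc((0,\thet),\dom(S_0))$ — this is a soft $L^2$-approximation, made easy by the fact that such $w$ are supported in some $[c,\thet)$ on which $\tfrac{1}{r}$ is bounded, so no control on $S_0$-graph norms near $r=0$ is required. Then, for $u\in V$, one takes cutoffs $\psi_n\uparrow 1$, observes $\psi_n\PPP u\in C^1_\cc((0,\thet),\dom(S_0))$ by Proposition \ref{difficult}, applies the extended formula to $\psi_n\PPP u$, commutes the scalar $\psi_n$ past $\tfrac{1}{r}S_1$, and lets $n\to\infty$ using boundedness of $\{\PPP\tfrac{1}{r}S_1\}$, $\{\tfrac{1}{r}S_1\PPP\}$ and $\PPP$. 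Your plan instead returns to the fiberwise picture: it requires the direct integral, a joint $(r,s)$-truncation, and a graph-norm ($L^2((0,\thet),\dom(\bar S_0))$, not merely $L^2((0,\thet),\LL)$) approximation of the truncated function by elements of $V$. This is sound but heavier; the paper's argument stays at the level of the abstract operators $\PPP$ and $\{\PPP\tfrac{1}{r}S_\bullet\}$ and replaces the truncation-plus-mollification by a single cutoff. Two sharpening remarks on your plan: the approximation of $h_n$ by tensor products must be carried out in the $\dom(\bar S_0)$-graph norm for $\tfrac{1}{r}S_0 h_n \to f$ to follow, and once the $r$-support is bounded away from $0$ the spectral truncation in $s$ is actually redundant, since $h = S_0^{-1}S_1\PPP u$ already lies in $L^2((0,\thet),\dom(\bar S_0))$ (as $\tfrac{1}{r}S_0\PPP u\in L^2$ by Lemma \ref{lemma:BoundOnS0Param}).
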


\begin{proof} Using the associativity of composition  in $\boundedops( L^2(0, \thet), \LL)$, it is enough to treat the case $j = 1$.
That is, we are claiming
\begin{equation} \label{jone}
      \left\{ \PPP \tfrac{1}{r} S_1 \right\}  \PPP = \PPP  \left\{ \tfrac{1}{r} S_1 \PPP \right\} \in \boundedops( L^2((0, \thet), \LL) ).
\end{equation}

Let $u \in C_{\cc}^0((0,\thet), \dom(S_0))$. 
Since $S_1$ is essentially bounded, we obtain $\tfrac{1}{r} S_1 u \in L^2((0,\thet), \LL)$.
We claim that 
\begin{equation} \label{seemstrivial}
     \left\{ \PPP \tfrac{1}{r} S_1 \right\} u = \PPP ( \tfrac{1}{r} S_1  u). 
\end{equation}
Let $0 < c < \thet$ be such that $\supp(u) \subset [c,\thet)$.
We find $u_n \in C_{\cc}^0((0,\thet), \CC) \otimes \dom(S_0)$ such that $\supp(u_n) \subset [\tfrac{1}{2}c,\thet)$ and  $u_n \longrightarrow u$ in $L^2((0,\thet), \LL)$.
Since $S_1$ is essentially bounded, we get  $\tfrac{1}{r} S_1 u_n \longrightarrow \tfrac{1}{r} S_1 u$ in $L^2((0,\thet), \LL)$ and since $\PPP$ is bounded, we get
\[
   \PPP ( \tfrac{1}{r} S_1 u_n) \longrightarrow \PPP ( \tfrac{1}{r}S_1 u) \textrm{ in } L^2((0,\thet), \LL).
 \]
 On  the other hand,  by definition of $\left\{  \PPP  \tfrac{1}{r} S_1\right\}$, we have $ \PPP ( \tfrac{1}{r} S_1 u_n)  \longrightarrow \left\{  \PPP  \tfrac{1}{r} S_1\right\} u$ in  $L^2((0,\thet), \LL)$.
This shows  \eqref{seemstrivial}.
 
 Now let $u \in C_{\cc}^0((0,\thet), \CC) \otimes \dom(S_0)$ and  let $\psi_n \in C_{\cc}^\infty((0,\thet), [0,1])$ be a sequence converging to the constant function $1$ in $L^2((0,\thet) , \RR)$.
 By Proposition \ref{difficult}, we get $\psi_n \PPP u \in C_{\cc}^1 ((0,\thet), \dom(S_0))$ for each $n$.
Using \eqref{seemstrivial} and the associativity of composition of maps, we have
\begin{equation} \label{tedious}
    \left\{ \PPP\tfrac{1}{r}S_1\right\}\psi_n\PPP u  = (\PPP\tfrac{1}{r}S_1)\psi_n\PPP u=\PPP(\tfrac{1}{r}S_1\psi_n\PPP)u . 
\end{equation}
Using the fact that $\psi_n \PPP u \longrightarrow \PPP u$ in $L^2((0,\thet), \LL)$, the left hand side of \eqref{tedious} converges in $L^2((0,\thet), \LL)$ to $\left\{\PPP \tfrac{1}{r} S_1\right\} \PPP u$.
On the other hand, in $L^2((0,\thet), \LL)$ we have 
\[
  (\tfrac{1}{r}S_1\psi_n \PPP) u = \psi_n (\tfrac{1}{r}S_1  \PPP) u  = \psi_n  \left\{ \tfrac{1}{r}S_1\PPP \right\} u \stackrel{n \to \infty}{\longrightarrow}  \left\{ \tfrac{1}{r}S_1 \PPP\right\} u .
\]
Hence the right hand side of \eqref{tedious} converges in $L^2((0,\thet), \LL)$ to $ \PPP \left\{ \tfrac{1}{r}S_1\PPP \right\}u$.
We conclude that
$$\left\{ \PPP \tfrac{1}{r} S_1 \right\}  \PPP u = \PPP  \left\{ \tfrac{1}{r} S_1 \PPP \right\}u.$$ 
Since $C_{\cc}^0((0,\thet), \CC) \otimes \dom(S_0) \subset L^2((0,\thet), \LL)$ is dense, \eqref{jone}  follows.
\end{proof}

\begin{proposition} \label{relationmaxmin} There exists $V \in \boundedops(L^2((0,\thet), \LL))$ with the following property: Let $u \in \dom( \bar \Keg)$ with essential support $\supp(u) \subset [0,\thet)$.
Then 
\[
     u = \PPP V \bar \Keg u. 
\]
\end{proposition}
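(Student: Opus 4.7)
The plan is to invert the formula from Proposition \ref{lem:BoundParamMaxExt} by a Neumann-series argument and then carry the inverse through $\PPP$ using the intertwining relation provided by Lemma \ref{crazy}.

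Set $T := \left\{ \PPP \tfrac{1}{r} S_1 \right\}$ and $W := \left\{ \tfrac{1}{r} S_1 \PPP \right\}$, both bounded on $L^2((0,\thet), \LL)$. The first step is to establish $\|T\|, \|W\| \leq \tfrac{1}{2}$. Combining Lemma \ref{cor:BoundOnParamS1} with hypothesis \ref{boundedperturbtwo} gives, for $T$,
\[
   \|T\| \leq \inf_{s \in \sigma(\bar S_0)} \tfrac{|s - 1/2|}{2|s|} \cdot \sup_{s \in \sigma(\bar S_0)} \tfrac{|s|}{|s - 1/2|};
\]
since the two factors, evaluated at any common $s$, have product exactly $\tfrac{1}{2}$, the infimum of the first against the supremum of the second is at most $\tfrac{1}{2}$. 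The bound on $\|W\|$ is entirely analogous, using \eqref{small1} and the pointwise identity $\tfrac{|s + 1/2|}{2|s|} \cdot \tfrac{|s|}{|s + 1/2|} = \tfrac{1}{2}$.

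Because $\|T\|, \|W\| < 1$, the operators $I+T$ and $I+W$ are invertible on $L^2((0,\thet), \LL)$ by Neumann series, and I set
\[
  V := (I+W)^{-1} = \sum_{j=0}^\infty (-1)^j W^j \in \boundedops(L^2((0,\thet), \LL)).
\]
Lemma \ref{crazy} (trivially extended to $j = 0$) gives $T^j \PPP = \PPP W^j$ for all $j \geq 0$; summing in operator norm yields the intertwining identity $(I+T)^{-1} \PPP = \PPP V$ as bounded operators on $L^2((0,\thet),\LL)$.

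Finally, for any $u \in \dom(\bar \Keg)$ Proposition \ref{lem:BoundParamMaxExt} rewrites as $\PPP \bar \Keg u = (I+T) u$; applying $(I+T)^{-1}$ and invoking the intertwining identity gives $u = (I+T)^{-1} \PPP \bar \Keg u = \PPP V \bar \Keg u$, as required. I expect the principal obstacle to be the sharp norm bound $\|T\|, \|W\| \leq \tfrac{1}{2}$, which is precisely where the small-perturbation assumption \ref{boundedperturbtwo} is engineered to enter; the rest is Neumann-series bookkeeping combined with Lemma \ref{crazy}. The support condition $\supp(u) \subset [0, \thet)$ does not seem to play an essential role in this argument, so I read it as a compatibility hypothesis ensuring that the cone parametrix developed in Section \ref{cone_para} is applied in its region of validity.
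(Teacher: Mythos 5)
Your proof is correct and follows essentially the same route as the paper: same norm bounds $\|T\|, \|W\| \leq \tfrac12$ drawn from Lemma \ref{cor:BoundOnParamS1} together with \ref{boundedperturbtwo}, the same operator $V = \sum_j (-1)^j W^j$, and the same use of Lemma \ref{crazy} to move the Neumann series across $\PPP$. The paper runs the series by hand (partial sums plus a vanishing remainder $(-1)^{k+1}A^{k+1}u$), while you package it as the operator identity $(I+T)^{-1}\PPP = \PPP V$; these are the same argument with different bookkeeping, and your reading of the support condition as not being used in this proof matches the paper as well.
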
 

\begin{proof}
By Lemma \ref{cor:BoundOnParamS1} and \eqref{small1} and \eqref{small2},  the operators 
\begin{align*}
    A  & := \left\{ \PPP \tfrac{1}{r} S_1  \right\} \colon L^2((0,\thet), \LL) \to L^2((0,\thet), \LL), \\
    B & := \left \{ \tfrac{1}{r} S_1 \PPP \right\} \colon L^2((0,\thet), \LL) \to L^2((0,\thet), \LL) 
\end{align*} 
satisfy $\|A\| \leq \tfrac{1}{2}$ and $\|B\| \leq \tfrac{1}{2}$.
In particular,  we can define 
\[
    V : =  \sum_{j=0}^{\infty}  (-1)^j B^j \in \boundedops( L^2((0,\thet), \LL)).
 \]
Let $u \in \dom( \bar \Keg)$ with essential support in $[0, \thet)$.
By Proposition \ref{lem:BoundParamMaxExt}, we get
\[
   u =  \PPP \bar \Keg u -  \left\{ \PPP\tfrac{1}{r} S_1 \right\} u. 
\]
For $k \geq 0$, induction and Lemma \ref{crazy} shows
\[
      u = \PPP\left( \sum_{j=0}^{k}  (-1)^j B^j \right)\bar \Keg u   + (-1)^{k+1} A^{k+1} u.
\]
Using  $ \| A^{k+1}u\| \leq \left( \tfrac{1}{2} \right)^{k+1} \|u \| \stackrel{k \to \infty}{\longrightarrow} 0$, we obtain $u = \PPP V  \bar \Keg(u)$. 
\end{proof}

\begin{proposition} \label{exactdom} With the operators defined in \eqref{cute1} and \eqref{cute2}, the following holds. 
\begin{enumerate}[start=1,label=\textup{(\roman*)}]
\item \label{erstens} $\dom(\bar \Keg) = \dom(\overline{\partial_r} ) \cap \dom( \overline{\tfrac{1}{r} S_0})$.
\item \label{zweitens} The graph norm on $ \dom(\bar \Keg)$ is equivalent to the $H^1$-norm
\[
     \norm{u}^2_{H^1} := \norm{u}^2_{L^2} + \norm{\overline{\partial_r} u}^2_{L^2}  + \norm{\overline{\tfrac{1}{r} S_0} u}^2_{L^2} .
\]

\end{enumerate}
\end{proposition}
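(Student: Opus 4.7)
For the inclusion $\dom(\bar\Keg) \subset \dom(\overline{\partial_r}) \cap \dom(\overline{\tfrac{1}{r}S_0})$, the key tool is the identity $u_n = \PPP V \Keg u_n$ from Proposition~\ref{relationmaxmin} applied to elements $u_n \in C_\cc^\infty((0,\thet),\dom(S_0))$ of the core. Composing on the left with $\tfrac{1}{r} S_0$ and using Lemma~\ref{lemma:BoundOnS0Param}\ref{um} gives
\[
\tfrac{1}{r} S_0 u_n = \bigl\{ \tfrac{1}{r} S_0 \PPP \bigr\} V \Keg u_n,
\]
so $\|\tfrac{1}{r} S_0 u_n\|_{L^2}$ is bounded by a constant times $\|\Keg u_n\|_{L^2}$. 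Reading $\partial_r u_n$ off the defining formula $\Keg u_n = \partial_r u_n + \tfrac{1}{r}(S_0 + S_1(r)) u_n$ and using that $\tfrac{1}{r} S_1(r) u_n = S_1(r) S_0^{-1} \cdot \tfrac{1}{r} S_0 u_n$ is controlled through the small-perturbation assumption, I also bound $\|\partial_r u_n\|_{L^2}$. Passing to a sequence $u_n \to u$ in the graph norm of $\bar\Keg$, both $\partial_r u_n$ and $\tfrac{1}{r} S_0 u_n$ become Cauchy in $L^2$, and their limits place $u$ in $\dom(\overline{\partial_r}) \cap \dom(\overline{\tfrac{1}{r} S_0})$ with an $H^1$-bound in terms of $\|u\|_{L^2} + \|\bar\Keg u\|_{L^2}$.

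The reverse inclusion is the delicate one: for $u$ in the intersection, I must produce a single sequence $u_n \in C_\cc^\infty((0,\thet), \dom(S_0))$ approximating $u$ simultaneously in the graph norms of $\overline{\partial_r}$ and of $\overline{\tfrac{1}{r} S_0}$. The plan is a three-step regularization. First, cut off near $r = 0$ using the functions $\tau_\eps$ from Lemma~\ref{lem:DefFunction}: writing $u^\eps := (1-\tau_\eps) u$, the estimate $|\tau_\eps'(r)| \leq \eps/r$ together with $u/r \in L^2$ (which follows from the spectral gap $\sigma(\bar S_0) \cap [-\tfrac12,\tfrac12]=\emptyset$ via the direct-integral decomposition of Subsection~\ref{cone_para}) yields $\tau_\eps' u \to 0$ in $L^2$, and $u^\eps$ converges to $u$ in both graph norms. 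Second, cut off near $r = \thet$: since $\dom(\overline{\partial_r})$ is the $H^1$-closure of $C_\cc^\infty$, such a $u$ lies in $H^1((0,\thet),\LL)$ and has vanishing trace at $\thet$; the one-dimensional Sobolev estimate $\|u(r)\|_\LL^2 \leq (\thet - r) \int_r^\thet \|u'(s)\|_\LL^2\,ds$ gives $\int_{\thet - 1/m}^\thet \|u\|_\LL^2\,dr = o(1/m^2)$, so a linear cutoff $\chi_m$ with $|\chi_m'| \leq m$ supported near $\thet$ satisfies $\chi_m' u \to 0$ in $L^2$, while multiplication by $\chi_m$ is benign for $\overline{\tfrac{1}{r} S_0}$ by dominated convergence. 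Third, mollify the resulting compactly supported function fibrewise in $r$; after steps one and two we work on a compact subinterval of $(0,\thet)$, so this is classical and preserves both graph norms.

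For such $u_n$,
\[
\Keg u_n = \partial_r u_n + \tfrac{1}{r} S_0 u_n + \tfrac{1}{r} S_1(r) u_n \longrightarrow \overline{\partial_r} u + \overline{\tfrac{1}{r} S_0} u + S_1 S_0^{-1} \overline{\tfrac{1}{r} S_0} u
\]
in $L^2$, so $u \in \dom(\bar\Keg)$. Combining both inclusions proves \ref{erstens}, and the two-sided bounds established along the way yield the norm equivalence \ref{zweitens}. I expect the second cutoff step to be the main obstacle: controlling $\chi_m' u$ near $r = \thet$ requires genuine use of the boundary decay imposed by the closure construction for $\overline{\partial_r}$, in contrast to the behaviour at $r = 0$, where the spectral gap carries the argument through the bound on $u/r$.
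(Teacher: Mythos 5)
Your forward inclusion and the two norm inequalities on the core are essentially identical to the paper's: both use Proposition~\ref{relationmaxmin} to write $u_n = \PPP V \Keg u_n$, compose with $\tfrac{1}{r}S_0$ and bound via Lemma~\ref{lemma:BoundOnS0Param}, then read off $\partial_r u_n$ from the defining formula using the smallness of the perturbation. (As in the paper, passing from $\tfrac{1}{r}S_0 \PPP(V\Keg u_n)$ to $\{\tfrac{1}{r}S_0\PPP\}(V\Keg u_n)$ implicitly uses a density argument, since Lemma~\ref{lemma:BoundOnS0Param} is stated only for $C^0_\cc\otimes\dom(S_0)$; this is harmless but worth noting.)

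Where you genuinely diverge from the paper is the reverse inclusion, and this is the most valuable part of your proposal. The paper's argument establishes only that the completion of the core $C_\cc^\infty((0,\thet),\dom(S_0))$ in the $H^1$-norm is contained in $\dom(\bar\Keg)$: it observes that if a sequence $u_n$ from the core converges \emph{jointly} with $\partial_r u_n$ and $\tfrac{1}{r}S_0 u_n$, then $\tfrac{1}{r}S_1 u_n$ and hence $\Keg u_n$ also converge. The final ``Thus, $\dom(\overline{\partial_r}) \cap \dom(\overline{\tfrac{1}{r}S_0}) \subset \dom(\bar\Keg)$'' silently identifies that intersection with the $H^1$-closure of the core, i.e.\ it assumes without proof that a \emph{single} approximating sequence from the core exists for every element of the intersection; the two domains are closures with respect to two independent directions of regularity ($r$-regularity vs.\ $\LL$-regularity), so this is not automatic. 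You correctly flag this as the delicate step and supply a real argument: cutoff near $r=0$ via Lemma~\ref{lem:DefFunction} together with $\|u/r\|_{L^2} \leq \|S_0^{-1}\|\,\|\overline{\tfrac{1}{r}S_0}u\|_{L^2}$ from the spectral gap~\ref{AnSpGap}; cutoff near $r=\thet$ via the Dirichlet boundary behaviour forced by $\dom(\overline{\partial_r})$ being the closure from $C^\infty_\cc$; then fibrewise mollification on a compact subinterval. This makes the proposal more complete than the paper's written proof. Two small details to tighten: after mollification your functions take values in $\dom(\bar S_0)$, not $\dom(S_0)$, so one last appeal to the density statement of Lemma~\ref{lem:extend_domain} (or the equivalence of norms already proved) is needed to return to the declared core; and commuting $\tfrac{1}{r}$ past the mollifier requires its Lipschitz continuity on $[c,\thet-c]$, which you should state rather than call ``classical.'' Neither affects the substance.
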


\begin{proof} 
Let $u \in C_{\cc}^{\infty}((0,\thet), \dom(S_0))$.
Applying Lemma \ref{relationmaxmin}, we obtain
 \begin{align*}
     \norm{ \tfrac1r  S_0 u }_{L^2}  & = \norm{\tfrac1r S_0 \PPP V \Keg u }_{L^2} \leq \norm{ \left\{ \tfrac1r S_0 \PPP \right\}} \cdot \norm{V} \cdot \norm{ \Keg u }_{L^2} , \\
     \norm{ \tfrac1r  S_1 u }_{L^2}  & = \norm{\tfrac1r S_1 \PPP V \Keg u }_{L^2} \leq \norm{ \left\{ \tfrac1r S_1 \PPP \right\}} \cdot \norm{V} \cdot \norm{ \Keg u }_{L^2} .
\end{align*}
Using that $\Keg u  = \partial_r u  + \frac{1}{r}(S_0+S_1(r)) u $, we furthermore have
\[
        \norm{ \partial_r  u }_{L^2}  \le \norm{\bar \Keg u}_{L^2} + \norm{ \tfrac{1}{r}S_0 u}_{L^2} +  \norm{\tfrac{1}{r}S_1 u}_{L^2}. 
\]
Together with the preceding estimates, this implies $\dom(\bar \Keg) \subset  \dom(\overline{\partial_r} ) \cap \dom( \overline{\tfrac{1}{r} S_0})$.

On the other hand, if $u \in L^2((0, \thet), \LL)$ and $u_n \in C_{\cc}^{\infty}( (0,\thet), \dom(S_0))$ is a sequence such that $u_n \longrightarrow u$, $\partial_r u_n \longrightarrow v$ and $\tfrac{1}{r} S_0 u_n \longrightarrow w$ in $L^2((0,\thet), \LL)$, then, since $S_1 S_0^{-1} \colon (0,\thet) \to \boundedops(\LL)$ is essentially bounded,  $\tfrac{1}{r} S_1 u_n$ is a Cauchy sequence in  $L^2((0,\thet), \LL)$ and  there exists  $z \in L^2((0,\thet), \LL)$ such that $\tfrac{1}{r} S_1 u_n \longrightarrow z$ in $L^2((0,\thet), \LL)$. 
We conclude $\Keg u_n \longrightarrow v + w + z$ in $L^2((0,\thet), \LL)$, hence $u \in \dom(\bar \Keg)$. 
Thus,
\[
    \dom(\overline{\partial_r} ) \cap \dom( \overline{\tfrac{1}{r} S_0}) \subset \dom(\bar \Keg).
\]
This completes the proof of \ref{erstens}.

Next, let  $u \in C_{\cc}^{\infty}( (0,\thet), \dom(S_0))$.
By the triangle inequality, we obtain
\[
  \norm{ \Keg u}_{L^2} \le  \norm{\partial_r u}_{L^2} + \norm{\tfrac{1}{r} S_0 u}_{L^2} + \norm{S_1 S_0^{-1}}_{L^{\infty}( (0,\thet), \boundedops(\LL))}  \norm{\tfrac{1}{r} S_0 u}_{L^2}  . 
\]
This implies that the Sobolev $1$-norm dominates the graph norm of $\bar \Keg$.

On the other hand, the previous considerations show  that there exists a constant $C >0$, which is independent of $u$, such that 
\[
    \norm{ \partial_r  u }_{L^2}  +  \norm{ \tfrac1r  S_0 u }_{L^2}  \leq C \norm{\Keg u}_{L^2} .
\]
Hence the two norms are in fact equivalent, completing the proof of \ref{zweitens}.
\end{proof}

 We now show parts \ref{b} to \ref{e} of Theorem \ref{thm:RegularSingular}.  
 For this we apply the previous results to $\Keg := \Brue_{\cone}$.
  Part \ref{b} and \ref{c} follow from Proposition \ref{exactdom}.
  For part \ref{d}, observe that for $\varphi \in C_{\cc, 0}^{\infty} ([0,\thet), \RR)$, the graph norm of $\bar \Brue$ is equivalent to the norm
\[
   \norm{u} := \norm{\varphi u}_{\dom(\bar \Keg)} + \norm{(1-\varphi) u}_{\dom(\bar \Brue)}.
\]
Hence the assertion follows from Lemma \ref{exactdom}.

  For assertion \ref{e} we first note that by \ref{d}, every $u\in \dom(\bar \Brue)$ satisfies the interior and  cone regularity conditions.
   Conversely, assume that $u\in \EE_{\bulk}\oplus L^2((0,1),\LL)$ satisfies the interior  and the cone regularity conditions.
   Then, by Proposition \ref{exactdom}, we obtain $\varphi u \in \dom(\bar \Keg ) \subset \dom(\bar \Brue)$. 
   Since $(1-\varphi) u \in \dom(\bar \Brue)$, we conclude $u \in \dom(\bar \Brue)$.

\subsection{Proof of Fredholm property}

Here we prove part \ref{fred} of Theorem \ref{thm:RegularSingular}.  
We construct a right and a left parametrix for  $\bar \Brue$. 
Let $\eps > 0$. 
Let  $\varphi, \psi, \chi \in C_{\cc,0}^\infty ([0,\thet),[0,1])$ such that $\psi|_{\supp(\varphi)} \equiv 1$ and $\chi|_{\supp(\psi)} \equiv 1$.
With Lemma \ref{lem:DefFunction}, we can assume that  $|r  \chi'| \leq \eps$ and $|r \psi'| \leq \eps$.

Let 
  \[
     \Keg = \Brue_{\cone} \colon L^2((0,\thet), \LL) \supset C_{\cc}^{\infty} ((0,\thet), \dom(S_0)) \to L^2((0,\thet), \LL)
  \] 
  be defined as in \eqref{prod_form} and let $\PPP \colon L^2((0,1), \LL) \to L^2((0,1), \LL)$ be defined as in Lemma \ref{lengthy}.
   Let  $P \colon \FF \to \dom(\bar \Brue)$ be an interior parametrix of $\Brue$ for $\varphi$ and $\psi$.
   
  Define $\mathscr{P}_R \colon \FF \to \EE$ by 
  \begin{equation*}
    \mathscr{P}_R:= (1-\varphi)P (1-\psi) + \chi \PPP \psi  .
  \end{equation*}
By  Proposition \ref{lemma:coneParametrix}, $\PPP_R$ defines a bounded operator $\FF \to \dom (\bar \Brue)$.
In addition, 
  \begin{equation*}\label{eq:PB_eq}
    \bar \Brue \mathscr{P}_R = (1-\psi) + R  + \psi + \chi \left\{  \tfrac1r S_1 \PPP \right\}  \psi +  \chi' \PPP \psi , \\
   \end{equation*}
where $R$ is a compact operator on $\FF$.

Consider the bounded operator $X:= \chi \left\{  \tfrac1r S_1 \PPP \right\}  \psi +  \chi' \PPP \psi$ on $\FF = \FF_{\bulk} \oplus L^2((0,\thet), \LL)$.
By Lemma \ref{cor:BoundOnParamS1},  by \ref{boundedperturbtwo} and Lemma \ref{little_shit}, we get an operator norm estimate
\[
    \norm{X} \leq \tfrac{1}{2} + \sup_{r \in (0,\thet)} | r  \chi'| \cdot \norm{\left\{ \tfrac{1}{r}  \PPP \right\}}.
 \]
For $\eps = \tfrac{1}{4} \norm{\left\{ \tfrac{1}{r}\PPP \right\}}^{-1} $, we hence obtain $\norm{X} \leq \tfrac{3}{4}$.
With this choice, $(\id + X)^{-1}$ is a bounded operator on $\FF$    and $R  (\id + X)^{-1}$ is a compact operator on $\FF$. 
The operator $\PPP_R (\id + X)^{-1} \colon \FF \to \dom(\bar \Brue)$ is bounded and satisfies 
\[
      \bar \Brue \PPP_R (\id + X)^{-1} = \id + R (\id + X)^{-1} .
\]
Therefore, $\PPP_R (\id + X)^{-1}$ is a right parametrix for $\bar \Brue$.

  Now, with $V \in \boundedops( L^2((0,\thet) , \LL))$ from Proposition \ref{relationmaxmin}, define $\PPP_L \colon \FF \to \EE$ by 
   \begin{equation*}
    \mathscr{P}_L := (1-\varphi)P (1-\psi) + \chi  \PPP V \psi . 
  \end{equation*}
  By  Proposition \ref{lemma:coneParametrix}, $\PPP_L$ is a bounded operator $\FF \to \dom (\bar \Keg)$.
Using Lemma \ref{maptocone}, Proposition \ref{relationmaxmin}, we obtain on $\dom(\bar \Brue)$
  \begin{equation*}
    \mathscr{P}_L \bar \Brue  = (1-\psi) + L + \psi - \chi \PPP V \psi'  . 
  \end{equation*}
 
   We want to show that the norm of the (by Proposition \ref{lemma:coneParametrix}) bounded operator 
    \begin{equation*}
        Y =  \chi \PPP V  \psi' \colon \dom(\bar \Brue) \to \dom(\bar \Brue)
    \end{equation*}
 becomes arbitrarily small, choosing $\eps$ small.
    
   As a preparation, we show that the operator norm of the multiplication operator $\psi' \colon \dom(\bar \Keg ) \to L^2((0,\thet), \LL)$ becomes arbitrarily small, choosing $\eps$ small.
   This follows from the choice of $\psi$ and Proposition \ref{relationmaxmin}, since, for $u \in \dom(\bar \Keg)$, we get  by Proposition \ref{relationmaxmin} and Lemma \ref{little_shit},
    \[
        \norm{\psi' u}_{L^2} =  \norm{ r \psi'  \left\{ \tfrac{1}{r} \PPP \right\} V \bar \Keg u}_{L^2}  \leq  \sup_{r \in (0,\thet)} | r \psi' | \norm{ \left\{ \tfrac{1}{r} \PPP \right\} V} \norm{u}_{\dom(\bar \Keg)}.  
    \]
      Using Proposition \ref{lemma:coneParametrix}, we now compute, for $u \in \dom(\bar \Keg)$,
    \begin{align*}
        \bar\Keg Y u & =  \Big( \chi V \psi'  +   \chi \left \{ \tfrac1r S_1 \PPP \right\}V \psi' -  \chi'\PPP V \psi'  \Big) u \\
                             & =  \Big( \chi V \psi'  +   \chi \left \{ \tfrac1r S_1 \PPP \right\}V \psi' - r \chi' \left\{ \tfrac{1}{r} \PPP \right\}V \psi'  \Big) u \\
                              & = \Big( \chi V  +   \chi \left \{ \tfrac1r S_1 \PPP \right\}V  - (r \chi') \left\{ \tfrac{1}{r} \PPP \right \}V  \Big) \psi' u. 
    \end{align*}
     Since by choosing $\eps$ small,  the norm of $\psi'  \colon \dom(\bar \Keg) \to \LL$ becomes arbitrarily small and since  $|r \chi'| \leq \eps$, the operator norm of $Y \colon \dom(\bar \Brue) \to \dom(\bar \Brue)$ becomes arbitrarily small, as required.
     
     Choose $\eps > 0$ so small that $\| Y \|_{\boundedops(\dom(\bar \Brue))} < 1$.
    Then $(\id + Y)^{-1}$ is a bounded operator on  $\dom(\bar \Brue)$ and $(\id + Y)^{-1} L$ is a compact operator on $\dom(\bar \Brue)$.
     We conclude that  $(\id + Y)^{-1} \mathscr{P}_L$ gives the required left parametrix for $\bar \Brue$.

\subsection{Deformations of abstract cone operators}

\begin{proposition}\label{lem:deformationLemma}
We fix the following data:
\begin{enumerate}[label=\myicon]
   \item  separable Hilbert spaces $\EE$ and  $\FF$ together with orthogonal decompositions $\EE = \EE_{\bulk} \oplus \EE_{\cone}$ and $\FF = \FF_{\bulk} \oplus \FF_{\cone}$;
    \item a separable Hilbert space $\LL$, some $0 < \thet \leq 1$ and Hilbert space isometries $ \EE_{\cone} \cong L^2((0,\thet), \LL)$ and $\FF_{\cone} \cong L^2((0,\thet) ,\LL)$.
  \end{enumerate}
Let $(\Brue_t)_{t \in   [0,1]} \colon  \EE \to \FF$ be a family of abstract cone operators with these data and with link operators $S_{0,t} \colon \LL \supset \dom(S_{0,t}) \to \LL$ and perturbations $S_{1,t}  \colon (0,\thet) \to \boundedops( \dom(S_{0,t}), \LL)$.

Furthermore, assume the following.
 \begin{enumerate}[label=\textup{(\roman*)}]
    \item\label{item:S0_cont}
     $\dom(S_{0,t}) \subset \LL$ is independent of $t$ and the graph norm of $S_{0,t}$  is independent of $t$, up to equivalence.
     Set  $\mathfrak{D}_{\link} := \dom(S_{0,t})$ with the graph norm of $S_{0,0}$.
    \item The map \label{item:S_0_cont}
      \begin{equation*}
        [0,1]\to \boundedops(\mathfrak{D}_{\link} , \LL), \quad t\mapsto S_{0,t} , 
      \end{equation*}
      is continuous.
    \item \label{item:S_1_cont}
       The map
        \begin{equation*} [0,1] \to L^{\infty}((0,\thet), \boundedops(\mathfrak{D}_{\link} , \LL)), \quad t \mapsto S_{1,t} , 
        \end{equation*}
        is continuous.
    \item\label{item:bulk_cont} $\dom(\Brue_t) \subset \EE$ is independent of $t$.
    Let $\varphi\in C_{\cc,0}^\infty([0,\thet), \RR)$ and set $\mathfrak{D}_{\bulk} := (1-\varphi) \dom(\Brue_0) \subset \EE$.
    Then the graph norm of $\Brue_t$ on $\mathfrak{D}_{\bulk}$  is independent of $t$, up to equivalence.
    Equip $\mathfrak{D}_{\bulk}$ with the graph norm of $\Brue_0$.
    \item \label{bulkcont} The map
      \begin{equation*}
        [0,1]\to \boundedops(\mathfrak{D}_{\bulk},\FF), \quad t\mapsto \Brue_t ,
      \end{equation*}
      is continuous.
  \end{enumerate}
 Then $\mathfrak{D} : = \dom(\bar \Brue_t)  \subset \EE$ is independent of $t$, and the graph norm of $\bar \Brue_t$ on $\mathfrak{D}$  is independent of $t$, up to equivalence.
Furthermore,  with the graph norm of $\bar \Brue_0$ on $\mathfrak{D}$, the assignment
      \begin{equation*}
        [0,1]\to \boundedops(\mathfrak{D}, \FF), \quad t \mapsto \overline\Brue_t ,
      \end{equation*}
is continuous.
\end{proposition}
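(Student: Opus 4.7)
The plan is to invoke Theorem~\ref{thm:RegularSingular}, in particular parts (b)--(e), to reduce the claim to controlling separately the cone and interior contributions to $\dom(\bar \Brue_t)$. By Theorem~\ref{thm:RegularSingular}\,(e), an element $u \in \EE$ lies in $\dom(\bar \Brue_t)$ if and only if $(1-\varphi) u \in \dom(\bar \Brue_t)$ (interior regularity) and $\varphi u \in \dom(\bar \Brue_{t,\cone})$ (cone regularity); by Theorem~\ref{thm:RegularSingular}\,(d), the graph norm of $\bar \Brue_t$ is equivalent to a $t$-dependent sum of bulk and cone terms. I would show that each piece has a $t$-independent domain with uniformly equivalent norms and depends continuously on $t$ in operator norm.

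For the cone part, the key step is to show that $\dom(\overline{\tfrac{1}{r} S_{0,t}}) \subset L^2((0,\thet), \LL)$ is $t$-independent with uniformly equivalent graph norms. Using \ref{AnSpGap}, which yields $\|v\|_{\LL} \leq 2\|S_{0,s} v\|_{\LL}$ for $v \in \mathfrak{D}_{\link}$, one obtains
\[
\big\|\tfrac{1}{r}(S_{0,t} - S_{0,s}) u\big\|_{L^2} \leq \sqrt{5}\, \|S_{0,t} - S_{0,s}\|_{\mathfrak{D}_{\link} \to \LL} \cdot \big\|\tfrac{1}{r} S_{0,s} u\big\|_{L^2}
\]
for $u \in C_\cc^\infty((0,\thet), \mathfrak{D}_{\link})$. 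Assumption~\ref{item:S_0_cont} together with the compactness of $[0,1]$ (cover by small enough subintervals) then delivers uniform equivalence of graph norms on a common domain. Combined with the $t$-independent $\dom(\overline{\partial_r})$ and Theorem~\ref{thm:RegularSingular}\,(b),(c), this gives that $\dom(\bar \Brue_{t,\cone})$ is $t$-independent with equivalent graph norms. Operator-norm continuity of $t \mapsto \bar \Brue_{t,\cone}$ then follows from the identity $\bar \Brue_{t,\cone} = \overline{\partial_r} + (I + S_{1,t} S_{0,t}^{-1}) \circ \overline{\tfrac{1}{r} S_{0,t}}$, assumption~\ref{item:S_1_cont}, and continuity of $S_{0,t}^{-1}$ (via the resolvent identity and the uniform bound $\|\bar S_{0,t}^{-1}\|_{\LL \to \LL} \leq 2$ from the spectral gap).

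For the interior part, given $u \in \dom(\bar \Brue_0)$ I would choose a sequence $v_n \in \dom(\Brue_0) = \dom(\Brue_t)$ with $v_n \to u$ in the graph norm of $\bar \Brue_0$. By \ref{AnSetup3} both $\varphi v_n$ and $(1-\varphi) v_n = v_n - \varphi v_n$ lie in $\dom(\Brue_0)$, and $(1-\varphi) v_n \in \mathfrak{D}_{\bulk}$. Applying Lemma~\ref{maptocone} to $\varphi v_n$ gives $\Brue_0(\varphi v_n) = \varphi \Brue_0 v_n + \varphi' v_n \in \FF_{\cone}$, so by linearity
\[
\Brue_0((1-\varphi) v_n) = (1-\varphi) \Brue_0 v_n - \varphi' v_n \xrightarrow{n \to \infty} (1-\varphi) \bar \Brue_0 u - \varphi' u = \bar \Brue_0((1-\varphi) u)
\]
in $\FF$, where the last equality again uses Lemma~\ref{maptocone}. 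Hence $(1-\varphi) v_n \to (1-\varphi) u$ in the graph norm of $\bar \Brue_0$, and by assumption~\ref{item:bulk_cont} the sequence is also Cauchy in the graph norm of $\Brue_t$. Closedness of $\bar \Brue_t$ yields $(1-\varphi) u \in \dom(\bar \Brue_t)$ and $\bar \Brue_t((1-\varphi) u) = \lim_n \Brue_t((1-\varphi) v_n)$. Operator-norm continuity of $t \mapsto \Brue_t$ as a map $\mathfrak{D}_{\bulk} \to \FF$ from assumption~\ref{bulkcont} then gives the operator-norm continuity of $t \mapsto \bar \Brue_t \circ (1-\varphi)$ as a map $\dom(\bar \Brue_0) \to \FF$.

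Combining the two steps via Theorem~\ref{thm:RegularSingular}\,(e) yields $\dom(\bar \Brue_t) = \dom(\bar \Brue_0) =: \mathfrak{D}$, the norm from Theorem~\ref{thm:RegularSingular}\,(d) is uniformly equivalent in $t$, and the decomposition $\bar \Brue_t u = \bar \Brue_{t,\cone}(\varphi u) + \bar \Brue_t((1-\varphi) u)$ together with the operator-norm continuity of each summand yields the continuity of $t \mapsto \bar \Brue_t$ in $\boundedops(\mathfrak{D}, \FF)$. I expect the main obstacle to be the interior step: the Leibniz manipulation must be carried out carefully enough that the approximating sequence lands inside $\mathfrak{D}_{\bulk}$, since assumption~\ref{bulkcont} only controls $\Brue_t$ on this restricted subspace and not on all of $\dom(\Brue_0)$.
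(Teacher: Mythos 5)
Your proof is correct and follows essentially the same route as the paper: reduce to cone and bulk contributions via Theorem~\ref{thm:RegularSingular}\,\ref{b}, \ref{d}, \ref{e}; deduce domain independence from the equivalence of the graph norms of $S_{0,t}$ together with the spectral gap (which makes the $\LL$-norm negligible against $\|S_{0,t}\cdot\|_\LL$, so that the equivalence propagates to $\overline{\tfrac1r S_{0,t}}$ and hence to $\bar\Brue_{t,\cone}$); and estimate $\bar\Brue_t - \bar\Brue_s$ on the cone part by factoring the differences $\tfrac1r(S_{0,t}-S_{0,s})$ and $\tfrac1r(S_{1,t}-S_{1,s})$ through $S_0^{-1}$. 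The main stylistic difference is in the cone continuity step: the paper directly writes $\|(\bar\Brue_t - \bar\Brue_s)\varphi u\| \lesssim \eps\,\|\overline{\tfrac1r S_0}\varphi u\| \lesssim \eps\,\|u\|_{H^1}$ using Theorem~\ref{thm:RegularSingular}\,\ref{d}, whereas you develop the factored identity $\bar\Brue_{t,\cone} = \overline{\partial_r} + (I + S_{1,t}S_{0,t}^{-1})\circ\overline{\tfrac1r S_{0,t}}$ and track the continuity of $t\mapsto S_{0,t}^{-1}$ via the resolvent identity; these are two presentations of the same estimate and both are fine. You are also more explicit about the interior step — showing that $(1-\varphi)v_n$ lands in $\mathfrak{D}_{\bulk}$ and that the Leibniz rule gives graph-norm convergence — where the paper is terse. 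One small citation imprecision: the identity $\Brue_0(\varphi v_n) = \varphi\,\Brue_0 v_n + \varphi' v_n$ for $v_n \in \dom(\Brue_0)$ is established in the course of the proof of Lemma~\ref{maptocone} (as the displayed intermediate computation), but it is not quite what the lemma states (the lemma's conclusion is about $\bar\Brue_{\cone}$ acting on $\dom(\bar\Brue)$); you should point to that computation rather than to the lemma's statement. This is cosmetic and does not affect the validity of your argument.
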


\begin{proof}
   Let $t \in [0,1]$.
  Then, by  assumption \ref{item:S0_cont} the graph norms of $S_{0,0}$ and $S_{0,t}$ on $\mathfrak{D}_{\link}$ are equivalent. 
  By part \ref{b} of Theorem \ref{thm:RegularSingular}, we get  $\dom(\bar \Brue_{t, \cone}) = \dom( \bar \Brue_{0,\cone})$ and the graph norms of $\bar \Brue_{t, \cone}$ and $\bar \Brue_{0,\cone}$ are equivalent. 
Using assumption \ref{item:bulk_cont} and  part \ref{e} of Theorem \ref{thm:RegularSingular}, the graph norms of $\Brue_0$ and $\Brue_t$ on $\dom(\Brue_0) = \dom(\Brue_t)$ are equivalent.
This shows that $\mathfrak{D} : = \dom(\bar \Brue_t) \subset \EE$ is independent of $t$ and the graph norms of $\bar \Brue_t$ are pairwise equivalent.

    It remains to prove the continuity of $t \mapsto \bar \Brue_t$.
    By the spectral gap assumption \ref{AnSpGap}, we have $\norm{S_0}_{\boundedops(D_{\link}, \LL)} > 0$.
  Let $\eps > 0$.
  By assumptions \ref{item:S_0_cont} and \ref{item:S_1_cont}, there exists $\delta>0$ such that, for $\abs{t-s}<\delta$, we have
  \begin{align}
   \label{contdef1}  \norm{S_0^t  -S_0^s}_{\boundedops(D_{\link}, \LL)}&  \le\eps \norm{S_0}_{\boundedops(D_{\link}, \LL)} \\
    \label{contdef2} \norm{S_1^t -S^s_1 }_{L^{\infty}((0,\thet), \boundedops(D_{\link}, \LL))} & \le \eps \norm{S_0}_{\boundedops(D_{\link}, \LL)}  .
  \end{align}

Now for $\abs{t-s}<\delta$ and $u\in \dom(\overline\Brue)$ we get
\begin{equation*}\label{eq:ContinuityEstimateBrueOp}
  \norm{(\overline\Brue_t - \overline\Brue_s)u}_{\FF}\leq \norm{(\overline\Brue_t - \overline\Brue_s)\varphi u}_{L^2((0,\thet) ,\LL)} + \norm{(\overline\Brue_t - \overline\Brue_s)(1-\varphi)u}_{\FF}.
\end{equation*}
The second summand on the right hand side is controlled by assumption \ref{item:bulk_cont}. 
For the first summand on the right hand side, we argue as follows.
By Theorem \ref{thm:RegularSingular} \ref{d}, there exists $C > 0$, independent of $u$ and $\varphi$, such that, with \eqref{contdef1} and \eqref{contdef2},
\begin{align*}
  \norm{(\overline\Brue_t - \overline\Brue_s)\varphi u}_{L^2((0,\thet),\LL)} & \leq \norm{\overline{\tfrac1r (S_0^t-S_0^s)} \varphi u}_{L^2((0,\thet),\LL)} + \norm{\overline{\tfrac1r(S_1^t(r)-S_1^s(r))} \varphi u}_{L^2((0,\thet),\LL)} \\
  \leq& 2\eps \norm{\overline{\tfrac1r S_0} \varphi u}_{L^2((0,\thet),\LL)}\\
  \leq& 2\eps C\norm{u }_{\boundedops(\mathfrak{D}, \LL)}.
\end{align*}
This finishes the proof of Proposition \ref{lem:deformationLemma}.

\end{proof}

\section{Lipschitz rigidity for spherical suspensions of low regularity metrics} \label{spherical_suspension}

\subsection{Twisted Dirac operators on spherical suspensions as abstract cone operators}\label{sec:Dirac_as_regsingop}
 
Let $M$ be a smooth manifold equipped with a continuous Riemannian metric $g$.
We denote by $d_g$ the induced path metric on $M$, compare \cite{CHS}*{Reminder 1.2}. 

\begin{definition}\label{def:spherical_suspension}
  The \emph{spherical suspension} of $(M,g)$ is the smooth manifold
  $\susp M:=(0,\pi)\times M$ equipped with the continuous Riemannian metric
  $\susp g:=dr^2+\sin^2(r)g$ where $r$ is the standard coordinate on
  $(0,\pi)$.  The metric $\susp g$ is incomplete.  If $N$ is another smooth
  manifold equipped with a continuous Riemannian metric $h$ and
  $f\colon M\to N$ is a continuous map, then we define the spherical
  suspension of $f$ as
\end{definition}
\[
 \susp f  \colon (\susp M , \susp g) \to (\susp N, \susp h), \quad (r,x) \mapsto (r,f(x)) .
\]

We will work in the following setup.

\begin{setup} \label{setup} Let
\begin{enumerate}[label=\myicon]
  \item $(M,\gamma)$ be a closed smooth Riemannian spin manifold of odd dimension $n \geq 3$. 
 \item $g$ be a continuous Riemannian metric of Sobolev regularity $W^{1,p}$, $p > n+1$, on $M$.
   \item  $f\colon (M,d_g) \to \SSS^{n}$ be a $\Lambda$-Lipschitz map for some $\Lambda > 0$.
   \item  $(E_0,\nabla^{E_0})$ be a smooth Hermitian vector bundle over $\SSS^{n+1}$ with smooth metric connection $\nabla^{E_0}$.
   \item  $(E,\nabla^E):=(\susp f)^*(E_0,\nabla^{E_0}) \to \susp M$  be the pull back of $E_0$ under the spherical suspension of $f$. 
   This  is a Lipschitz bundle with induced Hermitian metric and metric Lipschitz connection $\nabla^E$.
 \end{enumerate}

\end{setup}

Let $\Cl(M)  \to (M, \gamma)$ be the Clifford algebra bundle and let $\vol_{\CC}(M) \in C^{\infty}(M, \Cl(M) \otimes \mathbb{C})$ be the complex volume form.
It is locally defined by 
\[
     \rm{vol}_{\CC}(M) = i^{\frac{n+1}{2}} e_1 \cdots e_n,
\]
where $(e_1, \ldots, e_n)$ is a smooth positively oriented $\gamma$-orthonormal local frame of $(TM, \gamma)$.
Let $S_M \to (M, \gamma)$  be the smooth spinor bundle associated with the irreducible unitary $\Spin(n)$-module for which  the complex volume element $\vol_{\CC}(M)$ acts as $+\id$. 
Let 
\[
   D_g \colon L_g^2(M, S_M) \supset \Lip(M, S_M) \to L_g^2(M, S_M) 
 \]
 be the Dirac operator associated with the metric $g$ acting on sections of $S_M$, see \cite{CHS}*{Definition 4.5}. 
 
 Let $F$ be a finite dimensional Hermitian vector space.
 We consider the induced Dirac operator 
 \[
   D_{g,F} = D_g \otimes \id  \colon L_g^2(M, S_M\otimes F) \supset \Lip(M, S_M \otimes F) \to L_g^2(M, S_M\otimes F) .
 \]
 Recall that, by \cite{CHS}*{Proposition 4.7},  the operator $D_{g,F}$  is essentially self-adjoint with $\dom(\bar D_{g,F}) = H_g^1(M, S_M \otimes F)$.

\begin{lemma} \label{scal_nearby} Assume that there exists  $\eta > 0$ such that $\scal_g \geq 1 + \eta$ in the distributional sense, see \cite{CHS}*{Definition 5.3}. 
That is, for all $u \in C^{\infty}(M, \R)$, $u \geq 0$, we have $\llangle
\scal_g , u \rrangle \geq (1+\eta) \int_M u\;d\mu_g$. 
 
 Then there exists $\eps > 0$ such that for each $W^{1,p}$-regular Riemannian metric $h$ on $M$  with  $\| h - g \|_{W_\gamma^{1,p}(M)} < \eps$ and for all Hermitian vector spaces $F$, the spectrum of $\bar D_{h,F}$ has empty intersection with $[-\tfrac{1}{2}, + \tfrac{1}{2} ]$.
 \end{lemma}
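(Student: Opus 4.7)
My plan is to first establish the desired spectral gap at $g$ itself via the distributional Schr\"odinger-Lichnerowicz estimate, and then extend it to $W^{1,p}$-nearby metrics by a perturbation argument. Because the twist $F$ carries the trivial flat connection, the twisted Schr\"odinger-Lichnerowicz formula reduces to $D_{g,F}^2 = (\nabla^{g,F})^* \nabla^{g,F} + \tfrac{1}{4}\scal_g$. In the distributional form developed in \cite{CHS}*{Section 5}, it yields, for every $\psi \in \dom(\bar D_{g,F}) = H^1_g(M, S_M\otimes F)$,
\[
   \|\bar D_{g,F}\psi\|_{L^2_g}^2 \;=\; \|\nabla^{g,F}\psi\|_{L^2_g}^2 \;+\; \tfrac{1}{4}\,\llangle \scal_g, |\psi|^2 \rrangle ,
\]
after approximating $|\psi|^2$ by non-negative smooth test functions in the usual way. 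The hypothesis $\scal_g \geq 1+\eta$ then gives $\|\bar D_{g,F}\psi\|_{L^2_g}^2 \geq \tfrac{1+\eta}{4}\|\psi\|_{L^2_g}^2$, and so by self-adjointness $\spec(\bar D_{g,F})$ avoids the open interval $\bigl(-\tfrac{\sqrt{1+\eta}}{2}, \tfrac{\sqrt{1+\eta}}{2}\bigr)$, a strict open neighborhood of $[-\tfrac{1}{2}, \tfrac{1}{2}]$.

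To transport $\bar D_{h,F}$ for varying $h$ onto a common Hilbert space I would use the Bourguignon-Gauduchon construction: for $h$ sufficiently $C^0$-close to $g$, the field $a_h := (g^{-1} h)^{1/2}$ is a well-defined positive-definite section of $\End TM$ of class $W^{1,p}_\gamma$; it induces a bundle isometry between $(TM,h)$ and $(TM,g)$ which lifts to a unitary $U_h \colon L^2_h(M, S_M\otimes F) \to L^2_g(M, S_M\otimes F)$ preserving $H^1$. The conjugated operator $\tilde D_{h,F} := U_h \bar D_{h,F} U_h^{-1}$ is self-adjoint on $L^2_g(M, S_M\otimes F)$ with fixed domain $H^1_g(M, S_M\otimes F)$. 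Writing $\tilde D_{h,F} - \bar D_{g,F}$ in a local $\gamma$-orthonormal frame, the differences of Christoffel symbols, Clifford multiplications, and frame changes are controlled pointwise by $h - g$ and $\nabla^\gamma(h-g)$. The Sobolev embedding $W^{1,p} \hookrightarrow C^0$, valid since $p > n+1 > n$, then yields an operator-norm bound
\[
   \|\tilde D_{h,F} - \bar D_{g,F}\|_{\boundedops(H^1_g, L^2_g)} \;\leq\; C\, \|h - g\|_{W^{1,p}_\gamma(M)} ,
\]
with $C$ depending on $g$ and $\gamma$ but not on $F$ or $h$.

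From this graph-norm convergence one obtains norm resolvent convergence of $\tilde D_{h,F}$ to $\bar D_{g,F}$, and hence upper semi-continuity of spectra on compact subsets of $\RR$. Since, by the first step, $\spec(\bar D_{g,F})$ avoids a fixed open neighborhood of $[-\tfrac{1}{2}, \tfrac{1}{2}]$ of size depending only on $\eta$, the same holds for $\tilde D_{h,F}$, and hence for $\bar D_{h,F}$, once $\|h - g\|_{W^{1,p}_\gamma(M)} < \eps$ with $\eps$ sufficiently small. The main obstacle is the uniformity of the perturbation estimate in the auxiliary Hermitian space $F$; this reduces to the fact that $F$ appears only as a trivial tensor factor, so every symbol decomposes as $(\blank) \otimes \id_F$ and all operator norms are independent of $\dim F$. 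Thus the same $\eps$ works for all $F$ simultaneously.
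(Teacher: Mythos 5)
Your argument is correct, but it takes a genuinely different route from the paper's.

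The paper's proof is considerably shorter and more geometric: it observes that the distributional scalar curvature $\scal_h$ depends continuously on $h$ in the $W^{1,p}_\gamma$-topology, so $\scal_h \ge 1 + \tfrac{1}{2}\eta$ for $h$ in a small $W^{1,p}_\gamma$-ball around $g$, and then applies the integral Schr\"odinger--Lichnerowicz formula \cite{CHS}*{Theorem 5.1} directly to the operator $\bar D_{h,F}$ at the perturbed metric. No transport of operators between Hilbert spaces is needed; the lower curvature bound alone gives the spectral gap at each nearby $h$ simultaneously for all $F$. By contrast, you first derive the spectral gap at $g$ itself from the distributional Lichnerowicz identity, then move $\bar D_{h,F}$ onto the fixed Hilbert space $L^2_g(M, S_M\otimes F)$ via the Bourguignon--Gauduchon unitary, establish graph-norm convergence $\tilde D_{h,F}\to\bar D_{g,F}$ (which is essentially \cite{CHS}*{Proposition 4.10}), upgrade to norm resolvent convergence, and conclude by spectral stability. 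Both approaches are valid and rest on results already available in \cite{CHS}; the paper's is more economical because it shifts the continuity burden onto the scalar curvature rather than the operator, and thereby avoids the discussion of resolvent convergence and the unitary conjugation altogether. Your approach is more robust in one respect: it isolates the spectral gap at $g$ as the key geometric input, and the rest is pure operator perturbation theory, so it would apply verbatim even if the curvature bound were replaced by some other hypothesis guaranteeing a gap at $g$. Your handling of uniformity in $F$ (everything factors as $(\blank)\otimes\id_F$) is the right observation and matches what the paper implicitly uses.
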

 
 \begin{proof} There exists $\eps > 0$ such that for each $W^{1,p}$-regular Riemannian metric $h$ on $M$  with $\| h - g \|_{W_\gamma^{1,p}(M)} < \eps$, we have $\scal_h \geq 1 + \tfrac{1}{2} \eta$.
 For this $\eps$, the claim holds by the integral Schr\"odinger-Lichnerowicz formula \cite{CHS}*{Theorem 5.1} (see also \cite{LL15}*{Proposition 3.2})  for the trivial twist bundle $M \times F \to M$.
 \end{proof}
 
The smooth Riemannian manifold $(\susp M, \susp\gamma)$ carries an induced spin structure.
The induced orientation of $\susp M$ is such that for a smooth positively oriented  frame $(e_1, \ldots, e_n)$ of $TM$, the  frame $((0,e_1), \ldots, (0,e_{n}), \partial_r)$ on $\susp M$ is positively oriented.
 
 Let $\Cl(\susp M) \to (\susp M, \susp \gamma)$ be the Clifford algebra bundle and let $ \rm{vol}_{\CC}(\susp M) = i^{\frac{n+1}{2}} e_1 \cdots  e_{n+1}$ be the complex volume element on $\susp M$, where $(e_1, \ldots, e_{n+1})$ is a smooth positively oriented $\susp \gamma$-orthonormal local frame of $T\susp M$.

Let $W$ be the unique irreducible unitary $\Spin(n+1)$-module (recall that $n+1$ is even) and let
 \begin{equation} \label{defspinor}
     \spinor = P_{\Spin}(\susp M, \susp \gamma) \times_{\Spin(n+1)} W \to (\susp M, \susp \gamma),
 \end{equation}
 be the smooth spinor bundle over $(\susp M, \susp \gamma)$.
We obtain the chiral decomposition  $\spinor = \spinor^+ \oplus \spinor^-$  into the $\pm 1$-eigenspaces of   $\vol_{\CC}(\susp M)$. 
The bundle $\spinor$ comes with a Hermitian structure so that the Clifford multiplication with unit vectors in $(T \susp M, \susp \gamma)$ is  isometric.
Let 
\[
    \nabla^{\spinor}_{\susp g} \colon \Lip_{\loc} ( \susp M, \spinor) \to L_{\susp g}^2(\susp M, T \susp M \otimes \spinor) 
\]
be the spinor connection on  $\spinor \to \susp M$, compare \cite{CHS}*{Definition 4.3}.

 Note that $E$ is a Hermitian Lipschitz bundle which carries a metric Lipschitz connection $\nabla^E$ induced by $\nabla^{E_0}$.
 Let  
 \[
    \nabla^{\spinor \tensor E} = \nabla^{\spinor}_{\susp g} \otimes 1 + 1 \otimes \nabla^E
 \]
 be the tensor product connection on $\spinor\tensor E$,

Let $b \colon W^{1,p}_{\loc} (\susp M, S^2_+(T^* \susp M)) \to W^{1,p}_{\loc}(\susp M, \End(T\susp M))$, $G \mapsto b_G$, be the continuous map defined in \cite{CHS}*{Lemma 4.1} with respect to the smooth background metric $\susp \gamma$ on $\susp M$.
Note that $p > n+1 = \dim \susp M$.
Recall that each $b_G$ is an $\susp \gamma$-self-adjoint automorphism of $T\susp M$ and maps local smooth $\susp \gamma$-orthonormal frames to $W^{1,p}$-regular $G$-orthonormal frames.
For  a $W^{1,p}$-regular metric $G$ on $\susp M$ and $v \in T\susp M$, we set
\begin{equation}\label{bIsometry}
  v^{G} := b_{G}(v).
\end{equation}
In particular, since the metric $\susp g$  on $\susp M$ is $W^{1,p}$-regular, for each smooth $\susp \gamma$-orthonormal frame $(e_1, \ldots, e_{n+1})$ of $(T\susp M, \susp \gamma)$, the $W^{1,p}$-regular frame $(e_1^{\susp g}, \ldots, e_{n+1}^{\susp g})$ is $\susp g$-orthonormal.

Let $L^2(\susp M,\spinor\tensor E; d\mu^{\susp g})$ be the Hilbert space of square-integrable sections of $\spinor\tensor E \to (\susp M, \susp g)$.
Define the twisted Dirac operator on $(\susp M, \susp g)$ as 
\begin{eqnarray} \label{defDir}
       \label{defdir}    \Dirac_E\colon L_{\susp g}^2(\susp M,\spinor  \tensor E) \supset \Lip_\cc(\susp M,\spinor  \tensor E) & \to&  L_{\susp g}^2(\susp M,\spinor  \tensor E) , \\
        \nonumber   \Dirac_E(\psi) &=  & \sum_{i=1}^{n+1} e_i \cdot \nabla^{\spinor \tensor E}_{e_i^{\susp g}} \psi , 
\end{eqnarray}
where $(e_1, \ldots, e_{n+1})$ is a local $\susp \gamma$-orthonormal frame of $T \susp M$, compare \cite{CHS}*{Definition 4.5}.

Since integration by parts holds for compactly supported Lipschitz sections, this is a symmetric linear differential operator of first order. 
In particular, it is closable.
  
With respect to the chirality decomposition $\spinor = \spinor^+ \oplus \spinor^-$, the Dirac operator $\Dirac_E$ is in block diagonal form
\[
   \Dirac^{\pm}_E\colon L_{\susp g}^2(\susp M,\spinor^{\pm}  \tensor E) \supset \Lip_\cc(\susp M,\spinor^{\pm} \tensor E)  \to   L_{\susp g}^2(\susp M,\spinor^{\mp}  \tensor E).
\]

\begin{proposition} \label{coneexample} 
Assume we are in the situation of Setup \ref{setup} and that there exists $\eta > 0$ such that $\scal_g \geq 1 + \eta$.

Then the twisted Dirac operators  $\Dirac^{\pm}_E$ on $\susp M$, defined in \eqref{defDir}, are abstract cone operators in the sense of Definition \ref{abstractcone}.
Furthermore, their closures 
\[
   \bar \Dirac\vphantom{\Dirac}^{\pm}_E \colon \dom (\bar \Dirac\vphantom{\Dirac}^{\pm}_E) \to L_{\susp g}^2(\susp , \spinor^{\mp} \otimes E)
\]
are Fredholm operators.
\end{proposition}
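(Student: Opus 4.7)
The plan is to verify all axioms of Definition \ref{abstractcone} for a natural cone--bulk decomposition of $\Dirac_E^\pm$ and then to apply Theorem \ref{thm:RegularSingular}\,\ref{fred}. The metric $\susp g = dr^2 + \sin^2(r)\,g$ has two cone tips, at $r = 0$ and $r = \pi$. First, I would fix $0 < \thet < \pi/2$ small and, using the substitution $r \mapsto \pi - r$ at the second tip, treat both tips uniformly. Let $\EE_\cone$ consist of $L^2_{\susp g}$-sections of $\spinor^+ \otimes E$ essentially supported in $\bigl((0,\thet) \cup (\pi-\thet, \pi)\bigr) \times M$, set $\EE_\bulk := \EE_\cone^\perp$, and similarly for $\FF$. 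Near each tip, trivialize $(E_0, \nabla^{E_0})$ by parallel transport along radial geodesics in $\SSS^{n+1}$ emanating from $\overline{\susp f}(\text{tip})$, and identify $\spinor|_{\{r\}\times M} \cong S_M$ via Clifford multiplication by $\partial_r$. Combined with the unitary rescaling $\psi \mapsto \sin(r)^{n/2}\psi$ that absorbs the $\sin^n r$ volume factor, this yields Hilbert space isometries $\EE_\cone, \FF_\cone \cong L^2((0,\thet), \LL)$ with $\LL := L^2_g(M, S_M \otimes F_0) \oplus L^2_g(M, S_M \otimes F_\pi)$, where $F_0, F_\pi$ are the fibers of $E_0$ at the two tips.

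Next I would compute the product structure. A standard warped-product computation for $\susp g$ gives
\[
    \Dirac_E = c(\partial_r)\,\partial_r + \tfrac{n}{2}\cot(r)\,c(\partial_r) + \tfrac{1}{\sin(r)}\,D^{\rm tw}_{M, r},
\]
where $D^{\rm tw}_{M,r}$ is a family of twisted Dirac operators on $(M, g)$ depending smoothly on $r$. Conjugation by the unitary $\sin(r)^{n/2}\cdot$ exactly cancels the $\tfrac{n}{2}\cot(r)$ term, and subsequent left-multiplication by the isometry $c(\partial_r)^{-1} = -c(\partial_r)$ produces the required form \ref{AnSetup5} with
\[
    S_0 := -c(\partial_r)\,D^{\rm tw}_{M,g} \quad\text{and}\quad S_1(r) := -c(\partial_r)\bigl(\tfrac{r}{\sin r}\,D^{\rm tw}_{M,r} - D^{\rm tw}_{M,g}\bigr).
\]
The locality axioms \ref{AnSetup3}, \ref{AnSetup4}, and \ref{AnSetup2} then follow from locality of differential operators and standard density of compactly supported smooth sections.

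Since $c(\partial_r)$ anticommutes with Clifford multiplication by vectors tangent to $M$, one checks that $\bar S_0^2 = (\bar D^{\rm tw}_{M,g})^2$. By hypothesis $\scal_g \geq 1 + \eta$, Lemma \ref{scal_nearby} yields $\sigma(\bar D^{\rm tw}_{M,g}) \cap [-\tfrac12, \tfrac12] = \emptyset$, whence $\sigma(\bar S_0^2) \subset [\tfrac14 + \delta, \infty)$ for some $\delta > 0$, establishing \ref{AnSpGap}. Smoothness of $\nabla^{E_0}$ and of $\sin$, together with $r/\sin r = 1 + O(r^2)$, imply $\|S_1(r)\|_{\boundedops(\dom(\bar S_0), \LL)} = O(r)$, so Remark \ref{absorb} (shrinking $\thet$) drives the $L^\infty$-norms of $\bar S_1 \bar S_0^{-1}$ and $\bar S_0^{-1} \bar S_1$ below the quantitative thresholds of \ref{AnSetup6} and \ref{boundedperturbtwo}. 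For Fredholmness, Theorem \ref{thm:RegularSingular}\,\ref{fred} requires an interior parametrix: away from the tips, $\susp g$ is $W^{1,p}$-regular with $p > n+1 = \dim \susp M$, so a cutoff-and-inversion argument applied on (for instance) a double of the bulk region, using the elliptic theory from \cite{CHS}*{Proposition 4.7}, produces the required bounded $P\colon \FF \to \dom(\bar \Brue)$ with compact remainders.

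The main obstacle I anticipate is the cleanness of the warped-product computation in Step 2, in particular the identification of $D^{\rm tw}_{M,r}$ as a genuine twisted Dirac operator on $(M,g)$ to which \cite{CHS}*{Proposition 4.7} and hence Lemma \ref{scal_nearby} applies. A second delicate point is the control of $S_1$: the Lipschitz map $f$ induces only Lipschitz regularity of $\nabla^E$ in the radial direction, so one must verify that the resulting operator perturbation is actually bounded as a map $\dom(\bar S_0) \to \LL$, not merely on $\LL$. This is the role of the stronger hypothesis $p > n+1$ in Theorem \ref{theo:main_odd} and ultimately dictates how small $\thet$ must be taken.
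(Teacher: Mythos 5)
Your proposal follows essentially the same route as the paper: radial parallel-transport trivialization of $E_0$ near the images of the tips, identification of the cross-sectional spinor bundle with $S_M$ and rescaling by $\sin(r)^{n/2}$ to reach the normal form $\partial_r + \tfrac1r(S_0 + S_1(r))$ with $S_0$ a \emph{trivially} twisted link Dirac operator (so Lemma \ref{scal_nearby} gives the spectral gap), smallness of $S_1(r)=O(r)$ combined with Remark \ref{absorb} to secure \ref{AnSetup6} and \ref{boundedperturbtwo}, and an interior parametrix obtained by doubling the bulk region and invoking closed-manifold low-regularity elliptic theory, exactly as in Proposition \ref{constr_int_parametrix}. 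The only slip is the citation in that last step: \cite{CHS}*{Proposition 4.7} concerns the trivially twisted operator, whereas the $H^1$-domain and Fredholmness of the Lipschitz-twisted operator on the double is what the paper takes from \cite{BartnickChrusciel}; this does not affect the structure of the argument.
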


We prove this only for $\Dirac^{+}_E$.
The proof for $\Dirac^{-}_E$ is analogous.
Let 

\begin{enumerate}[label=\myicon]
  \item  $\EE := L_{\susp g}^2(\susp M,\spinor^+ \tensor E)$, 
  \item $\FF := L_{\susp g}^2(\susp M,\spinor^- \tensor E)$,
  \item $\EE_{\bulk} := L_{\susp g}^2((1,\pi-1)\times M, \spinor^+ \otimes E)$, 
  \item $\EE_{\cone} :=  L_{\susp g}^2((0,1) \times M, \spinor^+ \otimes E) \oplus L_{\susp g}^2((\pi-1,\pi) \times M, \spinor^+ \otimes E)$,
  \item $\FF_{\bulk} := L_{\susp g}^2((1,\pi-1)\times M, \spinor^- \otimes E)$, 
  \item $\FF_{\cone} :=  L_{\susp g}^2((0,1) \times M, \spinor^- \otimes E) \oplus L_{\susp g}^2((\pi-1,\pi) \times M, \spinor^- \otimes E)$.
\end{enumerate}

We carry out the analysis  for the cone part $L_{\susp g}^2((0,1) \times M, \spinor^+ \otimes E)$. 
The arguments for the other cone part  are analogous.

Using the isometric identification $TM \cong T(M \times \{\tfrac{1}{2}\}) \subset T\susp M$, $\xi \mapsto \sin(\tfrac12)^{-1} \xi$, we consider  $\spinor^{\pm}|_M \to M$ as $\Cl(M,\gamma)$-module bundles, by letting $\xi \in TM$ act by Clifford multiplication with $\pm \sin(\tfrac12)^{-1} \xi  \cdot \partial_r$. 
With this definition, the volume element $\rm{vol}_{\CC}(M)$ acts by $+ \id$ on $\spinor^{\pm}|_M$ and the isomorphism 
\begin{equation*}
  c(\partial_r) \colon \spinor^{+} \cong \spinor^- , 
\end{equation*}
where $c(-)$ denotes Clifford multiplication, is $\Cl(M, \gamma)$-linear. 
Altogether, we obtain unitary $\Cl(M,\gamma)$-module bundle isomorphisms $\alpha \colon \spinor^+ |_M \cong S_M$ and $\beta \colon\spinor^-|_M \cong S_M$ satisfying $\beta \circ c(\partial_r) = \alpha$.

Set $U := (0,1) \times M \subset \susp M$.
Using parallel translation of sections in $C^{\infty}(M, S_M)$ along geodesic lines $(0,1) \times \{x\}\subset \susp M$, the isomorphisms $\alpha$ and $\beta$ induce isometries 
\begin{align*} 
   \Phi_\alpha  \colon L_{\susp g}^2(U, \spinor^+|_U) & \to L^2( (0,1), L^2(M, S_M; \sin(r)^n d\mu^g))  \\ 
   \Phi_\beta  \colon L_{\susp g}^2(U, \spinor^-|_U) & \to L^2( (0,1),  L^2(M, S_M; \sin(r)^n d\mu^g)) .
\end{align*} 
A straightforward calculation shows
\begin{equation*} 
  \Phi_\beta \circ \Dirac^+ \circ\, \Phi_\alpha^{-1} = \partial_r + \frac{n}{2} \cot(r) + \frac{1}{\sin(r)} D_M  .
\end{equation*}
This expression can be further simplified as follows.
We have isometries
\begin{align} 
 \label{important1}  \widetilde  \Phi_\alpha \colon L_{\susp g}^2(U, \spinor^+|_U) & \to L^2 ( (0,1),L_g^2(M, S_M))  , \\
 \label{important2}  \widetilde  \Phi_\beta \colon L_{\susp g}^2(U, \spinor^-|_U ) & \to L^2((0,1), L_g^2(M, S_M) )
\end{align} 
given by 
\begin{align*}
  \widetilde \Phi_\alpha(u)(r) &= \sin(r)^{\frac{n}{2}}  \cdot \Phi_{\alpha}(u)(r)  , \\
  \widetilde \Phi_\beta(u)(r)  &= \sin(r)^{\frac{n}{2}}  \cdot \Phi_{\beta}(u)(r)  ,
\end{align*}
and one calculates
\begin{equation}
 \label{Diracnontwist}  \widetilde \Phi_\beta \circ \Dirac^+ \circ \, \widetilde \Phi_\alpha^{-1} = \partial_r + \frac{1}{\sin(r)} D_M . 
\end{equation}

Next, we consider the twisted Dirac operator $\Dirac_E$ from \eqref{defDir} acting on sections of $\spinor \otimes E$. 
Let 
\begin{equation*}
  V = B_1((0,\ldots, -1)) \subset \SSS^{n+1} 
\end{equation*}
be the open ball of radius $1$ around the south pole of $\SSS^{n+1}$, let $F = E_0|_{(0,\ldots, 0, -1)}$ be the fibre of $E_0$ over the south pole of $\SSS^{n+1}$ and let  
\begin{equation} \label{trivialsphere}
     E_0|_{V} \cong V \times F 
\end{equation}
be the smooth unitary trivialization obtained by parallel transport along radial lines emanating from $(0,\ldots, 0, -1) \in \SSS^{n+1}$.
The trivialization \eqref{trivialsphere} and the map $\susp f$  induce a unitary trivialization $E|_U \cong U \times F$.

Let $\omega \in \Omega^1(V, \End(F))$ be the connection $1$-form of $\nabla^{E_0}$ with respect to the trivialization \eqref{trivialsphere}.
It defines a bounded section  of the Hermitian bundle  $T^*V \otimes \End(F) \to V$.

Consider the separable Hilbert space
\[
    \LL := L_g^2(M, S_M \otimes F) . 
\]
The unitary trivialization $E|_U \cong U \times F$ and the isometries $\widetilde \Phi_{\alpha}$ and $\widetilde \Phi_{\beta}$ induce isometries
\begin{align} 
  \label{L2Isom1}  \Phi_{\EE} \colon L_{\susp g}^2(U, \spinor^+\otimes\ E) & \to L^2 ( (0,1) , \LL ), \\
  \label{L2Isom2}   \Phi_{\FF} \colon L_{\susp g}^2(U, \spinor^- \otimes\ E) & \to L^2((0,1),  \LL ) . 
\end{align} 
This is the datum required in \ref{tre} in Definition \ref{abstractcone}.
Furthermore, if $(e_1, \ldots,  e_{n})$ is a local $\gamma$-orthonormal frame on $TM$, we have on $C_{\cc}^{\infty}((0,1), \Lip(M, S_M \otimes F))$, using \eqref{defDir} and $(\susp f)^* \omega(\partial_r) = 0$, we obtain
\[
  \widetilde \Phi^- \circ \Dirac_E^+ \circ ( \widetilde \Phi^{+})^{-1} = \partial_r + \frac{1}{\sin(r)} ( D_M \otimes \id_F) +\sum_{i=1}^{n}  c \left( e_i \right) \otimes ((\susp f)^* \omega)   \left( \big(0,  \tfrac{1}{\sin(r)} e_i^{g}\big) \right) . 
\]
Here, $\big(0,  \tfrac{1}{\sin(r)} e_i^{g}\big)  \in T( (0,1) \times M))|_{\{r\} \times M}$ is a unit vector with respect to the metric $\susp g$.

Consider the dense subspace $\dom(S_0) := \Lip(M, S_M \otimes F)$ of  $ \LL$ and set
\begin{equation*}
  S_0    =  D_M  \otimes \id_{F} \colon \LL  \supset \dom(S_0)  \to \LL.
\end{equation*}
The operator $S_0$  is a link operator as required in \ref{quattro}.

For $r \in (0,1)$, we define $S_1(r) \in \boundedops( \dom(S_0), \LL)$, 
\begin{equation*}
  S_1 (r) = r \cdot \Bigg( \left( \frac{1}{\sin(r)} - \frac{1}{r} \right) D_M \otimes \id_{F}  + \sum_{i=1}^{n} c \left( e_i \right) \otimes ((\susp f)^* \omega)   \left( \big(0,  \tfrac{1}{\sin(r)} e_i^{g}\big) \right) \Bigg) .
\end{equation*}
The map $S_1 \colon (0,1) \to \boundedops( \dom(S_0), \LL)$ is continuous, hence measurable. 
Furthermore, since $\susp f$ is $\max \{1,\Lambda \}$-Lipschitz continuous and the form $\omega$ is bounded, the map $S_1$ is bounded.
Thus $S_1$ is a perturbation of the form \ref{cinque}.

As usual, the isometries $\Phi_{\EE}$ and $\Phi_{\FF}$ will be suppressed from the notation and we write the cone part of the Dirac operator as 
\begin{equation} \label{productyes}
   \Dirac_E^+ = \partial_r + \tfrac1r (S_0 + S_1(r) ) .
\end{equation}
This is the product structure as required in \ref{AnSetup5}, and we now check that the conditions on the operators are satisfied.

Concerning \ref{AnSetup4}, we observe that we have a canonical isomorphism
\[
    \Lip_c(U, \spinor^+\otimes\ E; \susp g) \cong \Lip_c ( (0,1) , \Lip(S_M \otimes F; g)).
\]
Hence, for every $\varphi\in C_{\cc,0}^\infty([0,1), \R)$ and $u \in  \Lip_\cc(\susp M, \spinor\otimes E)$, we have 
\[
    \varphi u \in \varphi \big( \Lip_c ( (0,1) , \Lip(S_M \otimes F; g)) \big). 
\]
Folding this element with a mollifier in $r$ direction and using the product structure \eqref{productyes}, we see that the inclusion
\begin{equation*}
  \varphi\cdot C_\cc^\infty((0,1), \Lip(M, S_M \otimes F)) \subset \varphi \cdot   \dom(\Dirac_E^+)
\end{equation*}
is dense with respect to the graph norm on $\dom(\Dirac_E^+)$.
Property  \ref{AnSetup2} follows from the fact that $\Dirac_E$ is a differential operator.
Property \ref{AnSpGap} follows from Lemma \ref{scal_nearby}.

For $r \in (0,1)$, the operator $\frac1r S_1(r)$ is the product of $S_0$ by the bounded scalar $\frac{1}{\sin(r)}-\frac{1}{r}$ plus an operator of order $0$ whose $L^2$-operator norm is  bounded in $r$.
This implies that the  continuous family $\bar S_0^{-1} \bar S_1 \colon (0,1) \to \boundedops( \dom(S_0))$ induces a bounded family $(0,1) \to \boundedops( \LL)$, as required in \ref{AnSetup6}.
Furthermore, it implies that
\[
 \lim_{\thet \to 0} \left (   \norm{  \bar S_1\bar  S_0^{-1}}_{L^{\infty}((0, \vartheta), \boundedops(\LL))} +   \norm{  \bar  S_0^{-1} \bar S_1}_{L^{\infty}((0, \vartheta), \boundedops(\LL))} \right) = 0 . 
\]
This shows that \ref{boundedperturbtwo} holds over some $(0,\thet)$, $0 < \thet \leq 1$.

We have thus shown that $\Dirac^+_E$ is an abstract cone operator. 

The Fredholm property follows from Theorem \ref{thm:RegularSingular} \ref{fred} and  the next proposition.

\begin{proposition} \label{constr_int_parametrix} For all $\varphi, \psi \in C_{\cc,0}([0,\thet), \RR)$ with $\psi|_{\supp \varphi} \equiv 1$, there exists an interior parametrix of $\Dirac_E$ for $\varphi$ and $\psi$.
\end{proposition}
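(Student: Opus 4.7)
The plan is to reduce the construction of $P$ to a standard elliptic parametrix on a closed manifold, exploiting the fact that the cutoffs $(1-\varphi)$ and $(1-\psi)$ localize everything to a compact region of $\susp M$ bounded away from the cone tips. On such a region the metric $\susp g$ is $W^{1,p}$-regular, the bundle $\spinor\otimes E$ is Lipschitz with a Lipschitz connection, and $\Dirac_E$ is a first-order elliptic operator of the type analyzed in \cite{CHS}*{Section 4}.

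Concretely, I would fix a smooth cutoff $\chi\in C^\infty_\cc(\susp M,[0,1])$ supported away from the cone tips and equal to $1$ on an open neighborhood of $\supp(1-\varphi)\cup\supp(1-\psi)$. Embedding $\supp\chi$ isometrically into a closed smooth spin manifold $N$, I would extend $\susp g$ to a $W^{1,p}$-regular Riemannian metric on $N$ and extend $\spinor\otimes E$ to a Lipschitz Hermitian bundle with Lipschitz connection, for instance by a doubling construction combined with a smooth patch. The associated twisted Dirac operator $\Dirac_N$ on $N$ is then essentially self-adjoint with $\dom(\bar\Dirac_N)=H^1(N,\spinor\otimes E)$ by the closed-manifold case of \cite{CHS}*{Proposition 4.7}, and since $N$ is compact, $K_N:=(\bar\Dirac_N^2+1)^{-1}$ is a compact operator on $L^2(N)$. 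Hence
\[
   Q_N := \bar\Dirac_N(\bar\Dirac_N^2+1)^{-1}\colon L^2(N)\to\dom(\bar\Dirac_N)
\]
is bounded and satisfies $\bar\Dirac_N Q_N = I-K_N$ and $Q_N\bar\Dirac_N=I-K_N$ on the respective domains.

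To pull the parametrix back to $\susp M$, I would define $Pu:=\chi\cdot Q_N(\chi\bar u)$, where $\bar u$ denotes the zero-extension of $u\in\FF$ through the embedding. Since $(1-\varphi)\chi=1-\varphi$ and $(1-\psi)\chi=1-\psi$, and since $\Dirac_E$ agrees with $\Dirac_N$ on $\supp\chi$, a standard Leibniz computation yields
\[
   \bar\Dirac_E(1-\varphi)P(1-\psi) = (1-\psi)+R, \qquad (1-\varphi)P(1-\psi)\bar\Dirac_E = (1-\psi)+L,
\]
where $R$ and $L$ are sums of a compact contribution originating from $K_N$ and a commutator term $[\bar\Dirac_E,\chi]Q_N$, the latter being compact by the Rellich--Kondrachov theorem since $Q_N$ maps $L^2(N)$ into $H^1$ and the commutator is a multiplication operator with coefficients supported on the compact set $\supp d\chi$.

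The main obstacle I foresee is the low regularity of $\susp g$ and of the connection on $\spinor\otimes E$, which prevents a direct appeal to the smooth pseudodifferential calculus and forces the parametrix to be produced via spectral theory of $\bar\Dirac_N$. I expect the technical work to lie in carrying out the doubling/gluing construction while preserving the $W^{1,p}$-regularity of the metric and the Lipschitz regularity of the bundle and its connection, so that the essential self-adjointness and compact resolvent properties established in \cite{CHS}*{Section 4} remain available for $\Dirac_N$.
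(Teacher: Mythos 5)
Your proposal follows essentially the same route as the paper: localize to a compact region away from the cone tips, double it into a closed manifold while extending the $W^{1,p}$ metric and the Lipschitz twist bundle, invoke the closed-manifold low-regularity Dirac theory to obtain a parametrix with compact remainders, and recover the interior parametrix by cutting off, with the commutator terms compact via Rellich--Kondrachov. The only differences are cosmetic: the paper extends the bundle by also doubling the corresponding region of $\SSS^{n+1}$ and extending the Lipschitz map (so the extended bundle is automatically a pullback of a smooth bundle under a Lipschitz map), and it obtains the parametrix from the Fredholm property of the chiral operator via Bartnik--Chru\'{s}ciel rather than from your spectral formula $\bar\Dirac_N(\bar\Dirac_N^2+1)^{-1}$.
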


\begin{proof}
There exists a closed interval $[a,b] \subset (0,\pi)$ with $\supp(1-\varphi)\subset (a,b) \times M$.
Set  
\[
     \Omega := [a,b]\times M \subset \susp M .
\]

The normal exponential map along $\partial \Omega \subset \Omega$ for the restriction of the smooth Riemannian metric $\susp \gamma$ to $\Omega$ induces a collar diffeomorphism $[0,1) \times \partial \Omega \stackrel{\approx}{\rightarrow} U$ where $U$ is an open neighborhood of $\partial \Omega$ in $\Omega$.
Hence we obtain a smooth structure on the double $\mathsf{D}{\Omega} = \Omega \cup_{\partial \Omega} \Omega$.
There exists a continuous metric $\widetilde{g}$ of regularity $W^{1,p}$ on $\mathsf{D}{\Omega}$ with $\widetilde g|_{\Omega} = \susp g|_\Omega$.

Similarly, let $\mathsf{D} \Sigma$ be the smooth double of $\Sigma := [a,b]\times \SSS^{n-1} \subset \SSS^n$ with respect to the metric induced by $\SSS^n$ and let $\widetilde h$ be a smooth metric on $\mathsf{D} \Sigma$ which restricts to the round metric on $[a,b]\times \SSS^{n-1} \subset \SSS^n$.

The restriction of the smooth Hermitian vector bundle $E_0$ with metric connection to $\Sigma$ extends to a smooth Hermitian vector bundle $\tilde E_0 \to \mathsf{D} \Sigma$ with metric connection.

The closed Riemannian manifold $(\mathsf{D} \Omega, \tilde g)$ carries a spin structure restricting to the given spin structure on $(\Omega,g)$.
Let $\widetilde{\spinor} \to (\mathsf{D} \Omega, \tilde g)$ be the spinor bundle.
The restriction of $\widetilde{\spinor}$ to $\Omega$ can be identified with the restriction of $\spinor$ to $\Omega$.

The map $\susp f$ extends to a Lipschitz map
\begin{equation*}
  \widetilde{F}\colon \mathsf{D}\Omega \to \mathsf{D} \Sigma  .
\end{equation*}
Let $\widetilde E$ be the pull back bundle $\widetilde{F}^*(\widetilde E_0) \to \mathsf{D} \Omega$ along $\widetilde F$ together with the induced metric connection.
Then $\widetilde E$ is a Hermitian Lipschitz bundle with compatible metric connection.

We hence obtain  a twisted Dirac operator 
\[
   \mathscr{D} \colon L^2( \mathsf{D} \Omega, \widetilde \spinor\vphantom{\spinor}^+ \otimes \widetilde E) \supset \Lip (  \mathsf{D} \Omega, \widetilde \spinor\vphantom{\spinor}^+ \otimes \widetilde E) \to  L^2( \mathsf{D} \Omega, \widetilde \spinor\vphantom{\spinor}^- \otimes \widetilde E) .
 \]
The operator $\mathscr{D}$ restricts to the original Dirac operator $\Dirac^+_E$ on $\Lip_c (\Omega, \spinor^+ \otimes E)$. 

Let $q = \dim \widetilde \spinor\vphantom\spinor^{\pm}$ and $r= \dim E$.
Working in smooth trivializations of $\widetilde \spinor\vphantom{\spinor}^{\pm}$ and Lipschitz trivializations of $\widetilde E$ over open subsets $U \subset \mathsf{D} \Omega$, the operator $\mathscr{D} $ takes the form 
\[
      \mathscr{D}(u) = a^j \partial_j u + bu
\]
where $a^j \in W^{1,p}_{\loc}(U, \End(\CC^{q+r}))$ and $b \in L^p_{\loc}(\susp U, \End(\CC^{q+r}))$.
Furthermore, each $a^j$ is continuous. 
In particular, since $p \geq \dim \susp M \geq 4$, the regularity conditions in \cite{BartnickChrusciel}*{Equation (3.4)} and the ellipticity condition \cite{BartnickChrusciel}*{Equation (3.5)} are satisfied.
By \cite{BartnickChrusciel}*{Theorem 3.7} and \cite{BartnickChrusciel}*{Corollary 4.5}, the domain of the closure of $\mathscr{D}$  is equal to $H^1(  \mathsf{D} \Omega, \widetilde \spinor\vphantom{\spinor}^+ \otimes \widetilde E)$ and the operator $\bar{\mathscr{D}} \colon H^1(  \mathsf{D} \Omega, \widetilde \spinor\vphantom{\spinor}^+ \otimes \widetilde E) \to L^2(  \mathsf{D} \Omega, \widetilde \spinor\vphantom{\spinor}^- \otimes \widetilde E)$ is Fredholm.

Hence we obtain a parametrix for $\bar{\mathscr{D}}$, i.e., a bounded operator
\begin{equation*}
  \widetilde{P}\colon L^2(\mathsf{D} \Omega , \widetilde{\spinor}\vphantom{\spinor}^- \otimes  \widetilde{E}) \to H^1(\mathsf{D}\Omega, \widetilde{\spinor}\vphantom{\spinor}^+ \otimes  \widetilde{E})
\end{equation*}
and compact operators $\widetilde R$ on $L^2(\mathsf{D} \Omega, \widetilde{\spinor}\vphantom{\spinor}^-\otimes \widetilde{E})$ and $\widetilde L$ on $H^1(\mathsf{D} \Omega, \widetilde{\spinor}\vphantom{\spinor}^+ \otimes  \widetilde{E})$ satisfying
\[
  \bar{\mathscr{D}} \widetilde{P}   = \id  + \widetilde R , \qquad \widetilde{P} \bar{\mathscr{D}}  = \id  + \widetilde L  .
\]
Let $\chi \in C_{\cc}^{\infty}((a,b) \times M, [0,1])$ such that $\chi|_{\supp (1-\varphi)} \equiv 1$ and consider the bounded operator
\[
     P := \chi \widetilde{P} \chi \colon L^2 ( \susp M, \spinor^- \otimes E) \to H^1(\susp M, \spinor^+ \otimes E).
\]
We claim that $P$ is an interior parametrix of $\bar \Dirac^+_E$ for $\varphi$ and $\psi$.

For this aim, we compute 
\begin{align*}
     \bar \Dirac\vphantom{\Dirac}^+_E (1-\varphi) P (1-\psi) & = \bar{\mathscr{D}} (1-\varphi) \widetilde P (1-\psi) \\
    & = (1-\varphi) \bar{\mathscr{D}} \widetilde P (1-\psi) + [\varphi , \bar{\mathscr{D}}] \widetilde P (1-\psi) \\
    & = (1-\psi) + (1-\varphi) \widetilde{R} (1-\psi) + [\varphi , \bar{\mathscr{D}}] \widetilde P (1-\psi) .
\end{align*}
Since the commutator $[\varphi, \mathscr{D}]$ is a differential operator of order $0$ and $\widetilde P$ is a bounded operator $L^2 \to H^1$, the composition $[\varphi , \bar{\mathscr{D}}] \widetilde P (1-\psi)$ defines a compact operator on $L^2(\susp M, \spinor^- \otimes E)$ by the Rellich-Kondrachov embedding theorem. 
Hence, 
\[
    R := (1-\varphi) \widetilde{R} (1-\psi) + [\varphi , \bar{\mathscr{D}}] \widetilde P (1-\psi)
\]
defines a compact operator on $L^2 ( \susp M, \spinor^- \otimes E)$. 

Similarly, 
\begin{align*}
     (1-\varphi) P (1-\psi)  \bar \Dirac\vphantom{\Dirac}^+_E & =(1-\varphi) \widetilde P (1-\psi)  \bar{\mathscr{D}}  \\
    & = (1-\varphi) \widetilde P \bar{\mathscr{D}} (1-\psi) + (1-\varphi) \widetilde P [\bar{\mathscr{D}}, \psi] \\
    & = (1-\psi) + (1-\varphi) \widetilde{L} (1-\psi) +(1-\varphi) \widetilde P [\bar{\mathscr{D}}, \psi] .
\end{align*}
Again, the commutator $[\mathscr{D}, \psi]$ is a differential operator of order zero, hence $ (1-\varphi) \widetilde P [\bar{\mathscr{D}}, \psi]$ defines a compact operator on $H^1(\susp M, \spinor^+ \otimes E)$ and 
\[
   L:= (1-\varphi) \widetilde{L} (1-\psi) +(1-\varphi) \widetilde P [\bar{\mathscr{D}}, \psi]
\]
is a compact operator on $H^1(\susp M, \spinor^+ \otimes E)$. 

Hence $P$ satisfies  all of the required properties.
\end{proof}

\subsection{Integral Schr\"odinger-Lichnerowicz formula for spherical suspensions}

We work in the setting of Setup \ref{setup}.
Furthermore, we assume $\scal_g \geq n(n-1)$ in the distributional sense.
In Theorem \ref{thm:SL_cone}, we will prove an integral Schr\"odinger-Lichnerowicz  formula which applies to sections in $\dom (\bar  \Dirac\vphantom{\Dirac}^{\pm}_E)$ on the spherical suspension $(\susp M, \susp g)$.

  \begin{lemma} \label{scalinsuspension}
  Define the distribution $\rho^{-2}\scal_g \colon C^\infty_\cc(\susp M ) \to \R$ by
  \begin{equation*}
   \llangle \rho^{-2}\scal_g,u \rrangle := \int_a^b \rho(t)^{-2} \llangle \scal_g, u(t,\cdot) \big\rrangle \,dt .
  \end{equation*}
 Then
  \begin{equation*}
    \scal_{\susp g} = \rho^{-2}\scal_g - n\frac{(n-1)(\rho')^2 + 2\rho  \rho''}{\rho^2}.
  \end{equation*}
In particular,  if $\scal_g \geq n(n-1)$, then $\scal_{\susp g} \ge (n+1)n$.
\end{lemma}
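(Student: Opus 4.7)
The plan is to first prove the formula for smooth Riemannian metrics $g$ on $M$ via the classical warped-product calculation, and then transfer it to the $W^{1,p}$ case by mollification.

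In the smooth case, the warped product $B \times_\rho F$ with $B = ((a,b), dr^2)$ (so $\scal_B = 0$) and $F = (M^n, g)$ has scalar curvature
\[
\scal_{dr^2 + \rho^2 g} \;=\; \frac{\scal_g}{\rho^2} \;-\; 2n\,\frac{\rho''}{\rho} \;-\; n(n-1)\,\frac{(\rho')^2}{\rho^2},
\]
which is a standard computation (O'Neill), and this is exactly the claimed identity after combining the $\rho''$ and $(\rho')^2$ terms over $\rho^2$. Here I use the analyst's Laplacian $\Delta f = f''$ on $(\mathbb{R}, dr^2)$.

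For the low regularity case, I would mollify $g$ to obtain a sequence of smooth Riemannian metrics $g_\eps$ on $M$ converging to $g$ in $W^{1,p}(M, T^*M \otimes T^*M)$. Since $\rho$ is smooth and positive on $(a,b)$, the induced warped-product metrics $dr^2 + \rho^2 g_\eps$ converge in $W^{1,p}_{\loc}((a,b) \times M)$ to $\susp g$. By the continuity properties of the distributional scalar curvature in the $W^{1,p}$ topology (\cite{LL15}, \cite{CHS}*{Definition 3.3}), we have $\scal_{g_\eps} \to \scal_g$ as distributions on $M$ and $\scal_{dr^2 + \rho^2 g_\eps} \to \scal_{\susp g}$ as distributions on $(a,b) \times M$. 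Pairing against a test function $u \in C_\cc^\infty((a,b) \times M)$, the smooth identity for $g_\eps$ reads
\[
\llangle \scal_{dr^2+\rho^2 g_\eps},\, u\rrangle
= \int_a^b \rho(t)^{-2}\,\llangle \scal_{g_\eps},\, u(t,\cdot)\rrangle\,dt
\;-\; n\int_{(a,b)\times M} \frac{(n-1)(\rho')^2+2\rho\rho''}{\rho^2}\, u\, d\mu^{\susp g_\eps}.
\]
Passing to the limit $\eps \to 0$ on both sides (Fubini plus dominated convergence on the right), and using density of product test functions, yields the claimed distributional formula.

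The concluding scalar-curvature bound for $\rho(r) = \sin r$ is a direct substitution: since $(\rho')^2 = \cos^2 r$ and $\rho\rho'' = -\sin^2 r$,
\[
\scal_{\susp g} \;\ge\; \frac{n(n-1)}{\sin^2 r} - n\,\frac{(n-1)\cos^2 r - 2\sin^2 r}{\sin^2 r} \;=\; \frac{n(n-1)(1-\cos^2 r) + 2n\sin^2 r}{\sin^2 r} \;=\; n(n+1).
\]

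The main obstacle is the distributional limit step: one must verify that the non-standard pairing $\llangle \rho^{-2}\scal_g, u\rrangle := \int_a^b \rho(t)^{-2}\llangle \scal_g, u(t,\cdot)\rrangle\,dt$ used to define the first term actually arises as the limit of the analogous expressions for $g_\eps$. This amounts to a Fubini-type exchange of the $r$-integration with the distributional pairing in the $M$-variable, together with uniform bounds on $\llangle \scal_{g_\eps}, u(t,\cdot)\rrangle$ coming from the uniform $W^{1,p}$-convergence $g_\eps \to g$. Once this is in place, the smooth identity transfers cleanly.
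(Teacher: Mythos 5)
Your route is correct in outline but genuinely different from the paper's. The paper's proof is a one-liner that combines the Lee--LeFloch definition \cite{LL15}*{Definition 2.1} with the classical warped-product computation from O'Neill, with no mollification: the explicit integral formula defining $\llangle\scal_{\susp g}, u\rrangle$ separates the $r$- and $M$-variables cleanly for $\susp g = dr^2 + \rho^2 g$, the $r$-derivatives producing the smooth correction term and the $M$-derivatives reproducing $\llangle\scal_g, u(t,\cdot)\rrangle$ weighted by a power of $\rho$. Your mollification argument instead reduces everything to the smooth identity, trading that explicit verification for a limit exchange. Both are viable; the paper's version avoids convergence bookkeeping, yours avoids unwinding the Lee--LeFloch formula.

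The gap you flag is real but fillable: $\llangle\scal_{g_\eps}, u(t,\cdot)\rrangle$ is a bilinear integral in $(\partial g_\eps, g_\eps)$ and $(\partial u(t,\cdot), u(t,\cdot))$; since $g_\eps$ is bounded in $W^{1,p}(M)$ and $\{u(t,\cdot)\}_t$ is a compact family in $C^\infty_\cc(M)$, the integrand is uniformly dominated in $(t,\eps)$, and dominated convergence lets you pull $\eps\to 0$ through the $t$-integral. However, you should re-examine the $\rho$-power in your ``smooth identity'' before writing this up: pairing the classical O'Neill formula against $d\mu^{\susp g_\eps} = \rho(t)^n\,dt\wedge d\mu^{g_\eps}$ puts a factor $\rho(t)^{n-2}$, not $\rho(t)^{-2}$, in front of $\llangle\scal_{g_\eps}, u(t,\cdot)\rrangle$ if one uses the normalization $\llangle\scal_g,\varphi\rrangle = \int_M\scal_g\,\varphi\,d\mu^g$ for smooth $g$; the bare $\rho^{-2}$ weight in the lemma's definition requires instead pairing against a fixed reference volume. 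This bookkeeping does not affect the inequality $\scal_{\susp g}\ge (n+1)n$ (the extra $\rho^n$ is positive), but it does matter for the displayed identity, and you have reproduced it as written without verifying it against the chosen convention --- so nail down which volume form \cite{CHS}*{Definition 3.3} actually uses before finalizing.
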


\begin{proof}
  This follows from the definition of the scalar curvature distribution \cite{LL15}*{Definition 2.1} together with the standard computation of the scalar curvature of a warped product as in \cite{ONeill}*{Chapter 7}.
\end{proof}

From now on we work in Setup \ref{setup}.

\begin{lemma} \label{curvatureterm}
Let $R^{E_0} \in \Omega^2(\SSS^{n+1}, \End(E_0))$ be the curvature form of  $(E_0,\nabla_0) \to \SSS^{n+1}$.
We consider the almost everywhere defined measurable $2$-form on $M$ with values in $\End(E)$ given by 
\[
     R^E := (\susp f)^*(R^{E_0}).
\]

Then  $R^E  \in L^{\infty}\left(\Omega^2(\susp M, \End(E))\right)$. 
\end{lemma}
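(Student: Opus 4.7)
The plan is to show directly that $R^E$ has bounded pointwise $\susp g$-norm almost everywhere on $\susp M$, by exploiting that $R^{E_0}$ is smooth on the compact manifold $\SSS^{n+1}$ and that $\susp f$ has controlled differential almost everywhere.

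First I would pin down the differential of $\susp f$. Since $f\colon (M,d_g)\to \SSS^n$ is $\Lambda$-Lipschitz, Rademacher's theorem gives an $M$-full-measure set $A\subset M$ on which $d_xf$ exists with $|d_xf|_{\rm op} \leq \Lambda$ (measured with respect to $g$ on $T_xM$ and the round metric on $T_{f(x)}\SSS^n$). Consequently $\susp f$ is differentiable at every point of $(0,\pi)\times A$, which has full measure in $\susp M$, and with respect to the split $T_{(r,x)}\susp M = \RR\partial_r \oplus T_xM$ and $T_{(r,f(x))}\SSS^{n+1} = \RR\partial_r \oplus T_{f(x)}\SSS^n$, the differential decomposes as
\[
 d_{(r,x)}\susp f = \id_{\RR\partial_r} \oplus\, d_xf.
\]

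Next I would bound the operator norm of $d\susp f$ with respect to the warped product metrics. For $(u,v)\in T_{(r,x)}\susp M$ we have $|(u,v)|^2_{\susp g} = u^2 + \sin^2(r)|v|^2_g$ and correspondingly $|d\susp f(u,v)|^2_{\susp g_{\SSS^n}} = u^2 + \sin^2(r)|d_xf(v)|^2$. The quotient is bounded by $\max(1,\Lambda^2)$, so $|d_{(r,x)}\susp f|_{\rm op} \leq \max(1,\Lambda)$ at almost every point of $\susp M$. Taking second exterior powers yields $|\Lambda^2 d_{(r,x)}\susp f|_{\rm op} \leq \max(1,\Lambda)^2$ almost everywhere.

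Finally, since $\SSS^{n+1}$ is compact and $R^{E_0}\in \Omega^2(\SSS^{n+1},\End(E_0))$ is smooth, its pointwise $\End$-norm $\|R^{E_0}\|_\infty$ is finite. Using that the pullback of a $2$-form is controlled by the operator norm of $\Lambda^2 d\susp f$, we get at almost every $(r,x)\in \susp M$
\[
 |R^E_{(r,x)}|_{\susp g,\End(E)} = |(\susp f)^*R^{E_0}_{(r,x)}|_{\susp g,\End(E)} \leq |\Lambda^2 d_{(r,x)}\susp f|_{\rm op}\cdot \|R^{E_0}\|_\infty \leq \max(1,\Lambda)^2\, \|R^{E_0}\|_\infty,
\]
which gives the desired $L^\infty$-bound. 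The only subtle point is justifying that the pullback is well-defined and measurable on the full-measure set where $\susp f$ is differentiable; but this is immediate from the explicit formula for $d\susp f$ above, together with the measurability of $d_xf$ in $x$, which is part of the content of Rademacher's theorem.
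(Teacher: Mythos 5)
Your proof is correct and follows essentially the same route as the paper's: both rely on the uniform boundedness of $R^{E_0}$ on the compact sphere, the observation that $\susp f$ is $\max\{1,\Lambda\}$-Lipschitz (you derive this explicitly from the warped-product form of $\susp g$, while the paper simply states it), and the pullback formula $(\susp f)^*R^{E_0}_{v,w}=R^{E_0}_{d\susp f(v),d\susp f(w)}$ valid almost everywhere by Rademacher.
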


\begin{proof}
   Since $\SSS^{n+1}$ is compact, $R^{E_0}$ is uniformly bounded.
   By definition, we have
  \[
      (\susp f)^*R^{E_0}_{v,w}= R^{E_0}_{d_x\susp f(v),d_x\susp f(w)}
  \]
    for almost all $x \in \susp M$ and all $v,w\in T_x\susp M$.
   Since $f$ is $\Lambda$-Lipschitz, the map  $\susp f$ is $\max\{1,\Lambda\}$-Lipschitz.
   From this, the assertion of Lemma \ref{curvatureterm} follows.
\end{proof}

For $u = \sigma \otimes \eta \in (\spinor \otimes E)_x$, we set 
   \begin{equation*}
    \mathcal{R}_{\susp g}^E u:= \frac{1}{2}\sum_{i,j} e_i\cdot e_j \sigma \otimes R^E_{e_i^{\susp g},e_j^{\susp g}} \eta \in (\spinor \otimes E)_x ,
    \end{equation*}
  where $(e_i, \ldots e_{n+1})$ is some $\susp \gamma$-orthonormal basis of $(T_{x} \susp M, (\susp \gamma)_x)$. 
 It follows from Lemma \ref{curvatureterm} that $\mathcal{R}_{\susp g}^E$ defines a bounded operator  $L_{\susp g}^2(\susp M, \spinor \otimes E) \to L_{\susp g}^2(\susp M, \spinor \otimes E)$.

Due to Proposition \ref{coneexample}, the twisted Dirac operator $\Dirac_E$ has a unique closed extension $\bar \Dirac_E$.
We furthermore consider the operator
\[
   \nabla_E := \nabla^{\spinor \otimes E} \colon L_{\susp g}^2(\susp M , \spinor \otimes E) \supset \Lip_{\cc}(\susp M , \spinor \otimes E) \to L_{\susp g}^2(\susp M, T^* \susp M \otimes \spinor \otimes E).
\]
As a linear differential operator it is closable.
Let
\[
   \bar \nabla_E \colon  L_{\susp g}^2(\susp M , \spinor \otimes E) \supset \dom(\bar \nabla_E) \to  L_{\susp g}^2(\susp E, T^* \susp M \otimes \spinor \otimes E)
\]
be its closure.
We now obtain the following integral Schr\"odinger-Lichnerowicz formula for spherical suspensions.

\begin{theorem}\label{thm:SL_cone}
Assume we are working in Setup \ref{setup} and that $\scal_g \geq n(n-1)$ in the distributional sense.
Then the following holds. 
\begin{enumerate}[label=\textup{(\roman*)}]
\item \label{un}  The sesquilinear form $\Lip_{\cc}(\susp M, \spinor \otimes E) \times \Lip_{\cc}(\susp M, \spinor \otimes E) \to \CC$, $(u,v) \mapsto \bigl(  \nabla_E u, \nabla_E v \bigr)_{L^2}$ extends to a continuous sesquilinear functional
\[
  \bigl( \nabla_E -  , \nabla_E -  \bigr)_{L^2} \colon  \dom( \bar \Dirac_E) \times \dom( \bar \Dirac_E) \to \CC .
\]
\item \label{deux} The sesquilinear form $\Lip_{\cc}(\susp M, \spinor \otimes E) \times \Lip_{\cc}(\susp M, \spinor \otimes E) \to \CC$, $(u,v) \mapsto \llangle \scal_g , \langle u,v \rangle \rrangle$ extends to a continuous sesquilinear functional
\[
  \llangle \scal_{\susp g}, \langle - , - \rangle \rrangle  \colon \dom( \bar \Dirac_E) \times \dom( \bar \Dirac_E) \to \CC .
\]
\item \label{trois} For $u, v \in \dom(\bar \Dirac_E)$, the integral Schr\"odinger-Lichnerowicz formula holds: 
 \begin{equation*}\label{eq:SL_cone}
    \bigl(\bar \Dirac_E u, \bar \Dirac_E v \bigr)_{L^2}=\bigl( \nabla_E u,  \nabla_E v \bigr)_{L^2} + \tfrac14\llangle \scal_{\susp g}, \langle u,v\rangle \rrangle + \bigl(\mathcal R^E u, v \bigr)_{L^2}
  \end{equation*}
(Note that the last summand is defined by Proposition \ref{curvatureterm}).
\end{enumerate}
\end{theorem}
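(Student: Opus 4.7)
The plan is to first establish the pointwise Schr\"odinger--Lichnerowicz identity for sections in $\Lip_\cc(\susp M, \spinor\otimes E)$, then upgrade it to an a priori bound on $\|\nabla_E u\|$ by the graph norm of $\bar\Dirac_E$, and finally extend all three sesquilinear forms to the closure by continuity.

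First I would verify the pointwise SL formula
\[
  (\Dirac_E u, \Dirac_E v)_{L^2} = (\nabla_E u, \nabla_E v)_{L^2} + \pairing{\scal_{\susp g}, \innerprod{u,v}} + (\mathcal R^E u, v)_{L^2}
\]
for $u,v\in \Lip_\cc(\susp M, \spinor\otimes E)$. This is the direct analogue of \cite{CHS}*{Theorem 5.1}, which treats the closed case with $W^{1,p}$-regular metrics for $p > \dim$; since $p > n+1 = \dim \susp M$ by hypothesis and $u,v$ have compact support in $\susp M$, the local arguments of loc.\ cit.\ transfer. The pairing of $\scal_{\susp g}$ against the Lipschitz function $\innerprod{u,v}$ is interpreted via Lemma~\ref{scalinsuspension}: the bounded contribution of $\scal_{\susp g}$ acts by ordinary $L^2$-duality, while the singular piece $\rho^{-2}\scal_g$ is evaluated slice-by-slice using the extension of $\scal_g$ from $C^\infty_\cc$-test functions to Lipschitz ones, already established in \cite{CHS}.

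Once the pointwise identity is in hand, setting $u = v$ and using $\scal_{\susp g} \ge n(n+1) \ge 0$ (Lemma~\ref{scalinsuspension}) together with boundedness of $\mathcal R^E$ (Lemma~\ref{curvatureterm}) yields
\[
  \|\nabla_E u\|_{L^2}^2 \le \|\Dirac_E u\|_{L^2}^2 + \|\mathcal R^E\|_{L^\infty}\cdot \|u\|_{L^2}^2
\]
for every $u\in \Lip_\cc(\susp M, \spinor\otimes E)$. Since $\Lip_\cc$ is dense in $\dom(\bar\Dirac_E)$ by definition of the closure, this estimate extends to a continuous inclusion $\dom(\bar\Dirac_E) \hookrightarrow \dom(\bar\nabla_E)$, which gives assertion~\ref{un} by polarization. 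For assertions~\ref{deux} and~\ref{trois}, one rearranges the pointwise identity as
\[
  \pairing{\scal_{\susp g}, \innerprod{u,v}} = (\Dirac_E u, \Dirac_E v)_{L^2} - (\nabla_E u, \nabla_E v)_{L^2} - (\mathcal R^E u, v)_{L^2}.
\]
Each term on the right extends continuously to $\dom(\bar\Dirac_E)\times \dom(\bar\Dirac_E)$: the first via the graph inner product of $\bar\Dirac_E$, the second by assertion~\ref{un}, and the third by boundedness of $\mathcal R^E$. This simultaneously defines the continuous extension of the scalar curvature form and, by density from $\Lip_\cc$, yields the identity of assertion~\ref{trois} on the full domain.

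The main technical obstacle is the first step, namely justifying the pointwise SL identity on $\Lip_\cc$ when $\susp g$ is only $W^{1,p}$-regular in the $M$-directions and the bundle $E$ carries merely a Lipschitz metric connection. The standard pointwise manipulations---commuting covariant derivatives past inner products and symmetrizing the curvature endomorphism---must be replaced by an approximation argument with smoothed metrics and connections, followed by passage to the limit in the distributional scalar curvature term. Compact support of $u,v$ in $\susp M$ sidesteps the endpoint degeneracies of $\susp g$ at $r\in\{0,\pi\}$, reducing the argument to a direct transfer of \cite{CHS}*{Theorem 5.1} from the closed to the locally compact setting.
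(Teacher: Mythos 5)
Your proposal is correct and takes essentially the same approach as the paper: establish the SL identity on $\Lip_\cc$ via \cite{CHS}*{Theorem~5.1}, then use the non-negativity of $\scal_{\susp g}$ and the $L^2$-boundedness of $\mathcal R^E$ to control $\|\nabla_E u\|$ by the graph norm of $\bar\Dirac_E$, and extend everything by density. The paper phrases the key step as a Cauchy argument applied to $u_i - u_j$ and then extracts the convergence of the scalar-curvature pairing by elimination, while you isolate the a priori bound explicitly before polarizing and rearranging, but this is only a cosmetic reorganization of the same proof.
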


\begin{proof}
If $u$ and $v$ are compactly supported Lipschitz sections, the proof of \cite{CHS}*{Theorem 5.1} carries over to show \ref{trois} (recall that $p > n+1$ by assumption).
Let $u \in \dom(\bar \Dirac_E)$ and let the sequence $u_i \in  \Lip_\cc(\susp M,\spinor\tensor E)$ converge in  $\dom( \bar \Dirac_E)$ to $u$.

We  apply the Schr\"odinger-Lichnerowicz formula \ref{trois} to the compactly supported Lipschitz sections $u_i - u_j$ and get
  \begin{align*}
    \|\Dirac_E u_i - \Dirac_E u_j \|_{L^2}^2 =&\ \| \Dirac_E(u_i-u_j) \|^2_{L^2} \\
    =&\ \|\nabla_E(u_i-u_j) \|^2_{L^2} + \tfrac{1}{4} \llangle \scal_{\susp g},\abs{u_i-u_j}^2\rrangle + \bigl(\mathcal R^E(u_i-u_j),u_i-u_j \bigr)_{L^2}  .
  \end{align*}
  The operator $\mathcal R^E$ is uniformly bounded on $L^2$ and the distribution $\scal_{\susp g}$ satisfies $\scal_{\susp g} \ge (n+1)n$ by Lemma \ref{scalinsuspension}. 
When we pass to the limit $i,j\to \infty$, the left-hand side goes to $0$ by assumption. 
By the boundedness of $\mathcal R^E$, the same applies to the right-most summand, while the other two summands are non-negative.

  It follows that all terms converge to $0$.
  In particular
  \begin{equation*}
    \lim_{i,j\to\infty} \| \nabla_E u_i- \nabla_E u_j \|_{L^2} =0 .
  \end{equation*}
  We conclude that $\dom( \bar \Dirac_E) \subset \dom( \bar \nabla_E)$.
  Furthermore, by polarization, we conclude \ref{un}.
  
  We  now apply the Schr\"odinger-Lichnerowicz formula \ref{trois} to the compactly supported Lipschitz sections $u_i $ and get, for all $i$,
    \[
 \| \Dirac_E u_i \|^2_{L^2} = \| \nabla_E u_i  \|^2_{L^2} + \tfrac{1}{4} \llangle \scal_{\susp g},\abs{u_i}^2\rrangle + \bigl(\mathcal R^E u_i ,u_i\bigr)_{L^2}  .
  \]
  Sending $i$ to infinity, the left hand side converges and the first and third summand of the right hand side converge. 
  Hence, $\llangle \scal_{\susp g},\abs{u_i}^2\rrangle$ converges, as well.
  A similar argument shows that the limit is independent of the choice of $u_i$ so that we can define
  \[
   \llangle \scal_{\susp g},\abs{u}^2\rrangle := \lim_{i \to \infty}  \llangle \scal_{\susp g},\abs{u_i}^2\rrangle \in \R. 
  \]
  Now let $u, v \in \dom(\bar \Dirac_E)$ and let $u_i$ and $v_i$ be sequences of compactly supported Lipschitz sections of $\spinor \otimes E$ converging to $u$ and $v$ in $\dom ( \bar \Dirac_E)$.
  By polarization and the previous argument, we see that the limit 
 \[
   \llangle \scal_{\susp g},\langle u, v \rangle\rrangle := \lim_{i \to \infty}  \llangle \scal_{\susp g},\langle u_i, v_i \rangle \rrangle \in \CC
  \]
  exists and is independent of the choice of $u_i$ and $v_i$.
  A similar argument shows that the sesquilinear form $\dom(\bar \Dirac_E) \times \dom(\bar \Dirac_E) \to \CC$, $(u,v) \mapsto \llangle \scal_{\susp g}, \langle u, v \rangle \rrangle$ is continuous. 
  This finishes the proof of \ref{deux}.
  
  For assertion \ref{trois}, we write down the Schr\"odinger-Lichnerowicz formula for $u_i$ and $v_i$, send $i$ to infinity and observe that each summand of the Schr\"odinger-Lichnerowicz formula converges.

\end{proof}

\subsection{Index formula on the spherical suspension}

Assume that we are in the situation of Setup \ref{setup} where $E_0$ is  the canonical spinor bundle $\Sigma \to \SSS^{n+1}$.
This bundle comes with a chirality decomposition $E_0 = E_0^+ \oplus E_0^-$ coming from the $\pm 1$-eigenbundle decomposition of the volume element over the even dimensional sphere $\SSS^{n+1}$.
We obtain a corresponding decomposition $E = E^+ \oplus E^-$, where $E^{\pm} = (\susp f)^*(E_0^{\pm})$.

Let $\gamma$ be a smooth Riemannian metric on $M$, choose a spin structure of $(M,\gamma)$ and let $\spinor = \spinor^+ \oplus \spinor^- \to (\susp M, \susp \gamma)$ be the spinor bundle from \eqref{defspinor}.

With respect to the direct sum decomposition
\begin{equation*}
  \spinor \otimes E = (\spinor \otimes E)^+ \oplus (\spinor\otimes E)^-,
\end{equation*}
where
\begin{align} 
  \label{eq:decomp_of_plustwistbundle} (\spinor \otimes E)^+ & = \big( \spinor^+ \otimes\ E^+ \big) \oplus \big( \spinor^- \otimes\ E^- \big),  \\
  \label{eq:decomp_of_minusplustwistbundle} (\spinor\otimes E)^-  & = \big( \spinor^- \otimes\ E^+ \big)  \oplus \big( \spinor^+ \otimes\ E^- \big) ,
\end{align} 
the twisted Dirac operator $\Dirac_E$ introduced in \eqref{defDir} acting on sections of $\spinor \otimes E$ is of block diagonal form, that is $ \Dirac_E = \Dirac_E^+ \oplus \Dirac_E^-$,
\begin{equation*}
      \Dirac_E^{\pm} \colon L^2_{\susp g}(\susp M, \spinor \otimes E) \supset \Lip_c( \susp M, (\spinor \otimes E)^{\pm}) \to L^2_{\susp g}( \susp M,  (\spinor \otimes E)^{\mp}).
\end{equation*}
It follows from Proposition \ref{coneexample} that $\Dirac_E^{\pm}$ are abstract cone operators and their closures $\bar \Dirac\vphantom{\Dirac}^{\pm}_E$ are Fredholm.
The aim of this section is to prove the following index formula.

\begin{proposition}\label{prop:IndexFormula} We have
  \begin{equation*}
    \ind( \bar \Dirac\vphantom{\Dirac}^+_E) =  (-1)^{\tfrac{n+1}{2}} \deg(f) \cdot \chi(\SSS^{n+1}).
  \end{equation*}
\end{proposition}

Let  $S_M \to (M, \gamma)$ be the spinor bundle.
Let $\delta$ be a smooth Riemannian metric on $M$ and consider the continuous family of $W^{1,p}$-metrics on $M$
\begin{equation}\label{eq:familyOfMetrics}
  g_t := (1-t)g + t\delta, \qquad t\in [0,1] .
\end{equation}
For $t \in [0,1]$, let 
\begin{equation} \label{Dirac_oncemore}
   D_t := L_{g_t}^2(M, S_M) \supset H^1_{g_t}(M, S_M) \to L_{g_t}^2(M, S_M) 
\end{equation}
be the associated family of Dirac operators.
By Lemma \ref{scal_nearby} and since smooth Riemannian metrics are dense in the space of $W^{1,p}$-regular Riemannian metrics on $M$, we can choose $\delta$ such that for each Hermitian vector space $F$, the spectrum of the operator 
\[
   D_{F,t} = D_{t} \otimes \id_F \colon L_{g_t}^2(M, S_M \otimes F) \supset H^1_{g_t} (M, S_M \otimes F) \to L_{g_t}^2(M, S_M \otimes F)
\]
has empty intersection with $ [-\tfrac{1}{2}, + \tfrac{1}{2}]$. 
Since these spectra are independent of the choice of the smooth background metric $\gamma$, we can henceforth assume without loss of generality that this property holds for the choice $\delta := \gamma$.

\begin{proof}[Proof of Proposition \ref{prop:IndexFormula}] We construct a continuous deformation of $\bar \Dirac\vphantom{\Dirac}^+_E$ through Fredholm operators to a Dirac operator on a smooth Riemannian manifold with conical singularities whose index can be computed by the results  in \cite{Chou}.

Let $\varphi \in C^{\infty}((0,\pi), \RR)$ be a smooth positive function such that $\varphi(r) = r$ for $r \in (0,1)$ and $\varphi(r) = \pi - r$ for $r \in (\pi-1,1)$.
    For $t \in [0,1]$, set $\sigma_t := t\varphi(r)  + (1-t) \sin(r)$ and consider the warped product metric on $\susp M$ given by 
    \begin{equation*}
      \Gamma_t := d r^2 + \sigma_t(r)^2 g_t(r). 
    \end{equation*}
Note, that the metric $\Gamma_1$ is smooth and strictly conical, i.e., equal to $d r^2 + r^2 \gamma$, on $(0,1) \times M$, and equal to $d r^2 + (\pi-r)^2 \gamma$ on $(\pi-1 ,\pi) \times M$.

Take a local trivialization of the pullback bundle $E^{\pm} = (\susp f)^*(E_0^{\pm})$ over $\mathscr{C} = \left( ( 0,1) \cup (\pi-1,\pi) \right) \times M$, using local trivializations of $E_0^{\pm}$ near the north and south poles of $\SSS^{n+1}$  as in \eqref{trivialsphere}.
 By Proposition \ref{lem:smoothStructureOnE}, we find a smooth structure on $E^{\pm}$ that is compatible with the Lipschitz structure and with  this trivialization of $E$ on $\mathscr{C}$.
Choose a smooth Hermitian bundle metric on $E^{\pm}$ and a smooth metric connection $\widetilde \nabla^{E^{\pm}}$ on $E^{\pm}$ that has a vanishing local connection 1-form for this trivialization over $\mathscr{C}$.
Set
    \begin{equation*}
      \nabla^{E^{\pm}}_t := t \widetilde\nabla^{E^{\pm}} + (1-t)\nabla^{E^{\pm}}.
    \end{equation*}
Note that, for $0 \leq t <1$, this connection is not necessarily metric.
 Let 
 \[
     \bar\Dirac\vphantom{\Dirac}^+_{E,t} \colon L_{\Gamma_t}^2(\susp M, (\spinor \otimes E)^+) \supset \dom(\bar\Dirac\vphantom{\Dirac}^+_{E,t}) \to L_{\Gamma_t}^2(\susp M, (\spinor \otimes E)^-) 
 \]
be the Dirac operator for the metric $\Gamma_t$ on $\susp M$ twisted with $(E,\nabla_t^E)$. As in the proof of Proposition \ref{coneexample}, it follows that each $\Dirac_{E,t}^+$ is an abstract cone operator.

To apply Proposition \ref{lem:deformationLemma}, we  conjugate the $\Dirac_{E,t}^+$ with a family of isometries to abstract cone operators $L^2_{\Gamma_1}(\susp M, (\spinor\otimes E)^+) \to L^2_{\Gamma_1}(\susp M, (\spinor\otimes E)^-)$ between fixed Hilbert spaces.
For this, let 
\begin{equation*}
  \rho^{\Gamma_t} := \tfrac{d\mu^{\Gamma_t}}{d\mu^{\Gamma_1}} \in W^{1,p}_{\loc}(\susp M)
\end{equation*}
  be the volume density of $\Gamma_t$ with respect to $\Gamma_1$.
  It is non-zero almost everywhere.
  Multiplication with $\left(\rho^{\Gamma_t}\right)^{\tfrac12}$ induces isometries
  \begin{equation*}
    \Psi_t^\pm\colon L^2_{\Gamma_t}(\susp M, (\spinor \otimes E)^{\pm}) \to L^2_{\Gamma_1}(\susp M, (\spinor \otimes E)^{\pm}).
  \end{equation*}

  We claim that $(\Psi_t^+)^{-1}$, which is multiplication with $\left(\rho^{\Gamma_t}\right)^{-\tfrac12}$, restricts to a map
  \begin{equation}\label{RestrictionLipcToDomDirac}
    \Lip_\cc(\susp M, (\spinor \otimes E)^+) \subset H^1_{\cc,\Gamma_1}(\susp M, (\spinor \otimes E)^+) \to H^1_{\cc, \Gamma_t}(\susp M, (\spinor \otimes E)^+)\subset  \dom(\bar\Dirac\vphantom{\Dirac}^+_{E,t}),
  \end{equation}
  where $H^1_{\cc,\Gamma_t}$ denotes compactly supported $H^1$-sections.

First, we show that for the image we have
\begin{equation*}
  (\Psi_t^+)^{-1} \left(H^1_{\cc, \Gamma_1}(\susp M, (\spinor \otimes E)^{\pm})\right) \subset H^1_{\cc, \Gamma_t}(\susp M, (\spinor \otimes E)^{\pm}).
\end{equation*}
On compact subsets $\Omega= [a,\pi-a]\times M\subset \susp M$ the Sobolev norm is independent of the choice of the metric and the connection up to equivalence. 
Therefore, it is enough to show that multiplication with any real valued function $\eta\in W^{1,p}_\loc(\susp M)$ induces a map $H^1(\Omega, (\spinor\otimes E)^+)\to H^1(\Omega, (\spinor\otimes E)^+)$.
Let $u\in H^1(\Omega, (\spinor\otimes E)^+)$. 
We apply the Leibniz rule and H\"older's inequality to compute
\begin{equation*}
  \norm{\nabla(\eta u)}_{L^2} \leq \norm{d\eta\otimes u}_{L^2} + \norm{\eta \nabla u}_{L^2} \leq \norm{d\eta}_{L^p}\norm{u}_{L^q} + \norm{\eta}_{L^2}\norm{\nabla u}_{L^2},
\end{equation*}
where $q = \frac{2p}{p-2}$ and all norms are considered over $\Omega$. 
By the Sobolev embedding theorem, we have $H^1\subset L^q$,  and $\eta \in L^2$, as $p>n+1$.
Hence, the right-hand side is finite and $\eta u \in H^1(\Omega, (\spinor\otimes E)^+)$.

  It is left to show that the compactly supported $H^1$ sections are contained in $\dom(\bar\Dirac\vphantom{\Dirac}^+_{E,t})$.
  For $u\in H^1_\cc(\susp M, (\spinor \otimes E)^{\pm})$ we find $a>0$ such that $\supp(u)\subset (a,\pi-a)\times M$.
  Consider the smooth double $\mathsf{D}\Omega$ of $\Omega$ with a $W^{1,p}$-metric $\widetilde g$ that restricts to $g$ on $\Omega$ and extensions $\widetilde \spinor \vphantom{\spinor}^\pm\otimes \widetilde E$ of the twist bundles to the double as in the proof of Proposition \ref{constr_int_parametrix}. 
  The minimal closed extension of the twisted Dirac operator $\mathscr{D}$ associated with $\widetilde g$ is $H^1(\mathsf D \Omega, \widetilde \spinor\vphantom\spinor^+\otimes \widetilde E)$ and contains the section $u$ extended by zero to $\mathsf D\Omega$.
  Therefore, we can approximate $u$ by Lipschitz sections, with support contained in $(a,\pi-a)\times M$, in the graph norm of $\mathscr{D}$.
  Since the restriction of $\mathscr{D}$ to $(a,\pi-a)\times M$ coincides with $\Dirac_{E,t}^+$, we obtain a sequence of Lipschitz sections $(u_i)$ with support in $(a,\pi-a)\times M$ that converges to $u$ in the graph norm of $\bar\Dirac\vphantom{\Dirac}^+_{E,t}$.

  As a consequence of \eqref{RestrictionLipcToDomDirac}, we obtain a well-defined family of operators
  \begin{equation*}
    \Dirac_t:= \Psi_t^-\circ \bar\Dirac\vphantom{\Dirac}^+_{E,t}\circ (\Psi_t^+)^{-1}\colon L^2_{\Gamma_1}(\susp M, (\spinor \otimes E)^+) \supset \Lip_\cc(\susp M, (\spinor\otimes E)^+) \to L^2_{\Gamma_1}(\susp M, (\spinor \otimes E)^-).
  \end{equation*}

  We will varify that each $\Dirac_t$ is an abstract cone operator. As in Section \ref{sec:Dirac_as_regsingop}, we demonstrate the argument only for the cone tip at $r=0$. 
  The argument for the cone tip at $r=\pi$ is analogous.

  Let
  \begin{enumerate}[label=\myicon]
  \item  $\EE := L_{\Gamma_1}^2(\susp M,\spinor^+ \tensor E)$, 
  \item $\FF := L_{\Gamma_1}^2(\susp M,\spinor^- \tensor E)$,
  \item $\LL:= L^2_\gamma(M, S_M \otimes F)$,
  \end{enumerate}
  where $F$ is the fibre of $E_0$ over the south pole of $\SSS^{n+1}$, see \eqref{trivialsphere}.
  On $U = (0,1) \times M \subset \susp M$, we have
\begin{equation*}
  \rho^{\Gamma_t} = \left(\tfrac{\sigma_t(r)}r\right)^n\tfrac{d\mu^{g_t}}{d\mu^{\gamma}}.
\end{equation*}
Set
\begin{equation*}
  \rho^{g_t} := \tfrac{d\mu^{g_t}}{d\mu^{\gamma}}.
\end{equation*}
Similar to \eqref{L2Isom1}  and \eqref{L2Isom2}, we obtain isometries
\begin{align*} 
  \Phi_{\EE} \colon L_{\Gamma_1}^2(U, \spinor^+\otimes\ E) & \to L^2 ( (0,1) , \LL ), \\
  \Phi_{\FF} \colon L_{\Gamma_1}^2(U, \spinor^- \otimes\ E) & \to L^2((0,1),  \LL ) 
\end{align*}
such that 
  \begin{equation*}
    \Phi_{\FF}\circ\Dirac_t \circ (\Phi_{\EE})^{-1} =\partial_r + \tfrac1{\sigma_t(r)} \left(\rho^{g_t}\right)^{\tfrac12}\cdot \left( D_t\otimes \id_F\right)\cdot \left(\rho^{g_t}\right)^{-\tfrac12} + (1-t)\sum_{i=1}^{n} c \left( e_i \right) \otimes ((\susp f)^* \omega)   \left( \big(0,  \tfrac{1}{\sin(r)} e_i^{g_t}\big) \right)
  \end{equation*}
  for $(e_1, \ldots, e_{n})$ a local $\gamma$-orthonormal frame of $TM$.

  We define the link operator by
  \begin{equation*}
    S_{0,t} = \left(\rho^{g_t}\right)^{\tfrac12}\left( D_t\otimes \id_F\right) \left(\rho^{g_t}\right)^{-\tfrac12} \colon \LL \supset \Lip(M, S_M \otimes F) \to \LL,
  \end{equation*}
  which is well-defined as multiplication by $\left(\rho^{g_t}\right)^{-\tfrac12}$ maps Lipschitz sections into the domain of $D_t$.
  Note that this operator is essentially self-adjoint. 
  Furthermore, we define the perturbation operator by
  \[
    S_{1,t}(r) =  r \cdot \Bigg( \left(\frac{1}{\sigma_t(r)} - \frac{1}{r}\right) S_{0,t} + (1-t)\sum_{i=1}^{n} c \left( e_i \right) \otimes ((\susp f)^* \omega)   \left( \big(0,  \tfrac{1}{\sin(r)} e_i^{g_t}\big) \right) \Bigg) .
  \]
  
  The domain $\mathfrak{D}_{\link} = \Lip(M, S_M \otimes F)$ of the link operators is independent of $t$ and the graph norms on $\mathfrak{D}_{\link}$ induced by $S_{0,t}$ are pairwise equivalent. 
  Furthermore, the map 
    \begin{equation*}
      [0,1]\to \boundedops(\mathfrak{D}_{\link},  \LL), \quad t \mapsto S_{0,t},
    \end{equation*}
    is continuous.
    The spectrum of $\bar S_{0,t}$ has empty intersection with $[-\tfrac{1}{2}, + \tfrac{1}{2}]$ by our choice of $\gamma$.
   Note also that 
        \begin{equation*} [0,1] \to L^{\infty}((0,\thet), \boundedops(\mathfrak{D}_{\link} , \LL)), \quad t \mapsto S_{1,t} 
        \end{equation*}
   is continuous.

The constant $0 < \thet \leq 1$, such that \ref{boundedperturbtwo} holds over some $(0,\thet)$, can be chosen independent of $t$.
Each $\Dirac_t$ has an interior parametrix so that their minimal closed extensions are Fredholm.

Proposition \ref{lem:deformationLemma} shows that $\mathfrak{D} := \dom(\bar
\Dirac_t)$ is independent of $t$, the graph norms of $\bar \Dirac_t$ on
$\mathfrak{D}$ are independent of $t$, up to equivalence with constant which
can be chosen independently of $t$, and the map 
\[
    [0,1] \to \boundedops(\mathfrak{D}, \FF), \qquad t \mapsto \bar \Dirac_t
\]
is continuous.
In particular, the Fredholm indices of $\bar \Dirac_0 = \Psi_0^-\circ \bar \Dirac\vphantom{\Dirac}^+_{E}\circ (\Psi_0^+)^{-1}$ and of $\bar \Dirac_1 = \bar \Dirac\vphantom{\Dirac}^+_{E,1}$ are equal. 
As $\Psi_0^\pm$ is a unitary isomorphism, we have
\begin{equation*}
  \ind(\bar \Dirac\vphantom{\Dirac}^+_{E})  = \ind(\bar \Dirac_0) = \ind(\bar \Dirac\vphantom{\Dirac}^+_{E,1}).
\end{equation*}

The twisted Dirac operator  $\Dirac_{E,1}^+$ is associated with a smooth Riemannian metric on $\susp M$ which is strictly conical on $\mathscr{C}$  and with a smooth Hermitian twist bundle $E$ equipped with a metric connection so that $E$ is trivialized and  flat over $\mathscr{C}$.

By \cite{Chou}*{Theorem (3.2) and remarks at the end of page 5}, the associated twisted Dirac operator for these data,
\begin{equation} \label{DiracChou}
    \mathcal{D}_{E,1} \colon L_{\Gamma_1}^2(\susp M , \spinor \otimes E) \supset \Lip(\susp M, \spinor \otimes E) \to L_{\Gamma_1}^2( \susp M, \spinor \otimes E),
 \end{equation}
 is essentially self-adjoint.
 To compute the index of $\bar \Dirac\vphantom{\Dirac}^+_{E,1}$, we observe that, due to \eqref{eq:decomp_of_plustwistbundle} and \eqref{eq:decomp_of_minusplustwistbundle}, the operator 
  \begin{equation*}
    \Dirac_{E,1}^+ \colon C_\cc^{\infty}(\susp M, (\spinor \otimes E)^+) \to C_\cc^{\infty}( \susp M, (\spinor \otimes E)^-)
  \end{equation*}
  decomposes as
  \begin{equation*}
    \Dirac_{E,1}^+ = \Dirac^{++} \oplus \Dirac^{--}.
  \end{equation*}
Here
  \begin{align*}
    \Dirac^{++} \colon C_\cc^{\infty}( \susp M , \spinor^+ \otimes\ E^+ ) &\to C_\cc^{\infty}(\susp M, \spinor^- \otimes\ E^+), \\
    \Dirac^{--} \colon C_\cc^{\infty}( \susp M , \spinor^- \otimes\ E^- ) &\to C_\cc^{\infty}(\susp M, \spinor^+ \otimes\ E^-),
  \end{align*}
  are restrictions of the operator $\mathcal{D}_{E,1}$ from \eqref{DiracChou}.
  Since $\mathcal{D}_{E,1}$ is essentially self adjoint, we obtain
    \begin{equation*}
    \ind ( \bar \Dirac\vphantom{\Dirac}^+_{E,1}) = \ind( \bar \Dirac\vphantom{\Dirac}^{++} ) + \ind (\bar \Dirac\vphantom{\Dirac}^{--}).
  \end{equation*}

  Let $\omega_{E^\pm}$ denote the index forms of the twisted Dirac operators induced by $\Dirac_{E,1}$,
  \begin{equation*}
       C_\cc^{\infty}( \susp M, \spinor^+ \otimes\ E^{\pm} ) \to C_\cc^{\infty} (\susp M, \spinor^-  \otimes\ E^{\pm}).
  \end{equation*} 
  Since $\Dirac^{++}$ and $\Dirac^{--}$ correspond to smooth conical Riemannian metrics on $\susp M$ and a connection on $E$ that is flat near the tips, we can apply \cite{Chou}*{Remark 5.25} to obtain
  \begin{align*} 
   \ind ( \bar \Dirac\vphantom{\Dirac}^{++} ) & = +  \int_{\susp M} \omega_{E^+} - \frac{\eta_{S_0}}{2}  , \\
   \ind ( \bar \Dirac\vphantom{\Dirac}^{--}  ) & = - \int_{\susp M} \omega_{E^-} + \frac{\eta_{S_0}}{2} . 
  \end{align*} 
  Here $\eta_{S_0}$ denotes the $\eta$-invariant of $S_0$.
  Adding the indices, the $\eta$-terms cancel, and 
  \[
    \ind( \bar \Dirac\vphantom{\Dirac}^+_E) = \int_{\susp M} \omega_{E^+} - \omega_{E^-} =  \int_{\susp M} f^*( \ch(E_0^+) - \ch(E_0^-)) = (-1)^{\tfrac{n+1}{2}}\deg(f) \chi(\SSS^{n+1}).
  \]
\end{proof}

\subsection{From area contracting to length contracting comparison maps}
\label{sec:infinitesimal_isometry}

\begin{proposition}\label{P:infinitesimal_isometry}
Let $M$ be a closed smooth connected  oriented manifold of odd dimension $n\geq 3$ which admits a spin structure, let $g$ be a $W^{1,p}$-regular  Riemannian metric for some $p > n$ with (distributional) scalar curvature $\scal_g \geq n(n-1)$, and let $f \colon (M,d_g) \to \SSS^n$ be a  Lipschitz continuous map of non-zero degree such that $d_x f$ is area non-increasing almost everywhere.

Then  $d_xf$ is an isometry almost everywhere. 
Furthermore, $f$ is $1$-Lipschitz.
\end{proposition}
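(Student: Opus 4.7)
The plan is to reduce to the even-dimensional version of the statement---whose proof in \cite{CHS}*{Theorem A} already establishes that the differential is an isometry almost everywhere---by multiplying $M$ with a circle $\SSS^1_R$ and passing to the limit $R\to\infty$, as in B\"ar--Brendle--Hanke--Wang \cite{BaerBrendleHankeWang}.

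Concretely, for each $R\ge 1$ I form the closed smooth oriented spin manifold $\tilde M_R:=M\times\SSS^1_R$ of even dimension $n+1$, equipped with the product metric $\tilde g_R:=g\oplus d\theta^2$, where $\theta$ is the arc-length coordinate on the circle of circumference $2\pi R$. Then $\tilde g_R$ is of Sobolev regularity $W^{1,p}$ with $p>n+1=\dim\tilde M_R$, and $\scal_{\tilde g_R}=\scal_g\ge n(n-1)$ in the distributional sense. From $f$ I build a Lipschitz comparison map $\tilde f_R\colon\tilde M_R\to N_R$ into an even-dimensional target $N_R$---e.g.\ $\SSS^n\times\SSS^1_R$ via $\tilde f_R:=f\times\mathrm{id}$, whose scalar curvature equals the lower bound $n(n-1)$, or alternatively a spherical-join construction into $\SSS^{n+1}$ that carries the circle factor with $O(1/R)$ Lipschitz constant---so that $\tilde f_R$ is area non-increasing wherever $f$ is, and encodes the topological datum $\deg f\ne 0$.

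I then transplant the twisted-Dirac Llarull argument from \cite{CHS} to the even-dimensional $\tilde M_R$: twisting by the Lipschitz bundle pulled back under $\tilde f_R$, the low-regularity integral Schr\"odinger--Lichnerowicz formula of \cite{CHS} combined with Llarull's pointwise bound on the twist curvature yields, for any spinor $\psi\in\dom(\bar\Dirac_E)$,
\[
\bigl\langle\bigl(\tfrac14\scal_{\tilde g_R}+\mathcal R^E\bigr)\psi,\psi\bigr\rangle\;\ge\;\tfrac{n(n-1)}{4}\bigl(1-|\Lambda^2 d_x\tilde f_R|\bigr)|\psi|^2\;\ge\;0\quad\text{a.e.}
\]
Non-vanishing of a suitable Fredholm index of the twisted Dirac operator on $\tilde M_R$---arranged through $\deg f\ne 0$ together with, where needed, the non-bounding spin structure on $\SSS^1_R$---produces a nontrivial harmonic spinor, forcing equality almost everywhere and hence $|\Lambda^2 d_x\tilde f_R|=1$ a.e. As $R\to\infty$, the $O(1/R)$ error arising from the circle factor vanishes, so $|\Lambda^2 d_xf|=1$ almost everywhere on $M$; combined with the area non-increasing bound $|\Lambda^2 d_xf|\le 1$, a singular-value argument then forces $d_xf$ to be an isometry at almost every $x\in M$.

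\emph{Main obstacle.} The chief difficulty is producing a nontrivial harmonic spinor on $\tilde M_R$ with the correct Llarull estimate. A naive pull-back twist under $\tilde f_R=f\times\mathrm{id}$ would give a characteristic integral that factors through the odd-dimensional $M$ and so vanishes; hence the construction of the twist bundle and the choice of spin structure on the $\SSS^1_R$ factor---exactly the BBHW trick in the smooth case---must be arranged delicately so as to output nonzero index while retaining a sharp-enough Llarull curvature estimate. Transplanting this to the present Lipschitz $f$ and $W^{1,p}$ metric setting requires the uniform $R$-estimates made available by the low-regularity index theory and the Schr\"odinger--Lichnerowicz machinery of \cite{CHS}, applied uniformly along the family of twisted Dirac operators on $\tilde M_R$.
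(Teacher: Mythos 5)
Your overall strategy---multiplying $M$ by $\SSS^1_R$, twisting the Dirac operator by a suitable Lipschitz bundle, invoking the low-regularity integral Schr\"odinger--Lichnerowicz formula of \cite{CHS}, and sending $R\to\infty$---does match the paper's, and your remark that the naive product twist $f\times\id\colon M\times\SSS^1_R\to\SSS^n\times\SSS^1_R$ gives vanishing index is correct. The genuine gap lies in how you pass from the even-dimensional estimate to the conclusion on $M$. With the twist bundle that actually yields a nonzero index---the paper pulls back the $\SSS^{n+1}$-spinor bundle under $u\circ(f\times\id)$, where $u\colon\SSS^n\times\SSS^1\to\SSS^{n+1}$ is a fixed smooth degree-nonzero map satisfying $u^*g_{\SSS^{n+1}}\leq g_{\SSS^n}+4g_{\SSS^1}$---the curvature contributions in the circle direction introduce an $O(1/R)$ defect, so your displayed pointwise inequality (with right-hand side $\geq 0$) does \emph{not} hold; it holds only for the index-zero product twist. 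Consequently, for a fixed $R$ the harmonic spinor $\psi_R$ does not force $|\Lambda^2 d_x\tilde f_R|=1$ almost everywhere: the SL formula gives only an integral inequality with an $O(1/R)$ error, whose weight $|\psi_R|^2$ depends on $R$ and may concentrate. Your sentence ``as $R\to\infty$ the $O(1/R)$ error vanishes, so $|\Lambda^2 d_xf|=1$ a.e.'' is therefore not a proof: a sequence of vanishing integral defects does not yield a pointwise a.e.\ statement about $d_xf$ without controlling this concentration.

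The missing step---and the heart of the paper's argument---is to take the limit at the level of the spinor rather than the conclusion. Concretely, the paper uses Fubini to slice each harmonic spinor on $M\times\SSS^1$ at a good circle parameter $t_R$, obtaining sections $s_R$ of $(\sigma\oplus\bar\sigma)\otimes E_{t_R}\to M$ with $\|s_R\|_{L^2}=1$ and $\|\nabla s_R\|_{L^2}\to 0$; after passing to a subsequence $t_R\to t_\infty$ and identifying $E_{t_R}\cong E_{t_\infty}$, Rellich--Kondrachov compactness and the regularity theorem of Bartnik--Chru\'sciel produce a strong $H^1$-limit $s_\infty$ that is genuinely $\Dirac_{g,E_{t_\infty}}$-harmonic of $L^2$-norm one. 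Only then does one apply the odd-dimensional integral Schr\"odinger--Lichnerowicz formula \emph{on $M$ itself} to $s_\infty$, with twist bundle coming from the $1$-Lipschitz map $u(-,t_\infty)\circ f\colon M\to\SSS^{n+1}$, where the curvature estimate has no $R$-dependent error and the equality case forces $d_xf$ to be an isometry almost everywhere. Your proposal does not identify this extraction of a limiting harmonic spinor on $M$ as a step to be done, so the argument as written does not close.
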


We follow the argument in \cite{Baer_2024} and \cite{LiSuWang} with some modifications to adapt it to the lower regularity setting. 

Choose a smooth background metric $\gamma$ on $M$ and let $S_M\to M$ denote the associated spinor bundle with spin connection $\nabla$.
Let 
\begin{equation*}
  D_g \colon H^1_g(M,S_M) \to L^2_g(M,S_M)
\end{equation*}
be the Dirac operator associated with the metric $g$.
Due to Friedrich's inequality, we have the following spectral gap
\begin{equation}\label{DiracSpectralGap}
  \spec(D_g) \cap \left(-\tfrac n2,\tfrac n2\right) = \emptyset.
\end{equation}
Furthermore, let $E_0 \to \SSS^n$ be the spinor bundle with spin connection $\nabla^{E_0}$ over the round sphere $\SSS^n$.
We consider a family of metric connections on $E_0$
\begin{equation*}
  \nabla_X^{E_0,s} u = \nabla_X^{E_0} u + sX\cdot u ,
\end{equation*}
where $s\in \RR$, $X\in TM$ and $X\cdot$ denotes Clifford multiplication by $X$.
For $s=\frac12$ and $s=-\frac12$ there are $\nabla^{E_0,s}$-parallel spinors on $\SSS^n$, called \emph{Killing spinors}, that form a global orthonormal frame of $E_0$ and hence, trivialize $E_0$.

We consider  
\begin{equation*}
  (E, \nabla^{E,s}):= (f^*E_0, f^*\nabla^{E_0,s}) \to M,
\end{equation*}
which is a Hermitian Lipschitz bundle equipped with a family of metric Lipschitz connections $\nabla^{E,s}$.
The Killing spinors on $\SSS^n$, for $s = \frac12$ and $s=-\frac12$, induce unitary Lipschitz trivializations
\begin{equation*}
  U_+ \colon E \to  \underline\CC^{2^k} \quad \text{and} \quad U_- \colon E \to \underline\CC^{2^k},
\end{equation*}
where $\underline \CC^{2^k} = M\times \CC^{2^k}$ denotes the trivial bundle with the canonical Hermitian structure and $n = 2k+1$. 
Thus, we obtain unitary vector bundle isomorphisms
\begin{align*}
  V_+ := \id \otimes\, U_+ \colon S_M\otimes E &\to S_M\otimes \underline \CC^{2^k}, \\
  V_- := \id \otimes\, U_- \colon S_M\otimes E &\to S_M\otimes \underline \CC^{2^k}.
\end{align*}

Let 
\begin{equation*}
  \nabla^s := \nabla \otimes 1 + 1\otimes \nabla^{E,s}
\end{equation*}
be the twist connection on $S_M\otimes E$ and let
\begin{equation*}
  D_{g,s} \colon H^1_g(M, S_M\otimes E)\to L^2_g(M, S_M\otimes E)
\end{equation*}
be the twisted Dirac operator.

There are $L^\infty$-endomorphism fields $Z_+$ and $Z_-$ on $S_M\otimes \underline\CC^{2^k}$ such that (see \cite{Baer_2024}*{Lemma 1})
\begin{equation*}
  V_+ \circ D_{g,s}\circ V_+^{-1} = D_g + \left(s- \tfrac12\right)Z_+
\end{equation*}
and 
\begin{equation*}\label{DiracFlowUnitaryTransform}
  V_- \circ D_{g,s}\circ V_-^{-1} = D_g + \left(s + \tfrac12\right)Z_-.
\end{equation*}
This implies for the spectra
\begin{equation}\label{CoincidingSpectra}
  \spec(D_{g,-\frac12}) = \spec(D_{g,\frac12}) = \spec(D_g) .
\end{equation}

We denote the spectral flow of the family $D_{g,s}$, for $s\in \left[-\frac12, \frac12\right]$, with $\specflow(D_{g,s})$.

\begin{lemma}\label{SpekFlowNonTrivial}
  We have
  \begin{equation*}
    \abs{\specflow(D_{g,s})} = \abs{\deg(f)}.
  \end{equation*}
  
  In particular, there is at least one $s\in \left[-\frac12,\frac12\right]$ such that $\ker(D_{g,s})\neq \{0\}$.
\end{lemma}

\begin{proof}
  Let $\delta$ be a smooth metric and consider the family of metrics
  \begin{equation*}
    g_t := (1-t)g + t\delta
  \end{equation*}
  for $t\in [0,1]$.
  Since the smooth metrics are dense in the $W^{1,p}$-metrics and by \eqref{DiracSpectralGap}, we can arrange that 
  \begin{equation}\label{DiracDeformSpectralGap}
    \ker(D_{g_t})= \{0\}\qquad \forall t\in [0,1].
  \end{equation}
  Without loss of generality we can assume that $\delta$ coincides with the background metric $\gamma$.
  Let
  \begin{equation*}
    \rho^{g_t} := \tfrac{d\mu^{g_t}}{d\mu^{\gamma}} \in W^{1,p}(M)
  \end{equation*}
  be the volume density of the metric $g_t$ with respect to the background metric $\gamma$. 
  Multiplication with $(\rho^{g_t})^{\frac12}$ induces a unitary transformation 
  \begin{equation*}
    \varphi_t \colon L^2_{g_t}(M, S_M\otimes E) \to L^2_\gamma(M, S_M\otimes E).
  \end{equation*}
  Due to H\"older's inequality and the Sobolev embedding theorem, the map $\varphi_t$ maps $H^1_{g_t}(M, S_M\otimes E)$ to $H^1_\gamma(M, S_M\otimes E)$. We obtain a map 
  \begin{equation*}
    H\colon \left[-\tfrac12, \tfrac12\right]\times [0,1] \to \boundedops\big(H^1_\gamma(M, S_M\otimes E), L^2_\gamma(M, S_M\otimes E)\big), \qquad (s,t) \mapsto \varphi_t \circ D_{g_t,s} \circ \varphi_t^{-1},
  \end{equation*}
  which has image in the set of unbounded self-adjoint Fredholm operators on $L^2_\gamma(M, S_M\otimes E)$. 
  Since the coefficients of $D_{g_t,s}$ depend continuously on $t$ and $s$, the map is continuous with respect to the gap topology on the space of self-adjoint Fredholm operators (see \cite{BoosLeschPhillips}).
  Due to \eqref{CoincidingSpectra} and \eqref{DiracDeformSpectralGap}, the operators $H\left(-\tfrac12,t\right)$ and $H\left(\tfrac12,t\right)$ are invertible for all $t\in [0,1]$. 
  Thus, it follows from \cite{BoosLeschPhillips}*{Proposition 2.3} that
  \begin{equation*}
    \specflow(D_{g,s})= \specflow(D_{\gamma,s}).
  \end{equation*}

  As in \cite{Baer_2024} we deform $f$ into a smooth map $f_1\colon M\to \SSS^n$ such that $\deg(f) = \deg(f_1)$.
  This induces a homotopy of self-adjoint Fredholm operators on $L^2_\gamma(M, S_M\otimes E)$ and with the same argument as in \cite{Baer_2024}*{Section 5.2} the claim follows. 
\end{proof}

Let $R^{s}\in \Omega^2(\SSS^n, \End(E_0))$ be the curvature form of $(E_0, \nabla^{E_0, s})\to \SSS^n$ and $(e_1,\ldots ,e_n)$ a $\gamma$-orthonormal frame of $TM$.
We define the curvature operator on simple tensors $u = \sigma \otimes \eta \in (S_M\otimes E)_x$ by
\begin{equation*}
  \mathcal R^{E,s}_g u := \tfrac12 \sum_{i,j=1}^n e_i\cdot e_j \cdot \sigma \otimes (f^*R^{s})_{e_i^g, e_j^g}\eta
\end{equation*}
for every $x\in M$ where $f$ is differentiable. 
Here we adopted the notation $e_i^g = b_g(e_i)$ from \eqref{bIsometry}.

By the singular value decomposition of $d_x f\circ b_g$, we find a $\gamma$-orthonormal basis $(e_1,\ldots, e_n)$ of $T_xM$, a $g_{\SSS^n}$-orthonormal basis $(\varepsilon_1, \ldots, \varepsilon_n)$ of $T_{f(x)}\SSS^n$ and real numbers $\mu_i \geq 0$ such that $d_x f(e_i^g) = \mu_i \varepsilon_i$.
Moreover, a straightforward computation shows that the family of connections $\nabla^{E_0,s}$, for $s\in \left[-\frac12, \frac12\right]$ corresponds to $\nabla_s = d + \left(s+\frac12\right)\omega$ in \cite{LiSuWang}. 
Therefore, \cite{LiSuWang}*{Equation 2.7} yields
\begin{equation}\label{CurvatureFamily}
  \mathcal R_g^{E,s} u = \left(s+\tfrac12\right)\left(s-\tfrac12\right) \sum_{i\neq j}\mu_i\mu_j \big(e_i\cdot e_j\otimes \varepsilon_i\cdot \varepsilon_j\big) \cdot u \qquad \mathrm{ a.e.}
\end{equation} 

\begin{proof}[Proof of Proposition \ref{P:infinitesimal_isometry}]
  Let $s\in \left[-\frac12, \frac12\right]$ such that $\ker(D_{g,s})\neq \{0\}$, which exists by Lemma \ref{SpekFlowNonTrivial}, and choose $u \in \ker(D_{g,s})\setminus\{0\}$.

  At every point $x\in M$, where $f$ is differentiable, we deduce from \eqref{CurvatureFamily} and $\abs{\Lambda^2 d_xf}\leq 1$ that
  \begin{equation}\label{CurvatureInequality}
    \innerprod{\mathcal R_g^{E,s}u, u} \geq -\tfrac14 n(n-1)\abs{u}^2 
  \end{equation}
  with equality if and only if $d_xf\colon (T_xM, g_x)\to T_{f(x)}\SSS^n$ is an isometry.

  We insert $u$ into the integrated Schr\"odinger-Lichnerowicz formula (see \cite{CHS}*{Theorem 5.1}) and use $\scal_g \geq n(n-1)$ in the distributional sense to get
  \begin{equation*}
    0 = \norm{D_{g,s}u}^2_{L^2} = \norm{\nabla^s u}_{L^2}^2 + \tfrac14 \llangle  \scal_g , |u|^2  \rrangle + \left(\mathcal R_g^{E,s}u ,u\right)_{L^2} \geq 0.
  \end{equation*}
  It follows that we are in the equality case of \eqref{CurvatureInequality} and hence, $d_xf$ is an isometry almost everywhere.

The fact that $f$ is $1$-Lipschitz follows from the same  argument used in the last part of the proof of \cite{CHS}*{Proposition 2.14}.
This completes the proof of Proposition \ref{P:infinitesimal_isometry}.

\end{proof}

\subsection{Proof of Theorem \ref{theo:main_odd}}

After establishing an index formula and a Schr\"odinger-Lichnerowicz formula for the Dirac operator on the spherical suspension, the remainder of the proof of Theorem \ref{theo:main_odd} follows the approach in \cite{CHS}.

We work in Setup \ref{setup} where $E_0 = E_0^+ \oplus E_0^- \to \SSS^{n+1}$ is the spinor bundle over the even dimensional sphere $\SSS^{n+1}$.
Let 
\begin{equation*}
  \bar\Dirac_E \colon L^2_{\susp g}( \susp M, \spinor \otimes E) \supset \dom(\bar\Dirac_E) \to L^2_{\susp g}(\susp M, \spinor\otimes E).
\end{equation*}
be the Dirac operator on the spherical suspension $(\susp M, \susp g)$.
We computed the index of the positive part of this operator in Proposition \ref{prop:IndexFormula}:
\begin{equation*}
  \ind \left(\bar\Dirac\vphantom{\Dirac}^+_E\right) = (-1)^{\tfrac{n+1}{2}} \deg (f)\chi (\SSS^{n+1}) .
\end{equation*}
By assumption, the degree of $f$ is non-zero and the Euler characteristic of the even dimensional sphere is 2. 
Hence, possibly after changing the orientation of $M$,  the index of  $ \bar\Dirac\vphantom{\Dirac}^+_E$ is positive.
We thus obtain a non-zero harmonic spinor field  $\psi \in \dom \left(\bar \Dirac\vphantom{\Dirac}^+_E\right)\subset L^2_{\susp g}(\susp M, (\spinor\otimes E)^+)$.

By assumption, $f$ is area non-increasing.
Hence, by Proposition \ref{P:infinitesimal_isometry}, $df$ is an isometry almost everywhere. 
This implies that $d( \susp f)$ is an isometry almost everywhere. 

With  \cite{CHS}*{Proposition 6.1} we obtain a pointwise estimate
\begin{equation}\label{eq:curv_lowerbound}
  \langle \mathcal{R}^E \omega, \omega \rangle\geq -\tfrac14 (n+1)n |\omega|^2 \qquad \forall \omega \in (\spinor\otimes E)_x\  
\end{equation}
at every point $x\in \susp M$ where $\susp f$ is differentiable. 

Plugging the harmonic spinor into the integral  Lichnerowicz formula, by Theorem \ref{thm:SL_cone} and  \eqref{eq:curv_lowerbound}, we get
\begin{align*}
  0 = \| \bar \Dirac_E\psi \|_{L^2}^2 &= \| \nabla_E \psi \|_{L^2}^2 + \tfrac14 \llangle  \scal_{\susp g} , |\psi|^2  \rrangle + \bigl( \mathcal{R}^E \psi, \psi \bigr)_{L^2} \\
  & \geq \tfrac14\llangle  \scal_{\susp g} , |\psi|^2  \rrangle -\tfrac14 (n+1)n \|\psi \|_{L^2}^2 \\
  &\geq 0 .
\end{align*}
The last inequality follows from the assumption $\scal_g \geq n(n-1)$, which implies $\scal_{\susp g} \geq (n+1)n$ by Lemma \ref{scalinsuspension}.
Hence, we have
\begin{align}
  &\|  \nabla_E \psi \|_{L^2} = 0 \label{eq:parallelharmspinor},\\
  &\langle \mathcal{R}^E \psi, \psi \rangle = -\tfrac14 (n+1) n |\psi |^2 \quad \mathrm{a.e.}  \label{eq:curv_equality},\\
  &\scal_{\susp g} =  (n+1)n \quad \text{in the distributional sense}. \label{eq:equality_scalcurv}  
\end{align}
From \eqref{eq:parallelharmspinor} it follows that for $|\psi|\in H^1(\susp M)$ we have
\begin{equation*}
  d | \psi |^2 = 2 \langle \nabla_E\psi,\psi  \rangle = 0\qquad \text{a.e.}
\end{equation*}
and therefore, that $|\psi| $ is constant almost everywhere. 
Consequently, $\psi$ only vanishes on a set of measure zero.

Equation \eqref{eq:curv_equality} says that we are in the equality case of \cite{CHS}*{Proposition 6.1} which implies that for almost all  $x\in \susp M$  where $\susp f$ is differentiable, and for all $\gamma$-orthonormal vectors $v,w \in T_x\susp M$ we have
\begin{equation}\label{eq:doubleCM_invariance}
  \bigl( v\cdot w \otimes d_x\susp f(v^{\susp g})\cdot d_x\susp f (w^{\susp g}) \bigr) \cdot \psi = \psi .
\end{equation}

\begin{proposition}\label{prop:differential_orientpres}
  At all points $x \in \susp M$, where $\susp f$ is differentiable, the differential $d_x \susp f$ is an orientation preserving isometry.
\end{proposition}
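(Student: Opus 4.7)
The plan is to derive a Clifford-algebraic constraint on $d_x\susp f$ by iterating \eqref{eq:doubleCM_invariance} over an orthonormal frame partitioned into disjoint pairs, and then to fix the resulting orientation sign using the chirality of~$\psi$.

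First I would observe that $d_x\susp f$ is already known to be an isometry at almost every differentiability point, since Proposition \ref{P:infinitesimal_isometry} applied to $\susp f$ (together with the equality case extracted from the integral Schr\"odinger–Lichnerowicz formula of Theorem \ref{thm:SL_cone}) yields this. Since $|\psi|$ is a.e.\ a nonzero constant by \eqref{eq:parallelharmspinor}, the set of $x$ at which $\susp f$ is differentiable, $d_x\susp f$ is an isometry, and $\psi(x)\neq 0$ has full measure, and at such an $x$ only the orientation-reversing possibility for $d_x\susp f$ must be excluded. I would then fix such an $x$ and a positively oriented $\susp\gamma$-orthonormal basis $(e_1,\ldots,e_{n+1})$ of $T_x\susp M$, and set $f_i:=d_x\susp f(e_i^{\susp g})$; this gives an orthonormal basis of $T_{f(x)}\SSS^{n+1}$ which is positively oriented precisely when $d_x\susp f$ preserves orientation (using that $b_{\susp g}$ preserves orientation).

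Since $n+1$ is even, I would group the indices into the $(n+1)/2$ disjoint pairs $(2k{-}1,2k)$, and consider the operators $P_k := e_{2k-1}e_{2k}\otimes f_{2k-1}f_{2k}\in\End\bigl((\spinor\otimes E)_x\bigr)$. Equation \eqref{eq:doubleCM_invariance} gives $P_k\psi(x)=\psi(x)$ for each $k$, and distinct $P_k$, $P_\ell$ commute because they act by Clifford products over disjoint index sets in each tensor factor. Multiplying all the $P_k$ yields
\[
\bigl[(e_1\cdots e_{n+1})\otimes(f_1\cdots f_{n+1})\bigr]\cdot\psi(x)=\psi(x).
\]
Using the convention $\vol_\CC(\susp M)=i^{(n+1)/2}e_1\cdots e_{n+1}$ from Subsection \ref{sec:Dirac_as_regsingop} and the analogous formula on $\SSS^{n+1}$, I would rewrite the left side as $\epsilon\cdot i^{-(n+1)}\bigl(\vol_\CC(\susp M)\otimes\vol_\CC(\SSS^{n+1})\bigr)\psi(x)$, where $\epsilon\in\{\pm 1\}$ is the orientation sign of $d_x\susp f$.

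Since $\psi(x)\in(\spinor\otimes E)^+_x=(\spinor^+\otimes E^+)_x\oplus(\spinor^-\otimes E^-)_x$, the operator $\vol_\CC(\susp M)\otimes\vol_\CC(\SSS^{n+1})$ acts as $+\id$ on $\psi(x)$, so the identity reduces to $\epsilon\cdot i^{-(n+1)}\psi(x)=\psi(x)$, and the non-vanishing of $\psi(x)$ forces $\epsilon$ to a definite value. Combining this with the sign $(-1)^{(n+1)/2}$ in the index formula of Proposition \ref{prop:IndexFormula}, and the corresponding choice of orientation on $M$ made at the start of the proof of Theorem \ref{theo:main_odd} to arrange $\ind\bar\Dirac_E^+>0$, a direct sign check shows that $\epsilon=+1$, so $d_x\susp f$ is orientation preserving. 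The step I expect to be the main obstacle is precisely this final bookkeeping between three interlocking sign conventions — the orientation of $M$, the complex volume element, and the chirality decomposition of $\spinor\otimes E$; the Clifford-algebraic core of the argument is the commuting-disjoint-pairs identity displayed above.
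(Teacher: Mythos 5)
Your argument matches the paper's proof of Proposition \ref{prop:differential_orientpres} in its essential structure: iterate \eqref{eq:doubleCM_invariance} over the $(n+1)/2$ disjoint, pairwise-commuting pairs $(e_{2k-1},e_{2k})$ to obtain $\bigl(e_1\cdots e_{n+1}\otimes f_1\cdots f_{n+1}\bigr)\psi = \psi$, and then pin the orientation sign of $d_x\susp f$ via the chirality constraint $\psi\in(\spinor\otimes E)^+$. The reduction to the isometry case using Proposition \ref{P:infinitesimal_isometry} and the a.e.\ non-vanishing of $\psi$ is also exactly as in the paper.

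The one place you diverge is the final sign-fixing. The paper works directly with the real volume elements $\mathrm{vol}_{\susp\gamma}=e_1\cdots e_{n+1}$ and $\mathrm{vol}_{g_0}=\varepsilon_1\cdots\varepsilon_{n+1}$ and simply records that $\mathrm{vol}_{\susp\gamma}\otimes\mathrm{vol}_{g_0}$ acts as the identity on $(\spinor\otimes E)^+$, after which $f_1\cdots f_{n+1}=\pm\mathrm{vol}_{g_0}$ immediately forces the $+$ sign. You instead route through $\mathrm{vol}_\CC$, pick up an explicit factor $i^{-(n+1)}$, and then try to discharge it by invoking the $(-1)^{(n+1)/2}$ from the index formula of Proposition \ref{prop:IndexFormula} together with the orientation normalization on $M$. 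That step is not part of the paper's argument and, as written, does not close: the orientation of $M$ is fixed before Proposition \ref{prop:differential_orientpres} is invoked (precisely so that a harmonic $\psi\in(\spinor\otimes E)^+$ exists), and the index-theoretic sign plays no role in the pointwise Clifford computation; from $\epsilon\cdot i^{-(n+1)}=1$ you would get $\epsilon=(-1)^{(n+1)/2}$, and the index formula cannot turn this into $\epsilon=+1$. Whatever convention underlies the paper's identity-action claim is what must absorb the factor you track separately, and your own flag about "three interlocking sign conventions" is the right instinct — but the index formula is not the lever that resolves it.
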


\begin{proof}
  Let $(e_1, \ldots , e_{n+1} )$ be an $\susp \gamma$-orthonormal frame of $T\susp M$ around $x\in \susp M$ and $( \varepsilon_1, \ldots ,\varepsilon_{n+1} )$ a $g_0$-orthonormal frame of $TS^{n+1}$ around $\susp f(x)$. The volume elements of the respective Clifford algebra bundles are defined by
  \begin{equation*}
    \mathrm{vol}_{\susp \gamma} = e_1\cdot \ldots \cdot e_{n+1} \in \mathrm{Cl}(\susp M)
  \end{equation*}
  and
  \begin{equation*}
    \mathrm{vol}_{g_0} = \varepsilon_1  \cdot \ldots \cdot \varepsilon_{n+1} \in \mathrm{Cl}(S^{n+1}) . 
  \end{equation*}

  The element $\mathrm{vol}_{\susp \gamma} \otimes \mathrm{vol}_{g_0}$ acts on $(\spinor\otimes E)^+$ as identity. 

  We already showed that at almost all points $x \in \susp M$, the differential $d_x \susp f$ is an isometry.
  For such $x$ we have
  \begin{equation} \label{eq:differential_and_volelement}
    d_x \susp f (e_1^{\susp g}) \cdot \ldots \cdot d_x \susp f (e_{n+1}^{\susp g})  = \pm \mathrm{vol}_{g_0} ,
  \end{equation}
  where the sign depends on whether $d_x \susp f$ is orientation preserving or reversing. 

  By assumption, $n+1$ is even and hence we can group the above basis vectors in pairs $(v,w) = (e_1, e_2), (e_3, e_4) ,\ldots, (e_n, e_{n+1})$.
 An iterative application of  \eqref{eq:doubleCM_invariance} shows
  \begin{equation*}
    \bigl( e_1 \cdot \ldots \cdot e_{n+1} \otimes d_x \susp f (e_1^{\susp g}) \cdot \ldots \cdot d_x \susp f (e_{n+1}^{\susp g}) \bigr)\cdot \psi = \psi .
  \end{equation*}
  Combining this with \eqref{eq:differential_and_volelement} and the fact that $\psi$ has only a non-trivial contribution from $(\spinor\otimes E)^+$ and only vanishes on a set of measure zero, we conclude that $d_x\susp f$ has to be orientation preserving. 
\end{proof}

Combining Proposition \ref{prop:differential_orientpres} with \cite{CHS}*{Theorem 2.4},  the proof of Theorem \ref{theo:main_odd} is complete.

\section{Lipschitz rigidity for manifolds with cone-like singularities} \label{Lip_Cone}

\subsection{Scalar curvature of generalized cone metrics}

Let $\conic g_r:= d r^2 + r^2 g_r$ be a generalized cone metric on $(0, \vartheta) \times M$ in the sense of Definition \ref{generalizedConeMetric}.

\begin{proposition}\label{LinkScalCurvBehaviour}
 For $x \in M$, we have
 \begin{equation*}
    \lim_{r\to 0}r^2\scal_{\conic g_r}(r,x) = \scal_{g_0}(x) - n(n-1).
 \end{equation*}

In particular, if $\scal_{\conic g_r} \geq 0$, then we have $\scal_{g_0}\geq n(n-1)$. 
\end{proposition}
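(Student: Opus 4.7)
The plan is to apply the standard Gauss--Riccati formula for the scalar curvature of a product-type metric $dr^2 + h_r$ on $(0,\vartheta) \times M$ and then track cancellations as $r \to 0$. For a smooth family $h_r$, let $W_r := \tfrac{1}{2} h_r^{-1} \partial_r h_r \in \End(TM)$ be the shape operator of the slice $\{r\} \times M$, $H_r := \tr W_r$ its mean curvature, and $|W_r|^2 := \tr W_r^2$. The Gauss equation for $\{r\}\times M$ together with the Riccati identity $\mathrm{Ric}(\partial_r,\partial_r) = -\partial_r H_r - |W_r|^2$ yields
\[
\scal_{dr^2 + h_r} = \scal_{h_r} - H_r^2 - |W_r|^2 - 2\partial_r H_r.
\]
As a sanity check, substituting the warped product $h_r = \phi(r)^2 g_0$ recovers the classical formula $\scal = \phi^{-2}\scal_{g_0} - n(n-1)\phi^{-2}(\phi')^2 - 2n\phi^{-1}\phi''$, which at $\phi(r)=r$ already gives $r^2\scal = \scal_{g_0} - n(n-1)$.

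Substituting $h_r = r^2 g_r$ and writing $A_r := g_r^{-1}\partial_r g_r$, $B_r := g_r^{-1}\partial_r^2 g_r$, a direct computation (using $\partial_r(g_r^{-1}) = -g_r^{-1}(\partial_r g_r)g_r^{-1}$ and the conformal scaling $\scal_{r^2 g_r} = r^{-2}\scal_{g_r}$) gives
\[
W_r = \tfrac{1}{r}\id + \tfrac{1}{2}A_r,\qquad H_r = \tfrac{n}{r} + \tfrac{1}{2}\tr A_r,\qquad |W_r|^2 = \tfrac{n}{r^2} + \tfrac{1}{r}\tr A_r + \tfrac{1}{4}\tr A_r^2,
\]
and $\partial_r H_r = -\tfrac{n}{r^2} + \tfrac{1}{2}(-\tr A_r^2 + \tr B_r)$. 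The singular $r^{-2}$ contributions coming from $\scal_{h_r}$, $-H_r^2$, $-|W_r|^2$ and $-2\partial_r H_r$ are proportional to $\scal_{g_r}$, $-n^2$, $-n$ and $2n$ respectively, and they combine to $(\scal_{g_r} - n(n-1))/r^2$. Hence
\[
r^2 \scal_{\conic g_r}(r,x) = \scal_{g_r}(x) - n(n-1) - (n+1)r\,\tr A_r(x) + r^2\bigl(\tfrac{3}{4}\tr A_r^2 - \tfrac{1}{4}(\tr A_r)^2 - \tr B_r\bigr)(x).
\]

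It remains to show that every remainder term vanishes as $r\to 0$. The $C^2$-convergence $g_r \to g_0$ gives $\scal_{g_r}(x)\to \scal_{g_0}(x)$. Since $\partial_r g_r$ and $\partial_r^2 g_r$ are symmetric, the endomorphisms $A_r$ and $B_r$ are $g_r$-self-adjoint, so $\tr A_r^2 = |A_r|_{g_r}^2 = |\partial_r g_r|_{g_r}^2$ and $|\tr B_r| \le \sqrt{n}\,|B_r|_{g_r} = \sqrt{n}\,|\partial_r^2 g_r|_{g_r}$. The two pointwise decay hypotheses therefore translate into $r^2\tr A_r^2 \to 0$ and $r^2|\tr B_r|\to 0$, which via Cauchy--Schwarz also give $r|\tr A_r|\to 0$ and $r^2(\tr A_r)^2\to 0$. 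Passing to the limit in the boxed identity yields the claim, and $\scal_{\conic g_r}\ge 0$ then forces $\scal_{g_0}\ge n(n-1)$ pointwise. The only delicate point is the computational bookkeeping that produces the precise cancellation $n^2+n-2n = n(n-1)$ of the $r^{-2}$ singularities; once this cancellation is secured, the hypotheses on $\partial_r g_r$ and $\partial_r^2 g_r$ are exactly tailored to kill all remaining terms.
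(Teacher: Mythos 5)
Your proposal is correct and follows essentially the same route as the paper: both compute the scalar curvature of $dr^2+r^2g_r$ through the shape operator of the slices (you via Gauss plus the Riccati identity, the paper via the equivalent formula $\scal_{dr^2+h_r}=\scal_{h_r}+3\tr W_r^2-(\tr W_r)^2-\tr_{h_r}\partial_r^2 h_r$ from B\"ar--Gauduchon--Moroianu), isolate the $r^{-2}$ singularity producing the $-n(n-1)$, and let the hypotheses on $r\partial_r g_r$ and $r^2\partial_r^2 g_r$ kill the remainder. The only stylistic difference is that you work coordinate-free with $A_r,B_r$ and Cauchy--Schwarz, whereas the paper picks a frame simultaneously diagonalizing $g_r$ and $\dot g_r$ and expands termwise.
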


\begin{proof}
 Let $W_r = - \nabla^{\conic g_r} \partial_r$ be the Weingarten map.
  Then we have (see \cite{BaerGauduchonMoroianu}*{Equation 4.8})
   \begin{equation*}
       \scal_{\conic g_r} = \scal_{r^2g_r} + 3 \tr (W_r^2) - \tr (W_r)^2 - \tr_{r^2 g_r}(\partial_r^2 (r^2g_r)).
    \end{equation*}

Recall, e.g.~from  \cite{BaerGauduchonMoroianu}*{Section 4}, that the Weingarten map satisfies
       \begin{enumerate}[label=\myicon]
	 \item $W_r(TM)\subset TM$,
	 \item $W_r(\partial_r) = 0$,
	 \item $W_r$ is symmetric with respect to ${\conic g_r}$,
	 \item $g_r(W_r(X),Y) = -\tfrac12 (\partial_r(r^2g_r))(X,Y)$ for all $X,Y\in TM$.
       \end{enumerate}

We have $\scal_{r^2g_r} = \tfrac1{r^2}\scal_{g_r}$.
Moreover, we compute, using the shorthand $\dot g_r := \partial_r g_r$ and $\ddot g_r := \partial_r^2 g_r$,
     \[
        \partial_r (r^2g_r)= 2r g_r + r^2 \dot g_r , \qquad \partial_r^2(r^2g_r) = 2g_r + 4r \dot g_r + r^2 \ddot g_r.
    \]

Let $x \in M$ and let $r \in (0,\thet)$. 
Since $\dot g_r$ is symmetric and bilinear, there is a local $g_r$-orthonormal frame $(e_1, \ldots, e_n)$ of $TM$ that is orthogonal with respect to $\dot g_r$.
Then $(\tfrac{1}{r} e_1, \ldots, \tfrac{1}{r} e_n)$ is a local  $r^2 g_{r}$-orthonormal frame on $TM$.

For the terms containing the Weingarten map we now compute 
  \begin{align*}
     \tr (W_r^2) = & \ \sum_{i = 1}^n  g_{r}  \Big(W_r^2 \left(\tfrac{1}{r}  e_i \right), \tfrac{1}{r}  e_i \Big)  \\
          =           &  \frac1{r^4} \sum_{i,j = 1}^n    g_r  \Big(W_r ( e_i),  e_j \Big) \cdot  g_r \Big( e_j , W_r(  e_i)\Big)\\
          =           &   \frac1{4r^4}\sum_{i,j=1}^n \big( (\partial_r (r^2g_r))( e_i,  e_j)\big)^2   \\
	 =            &   \frac1{4r^4}\sum_{i,j=1}^n  \Big( 4r^2 g_r(  e_i ,   e_j )^2 + 4r^3 g_r(  e_i ,   e_j )\cdot \dot g_r(  e_i,   e_j) + r^4  \, \dot g_r (  e_i,  e_j)^2  \Big)\\
	=             &  \frac1{r^2} \left( n + \sum_{i,j = 1}^n \Big( g_r(  e_i ,   e_j )\cdot r \dot g_r (  e_i ,   e_j ) + \frac14  r^2 \dot g_r (  e_i   ,e_j )^2\Big) \right) 
\end{align*}
   and
\begin{align*}
         \tr(W_r) = & -\frac12 \sum_{i = 1}^n  \partial_r(r^2g_r) \left(\tfrac1r   e_i , \tfrac1r   e_i \right)                 \\
            =                      & -\frac1{2r^2}\sum_{i= 1}^n \Big( 2rg_r(  e_i ,   e_i ) + r^2 \dot g_r(  e_i ,   e_i)\Big)  \\
	   =                      & - \frac 1r \Big( n +  \frac{1}{2} \sum_{i = 1}^n r \dot g_r(  e_i ,   e_i )\Big).
\end{align*}
For the last term we obtain
    \begin{align*}
           \tr_{r^2g_r} (\partial_r^2(r^2g_r)) = &  \frac1{r^2}\sum_{i=1}^n (\partial_r^2 (r^2g_r))(  e_i ,   e_i )                                                                                \\
	    =      &  \frac1{r^2}\sum_{i= 1}^n \Big( 2g_r(  e_i ,   e_i ) + 4r \dot g_r(  e_i ,   e_i ) + r^2 \ddot g_r(  e_i ,   e_i )\Big) \\
            =      &   \frac1{r^2}\left(2n + \sum_{i=1}^n \Big( 4r \, \dot g_r(  e_i ,   e_i ) + r^2 \ddot g_r(  e_i ,   e_i )\Big) \right).
\end{align*}
Summarizing, we have
\begin{align*}
 r^2 \scal_{\conic g_r} - \scal_{g_r} =&\  r^2 \Big( 3\tr(W_r^2) - \tr(W_r)^2 - \tr_{r^2g_r}\left(\partial_r^2 (r^2g_r)\right)\Big) \\
=&\ 3\Big( n + \sum_{i = 1}^n  r \dot g_r(e_i, e_i) + \frac14 r^2 \dot g_r (e_i, e_i)^2\Big) \\
&- \left( n^2 + \frac{n}{4}  \sum_{i=1}^n r \dot g_r(e_i, e_i) + 4\left( \sum_{i = 1}^n r \dot g_r (e_i, e_i)\right)^2\right) \\
&- \left(2n + \sum_{i = 1}^n 4r \dot g_r (e_i, e_i) + r^2 \ddot g_r (e_i, e_i)\right) \\
=&\  - n(n-1) - \sum_{i = 1}^n \Big( \left( \frac{n}{4} + 1 \right) r \dot g_r(e_i, e_i) + r^2 \ddot g_r (e_i, e_i)\Big) \\
&+ \frac34\sum_{i = 1}^n r^2 \dot g_r (e_i, e_i)^2 - 4\left( \sum_{i = 1}^n r \dot g_r(e_i, e_i)\right)^2 \\
\end{align*}
Letting $r$ tend to $0$ we obtain, using the properties stated in Definition \ref{generalizedConeMetric},
\begin{equation*}
    \lim_{r\to 0}  r^2  \scal_{\conic g_r}(r,x) - \scal_{g_0}(x)  =  - n(n-1).
 \end{equation*}
\end{proof}

\begin{remark} This computation shows that the implication ``$\scal_{\conic g_r} \geq 0 \Longrightarrow \scal_{g_0}\geq n(n-1)$'' can be drawn under slightly weaker conditions on the family $g_r$ than those stated in Definition \ref{generalizedConeMetric}.
The details are left to the reader.
\end{remark}

\subsection{Twisted Dirac operator on manifolds with cone-like singularities}\label{sectionDiracGenCone}

\begin{setup} \label{setupGeneralizedCone}
Let
	\begin{enumerate}[label=\myicon]
		\item $(N^{n+1},G)$ be a compact connected Riemannian spin manifold with cone-like singularities, where $n \geq 3$.
		\item  $f\colon (N,d_G) \to \SSS^{n+1}$ be a $\Lambda$-Lipschitz map for some $\Lambda > 0$.
		\item $(E_0,\nabla^{E_0})$ be a smooth Hermitian vector bundle over $\SSS^{n+1}$ with smooth metric connection $\nabla^{E_0}$.
		\item  $  (E,\nabla^E):=f^*(E_0,\nabla^{E_0}) \to (N,G)$ be the pull back of $E_0$ under $f$.
		      Then $E \to N$ is a Lipschitz bundle with induced Hermitian metric and metric Lipschitz connection $\nabla^E$.
	\end{enumerate}
\end{setup}

In the following we use the notation from Definition \ref{MfdConeLikeSing}.
Set
\begin{equation*}
 M := \partial \mathcal{K}, \qquad \conic := N\setminus \mathcal{K},
\end{equation*}
and identify $(\conic, G\vert_{\conic})$ with $((0,\vartheta)\times M, {\conic g_r})$ via $\nu$ where  $0<\vartheta \leq 1$.

In the remainder of the section we fix a smooth background Riemannian metric $\gamma$ on $M$.
For $v \in TM$ and a smooth Riemannian metric $g$ on $M$, we set $v^{g} = b_{g}(v)$.
Recall that $b_g \colon TM \to TM$ is a $\gamma$-self adjoint positive isomorphism that sends $\gamma$-orthonormal frames $(e_1, \ldots, e_n)$ of $M$, to $g$-orthonormal frames $(e_1^{g}, \ldots, e_n^{g})$ of $M$. 
Furthermore, the map $b_g$ depends continuously on $g$.
For further details, see \cite{CHS}*{Section 4}.

We equip $N$ with a smooth background Riemannian metric $\Gamma$ that satisfies
\begin{equation*}
	\Gamma\vert_{\conic} = \nu^* (\conic \gamma) = \nu^*(d r^2 + r^2 \gamma) .
\end{equation*}
This can be obtained by gluing $\conic \gamma$ with a metric on the bulk part and making $\vartheta$ smaller if necessary.

We denote the spinor bundle associated with $\Gamma$ by $\spinor \to N$ and the spinor bundle associated with $\gamma$ by $S_M\to M$.
Let
\begin{equation}\label{twistedDiracGenCone}
	\Dirac_E \colon L^2_G(N, \spinor\otimes E)\supset \Lip_\cc(N, \spinor\otimes E) \to L^2_G(N, \spinor\otimes E)
\end{equation}
be the Dirac operator with respect to $G$ twisted by the Lipschitz bundle $E$. This is a symmetric operator and hence closable.

\begin{proposition}\label{TwistedDiracGenConeAbstractConeOp}
	Assume we are in the situation of Setup \ref{setupGeneralizedCone} with $\scal_{g_0}> 1$.

	Then the twisted Dirac operators  $\Dirac^{\pm}_E$ on $N$ defined in \eqref{twistedDiracGenCone} are abstract cone operators in the sense of Definition \ref{abstractcone}.
	Furthermore, their closures
	\[
		\bar \Dirac\vphantom{\Dirac}^{\pm}_E \colon \dom (\bar \Dirac\vphantom{\Dirac}^{\pm}_E) \to L^2_G(\susp , \spinor^{\mp} \otimes E)
	\]
	are Fredholm operators.
\end{proposition}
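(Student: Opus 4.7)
The argument parallels Proposition \ref{coneexample}, with the generalized cone metric $\conic g_r = dr^2 + r^2 g_r$ replacing the spherical suspension $dr^2 + \sin^2(r)\gamma$. For concreteness I sketch only the case of $\Dirac_E^+$ at a single cone tip, the negative chirality and the other tips being handled identically. Fix $0 < \vartheta' \leq \vartheta$ (to be shrunk later) and let $\EE_{\cone} \subset \EE := L^2_G(N, \spinor^+ \otimes E)$ consist of the sections supported in $\nu^{-1}((0,\vartheta') \times M)$, with $\EE_{\bulk}$ its orthogonal complement; analogously for $\FF$. I identify $\EE_{\cone}$ with $L^2((0,\vartheta'), \LL)$ where $\LL := L^2_{g_0}(M, S_M \otimes F)$, $F := E_0|_{\bar f(x_i)}$, by combining three ingredients: Clifford multiplication by $\partial_r$ yielding a $\Cl(M,\gamma)$-linear isomorphism $\spinor^+|_{\{r\}\times M} \cong S_M$; the multiplicative rescaling by $r^{n/2}(\det g_r/\det g_0)^{1/4}$, which converts the cone volume $r^n \sqrt{\det g_r / \det g_0}\, dr \, d\mu^{g_0}$ into $dr \otimes d\mu^{g_0}$; and a unitary trivialization of $E$ on the cone by parallel transport along radial lines emanating from $x_i$, using the smooth local trivialization of $E_0$ near $\bar f(x_i)$.

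Under these identifications, a direct computation shows that the Dirac operator on the cone takes the form $\Dirac_E^+ = \partial_r + \tfrac{1}{r}(S_0 + S_1(r))$, with link operator $S_0 := D_{g_0} \otimes \id_F$ and domain $\dom(S_0) := \Lip(M, S_M \otimes F)$; the perturbation $S_1(r)$ collects three corrections: the difference $D_{g_r} - D_{g_0}$ (transported to the fixed Hilbert space $\LL$ via the $\gamma$-self-adjoint positive isomorphism $b_{g_r}$), the logarithmic derivative of the rescaling factor $r^n \sqrt{\det g_r / \det g_0}$ in the $r$-direction, and the pullback $(f \circ \nu^{-1})^* \omega$ of the connection $1$-form of $E_0$ evaluated on the $\conic g_r$-orthonormal radial frame. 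The locality and product-structure axioms \ref{AnSetup3}, \ref{AnSetup4}, \ref{AnSetup2}, and \ref{AnSetup5} of Definition \ref{abstractcone} follow exactly as in Proposition \ref{coneexample}: locality is a consequence of $\Dirac_E$ being a differential operator, and density of $C^\infty_c((0,\vartheta'), \dom(S_0))$ in the Lipschitz domain follows by mollification in $r$.

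The spectral gap \ref{AnSpGap} comes from the classical Lichnerowicz formula on $(M, g_0)$: since $M$ is closed and $\scal_{g_0} > 1$ by hypothesis, $\mu := \min_M \scal_{g_0} > 1$, whence $\|\bar S_0 \psi\|^2_{\LL} \geq \tfrac{\mu}{4} \|\psi\|^2_{\LL}$ and $\spec(\bar S_0) \cap [-\tfrac{1}{2}, \tfrac{1}{2}] = \emptyset$. The small perturbation conditions \ref{AnSetup6} and \ref{boundedperturbtwo} are the delicate step: each contribution to $S_1(r)$ carries an overall factor of $r$ multiplied either by $\partial_r g_r$, $\partial_r^2 g_r$, or a bounded quantity (arising from the smooth connection on $E_0$ and the $\Lambda$-Lipschitz property of $f$). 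By the hypotheses $r\partial_r g_r \to 0$ and $r^2 \partial_r^2 g_r \to 0$ in Definition \ref{generalizedConeMetric}, this gives $\|S_1(r)\|_{\boundedops(\dom(S_0), \LL)} \to 0$ as $r \to 0$. Consequently, $\|\bar S_0^{-1} \bar S_1\|_{L^{\infty}((0,\vartheta''), \boundedops(\LL))}$ and $\|\bar S_1 \bar S_0^{-1}\|_{L^{\infty}((0,\vartheta''), \boundedops(\LL))}$ can be made arbitrarily small by shrinking $\vartheta''$, and Remark \ref{absorb} allows absorbing $[\vartheta'', \vartheta')$ into the bulk part, meeting the numerical thresholds of \ref{AnSetup6} and \ref{boundedperturbtwo}.

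For the Fredholm claim I construct an interior parametrix for $\bar \Dirac_E^+$ as in Proposition \ref{constr_int_parametrix}: since $G$ is smooth on $N$ and $E$ is a Lipschitz Hermitian bundle with a metric connection, doubling a smoothly bounded bulk region across its boundary produces a closed Riemannian manifold to which the Fredholm theory of \cite{BartnickChrusciel}*{Theorem 3.7} and \cite{BartnickChrusciel}*{Corollary 4.5} applies, and truncation yields the required interior parametrix. Theorem \ref{thm:RegularSingular}\ref{fred} then delivers Fredholmness of $\bar \Dirac_E^{\pm}$. The main obstacle is the explicit identification of $S_1(r)$ and the bookkeeping of the factors $r \partial_r g_r$ and $r^2 \partial_r^2 g_r$ from Definition \ref{generalizedConeMetric}; this is where the precise form of the generalized cone hypothesis is indispensable.
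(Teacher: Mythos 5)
Your overall approach is the correct one and matches the paper's: identify the cone part, exhibit the product form $\partial_r + \tfrac{1}{r}(S_0 + S_1(r))$, verify the axioms of Definition \ref{abstractcone}, derive the spectral gap from $\scal_{g_0}>1$ via Lichnerowicz, use Remark \ref{absorb} to shrink the cone, and build the interior parametrix by the same doubling trick as Proposition \ref{constr_int_parametrix}. Two points in the middle of your argument, however, are off.

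First, your description of $S_1(r)$ is not right. After conjugating by the \emph{full} density factor $r^{n/2}\rho_r^{1/2}$ (you write $r^{n/2}(\det g_r/\det g_0)^{1/4}$), the logarithmic derivative of the rescaling cancels the mean curvature term $-\tfrac{n}{2}H_r$ exactly — one has $\partial_r(r^n\rho_r) = -nH_r\, r^n\rho_r$, so the conjugated operator is $\partial_r + \tfrac{1}{r}D_{M,r}$ with no residual. Hence $S_1(r)$ has exactly two contributions: $(D_{M,r}-D_{M,0})\otimes\id_F$ and $r$ times the pulled-back connection form. Your second ``correction'' (the log-derivative of the rescaling) is not there; had you conjugated only by $r^{n/2}$ it would be, but then the $\tfrac{n}{2}$-part of the mean curvature would also remain and that constant is \emph{not} small.

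Second, the smallness of $S_1$ does not come from the decay conditions $r\partial_r g_r\to 0$ and $r^2\partial_r^2 g_r\to 0$. Those conditions enter the scalar curvature computation (Proposition \ref{LinkScalCurvBehaviour}); they are irrelevant here, and $\partial_r^2 g_r$ cannot appear in the coefficients of a first-order operator at all. The term $(D_{M,r}-D_{M,0})\otimes\id_F$ carries no overall factor of $r$; its smallness is exactly the content of Lemma \ref{DiracOpContFamily}, which rests on the $C^2$-convergence $g_r\to g_0$ in Definition \ref{generalizedConeMetric}. You are pointing at the wrong hypothesis. A minor additional remark: the paper takes $\LL = L^2_\gamma(M, S_M\otimes F)$ for a fixed \emph{smooth} background $\gamma$ rather than the $C^2$-metric $g_0$, to stay cleanly inside the framework of \cite{CHS}*{Section 4}; your use of $L^2_{g_0}$ is not fatal but is less careful. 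None of this undermines the conclusion, but the identification of $S_1(r)$ and the justification of its smallness need to be fixed.
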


Let

\begin{enumerate}[label=\myicon]
	\item  $\EE := L^2_G(N,\spinor^+ \tensor E)$,
	\item $\FF := L^2_G(N,\spinor^- \tensor E)$,
	\item $\EE_{\bulk} := L^2_G(\mathcal{K}, \spinor^+ \otimes E)$,
	\item $\EE_{\cone} :=  L^2_G(\conic, \spinor^+ \otimes E)$,
	\item $\FF_{\bulk} := L^2_G(\mathcal{K}, \spinor^- \otimes E)$,
	\item $\FF_{\cone} :=  L^2_G(\conic, \spinor^- \otimes E) $.
\end{enumerate}

We first consider the untwisted Dirac operator
\begin{equation*}
	\Dirac\colon C_\cc^\infty(N, \spinor) \to C_\cc^\infty(N, \spinor)
\end{equation*}
associated with the metric $G$ on $N$.
For $r \in [0, \vartheta)$, let $D_{M, g_r}  \colon L^2_{g_r}(M, S_M) \supset C^\infty(M, S_M) \to L^2_{g_r}(M, S_M)$ be the Dirac operator associated with $g_r$ on $M$. 
Let $r \in (0,\vartheta)$.
Denoting by $D_{M, r^2 g_r}$ the Dirac operator associated with $r^2 g_r$ on $M$, we have
\begin{equation*}
	D_{M, r^2g_r} = \tfrac1r D_{M, g_r} \colon L^2_{g_r}(M, S_M) \supset C^\infty(M, S_M) \to L^2_{g_r}(M, S_M) .
\end{equation*}

Let
\begin{equation*}
	\rho_r:=  \frac{d \mu_{g_r}}{d \mu_{\gamma}}
\end{equation*}
be the volume density with respect to $\gamma$.
Multiplication with $\sqrt{\rho_r}$ induces an isometry
\begin{equation}\label{LinkIsometry}
  L^2_{g_r}(M, S_M) \to L^2_{\gamma}(M, S_M).
\end{equation}

Since $g_r$ converges to $g_0$ in $C^2$, we get the following lemma (see \cite{CHS}*{Proposition 4.10}).
\begin{lemma}\label{DiracOpContFamily}
	The family of Dirac operators
	\begin{equation*}
		[0,\vartheta) \to \boundedops\left(H^1_{\gamma}(M, S_M), L^2_{\gamma}(M, S_M)\right),\qquad r \mapsto  D_{M,r}:= (\rho_r)^{\tfrac12} \cdot \bar D_{M, g_r}\cdot  (\rho_r)^{-\tfrac12}
	\end{equation*}
	is continuous.
\end{lemma}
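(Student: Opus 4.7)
The plan is to reduce the assertion to the unconjugated statement \cite{CHS}*{Proposition 4.10}, which already provides continuity of the Dirac operator as a function of the metric, and then show that conjugation by $(\rho_r)^{1/2}$ does not destroy this continuity.

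First I would check that the family $r \mapsto g_r$ satisfies the hypotheses required to apply \cite{CHS}*{Proposition 4.10}. By Definition \ref{generalizedConeMetric}, $g_r \to g_0$ in $C^2$ as $r \to 0$, and $g_r$ depends smoothly on $r \in (0,\vartheta)$; together this gives a continuous map $[0,\vartheta) \to C^2(M, T^*M \otimes T^*M)$. Since $M$ is compact, continuous dependence in $C^2$ implies continuous dependence in $W^{1,p}$ for any $p \in (n,\infty)$, so \cite{CHS}*{Proposition 4.10} applies to the family $\bar D_{M, g_r}$ viewed as elements of $\boundedops(H^1_{g_r}(M, S_M), L^2_{g_r}(M, S_M))$ after the identifications described in that reference.

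Next I would analyze the conjugating factor $(\rho_r)^{1/2}$. The density $\rho_r = d\mu_{g_r}/d\mu_\gamma$ is a positive smooth function which is a pointwise algebraic expression in the coefficients of $g_r$ and $\gamma$, and depends $C^2$-continuously on $r$. In particular $(\rho_r)^{\pm 1/2}$ is uniformly bounded in $C^1$ on $[0,\vartheta)$ and depends continuously on $r$ in $C^1(M)$. Multiplication by $(\rho_r)^{1/2}$ therefore induces the pointwise isometry $L^2_{g_r}(M, S_M) \to L^2_\gamma(M, S_M)$ recorded in \eqref{LinkIsometry}, and simultaneously defines a topological isomorphism $H^1_{g_r}(M, S_M) \to H^1_\gamma(M, S_M)$ whose norm and whose dependence on $r$ are controlled by the Leibniz rule applied to $\nabla(\rho_r^{1/2} u)$.

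Finally, I would write $D_{M,r} = M_{\rho_r^{1/2}} \circ \bar D_{M, g_r} \circ M_{\rho_r^{-1/2}}$ as a composition of three families of bounded operators between appropriate Sobolev spaces; each factor depends continuously on $r$ in the operator norm (the outer factors by the multiplier discussion, the middle factor by \cite{CHS}*{Proposition 4.10}), so their composition $[0,\vartheta) \to \boundedops(H^1_\gamma(M, S_M), L^2_\gamma(M, S_M))$ is continuous as claimed. The main obstacle is the multiplier step, i.e.\ verifying that conjugation by $(\rho_r)^{1/2}$ is jointly continuous as a map from $[0,\vartheta)$ into $\boundedops(H^1_\gamma)$; this reduces to uniform $C^1$-bounds on $(\rho_r)^{\pm 1/2}$ and on their difference quotients in $r$, which follow directly from $g_r \in C^2([0,\vartheta), C^2(M, T^*M \otimes T^*M))$ and compactness of $M$.
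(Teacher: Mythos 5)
Your argument is correct and is essentially the paper's own justification: the paper proves this lemma by a one-line appeal to \cite{CHS}*{Proposition 4.10} together with the $C^2$-convergence $g_r \to g_0$ from Definition \ref{generalizedConeMetric}, and your reduction to that proposition plus the multiplier analysis for $\rho_r^{\pm 1/2}$ is exactly the routine bookkeeping implicit in that citation. One small correction: the definition only gives continuity of $r \mapsto g_r$ into $C^2(M, T^*M\otimes T^*M)$ up to $r=0$ (smoothness is only asserted on the open interval), not $C^2$-dependence on $r$, so the appeal to ``difference quotients in $r$'' should be dropped — but continuity of $r \mapsto \rho_r^{\pm 1/2}$ in $C^1(M)$ is all your conjugation step needs, so the proof stands.
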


Using the isometric identification $TM \cong T( \{\tfrac{\vartheta}{2}\} \times M) \subset T\conic$, $\xi \mapsto  \tfrac{2}{\vartheta} \, \xi$, we consider  $\spinor^{\pm}|_M \to M$ as $\Cl(M,\gamma)$-module bundles, by letting $\xi \in TM$ act by Clifford multiplication with $\pm \tfrac{2}{\vartheta} \, \xi \cdot \partial_r$.
In an analogous fashion as in the beginning of the proof of Proposition \ref{coneexample}, we obtain unitary $\Cl(M,\gamma)$-module bundle isomorphisms $\alpha \colon \spinor^+ |_M \cong S_M$ and $\beta \colon\spinor^-|_M \cong S_M$ satisfying $\beta \circ c(\partial_r) = \alpha$.

Using parallel translation of sections in $C^{\infty}(M, S_M)$ along geodesic lines $(0,\vartheta) \times \{x\}\subset \conic$, the isomorphisms $\alpha$ and $\beta$ induce isometries
\begin{align*}
	\Phi_\alpha  \colon L^2_G(\conic, \spinor^+|_{\conic}) & \to L^2( (0,\vartheta), L^2_{r^2g_r}(M, S_M)),    \\
	\Phi_\beta  \colon L^2_G(\conic, \spinor^-|_{\conic})  & \to L^2( (0,\vartheta),  L^2_{r^2g_r}(M, S_M)) .
\end{align*}

With \cite{BaerGauduchonMoroianu}*{Equation (3.6)} we obtain

\begin{equation*}
	\Phi_\beta \circ \Dirac^+ \circ\, \Phi_\alpha^{-1} = \partial_r - \tfrac{n}{2} H_r + \tfrac1r D_{M,g_r} ,
\end{equation*}
where
\begin{equation*}
	H_r= -\tfrac1{2n}\text{tr}_{r^2g_r}(\partial_r (r^2g_r))
\end{equation*}
is the mean curvature.

Recall that for the volume forms we have $d\mu^{r^2g_r} = r^nd\mu^{g_r}$.
We post-compose the isometries $\Phi_\alpha$ and $\Phi_\beta$ with the isometry in \eqref{LinkIsometry} and multiplication with $r^{\tfrac n2}$ to obtain isometries
\begin{align*}
	\Phi_\alpha' \colon L^2_G(\conic, \spinor^+\vert_{\conic}) \to & \ L^2((0,\vartheta), L^2_{\gamma}(M, S_M)), \\
	\Phi_\beta' \colon L^2_G(\conic, \spinor^-\vert_{\conic}) \to  & \ L^2((0,\vartheta), L^2_{\gamma}(M, S_M)).
\end{align*}

Using
\begin{equation*}
	\partial_r (r^n\rho_r) = -n H_r\cdot  r^n\rho_r,
\end{equation*}
we compute
\begin{equation*}
	\Phi_\beta' \circ \Dirac^+ \circ\, (\Phi_\alpha')^{-1} =  \partial_r + \tfrac1r D_{M,r}.
\end{equation*}
Here $e_1,\ldots, e_n$ denotes a $\gamma$-orthonormal frame on $M$.

We now consider the Dirac operator twisted by $E$. 
Let $\{x_1, \ldots x_k\}= \bar N\setminus N$ be the set of cone tips of $\bar N$ and let $\eps > 0$.
Let
\begin{equation*}
	V_i:=B_{\eps}(\bar f(x_i)) \subset  \SSS^{n+1}
\end{equation*}
be the open $\eps$-ball centered at $\bar f(x_i)$.
By choosing $\eps$ small enough we can assume $V_i\cap V_j = \emptyset$ if $\bar f(x_i)\neq \bar f(x_j)$.
Let
\begin{equation*}
	V:= \bigcup_{i= 1}^k V_i \subset \SSS^{n+1}.
\end{equation*}
For every point $\bar f(x_i)\in \SSS^{n+1}$ consider the fibre
\begin{equation*}
	F_i := E_0\vert_{\bar f(x_i)}.
\end{equation*}
Using that all fibres are isometric, we identify each $F_i$ with a fixed fibre $F$.
Using parallel translation along geodesic lines emanating from $\bar f(x_i) \in \SSS^{n+1}$, we obtain a unitary trivialization 
\begin{equation}\label{trivialSigma}
	E_0\vert_{V} \cong V\times F.
\end{equation}
This induces a unitary trivialization
\begin{equation}\label{TrivialzationTwistBundle}
	E\vert_{f^{-1}(V)} \cong f^{-1}(V)\times F.
\end{equation}
By passing to a larger $\mathcal{K} \subset N$ if necessary, we can assume that
 \begin{equation*}
	\conic \subset f^{-1}(V).
\end{equation*}

Let $\omega \in \Omega^1( V , \End(F))$ be the smooth connection $1$-form of the connection $\nabla^{E_0}$ on $E_0|_V\to \SSS^{n+1}$ with respect to \eqref{trivialSigma}.

Consider the Hilbert space
\begin{equation*}
	\LL:= L^2_{\gamma}(M, S_M\otimes F).
\end{equation*}

Let $(e_1, \ldots, e_n)$ be a local orthonormal frame of $(M ,\gamma)$.
With the trivialization \ref{trivialSigma} and the isometries $\Phi_\alpha', \Phi_\beta'$ we obtain isometries
\begin{align*}
	\Phi_{\EE} \colon L^2_G(\conic, \spinor^+\otimes\ E) \to & \ L^2((0,\vartheta), \LL), \\
	\Phi_{\FF} \colon L^2_G(\conic, \spinor^+\otimes\ E) \to & \ L^2((0,\vartheta), \LL).
\end{align*}
Put  
\begin{equation*}
  S_0:= D_{M,0}\otimes \id_F
\end{equation*} 
and
\begin{equation*}
     S_1(r):=  \left(D_{M,r} -  D_{M,0}\right)\otimes \id_F + r \cdot \Bigg(  \sum_{i = 1}^n c\left(  e_i\right)\otimes (f^*\omega)\left(  \tfrac1r \bar e_i^{g_r}\right) +  c\left(\partial_r \right)\otimes (f^*\omega)\left(  \partial_r \right) \Bigg).
\end{equation*}
Suppressing $\Phi_{\EE}$ and $\Phi_{\FF}$ from the notation, we then have  $\Dirac^+_E = \partial_r + \tfrac1r(S_0 + S_1(r))$.
Note that $S_0$ satisfies the spectral gap condition \ref{AnSpGap} by the assumption $\scal_{g_0}>1$.

The map $S_1 \colon (0,\vartheta) \to \boundedops( \dom(S_0), \LL)$ is continuous, hence measurable. 
Furthermore, it is bounded since $\omega$ is bounded and $f$ is $\Lambda$-Lipschitz.
Due to Lemma \ref{DiracOpContFamily}, we obtain
\begin{align*}
	\abs{S_1(r)S_0^{-1}}_{L^\infty((0,\vartheta), \boundedops(\LL))} \overset{r\to 0}{\longrightarrow}&\ 0, \\
	\abs{S_0^{-1}S_1(r)}_{L^\infty((0,\vartheta), \boundedops(\LL))} \overset{r\to 0}{\longrightarrow}&\ 0.
\end{align*}

The axioms \ref{AnSetup4} - \ref{boundedperturbtwo} are now checked analogously as in the case of the spherical suspension.

\begin{proposition} For all $\varphi, \psi \in C_{\cc,0}([0,\thet), \RR)$ with $\psi|_{\supp \varphi} \equiv 1$, there exists an interior parametrix of $\Dirac_E$ for $\varphi$ and $\psi$.
\end{proposition}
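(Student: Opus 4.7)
The plan is to adapt the argument of Proposition \ref{constr_int_parametrix} to the cone-like setting, exploiting that $\supp(1-\varphi)$ is compact and bounded away from every cone tip. Since $\varphi \in C^{\infty}_{\cc,0}([0,\thet),\RR)$ equals $1$ near $0$, we can choose $a \in (0,\thet)$ with $\varphi \equiv 1$ on $[0,a]$, so that $\supp(1-\varphi)$ is contained in $\mathcal{K} \cup [a,\thet) \times M$. Picking any $a' \in (0,a)$, set $\Omega := \mathcal{K} \cup [a', \thet) \times M \subset N$; this is a compact smooth submanifold with boundary $\partial \Omega = \{a'\} \times M$, and $\supp(1-\varphi)$ lies in its interior. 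Because $G$ is smooth, $G|_\Omega$ is a smooth Riemannian metric up to $\partial\Omega$.

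First I would double $\Omega$ along $\partial\Omega$, using the normal exponential map for the smooth background metric $\Gamma$ to produce a smooth structure on $\mathsf{D}\Omega = \Omega \cup_{\partial\Omega}\Omega$, and extend $G|_\Omega$ to a smooth Riemannian metric $\tilde G$ on $\mathsf{D}\Omega$. Next, choose a compact smooth submanifold-with-boundary $\Sigma \subset \SSS^{n+1}$ containing $f(\Omega)$, double it to a smooth closed manifold $\mathsf{D}\Sigma$ with a smooth metric, and extend the smooth Hermitian bundle $E_0|_\Sigma$ with its metric connection to a smooth Hermitian bundle $\widetilde{E}_0 \to \mathsf{D}\Sigma$ carrying a smooth metric connection. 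The Lipschitz map $f|_\Omega$ then extends to a Lipschitz map $\widetilde f \colon \mathsf{D}\Omega \to \mathsf{D}\Sigma$ by reflection across $\partial\Omega$, and $\widetilde E := \widetilde f^* \widetilde{E}_0 \to \mathsf{D}\Omega$ is a Hermitian Lipschitz bundle with compatible metric Lipschitz connection whose restriction to $\Omega$ coincides with $E|_\Omega$.

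Let $\widetilde{\mathscr D}$ be the twisted Dirac operator on the closed Riemannian spin manifold $(\mathsf{D}\Omega, \tilde G)$ associated with $\widetilde E$; it restricts to $\Dirac_E$ on the interior of $\Omega$. In local trivializations, $\widetilde{\mathscr D}$ is a first-order elliptic operator with leading coefficients in $W^{1,p}_{\loc}$ (continuous) and zeroth-order term in $L^p_{\loc}$ for every $p<\infty$, so the hypotheses of \cite{BartnickChrusciel}*{Equations (3.4)-(3.5)} are met on the closed manifold $\mathsf{D}\Omega$ of dimension $n+1 \geq 4$. By \cite{BartnickChrusciel}*{Theorem 3.7, Corollary 4.5}, its closure has domain $H^1(\mathsf{D}\Omega, \widetilde\spinor \otimes \widetilde E)$ and is Fredholm, hence admits a bounded parametrix $\widetilde P \colon L^2(\mathsf{D}\Omega, \widetilde\spinor^-\otimes\widetilde E) \to H^1(\mathsf{D}\Omega, \widetilde\spinor^+\otimes\widetilde E)$ together with compact remainders $\widetilde R$ on $L^2$ and $\widetilde L$ on $H^1$. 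Choose $\chi \in C^{\infty}_{\cc}(\mathrm{int}(\Omega), [0,1])$ with $\chi \equiv 1$ on a neighborhood of $\supp(1-\varphi)$, and set $P := \chi \widetilde P \chi$, extended by zero across $\partial\Omega$.

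A direct computation, exactly as at the end of the proof of Proposition \ref{constr_int_parametrix}, now yields
\begin{align*}
  \bar\Dirac_E (1-\varphi) P (1-\psi) &= (1-\psi) + R, \\
  (1-\varphi) P (1-\psi) \bar\Dirac_E &= (1-\psi) + L,
\end{align*}
where $R$ and $L$ consist of cut-off versions of $\widetilde R$ and $\widetilde L$ together with commutator terms of the form $[\varphi, \widetilde{\mathscr D}]\widetilde P(1-\psi)$ and $(1-\varphi)\widetilde P [\widetilde{\mathscr D},\psi]$. Since these commutators are zeroth-order differential operators with compactly supported coefficients and $\widetilde P$ is bounded from $L^2$ to $H^1$, Rellich-Kondrachov ensures that these terms, and therefore $R$ and $L$, are compact. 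The main technical obstacle I anticipate is the construction of the doubled bundle $\widetilde E$ with a metric connection: one must pick $\Sigma$ large enough to contain $f(\Omega)$ yet small enough to admit a smooth doubling, and then use that $E_0$ is smooth so that a smooth Hermitian extension $\widetilde{E}_0$ exists across $\partial\mathsf{D}\Sigma$. This is handled exactly as in Proposition \ref{constr_int_parametrix}; the present case is simpler because $G$ is smooth rather than only $W^{1,p}$-regular.
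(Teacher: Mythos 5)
Your proposal follows the correct overall strategy, and this is also what the paper means by ``the same doubling procedure'': form the compact manifold with boundary $\Omega$ containing $\supp(1-\varphi)$, double it along $\partial\Omega = \{a'\}\times M$ using the background metric, extend the structures, apply the Bartnik--Chru\'sciel machinery on the resulting closed manifold, and cut the parametrix down with a bump function. The identification of $\Omega$, the smooth extension of the metric, the application of \cite{BartnickChrusciel}, and the final commutator/Rellich--Kondrachov argument are all fine.

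However, the step where you choose a compact smooth submanifold-with-boundary $\Sigma\subset\SSS^{n+1}$ with $f(\Omega)\subset\Sigma$ and extend $f\vert_\Omega$ to a map $\mathsf{D}\Omega\to\mathsf{D}\Sigma$ ``by reflection across $\partial\Omega$'' does not transfer from Proposition \ref{constr_int_parametrix} to the cone-like setting, for two reasons. First, since $\deg f\neq 0$ and $\Omega$ contains $\mathcal{K}$ and hence almost all of $N$, the image $f(\Omega)$ will in general be all of $\SSS^{n+1}$, so no proper $\Sigma$ with nonempty boundary exists. Second, even when such a $\Sigma$ exists, gluing $f$ to itself across the two copies of $\Omega$ requires $f(\partial\Omega)\subset\partial\Sigma$; in Proposition \ref{constr_int_parametrix} this held automatically because $\susp f(r,x)=(r,f(x))$ respects the product structure and sends $\{a,b\}\times M$ into $\{a,b\}\times\SSS^n$, but a general Lipschitz $f\colon N\to\SSS^{n+1}$ in Setup \ref{setupGeneralizedCone} has no such property, so the reflected map would be discontinuous along $\partial\Omega$. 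The fix is simple and makes the argument cleaner: drop $\Sigma$ and $\mathsf{D}\Sigma$ altogether, let $\pi\colon\mathsf{D}\Omega\to\Omega$ be the (Lipschitz) folding map, set $\widetilde f:=f\vert_\Omega\circ\pi\colon\mathsf{D}\Omega\to\SSS^{n+1}$, and put $\widetilde E:=\widetilde f^{\,*}E_0=\pi^*(E\vert_\Omega)$, a Hermitian Lipschitz bundle with Lipschitz metric connection on $\mathsf{D}\Omega$ restricting to $E\vert_\Omega$ on either copy. With this modification the remainder of your proof goes through verbatim.
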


\begin{proof}
	Let $0<a<\inf_{r\in (0,\vartheta)}\supp (1-\varphi)$ and 
	\begin{equation*}
		\Omega := \left([a,\vartheta)\times M\right)\cup \mathcal{K}\subset N,
	\end{equation*} 
	which is a compact manifold with boundary.
	We use now the same doubling procedure as in the proof of Proposition \ref{constr_int_parametrix} 
\end{proof}

\subsection{Integral Schrödinger-Lichnerowicz formula for manifolds with cone-like singularities}

We work in the setting of Setup \ref{setupGeneralizedCone}.

\begin{lemma} \label{curvaturetermtwo}
Let $R^{E_0} \in \Omega^2(\SSS^{n+1}, \End(E_0))$ be the curvature form of  $(E_0,\nabla_0) \to \SSS^{n+1}$.
We consider the almost everywhere defined measurable $2$-form on $N$ with values in $\End(E)$ given by 
\[
     R^E := f^*(R^{E_0}).
\]
Then  $R^E  \in L^{\infty}(\Omega^2(N, \End(E)))$. 
\end{lemma}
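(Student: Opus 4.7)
The proof will essentially mirror that of Lemma \ref{curvatureterm}, which established the analogous statement for spherical suspensions. The structural ingredients are identical: $(E_0, \nabla^{E_0})$ is smooth on the compact manifold $\SSS^{n+1}$ and $f$ is globally Lipschitz.

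First I would observe that since $\SSS^{n+1}$ is compact and $R^{E_0}$ is a smooth section of $\Lambda^2 T^*\SSS^{n+1} \otimes \End(E_0)$, there is a uniform bound
\[
    C_0 := \sup_{y \in \SSS^{n+1}} |R^{E_0}_y|_{\rm op} < \infty,
\]
where the norm is taken with respect to the round metric on $\SSS^{n+1}$ and the Hermitian metric on $E_0$. Next, since $f \colon (N, d_G) \to \SSS^{n+1}$ is $\Lambda$-Lipschitz and $(N,G)$ is a smooth Riemannian manifold (albeit with cone-like singularities), Rademacher's theorem gives that $f$ is differentiable at almost every $x \in N$ (with respect to $d\mu_G$) with differential satisfying $|d_x f|_{\rm op} \leq \Lambda$.

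For such $x$, the pullback formula
\[
    (f^* R^{E_0})_{v,w}(x) = R^{E_0}_{d_x f(v),\, d_x f(w)}
\]
for $v, w \in T_xN$ yields the pointwise estimate
\[
    |(f^* R^{E_0})_x|_{\rm op} \leq C_0 \cdot \Lambda^2,
\]
which holds almost everywhere on $N$. This establishes $R^E \in L^\infty(\Omega^2(N, \End(E)))$.

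The only minor subtlety compared to Lemma \ref{curvatureterm} is that $N$ now has cone-like singularities rather than being a smooth (incomplete) Riemannian manifold, but this is immaterial: the $L^\infty$ property concerns pointwise bounds on an open smooth Riemannian manifold $N$, and the cone tips form a set of measure zero in $\bar N$ which is not part of $N$ anyway. Hence no new obstacle arises, and the argument is essentially a one-line invocation of the Lipschitz bound on $f$ combined with compactness of $\SSS^{n+1}$.
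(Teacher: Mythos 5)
Your proof is correct and follows essentially the same route as the paper's (which itself simply refers back to the proof of Lemma \ref{curvatureterm}): compactness of $\SSS^{n+1}$ gives a uniform bound on $R^{E_0}$, and the $\Lambda$-Lipschitz property of $f$ together with the pullback formula gives the pointwise $L^\infty$ bound almost everywhere.
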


\begin{proof}  The proof is analogous to the proof of Lemma \ref{curvatureterm}.
\end{proof}

For $u = \sigma \otimes \eta \in (\spinor \otimes E)_x$, we set 
\begin{equation*}
    \mathcal{R}_{G}^E u:= \frac{1}{2}\sum_{i,j} e_i\cdot e_j \sigma \otimes R^E_{ e_i^G, e_j^G} \eta \in (\spinor \otimes E)_x ,
\end{equation*}
where $(e_1, \ldots, e_{n+1})$ is some $\Gamma$-orthonormal basis of $(T_x N, G_x)$.
It follows from Lemma \ref{curvaturetermtwo} that $\mathcal{R}_{G}^E$ defines a bounded operator  $L^2_G(\susp M, \spinor \otimes E) \to L^2_G(\susp M, \spinor \otimes E)$.

Due to Proposition \ref{TwistedDiracGenConeAbstractConeOp} the twisted Dirac operator $\Dirac_E$ has a unique closed extension $\bar \Dirac_E$.
We furthermore consider the operator
\[
   \nabla_E = \nabla^{\spinor \otimes E} \colon L^2_G(N , \spinor \otimes E) \supset \Lip_{\cc}(N , \spinor \otimes E) \to L^2_G(N, T^* N \otimes \spinor \otimes E).
\]
As a linear differential operator it is closable.
Let
\[
   \bar \nabla_E \colon  L^2_G(N , \spinor \otimes E) \supset \dom(\bar \nabla_E) \to L^2_G(N, T^* N \otimes \spinor \otimes E)
\]
be its closure.
We now obtain the following integral Schr\"odinger-Lichnerowicz formula for manifolds with cone-like singularities. 

\begin{theorem}\label{thm:SL_GeneralizedCone}
	Assume we are working in Setup \ref{setupGeneralizedCone} with $\scal_G \geq 0$.
	\begin{enumerate}[label=\textup{(\roman*)}]
	\item  The sesquilinear form $\Lip_\cc(N, \spinor \otimes E) \times \Lip_{\cc}(N, \spinor \otimes E) \to \CC$, $(u,v) \mapsto \bigl(  \nabla_E u, \nabla_E v \bigr)_{L^2}$ extends to a continuous sesquilinear functional
	\[
	  \bigl( \nabla_E -  , \nabla_E -  \bigr)_{L^2} \colon  \dom( \bar \Dirac_E) \times \dom( \bar \Dirac_E) \to \CC .
	\]
	\item  The sesquilinear form $\Lip_{\cc}(N, \spinor \otimes E) \times \Lip_{\cc}(N, \spinor \otimes E) \to \CC$, $(u,v) \mapsto \bigl( \scal_G u,v \bigr)_{L^2}$, extends to a continuous sesquilinear functional
	\[
	   \bigl(\scal_G - , - \bigr)_{L^2}   \colon \dom( \bar \Dirac_E) \times \dom( \bar \Dirac_E) \to \CC .
	\]
	\item  For $u, v \in \dom(\bar \Dirac_E)$, the integral Schr\"odinger-Lichnerowicz formula holds: 
	 \begin{equation*}
		\bigl(\bar \Dirac_E u, \bar \Dirac_E v \bigr)_{L^2}=\bigl( \nabla_E u,  \nabla_E v \bigr)_{L^2} +  \tfrac14 \bigl(\scal_G u,v\bigr)_{L^2}  + \bigl(\mathcal R^E u, v \bigr)_{L^2}.
	  \end{equation*}
	\end{enumerate}
	\end{theorem}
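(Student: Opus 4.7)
The plan is to follow the strategy of Theorem \ref{thm:SL_cone}, adapted to the present situation where the metric $G$ is smooth on $N$ (so scalar curvature is a classical function) but may be unbounded near the cone tips of $\bar N$.

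First I would verify the identity \ref{trois} for compactly supported Lipschitz sections $u, v \in \Lip_\cc(N, \spinor \otimes E)$. Since $G$ is smooth on all of $N$ and the twist bundle $E$ is Hermitian Lipschitz with metric Lipschitz connection, this is a direct application of the integral Schr\"odinger--Lichnerowicz formula \cite{CHS}*{Theorem 5.1} in its smooth-metric, Lipschitz-twist-bundle form. The cone tips $x_1,\dots,x_k$ lie in $\bar N \setminus N$, so compactly supported sections never see them.

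Next, given $u \in \dom(\bar \Dirac_E)$, pick an approximating sequence $u_i \in \Lip_\cc(N, \spinor \otimes E)$ with $u_i \to u$ and $\Dirac_E u_i \to \bar\Dirac_E u$ in $L^2_G$. Applying the formula from Step~1 to $u_i - u_j$ gives
\begin{equation*}
  \|\Dirac_E(u_i{-}u_j)\|_{L^2}^2 = \|\nabla_E(u_i{-}u_j)\|_{L^2}^2 + \bigl(\scal_G(u_i{-}u_j),u_i{-}u_j\bigr)_{L^2} + \bigl(\mathcal R^E(u_i{-}u_j), u_i{-}u_j\bigr)_{L^2}.
\end{equation*}
The left-hand side tends to zero, and the last term tends to zero since $\mathcal R^E$ is a bounded operator on $L^2_G$ by Lemma \ref{curvaturetermtwo}. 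The remaining two summands on the right are non-negative --- the first trivially, and the second because $\scal_G \geq 0$ by hypothesis --- so both tend to zero. In particular $\{\nabla_E u_i\}$ is Cauchy in $L^2_G(N, T^*N \otimes \spinor \otimes E)$, proving $\dom(\bar\Dirac_E) \subset \dom(\bar\nabla_E)$ and giving (i) after polarization. The same Cauchy argument for $(\scal_G u_i, u_i)_{L^2}$ defines $(\scal_G u, u)_{L^2}$ as a limit independent of the approximating sequence (independence follows, as in \ref{deux}, by applying the identity to a mixed sequence and invoking the already-established continuity of the $\nabla_E$ and $\mathcal R^E$ terms), and then polarization yields (ii).

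Finally, (iii) follows by writing the identity of Step~1 for $u_i, v_i$ and letting $i \to \infty$: each of the four terms converges to the corresponding extended expression by (i), (ii), the boundedness of $\mathcal R^E$, and the convergence $\Dirac_E u_i \to \bar\Dirac_E u$, $\Dirac_E v_i \to \bar\Dirac_E v$.

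The main obstacle is exactly the handling of the scalar curvature term. Since $\scal_G$ behaves like $r^{-2}\bigl(\scal_{g_0}(x) - n(n-1)\bigr)$ near each cone tip by Proposition \ref{LinkScalCurvBehaviour}, it need not lie in $L^\infty_{\loc}$, so $(\scal_G u, v)_{L^2}$ is not a priori defined for arbitrary $L^2$ sections. The argument leverages the non-negativity $\scal_G \geq 0$ as an essential ingredient: it is what forces the $\scal_G$-term in the Cauchy computation to vanish in the limit, and it is what permits the definition of the extended quadratic form on $\dom(\bar\Dirac_E)$ as a genuine limit of non-negative numbers rather than a distributional pairing.
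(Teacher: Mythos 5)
Your proposal is correct and takes essentially the same approach as the paper: the paper's own proof consists of the single sentence that the proof of Theorem~\ref{thm:SL_cone} carries over, and you have carried out exactly that transfer, with the Cauchy-sequence argument, the polarization step, and the use of $\scal_G \geq 0$ all matching. Your closing remark correctly pinpoints where the non-negativity is essential, namely that $\scal_G$ is a classical but possibly unbounded function near the cone tips, so the quadratic form must be obtained as a limit of non-negative numbers rather than by a direct pairing.
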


\begin{proof}
	The proof of Theorem \ref{thm:SL_cone} carries over to this situation. 
\end{proof}

\subsection{Index formula on manifolds with cone-like singularities}

Assume that we are in the situation of Setup \ref{setupGeneralizedCone}, where $E_0$ is  the canonical spinor bundle $\Sigma \to \SSS^{n+1}$ and $\scal_G \geq 0$. We use the notation of Section \ref{sectionDiracGenCone}.

We obtain a corresponding decomposition $E = E^+ \oplus E^-$, where $E^{\pm}= f^*(E_0^{\pm})$.
With respect to the direct sum decomposition
\begin{equation*}
  \spinor \otimes E = (\spinor \otimes E)^+ \oplus (\spinor\otimes E)^-,
\end{equation*}
where
\begin{align} 
(\spinor \otimes E)^+ & = \big( \spinor^+ \otimes\ E^+ \big) \oplus \big( \spinor^- \otimes\ E^- \big),  \\
 (\spinor\otimes E)^-  & = \big( \spinor^- \otimes\ E^+ \big)  \oplus \big( \spinor^+ \otimes\ E^- \big) .
\end{align} 
The twisted Dirac operator $\Dirac_E$ acting on sections of $\spinor \otimes E$ is of block diagonal form,  $\Dirac_E = \Dirac_E^+ \oplus \Dirac_E^-$, where
\begin{equation*}
  \Dirac^{\pm}_E\colon L_G^2(N,\spinor^{\pm}  \tensor E) \supset \Lip_\cc(N,\spinor^{\pm} \tensor E)  \to   L^2_G(N,\spinor^{\mp}  \tensor E) .
\end{equation*}
It follows from Proposition \ref{TwistedDiracGenConeAbstractConeOp}, that $\Dirac_E^{\pm}$ are abstract cone operators and their closures $\bar \Dirac\vphantom{\Dirac}^{\pm}_E$ are Fredholm.
The aim of this section is to prove the following index formula.
\begin{proposition}\label{prop:IndexFormulaGenCone} We have
	\begin{equation*}
	  \ind( \bar \Dirac\vphantom{\Dirac}^+_E) = (-1)^{\tfrac{n+1}2} \deg(f) \cdot \chi(\SSS^{n+1}).
	\end{equation*}
  \end{proposition}

\begin{proof}
We will use a deformation argument similar to the one used in the proof of Proposition \ref{prop:IndexFormula}.
Specifically, we deform the metric on N into one with straight cones.
For this, let $\varphi\in C_\cc^\infty([0,\vartheta))$ be a cutoff function with $\varphi \equiv 1$ on $(0,\vartheta')$, where $0<\vartheta' < \vartheta$.
Furthermore, let $\gamma$ be a smooth Riemannian metric on $M$.
We set
\begin{equation*}
	g_{r,t}:= \varphi(r)\big( t\gamma + (1-t)g_r\big) + \big(1-\varphi(r)\big)g_r
\end{equation*}
and
\begin{equation*}
	{\conic g_{r,t}}:= d r^2 + r^2g_{r,t}.
\end{equation*}
As in the case of the spherical suspension, we can choose the background metric $\gamma$ on $M$ in such a way that $\scal_{g_{r,t}}>1$, which ensures that the associated family of Dirac operators on the link have empty intersection with $[-\tfrac{1}{2}, + \tfrac{1}{2}]$.

Then, $\conic g_{r,t}$ is a generalized cone metric for every $t\in [0,1]$ with limit metric
\begin{equation*}
	g_{0,t}:= t\gamma + (1-t)g_0
\end{equation*}
and
\begin{equation*}
	{\conic g_{r,1}} = d r^2 + r^2 \big(\varphi(r)\gamma + (1-\varphi(r))g_r\big)
\end{equation*}
is a straight cone metric in a neighborhood of the cone tips.

With the assignment
\begin{equation*}
	G_t\vert_{\conic}:= \nu^* ({\conic g_{r,t}})
\end{equation*}
and
\begin{equation*}
	G_t\vert_\mathcal{K} := G\vert_\mathcal{K}
\end{equation*}
we obtain a family of metrics on $N$ such that $(N, G_1)$ is a manifold with conical singularities in the sense of \cite{Chou}.

As in \eqref{TrivialzationTwistBundle} we find a trivialization of $E^\pm$ over an open neighborhood $\conic\subset N$ of the cone tips. 
Due to Proposition \ref{lem:smoothStructureOnE}, we can equip $E^\pm$ with a smooth structure that is compatible with the Lipschitz structure and with the given trivialization over $\conic$.
Choose a smooth Hermitian bundle metric on $E^{\pm}$ and a smooth metric connection $\widetilde \nabla^{E^{\pm}}$ on $E^{\pm}$ that has a vanishing local connection 1-form for the given trivialization over $\conic$.
We set
\begin{equation*}
	\nabla^{E^{\pm}}_t := t \widetilde\nabla^{E^{\pm}} + (1-t)\nabla^{E^{\pm}}
  \end{equation*}
and denote its connection 1-form, with respect to the trivialization over $\conic$, with $\omega_t$.

Let
\begin{equation*}
  \rho^{G_t} = \tfrac{d\mu^{G_t}}{d\mu^{G_1}}.
\end{equation*}
be the volume density with respect to $G_1$. 
Multiplication with $\sqrt{\rho^{G_t}}$ induces isometries
\begin{align*}
  \Psi_t^\pm\colon L^2_{G_t}(N, \spinor^\pm \otimes E) & \to L^2_{G_1}(N, \spinor^\pm \otimes E).
\end{align*}
We consider the family of operators
  \begin{equation*}
	  \Dirac_t := \Psi_t^- \circ \Dirac_{E,t}^+\circ \left(\Psi_t^+\right)^{-1} \colon L^2_{G_1}(N, (\spinor \otimes E)^+) \supset \Lip(N, (\spinor \otimes E)^+) \to L^2_{G_1}(N, (\spinor \otimes E)^-),
  \end{equation*}
  where $\Dirac_{E,t}^+$ denotes the Dirac operator for the metric $G_t$ on $N$ twisted with $(E,\nabla_t^E)$. Due to Proposition \ref{TwistedDiracGenConeAbstractConeOp}, the $\Dirac_{E,t}^+$ form a family of abstract cone operators.

On $(0,\vartheta')\times M \subset \conic$ we have
\begin{equation*}
  \rho_{r,t}:=\rho^{G_t} = \tfrac {d\mu^{g_{r,t}}}{d\mu^{\gamma}}
\end{equation*}
Let
\begin{equation*}
  D_{M,r}^t:= \left(\rho_{r,t}\right)^{\tfrac12}\cdot D_{M, g_{r,t}} \cdot \left(\rho_{r,t}\right)^{-\tfrac12}.
\end{equation*}
Then we have
\begin{equation*}
	\Dirac_t = \partial_r + \tfrac1r (S_{0,t} + S_{1,t}),
\end{equation*}
where
\begin{equation*}
	S_{0,t} = \left(D_{M,0}^t\otimes \id_F\right)
\end{equation*}
and
\begin{equation*}
	S_{1,t}(r) = \ \left(D_{M,r}^t - D_{M,0}^t\right)\otimes \id_F  + r \cdot \Bigg( \sum_{i = 1}^{n} c\left(  e_i\right)\otimes \omega_t\left(\tfrac1r \bar e_i^{g_{r,t}}\right) + c\left(\partial_r \right)\otimes \omega_t \left(  \partial_r \right) \Bigg).
\end{equation*}

Put $\LL:= L^2_\gamma(M, S_M \otimes F)$.
Then the domain of the link operator $S_{0,t}$ is equal to $\mathfrak{D}_{\link} := \Lip(M, S_M \otimes F)$ which is independent of $t$, and the graph norms on $\mathfrak{D}_{\link}$ induced by $S_{0,t}$ are pairwise equivalent. 

Furthermore, the map 
  \begin{equation*}
	[0,1]\to \boundedops(\mathfrak{D}_{\link},  \LL), \quad t \mapsto S_{0,t},
  \end{equation*}
  is continuous.
  The spectrum of $\bar S_{0,t}$ has empty intersection with $[-\tfrac{1}{2}, + \tfrac{1}{2}]$ by our choice of $\gamma$.
 Note also that 
	  \begin{equation*} [0,1] \to L^{\infty}((0,\thet'), \boundedops(D_{\link} , \LL)), \quad t \mapsto S_{1,t} , 
	  \end{equation*}
 is continuous.
 Moreover, the constant $0 < \thet' \leq 1$ such that \ref{boundedperturbtwo} holds over some $(0,\thet')$, can be chosen independent of $t$.

Each $\Dirac_t$ has an interior parametrix so that the closures $\bar \Dirac_t$ are Fredholm.

 Proposition \ref{lem:deformationLemma} shows that $\mathfrak{D}:= \dom(\bar \Dirac_t)$ is independent of $t$, the graph norms of $\bar \Dirac_t$ on $\mathfrak{D}$ are independent of $t$, up to equivalence, and the map 
\[
    [0,1] \to \boundedops(\mathfrak{D}, L^2_{G_1}(N, (\spinor \otimes E)^-)), \qquad t \mapsto \bar \Dirac_t, 
\]
is continuous.
In particular, the Fredholm indices of $\bar \Dirac_0 = \Psi_0^-\circ \bar \Dirac\vphantom{\Dirac}^+_E\circ (\Psi_0^+)^{-1}$ and of $\bar \Dirac_1 = \bar \Dirac\vphantom{\Dirac}^+_{E,1}$ are equal. 
Since $\Psi_t^\pm$ are isometries, we have
\begin{equation*}
   \ind(\bar \Dirac\vphantom{\Dirac}^+_E) = \ind(\bar \Dirac_0)  = \ind(\bar \Dirac\vphantom{\Dirac}^+_{E,1}).
\end{equation*}

Using \cite{Chou}*{Remark 5.25} and the same index computation as in the case of the spherical suspension we obtain
\begin{equation*}
	\ind (\bar\Dirac\vphantom{\Dirac}^+_E) = \int_N \omega_{E^+} - \omega_{E^-} = \int_N f^*(\ch(E_0^+) - \ch(E_0^-)) = (-1)^{\frac{n+1}2}\deg(f)\chi(\SSS^{n+1}).
\end{equation*}
In the last step we use that the connection $\nabla_1^{E^\pm}$ is flat over $\conic$ by construction. 
Hence, the integral over $N$ restricts to an integral over the compact manifold $\mathcal{K} \subset N$ with boundary and we can consider the degree of $f$ as the homological mapping degree for the map of pairs $(\mathcal{K}, \partial \mathcal{K}) \to (\SSS^{n+1}, V)$.
\end{proof}

\subsection{Proof of Theorem \ref{LlarullGeneralizedCone}}

We work in Setup \ref{setupGeneralizedCone}, where $E_0 = E_0^+ \oplus E_0^- \to \SSS^{n+1}$ is the spinor bundle over the even dimensional sphere $\SSS^{n+1}$.

Due to Proposition \ref{prop:IndexFormulaGenCone}, the index of the twisted Dirac operator 
\begin{equation*}
  \bar\Dirac_E \colon L^2_G( N, \spinor \otimes E) \supset \dom(\bar\Dirac_E) \to L^2_G(N, \spinor\otimes E).
\end{equation*}
is given by
\begin{equation*}
  \ind (\bar\Dirac\vphantom{\Dirac}^+_E) = (-1)^{\tfrac{n+1}2}\deg (f)\chi (\SSS^{n+1}) .
\end{equation*}
By assumption the degree of $f$ is non-zero and the Euler characteristic of the even dimensional sphere is 2. 
Possibly after changing the orientation of $N$, the index $\ind (\bar\Dirac\vphantom{\Dirac}^+_E)$ is positive.
We obtain a non-zero harmonic spinor field in $\dom (\bar \Dirac\vphantom{\Dirac}^+_E)\subset L^2_G(N, (\spinor\otimes E)^+)$, hence a non-zero harmonic spinor field $\psi \in \dom (\bar \Dirac_E)$.

With  \cite{CHS}*{Proposition 6.1} we obtain a pointwise estimate
\begin{equation}\label{eq:curv_lowerboundGenCone}
  \langle \mathcal{R}^E \omega, \omega \rangle\geq -\tfrac14 (n+1)n |\omega|^2 \qquad \forall \omega \in (\spinor\otimes E)_x\  
\end{equation}
at every point $x\in N$ where $f$ is differentiable.

Plugging the harmonic spinor into the integral Schrödinger-Lichnerowicz formula, Theorem \ref{thm:SL_GeneralizedCone}, and using \eqref{eq:curv_lowerboundGenCone} we get
\begin{align*}
  0 = \| \bar \Dirac_E\psi \|_{L^2}^2 &= \| \nabla_E \psi \|_{L^2}^2 +   \tfrac14\left(\scal_G \psi, \psi\right)_{L^2}   + \bigl( \mathcal{R}^E \psi, \psi \bigr)_{L^2} \\
  & \geq   \tfrac14\left(\scal_G \psi, \psi\right)_{L^2}   -\tfrac14 (n+1)n \|\psi \|_{L^2}^2 \\
  &\geq 0 .
\end{align*}
Hence, we have
\begin{align}
 &\|  \nabla_E \psi \|_{L^2} = 0 \label{eq:parallelharmspinorGenCone},\\
  &\langle \mathcal{R}^E \psi, \psi \rangle = -\tfrac14 (n+1) n |\psi |^2 \quad \mathrm{a.e.},  \label{eq:curv_equalityGenCone}\\
  &\scal_G = (n+1)n. \label{eq:equality_scalcurvGenCone}  
\end{align}

From \eqref{eq:parallelharmspinorGenCone} it follows that for $|\psi|\in H^1(\susp M)$ we have
\begin{equation*}
  d | \psi |^2 = 2 \langle \nabla_E\psi,\psi  \rangle = 0\qquad \text{a.e.}
\end{equation*}
and therefore, that $|\psi| $ is constant almost everywhere. Consequently, $\psi$ only vanishes on a set of measure zero.

Equation \eqref{eq:curv_equalityGenCone} says that we are in the equality case of \cite{CHS}*{Proposition 6.1}.
This implies that for almost all $x\in N$ where $f$ is differentiable the differential $d_x f$ is an isometry and for all $\Gamma$-orthonormal vectors $v,w \in T_xN$ we have
\begin{equation}\label{eq:doubleCM_invarianceGenCone}
  \bigl( v\cdot w \otimes d_x f(v^G)\cdot d_x f (w^G) \bigr) \cdot \psi = \psi .
\end{equation}

\begin{proposition}\label{prop:differential_orientpresGenCone}
At all points $x$ where the differential $d_x f$ is defined, it is orientation preserving.
  \end{proposition}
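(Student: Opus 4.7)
The plan is to mirror the proof of Proposition \ref{prop:differential_orientpres} almost verbatim, with the only change being that one invokes \eqref{eq:doubleCM_invarianceGenCone} in place of \eqref{eq:doubleCM_invariance}. Fix a point $x \in N$ at which $f$ is differentiable, and pick a $\Gamma$-orthonormal frame $(e_1,\ldots,e_{n+1})$ of $T_x N$ together with a $g_{\SSS^{n+1}}$-orthonormal frame $(\varepsilon_1,\ldots,\varepsilon_{n+1})$ of $T_{f(x)}\SSS^{n+1}$. The complex Clifford volume elements
\[
   \mathrm{vol}_{\Gamma} = e_1 \cdots e_{n+1}, \qquad \mathrm{vol}_{g_{\SSS^{n+1}}} = \varepsilon_1 \cdots \varepsilon_{n+1}
\]
act on the positive chirality part $(\spinor \otimes E)^+$ by the identity, by definition of the chirality decomposition.

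From \eqref{eq:curv_equalityGenCone} together with the equality case of \cite{CHS}*{Proposition 6.1}, at almost every $x$ where $f$ is differentiable the map $d_x f$ is an isometry, so
\[
   d_x f(e_1^G) \cdots d_x f(e_{n+1}^G) = \pm \, \mathrm{vol}_{g_{\SSS^{n+1}}},
\]
the sign encoding whether $d_x f$ is orientation preserving or reversing. Since $n \geq 3$ is odd, $n+1$ is even, so I would partition the frame into $\tfrac{n+1}{2}$ pairs $(e_1,e_2),(e_3,e_4),\ldots,(e_n,e_{n+1})$ and apply \eqref{eq:doubleCM_invarianceGenCone} to each pair in turn. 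Iterating, one obtains
\[
   \bigl( e_1 \cdots e_{n+1} \otimes d_x f(e_1^G) \cdots d_x f(e_{n+1}^G) \bigr) \cdot \psi = \psi .
\]

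Combining this with the two identities above, the element $\pm \bigl(\mathrm{vol}_\Gamma \otimes \mathrm{vol}_{g_{\SSS^{n+1}}}\bigr)$ acts on $\psi$ by $+1$. Because $\psi$ takes values in $(\spinor \otimes E)^+$ and vanishes only on a set of measure zero (recalling that $|\psi|$ was shown to be constant a.e.), the positive sign must be selected, so $d_x f$ is orientation preserving at $x$. The proposition then follows at every point of differentiability by Rademacher's theorem, exactly as in the spherical suspension case. I anticipate no real obstacle here: the sphere-suspension argument does not use the warped product structure in any essential way, and the only ingredients needed carry over verbatim, namely the pointwise Clifford identity \eqref{eq:doubleCM_invarianceGenCone}, the parity of $n+1$, and the almost-everywhere non-vanishing of $\psi$.
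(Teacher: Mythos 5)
Your proposal is correct and follows essentially the same route as the paper, which itself dispatches this proposition as "analogous to the proof of Proposition \ref{prop:differential_orientpres}" on the spherical suspension; you have simply carried out the substitutions $\susp M \rightsquigarrow N$, $\susp\gamma \rightsquigarrow \Gamma$, $\susp g \rightsquigarrow G$, $\susp f \rightsquigarrow f$ and invoked \eqref{eq:doubleCM_invarianceGenCone} in place of \eqref{eq:doubleCM_invariance}, exactly as intended. One minor slip: the closing appeal to Rademacher's theorem does not "upgrade" an a.e.\ statement to all differentiability points (Rademacher gives existence of the differential a.e., not this upgrade), but the same looseness is already present in the paper's statement and is immaterial to how the proposition is later used.
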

  
  \begin{proof} The proof is analogous to the proof of Proposition \ref{prop:differential_orientpres}.
  \end{proof}

To complete the proof of Theorem \ref{LlarullGeneralizedCone}, set 
\[
   \Sigma := f^{-1}(\{\bar f(x_1), \ldots, \bar f(x_k) \}) \subset N . 
\]
Since $d f$ is an orientation preserving isometry almost everywhere, Proposition 2.9 and Lemma 2.12 in \cite{CHS} imply that $f$ is a local homeomorphism. 
In particular, $\Sigma \subset N$ is a discrete subset and $N \setminus \Sigma$ is path connected. 
Since $\bar f \colon \bar N \to \SSS^{n+1}$ is proper, the restriction
\[
      f|_{N \setminus \Sigma} \colon N  \setminus \Sigma \to \SSS^{n+1} \setminus \{\bar f(x_1), \ldots, \bar f(x_n) \}
 \]
is proper.
Using that  $N \setminus \Sigma$ is path connected, that $df$ is an orientation preserving isometry almost everywhere and that $\SSS^{n+1} \setminus \{\bar f(x_1), \ldots, \bar f(x_n) \}$ is simply connected (as $n \geq 3$), Theorem 2.4 in \cite{CHS}  implies that $f|_{N \setminus \Sigma}$ is a metric isometry with respect to the path metrics on $N \setminus \Sigma$ and on $\SSS^{n+1} \setminus \{\bar f(x_1), \ldots, \bar f(x_n) \}$.

We claim that $\Sigma = \emptyset$.
Suppose that this is not the case.
Let   $p \in \Sigma$ such that $f(p) = \bar f(x_i)$ for some $i \in \{1, \ldots, k\}$.
Let $d := \dist_{\bar N}(p, x_i)$. 
By the continuity of $\bar f$, there exists a point $q \in N$ near $x_i \in \bar N$ with $\dist_N(p,q) > \tfrac{d}{2}$ and $\dist_{\SSS^{n+1} \setminus \{\bar f(x_1), \ldots, \bar f(x_n) \}}(f(p), f(q)) < \tfrac{d}{2}$.
This is a contradiction.
So  $\Sigma = \emptyset$.

In summary,  $f \colon N \to \SSS^{n+1} \setminus \{\bar f(x_1), \ldots, \bar f(x_n) \}$ is a metric isometry. 
Since the Riemannian metrics on domain and target are smooth, we conclude by the Myers-Steenrod theorem  that $f$ is a smooth Riemannian isometry. 
This completes the proof of Theorem \ref{LlarullGeneralizedCone}.

\appendix

\section{Smooth structures on Lipschitz bundles}

\begin{proposition}\label{lem:smoothStructureOnE}
Let $M$ be a smooth (not necessarily compact) manifold, let $E \to M$ be a Lipschitz bundle, let $K \subset M$ be a compact subset and let 
\[
    \psi \colon E|_{M \setminus K} \cong (M \setminus K) \times \CC^r 
\]
be a Lipschitz trivialization.

Then there is a smooth structure on $E$ which is compatible with the given Lipschitz structure and with the local trivialization $\psi$.
\end{proposition}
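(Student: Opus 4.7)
My plan is to construct a finite Lipschitz bundle atlas of $E$ one of whose charts is the given trivialization $\psi$, and then to successively modify the remaining charts by Lipschitz gauge transformations so that all transition functions become smooth. The resulting smooth atlas will define the desired smooth structure on $E$.

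First, cover $K$ by finitely many relatively compact open subsets $W_1,\ldots,W_N\subset M$ over which $E$ is Lipschitz trivial, with Lipschitz trivializations $\varphi_i\colon E|_{W_i}\to W_i\times\CC^r$; together with $W_0:=M\setminus K$ and the trivialization $\psi$, this yields a Lipschitz atlas with Lipschitz transition functions $g_{ij}\colon W_i\cap W_j\to \GL_r(\CC)$ satisfying the cocycle identity. Next, for each $i\ge 1$, I extend the Lipschitz map $g_{0i}^{-1}\colon W_0\cap W_i\to\GL_r(\CC)$ to a Lipschitz map $k_i\colon W_i\to\GL_r(\CC)$ by a componentwise McShane--Kirszbraun extension; openness of $\GL_r(\CC)$ inside $\mathrm{Mat}_r(\CC)$ together with continuity of the extension guarantees that the image stays in $\GL_r(\CC)$ after possibly shrinking $W_i$ slightly. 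Replacing $\varphi_i$ by $\tilde\varphi_i := k_i^{-1}\cdot\varphi_i$ (fiberwise multiplication), the new transition $\psi\circ\tilde\varphi_i^{-1}$ is the identity on $W_0\cap W_i$, hence smooth; by the cocycle identity, the transitions $\tilde g_{ij}$ for $i,j\ge 1$ are then also equal to the identity on the open set $W_0\cap W_i\cap W_j$.

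It remains to smooth the still-only-Lipschitz transition functions $\tilde g_{ij}$ on $(W_i\cap W_j)\cap K$, i.e.\ on the interior part of $K$. I apply a second round of Lipschitz gauge transformations $\ell_i\colon W_i\to\GL_r(\CC)$ chosen to equal the identity on a neighborhood of $W_i\setminus K$ (so the already-smooth transitions involving $\psi$, and the identity transitions on $W_0\cap W_i\cap W_j$, are preserved). This reduces the problem to smoothing a Lipschitz $\GL_r(\CC)$-valued cocycle on a finite cover of the compact set $K$, relative to the region where the cocycle is already smooth (in fact equal to the identity). Standard Čech-style cocycle smoothing in local charts of $M$---using mollification composed with the exponential map of the Lie group $\GL_r(\CC)$ and inducting on the charts---produces such $\ell_i$; the relative smoothness of the original cocycle near $\partial K\cap W_i$ is used to arrange $\ell_i=\id$ there. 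The resulting atlas $(\psi,\ell_1\tilde\varphi_1,\ldots,\ell_N\tilde\varphi_N)$ has smooth transition functions everywhere and therefore defines a smooth structure on $E$, which is by construction compatible with the given Lipschitz structure and with $\psi$.

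The main obstacle is the relative cocycle smoothing in the last step: one needs to smooth the finite Lipschitz cocycle while not disturbing what has already been achieved---namely, the trivialization $\psi$ on $M\setminus K$ and the identity transitions on the overlaps with $W_0$. This is handled by restricting the gauge transformations $\ell_i$ to have support (in the sense of ``difference from the identity'') in a compact subset of the interior $\mathrm{int}(K')$ of a slightly enlarged compact set $K'$, and then carrying out the chart-by-chart Čech smoothing keeping the $C^0$-size of $\ell_i$ controlled so that the modified transitions remain in $\GL_r(\CC)$; this is routine once the preparation in Step 2 has been made.
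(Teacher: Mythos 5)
Your approach is genuinely different from the paper's: rather than working with the transition cocycle, the paper embeds $E$ as a Lipschitz subbundle of a trivial bundle $M\times\CC^N$, replaces the Lipschitz family of orthogonal projections $p(x)$ onto the fibers by a mollified, symmetrized, smooth family $\tilde p_\eps$, and uses a linear homotopy $P(x,t)$ together with holomorphic functional calculus (applied to the characteristic function $\chi_{[2/3,\infty)}$) to produce a Lipschitz isotopy of projections connecting the original Lipschitz bundle to a smooth one; the smooth structure is then pulled back. This sidesteps all explicit cocycle bookkeeping.

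The chief gap in your proposal is that the hard step---``standard \v{C}ech-style cocycle smoothing,'' asserted to be ``routine''---is exactly the content of the proposition and is not supplied. Mollifying each $g_{ij}$ separately destroys the cocycle condition, and a chart-by-chart induction runs into a genuine compatibility problem: to clean up the transitions $g_{nj}$ ($j<n$) with a single gauge $\lambda_n\colon W_n\to\GL_r(\CC)$, one would need $\lambda_n g_{nj}$ smooth on all of $W_n\cap W_j$, not merely on the triple overlaps $W_n\cap W_j\cap W_k$ where the cocycle identity helps; gluing local candidate gauges $\lambda_n$ using a partition of unity does not work naively because $\GL_r(\CC)$ is nonabelian. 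The paper's embedding/projection device is precisely the known way around this, so invoking ``standard smoothing'' here is circular. A secondary issue: the componentwise McShane extension $k_i$ of $g_{0i}^{-1}$ from $W_0\cap W_i=W_i\setminus K$ to all of $W_i$ need not take values in $\GL_r(\CC)$ in the interior of $K$, and ``shrinking $W_i$ slightly'' may destroy the cover of $K$ (a chart $W_i$ wholly contained in $\mathrm{int}(K)$ would be discarded). This can probably be patched---e.g.\ by only shrinking the charts that meet $W_0$ and adding fresh charts disjoint from $W_0$ to restore coverage---but as written the step has a hole, and the burden then shifts entirely onto the unproven cocycle-smoothing claim.
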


\begin{proof}
Let $r := \rk E$.
Let $V_0, V_1 \subset M$ be open subsets of $M$ with compact closures and such that $K \subset V_0$ and $\bar V_0  \subset V_1$.

There is a finite family $(U_i)_{i\in I}$ of open subsets of $M$ covering $K$ such that each $U_i$ is contained in $V_0$ and for each $i \in I$, there exists a Lipschitz trivialization $\psi_i \colon E|_{U_i} \cong U_i \times \CC^r$.
The family $(U_i)$ together with the open subset $M \setminus K \subset M$ form a finite open cover of $M$.
Using a finite partition of unity subordinate to this cover, using the Lipschitz trivializations $\psi_i$ of $E|_{U_i}$ and  the Lipschitz trivialization $\psi$ of $E|_{M \setminus K}$, we obtain a   Lipschitz embedding $\Phi \colon E \to M \times \CC^N$ into the trivial vector bundle of rank $N$ over $M$, for some $N \in \mathbb{N}$.
By construction, the  composition
\[
 \Phi|_{M \setminus \bar V'} \circ \psi|_{E|_{M \setminus \bar V_0}}^{-1} \colon (M \setminus \bar V_0) \times \CC^r \to (M \setminus \bar V_0) \times \CC^N
\]
is induced by the canonical embedding $\CC^r = \CC^r \times 0 \subset \CC^N$.
For $x \in M$, let $p(x) \in \CC^{N \times N}$ be the orthogonal projection onto $\Phi(E_x) \subset \CC^N$.
The map 
\[
    p \colon  M  \to \CC^{N \times N}, \quad x \mapsto p(x),
\]
is locally Lipschitz continuous and takes values in self-adjoint projections.

Let $\gamma$ be a smooth Riemannian metric on $M$ and let $d_\gamma$ be the induced metric on $M$.
There exists $\eps_0 > 0$ such that the $\eps_0$-neighborhood of $K$ in $(M, d_{\gamma})$ is contained in $V_0$ and the exponential map is defined on the $\eps_0$-ball in $(T_x M,  \gamma_x)$ for all $x \in \bar V_0$.
Let  $\kappa \colon \R^{n+1} \to \R_{\geq 0}$ be a smooth function supported in the unit ball such that $\int_{\R^{n+1}} \kappa(v) d v = 1$.
For $0 < \eps < \eps_0$, we obtain a smooth map $p_{\eps} \colon M \to \CC^{N \times N}$ such that 
\begin{enumerate}[label=\myicon]
   \item on $M \setminus V_0$ , the map $p_{\eps}$  is equal to $ \mathbb{1}_{r} \oplus  0$,
   \item on $\bar V_0$, the map $p_{\eps}$  is  equal to the convolution
\[
   p_{\eps}(x) = \frac{1}{\eps^{n+1}} \int_{T_x M} \kappa\left( \frac{|v|}{\eps} \right) p( \exp_x v) d v .
\]
\end{enumerate}

For $0 < \eps < \eps_0$, put $\tilde p_{\eps} := \frac{1}{2}( p_{\eps} + p^*_{\eps}) \colon  M \to \CC^{N \times N}$.
The map $\tilde p_{\eps}$ is smooth and equal to $ \mathbb{1}_{r} \oplus  0$ on $M \setminus V_0$.
Furthermore, each $\tilde p_{\eps}(x)$ is self-adjoint.

Choosing $\eps$ small enough we can assume that $ \| \tilde p_{\eps} - p \|_{C^0(M, \CC^{N \times N})} < \tfrac{1}{3}$.
Define a Lipschitz map $P \colon M \times [0,1] \to \CC^{N \times N}$,
\[
    P(x,t) := (1-t) \tilde p_{\eps}(x) + t p(x) .
\] 
Then each $P(x,t) \in \CC^{N \times N}$ is self-adjoint and $P(x,t) =  \mathbb{1}_{r} \oplus  0$ for $(x,t) \in (M \setminus V_0) \times  [0,1]$.
Moreover, $\|P(-, t) - p \|_{C^0(M, \CC^{N \times N})} < \tfrac{1}{3}$ for each $t \in [0,1]$.
This implies that  the spectrum of  $P|_{ \bar V_1 \times [0,1]}$, considered as an element in the unital $C^*$-algebra $C^0( \bar V_1 \times [0,1], \CC^{N \times N})$, is contained in $(-\tfrac{1}{3}, \tfrac{1}{3}) \cup (\tfrac{2}{3}, \tfrac{4}{3} )$, see  \cite{Rieffel}*{Lemma 3.2}.

Let $f=\chi_{\left[\nicefrac{2}{3}, \infty\right)} \colon \R \to \{0,1\}$ be the characteristic function.
Functional calculus yields a Lipschitz map $f(P) \colon (\bar V_1, d_{\gamma})  \times [0,1] \to \CC^{N \times N}$ with values in the self-adjoint projections, see \cite{Li}*{Proposition 3.1}.
By the naturality of functional calculus under unital $C^*$-homomorphisms and since $C^{\infty}( \bar V_1 , \CC^{N \times N})$ is closed under holomorphic functional calculus, $f(P)|_{ \bar V_1 \times \{1\}}  \colon  \bar V_1 \to \CC^{N \times N}$ is smooth. 
Furthermore, for $(x,t) \in (V_1 \setminus \bar V_0) \times  [0,1]$, we have $f(P)(x, t) =  \mathbb{1}_{r} \oplus  0$.
Hence, $f(P)$ extends to a Lipschitz map $\tilde P \colon (M, d_\gamma) \times [0,1] \to \CC^{N \times N}$, setting it equal to  $ \mathbb{1}_{r} \oplus  0$ on $ (M \setminus V_0) \times [0,1]$.

We conclude that  $\tilde P|_{M \times \{1\}}$ defines a smooth complex vector bundle on $M$  which is Lipschitz isomorphic to $E$.
Pulling back the smooth structure along this Lipschitz isomorphism defines a smooth structure on $E$ as needed.
\end{proof}

\begin{bibdiv}
  \begin{biblist}
    
\bib{AlbinGell-Redman}{article}{
   author={Albin, Pierre},
   author={Gell-Redman, Jesse},
   title={The index of Dirac operators on incomplete edge spaces},
   journal={SIGMA Symmetry Integrability Geom. Methods Appl.},
   volume={12},
   date={2016},
   pages={Paper No. 089, 45},
   doi={10.3842/SIGMA.2016.089},
}

\bib{Baer_2024}{unpublished}{
 author={B\"ar, Christian},
 title={Dirac eigenvalues and the hyperspherical radius},
 date={2024},
note={\href{https://arxiv.org/abs/2407.21704}{arXiv:2407.21704}},
}

    
    \bib{BaerGauduchonMoroianu}{article}{
		AUTHOR = {B\"ar, Christian},
		author = {Gauduchon, Paul},
		author = {Moroianu, Andrei},
		TITLE = {Generalized cylinders in semi-{R}iemannian and {S}pin
		geometry},
		JOURNAL = {Math. Z.},
		VOLUME = {249},
		YEAR = {2005},
		NUMBER = {3},
		PAGES = {545--580},
		ISSN = {0025-5874,1432-1823},
		DOI = {10.1007/s00209-004-0718-0},
		URL = {https://doi.org/10.1007/s00209-004-0718-0},
		}

    \bib{BaerHanke}{article}{
    AUTHOR = {B\"ar, Christian}, 
    author = {Hanke, Bernhard},
     TITLE = {Local flexibility for open partial differential relations},
   JOURNAL = {Comm. Pure Appl. Math.},
    VOLUME = {75},
      YEAR = {2022},
    NUMBER = {6},
     PAGES = {1377--1415},
      ISSN = {0010-3640,1097-0312},
       DOI = {10.1002/cpa.21982},
       URL = {https://doi.org/10.1002/cpa.21982},
}

\bib{BartnickChrusciel}{article}{ 
  author={Bartnik, Robert}, 
  author={Chru\'sciel, Piotr}, 
  title={Boundary value problems for Dirac-type equations}, 
  journal={J. reine angew. Math.}, 
  volume={579}, 
  year= {2005}, 
  pages={13--73},
  } 

\bib{BoosLeschPhillips}{article}{
    AUTHOR = {Booss-Bavnbek, Bernhelm},
    author = {Lesch, Matthias},
      author={Phillips, John},
       TITLE = {Unbounded {F}redholm operators and spectral flow},
        VOLUME = {57},
      Date = {2005},
    NUMBER = {2},
        journal={Canad. J. Math.},
        PAGES = {225--250},
        doi={10.4153/CJM-2005-010-1},
    }

\bib{Bruening}{article}{
   author={Br\"{u}ning, Jochen},
   title={$L^2$-index theorems on certain complete manifolds},
   journal={J. Differential Geom.},
   volume={32},
   date={1990},
   number={2},
   pages={491--532},
   issn={0022-040X},
   review={\MR{1072916}},
 }

 \bib{BS88}{article}{
   author={Br\"{u}ning, Jochen},
   author={Seeley, Robert},
   title={An index theorem for first order regular singular operators},
   journal={Amer. J. Math.},
   volume={110},
   date={1988},
   number={4},
   pages={659--714},
   issn={0002-9327},
   review={\MR{955293}},
   doi={10.2307/2374646},
}
		
   \bib{CHS}{article}{
      author={Cecchini, Simone},
      author={Hanke, Bernhard},
      author={Schick, Thomas}, 
      title={Lipschitz rigidity for scalar curvature},
      journal = {J. Eur. Math. Soc. (2024), published online first},
      note={\href{https://arxiv.org/abs/2206.11796}{arXiv:2206.11796}}   
   }
   
 \bib{Cheeger}{article}{
    AUTHOR = {Cheeger, Jeff},
     TITLE = {On the spectral geometry of spaces with cone-like
              singularities},
   JOURNAL = {Proc. Nat. Acad. Sci. U.S.A.},
    VOLUME = {76},
      YEAR = {1979},
    NUMBER = {5},
     PAGES = {2103--2106},
      ISSN = {0027-8424},
       DOI = {10.1073/pnas.76.5.2103},
       URL = {https://doi.org/10.1073/pnas.76.5.2103},
}

\bib{Chou}{article}{
   author={Chou, Arthur Weichung},
   title={The Dirac operator on spaces with conical singularities and
   positive scalar curvatures},
   journal={Trans. Amer. Math. Soc.},
   volume={289},
   date={1985},
   number={1},
   pages={1--40},
   issn={0002-9947},
   review={\MR{779050}},
   doi={10.2307/1999686},
}

\bib{ChuLeeZhu}{unpublished}{
      author={ Chu, Jianchun}, 
      author = {Lee,  Man-Chun},
      author = {Zhu,  Jintian},
      title={Llarull's theorem on punctured sphere with $L^\infty$ metric}, 
       year={2024},
      eprint={2405.19724},
      note={\href{https://arxiv.org/abs/2405.19724}{arxiv:2405.19724}}
}

\bib{DaiSunWang}{unpublished}{
author= {Dai, Xianzhe}, 
author={Sun, Yukai}, 
author={Wang, Changliang}, 
title={Positive mass theorem for asymptotically flat manifolds with isolated conical singularities}, 
date={2024},
note = {https://arxiv.org/pdf/2310.13285}
}

\bib{deLima}{article}{
   author={de Lima, Levi Lopes},
   title={The scalar curvature in conical manifolds: some results on
   existence and obstructions},
   journal={Ann. Global Anal. Geom.},
   volume={61},
   date={2022},
   number={3},
   pages={641--661},
   issn={0232-704X},
   review={\MR{4390515}},
   doi={10.1007/s10455-022-09825-5},
}
\bib{GS02}{article}{
   author={Goette, Sebastian},
   author={Semmelmann, Uwe},
   title={Scalar curvature estimates for compact symmetric spaces},
   journal={Differential Geom. Appl.},
   volume={16},
   date={2002},
   number={1},
   pages={65--78},
   issn={0926-2245},
   review={\MR{1877585}},
   doi={10.1016/S0926-2245(01)00068-7},
}

\bib{GS01}{article}{
   author={Goette, Sebastian},
   author={Semmelmann, Uwe},
   title={${\rm Spin}^c$ structures and scalar curvature estimates},
   journal={Ann. Global Anal. Geom.},
   volume={20},
   date={2001},
   number={4},
   pages={301--324},
   issn={0232-704X},
   review={\MR{1876863}},
   doi={10.1023/A:1013035721335},
}

\bib{Gromov4}{incollection}{
    AUTHOR = {Gromov, Misha},
     TITLE = {Four lectures on scalar curvature},
 BOOKTITLE = {Perspectives in scalar curvature. {V}ol. 1},
     PAGES = {1--514},
 PUBLISHER = {World Sci. Publ., Hackensack, NJ},
      YEAR = {[2023] \copyright 2023},
      ISBN = {978-981-124-998-3; 978-981-124-935-8; 978-981-124-936-5},
note = {\href{https://arxiv.org/abs/1908.10612}{arXiv:1908.10612}},
}

\bib{Hall}{book}{
    AUTHOR = {Hall, Brian C.},
     TITLE = {Quantum theory for mathematicians},
    SERIES = {Graduate Texts in Mathematics},
    VOLUME = {267},
 PUBLISHER = {Springer, New York},
      YEAR = {2013},
     PAGES = {xvi+554},
      ISBN = {978-1-4614-7115-8; 978-1-4614-7116-5},
             DOI = {10.1007/978-1-4614-7116-5},
       URL = {https://doi.org/10.1007/978-1-4614-7116-5},
}

\bib{Halmos_Sunder}{book}{ 
    AUTHOR = {Halmos, Paul Richard}, 
    author = {Sunder, Viakalathur Shankar},
     TITLE = {Bounded integral operators on {$L\sp{2}$}\ spaces},
    SERIES = {Ergebnisse der Mathematik und ihrer Grenzgebiete [Results in
              Mathematics and Related Areas]},
    VOLUME = {96},
 PUBLISHER = {Springer-Verlag, Berlin-New York},
      YEAR = {1978},
     PAGES = {xv+132},
      ISBN = {3-540-08894-6},
}

  \bib{Hirsch}{book}{
    author={Hirsch, Morris W.},
    title={Differential topology},
    series={Graduate Texts in Mathematics},
    volume={33},
    note={Corrected reprint of the 1976 original},
    publisher={Springer-Verlag, New York},
    date={1994},
    pages={x+222},
    isbn={0-387-90148-5},
    review={\MR{1336822}},
 }
      
\bib{LL15}{article}{
   author={Lee, Dan A.},
   author={LeFloch, Philippe G.},
   title={The positive mass theorem for manifolds with distributional
   curvature},
   journal={Comm. Math. Phys.},
   volume={339},
   date={2015},
   number={1},
   pages={99--120},
   issn={0010-3616},
   review={\MR{3366052}},
   doi={10.1007/s00220-015-2414-9},
}
\bib{ML22}{unpublished}{
  note={\href{https://arxiv.org/abs/2207.11017}{arXiv:2207.11017}},
    title={Rigidity of Lipschitz map using harmonic map heat flow},
    author={Lee, Man-Chun},
    author={Tam, Luen-Fai },
    date={2022},
  eprint={arXiv:2207.11017},
}

\bib{Li}{article}{
    AUTHOR = {Li, Hanfeng},
     TITLE = {Smooth approximation of {L}ipschitz projections},
   JOURNAL = {Canad. Math. Bull.},
    VOLUME = {55},
      YEAR = {2012},
    NUMBER = {4},
     PAGES = {762--766},
      ISSN = {0008-4395,1496-4287},
       DOI = {10.4153/CMB-2011-096-4},
       URL = {https://doi.org/10.4153/CMB-2011-096-4},
}

\bib{LiSuWang}{article}{
      author={Li, Yihan},
      author={Su, Guangxiang},
      author={Wang, Xiangsheng},
       title={Spectral flow, {L}arull's rigidity theorem in odd dimensions and its generalization},
        date={2024},
        volume={67},
        number={5},
        journal={Sci. China Math.},
        pages={1103--1114},
        doi={10.1007/s11425-023-2138-5}
    }

   \bib{Lla98}{article}{
   author={Llarull, Marcelo},
   title={Sharp estimates and the Dirac operator},
   journal={Math. Ann.},
   volume={310},
   date={1998},
   number={1},
   pages={55--71},
}

\bib{Lott_bound}{article}{
    AUTHOR = {Lott, John},
     TITLE = {Index theory for scalar curvature on manifolds with boundary},
   JOURNAL = {Proc. Amer. Math. Soc.},
    VOLUME = {149},
      YEAR = {2021},
    NUMBER = {10},
     PAGES = {4451--4459},
      ISSN = {0002-9939,1088-6826},
       DOI = {10.1090/proc/15551},
       URL = {https://doi.org/10.1090/proc/15551},
}

\bib{ONeill}{book}{
   author={O'Neill, Barrett},
   title={Semi-Riemannian geometry},
   series={Pure and Applied Mathematics},
   volume={103},
   note={With applications to relativity},
   publisher={Academic Press, Inc. [Harcourt Brace Jovanovich, Publishers],
   New York},
   date={1983},
   pages={xiii+468},
   isbn={0-12-526740-1},
   review={\MR{719023}},
}

\bib{Rieffel}{article}{
    AUTHOR = {Rieffel, Marc A.},
     TITLE = {Vector bundles and {G}romov-{H}ausdorff distance},
   JOURNAL = {J. K-Theory},
    VOLUME = {5},
      YEAR = {2010},
    NUMBER = {1},
     PAGES = {39--103},
      ISSN = {1865-2433,1865-5394},
       DOI = {10.1017/is008008014jkt080},
       URL = {https://doi.org/10.1017/is008008014jkt080},
}	
		   
  \bib{SchroheSeiler}{article}{
   author={Schrohe, Elmar},
   author={Seiler, J\"{o}rg},
   title={Ellipticity and invertibility in the cone algebra on $L_p$-Sobolev
   spaces},
   journal={Integral Equations Operator Theory},
   volume={41},
   date={2001},
   number={1},
   pages={93--114},
   issn={0378-620X},
   review={\MR{1844462}},
   doi={10.1007/BF01202533},
 }
 \bib{Taylor}{book}{
   author={Taylor, Michael E.},
   title={Partial differential equations I. Basic theory},
   series={Applied Mathematical Sciences},
   volume={115},
   note={Third edition [of  1395148]},
   publisher={Springer, Cham},
   date={[2023] \copyright 2023},
   pages={xxiv+714},
   isbn={978-3-031-33858-8},
   isbn={978-3-031-33859-5},
   review={\MR{4703940}},
   doi={10.1007/978-3-031-33859-5},
}
\end{biblist}
\end{bibdiv}

\end{document}